 \newtheorem{thm}{Theorem}[section]
 \newtheorem{cor}[thm]{Corollary}
 \newtheorem{lem}[thm]{Lemma}
 \newtheorem{prop}[thm]{Proposition}
 \newtheorem{defn}[thm]{Definition}
 \newtheorem{rem}[thm]{Remark}
 \numberwithin{equation}{section}
\newtheorem{lem*}{Lemma}
\newtheorem{cor*}{Corollary}
\newenvironment{pf0}{\medskip \noindent
{\it Proof of Theorem \ref{mL}.}}{\hfill $\square$\par
}
\def\p#1{\partial #1}
\def\R{\Bbb{R}}
\def\C{\Bbb{C}}
\def\N{\Bbb{N}}
\begin{document}

\title{
Spiral Minimal Products}
\dedicatory{Dedicated to Prof. F. Reese Harvey and Prof. H. Blaine Lawson, Jr.  with admiration}
\author{Haizhong Li}
\address{Department of Mathematical Sciences, Tsinghua University, Beijing 100084, P. R. China}
\email{lihz@mail.tsinghua.edu.cn}
\author{Yongsheng Zhang}
\address{Academy for Multidisciplinary Studies, Capital Normal University, Beijing 100048, P. R. China}
\email{yongsheng.chang@gmail.com}
\date{\today}

\keywords{Spherical minimal submanifold, Spiral minimal product, $\mathscr C$-totally real minimal submanifold, totally real submanifold, 
horizontal lifting, special Lagrangian calibration} 
\begin{abstract}
This paper exhibits  a structural strategy to produce new minimal submanifolds in spheres based on two given ones.
The method is to spin a pair of given minimal submanifolds by a curve $\gamma\subset \mathbb S^3$ in a balanced way 
and leads to resulting minimal submanifolds $-$ spiral minimal products, 
which form a two-dimensional family arising from intriguing pendulum phenomena decided by $C$ and $\tilde C$.
With $C=0$, we generalize the construction of minimal tori in $\mathbb S^3$ explained in \cite{Brendle} to higher dimensional situations.
When $C=-1$,  we recapture previous relative work in \cite{CLU} and \cite{HK1} for special Legendrian submanifolds 
in spheres,
and moreover, can gain numerous  
$\mathscr C$-totally real and
totally real embedded minimal submanifolds in spheres and in complex projective spaces respectively.
A key ingredient of the paper is to apply a beautiful extension result of minimal submanifolds by Harvey and Lawson \cite{HL0} 
for a rotational reflection principle in our situation to establish curve $\gamma$.
\end{abstract}
\maketitle
\section{Introduction}\label{P} 
By a submanifold in sphere, we mean an immersion of a manifold into a unit Euclidean sphere.
                       When the immersion has vanishing mean curvature vector field, 
                       we say that it is a spherical minimal submanifold. 
                       The study of spherical minimal submanifolds plays  a core role in differential geometry, e.g.
                       \cite{L, HL2, HL1, Mc,  CM, Haskins, CLU, HK0, Joyce, HK1, Brendle, z, XYZ2, x-y-z0, TZ} and etc.

                  In order to understand more about structures among spherical minimal submanifolds,
                   one may wonder
                       whether there
                        are some non-trivial 
                        algorithms to manufacture new examples based on given ones
                        and how useful that could be?
                       There is a relatively simple one already known for a while from different perspectives, 
                       for instance see \cite{X,TZ, CH}.
            Let us explain it a bit.          
                       Denote $S^{n}(1)\subset \R^{n+1}$ by $\mathbb S^n$.
                       Given two spherical minimal
                       submanifolds  
                       $f_1:M_1^{k_1}\longrightarrow \mathbb S^{n_1}$ and 
                       $f_2:M_2^{k_2}\longrightarrow \mathbb S^{n_2}$,
                       the (unique up to coefficient signs) {\it constant minimal product} of $f_1$ and $f_2$
                      is the minimal immersion from                     $M_1\times M_2$ into $ \mathbb S^{n_1+n_2+1}$ by
         \[
         (x,y)\longmapsto \Big(\lambda f_1(x), \mu f_2(y)\Big)\,  
         \text{  \    \       with\ \ } 
         \lambda=\sqrt{\frac{k_1}{{k_1+k_2}}\ }\ \text{\ \ and }\ \mu=\sqrt{\frac{k_2}{{k_1+k_2}}} .
         \]
        Such an algorithm stimulates several new discoveries  on spherical minimal submanifolds and area-minimizing cones \cite{TZ, JCX, WW, JXC}.


                 In this paper we 
                 provide
                 a new structural  strategy  to generate minimal objects of one dimension higher 
                 compared to the constant minimal product 
                 and the strategy can further enrich the realm of spherical minimal submanifolds.
                 The construction                 is quite universal and can take any pair of 
                    {\it $\mathscr C$-totally real} 
                    spherical minimal immersions as inputs.

\begin{defn}\label{wiso}
               An immersion $f$ of $M$ into $\mathbb S^{2n+1}\subset \C^{n+1}$ 
               is called $\mathscr C$-totally real
               if the vector field $\big\{i\cdot f(x)\, \vert\, x\in M\big\}$ along the immersion is a normal vector field.
\end{defn}
\begin{rem}
In $\mathbb S^{2n+1}$, $\mathscr C$-totally real submanifolds of dimension $n$ are called Legendrian submanifolds.
\end{rem}
                 Let $\gamma(t)=\big(\gamma_1(t),\gamma_2(t)\big)
                \in
                 \mathbb S^3\subset \mathbb C^2$
                 represent an immersed curve over domain $\R$.
                 When $n_1+1=2m_1+2$ and $n_2+1=2m_2+2$,
                 we identify $\mathbb R^{n_1+1}\cong\mathbb C^{m_1+1}$ and $\mathbb R^{n_2+1}\cong\mathbb C^{m_2+1}$.
\begin{defn}   
          The $\gamma$-product of spherical immersions $f_1$ and $f_2$ is the map
   \begin{eqnarray}
 \ \ \ \ \ \  \ \ \  G_\gamma: \mathbb R\times M_1\times M_2 \longrightarrow \mathbb S^{n_1+n_2+1}\ \ by \ \ 
                               \big(t,x,y\big)\longmapsto \Big(\gamma_1(t)f_1(x),\, \gamma_2(t)f_2(y)\Big) .
                               \label{expression}
   \end{eqnarray} 
   Here the multiplication 
   is the complex multiplication.
   When $G_\gamma$ is a minimal immersion,
   we call it a spiral minimal product.
\end{defn}

                    We shall assume $\gamma$ to be an immersion throughout the paper.
          Based on types of $\gamma$-spin, we introduce some concepts of $G_\gamma$.
          Let $a(t)=|\gamma_1(t)|$ and  $b(t)=|\gamma_2(t)|$. 
                         \begin{defn}
                  If $a(t)$ and  $b(t)$ remain unchanged, 
                $G_\gamma$ is called of steady magnitudes,
                 and otherwise of varying magnitudes.
                If
                 neither (or exact one) argument of 
                 $\gamma_1(t)$ and $\gamma_2(t)$ 
                 is constant,
        $G_\gamma$ is called a doubly (or singly) spiral product.
               %
                 \end{defn}
                  
                 The first main result of our paper is the following. 
    \begin{thm}\label{main}
    Suppose that 
          $f_1:M_1^{k_1}\longrightarrow \mathbb S^{n_1}$ and 
                       $f_2:M_2^{k_2}\longrightarrow \mathbb S^{n_2}$
                       are $\mathscr C$-totally real 
    minimal  immersions with $k_1+k_2\geq 1$.
      Then, for either type among 
      \\
      \ding{172} doubly spiral minimal product of steady magnitudes, 
      \\
                               \ding{173} 
                               doubly spiral minimal product of varying magnitudes,
                        \\
                        \ding{174} singly spiral minimal product of varying magnitudes,
                        \\
                  there are uncountably many choices of $\gamma$ 
                  to generate $ G_\gamma$
        of that type.
                  
      \end{thm}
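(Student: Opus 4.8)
The plan is to reduce the minimality of $G_\gamma$ to an ODE system for the curve $\gamma$ and then show that system admits uncountably many solutions of each prescribed type. First I would compute the induced metric of $G_\gamma$ on $\mathbb R\times M_1\times M_2$. Writing $\gamma_1 = a e^{i\phi_1}$, $\gamma_2 = b e^{i\phi_2}$ with $a^2+b^2=1$, the hypothesis that $f_1,f_2$ are $\mathscr C$-totally real forces the cross terms between the $t$-direction and the $M_j$-directions to behave nicely: the $\mathscr C$-totally real condition means $i f_j$ is normal to $f_j$, so $\langle i f_j, df_j\rangle = 0$, which kills the mixed metric terms. One finds that $G_\gamma$ is an immersion precisely when $\gamma$ is an immersed curve, and the induced metric splits as $\rho(t)^2\,dt^2 \oplus a(t)^2 g_1 \oplus b(t)^2 g_2$ where $\rho^2 = |\gamma'|^2 = a'^2 + b'^2 + a^2\phi_1'^2 + b^2\phi_2'^2$.

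Next I would write down the mean curvature. Because $f_1$ and $f_2$ are themselves minimal in their spheres and $\mathscr C$-totally real, the second fundamental form of $G_\gamma$ decomposes along the product structure; the $M_1$ and $M_2$ blocks contribute only through the conformal factors $a(t)$, $b(t)$ and the ambient curvature, while the genuinely new part is the curvature of the profile curve $\gamma$ inside $\mathbb S^3$ together with the warping. The upshot — this is the computation I would carry out carefully but not reproduce here — is that $G_\gamma$ is minimal if and only if $\gamma$ satisfies a second-order ODE system in $\mathbb S^3$ of the form: $\gamma$ is a curve whose geodesic curvature in $\mathbb S^3$ is prescribed by a function of $a, b$ weighted by $k_1, k_2$. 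After reparametrizing by arclength and using $a^2+b^2=1$ to set $a=\cos r$, $b=\sin r$, the system becomes a planar Hamiltonian-type (pendulum) system in the variables $(r, \psi)$ where $\psi$ records the relative winding of the two phases, with first integrals $C$ and $\tilde C$ as advertised in the abstract; the condition $k_1+k_2\ge 1$ guarantees the relevant coefficient does not degenerate.

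Finally, to produce the three types of examples I would analyze the phase portrait of this pendulum system. Steady magnitudes means $r\equiv\text{const}$, which is an equilibrium of the $r$-equation; I would check that for each admissible constant there is a one-parameter family of $\gamma$ (varying the winding rate), each genuinely doubly spiral when both $\phi_1', \phi_2'\ne 0$ — giving type \ding{172}. For \ding{173} and \ding{174}, $r$ is non-constant, so I would pick initial conditions on non-equilibrium level sets of the first integral: a full open region of initial data in $(r,r',\psi,\psi')$-space yields solutions with $r$ oscillating (hence varying magnitudes), and within that region the locus where exactly one of $\phi_1',\phi_2'$ vanishes identically is cut out by one equation — by the first-integral relations this can be arranged consistently — giving the singly spiral family \ding{174}, while its complement gives the doubly spiral family \ding{173}. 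In each case the set of valid initial conditions has positive measure, hence is uncountable, and distinct initial data give geometrically distinct $\gamma$. The main obstacle I anticipate is the bookkeeping in the minimality computation: verifying that the $\mathscr C$-totally real hypothesis is exactly what is needed to decouple the $M_1, M_2$ contributions and reduce everything to the profile-curve ODE, and then correctly identifying the two conserved quantities so that the phase-portrait argument is clean. The existence of solutions of each type is then a soft consequence of the structure of the reduced system; the genuinely delicate point flagged in the abstract — extending $\gamma$ past the boundary of its natural domain via the Harvey–Lawson reflection — is a separate matter not needed merely to establish existence of the three types.
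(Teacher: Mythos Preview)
Your broad strategy --- reduce minimality of $G_\gamma$ to an ODE system for $\gamma$, identify conserved quantities, and read off the three families from the phase portrait --- matches the paper's. The gap is in your last sentence: the Harvey--Lawson step is \emph{not} separate from the existence claim. The theorem requires $\gamma:\mathbb R \to \mathbb S^3$, and once you pass to the reduced first-order system (solving the first integrals for the derivative of $r$), you acquire a square-root singularity at each turning point $z_L, z_R$ of the pendulum --- the reduced ODE is defined only on the open basic interval $\Omega_{C,\tilde C}$ where the radicand is positive. Continuing $\gamma$ through a turning point is precisely what the paper's Lemma~4.1 and Theorem~4.2 accomplish: one builds a rotational reflection $\gamma^1$ of the basic solution piece $\gamma^0$, observes the join is $C^1$ at the common endpoint, and then invokes Harvey--Lawson's extension theorem for minimal varieties (applied to $G_{\gamma^0}\cup G_{\gamma^1}$ across its codimension-one seam) to upgrade the $C^1$ join to analytic. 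Iterating the reflection paves all of $\mathbb R$. The paper explicitly identifies this global assembly as the proof of Theorem~1.5.

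You might hope to bypass this by arguing that the \emph{unreduced} second-order system for $\gamma$ is smooth wherever $a,b>0$, and that the first integrals confine $(a,b)$ to a compact subinterval of $(0,\tfrac\pi2)$, so long-time existence follows from standard ODE theory on a compact region. That route is plausible and would indeed avoid Harvey--Lawson, but it requires writing the second-order system in a parametrization that does not degenerate at the turning points (arclength of $\gamma$ in $\mathbb S^3$, not arclength of $(a,b)$ in $\mathbb S^1$), verifying regularity there, and checking confinement --- none of which your proposal supplies. One smaller correction on type~\ding{172}: the steady-magnitude family is indexed by the spin ratio $c=\dot s_2/\dot s_1$, with a \emph{unique} equilibrium value of $r$ determined by each $c$, not a one-parameter family of curves for each fixed $r$.
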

                  It turns out that spiral minimal products of  Type \ding{172}
                  come from limits of those of Type \ding{173}
                  and that
                  the determination of $\gamma$ for \ding{173} and \ding{174}
                 is completely dominated by amusing pendulum phenomena.
                  Let us draw a blueprint for Type \ding{174} which is allowed to produce immersed minimal hypersurfaces. 
                 To seek for a singly spiral minimal product $G_\gamma$
                  with varying $\gamma_2(t)=b(t)$ (i.e., spin occurs only in the first complex slot of $\gamma$), 
                          we shall use an arc parameter $s$ for $\big(a(t), b(t)\big)$ and consider the following function (a special case of \eqref{Cfun})
                          \begin{equation}\label{P(s)}
                        P_0(s)= \dfrac{1}{ \Big(\cos s\Big)^{2k_1+2}
         \Big( \sin s\Big)^{2k_2}}
         \ \ \ \ \text{ for }s\in\left(0,\frac{\pi}{2}\right).
\end{equation}
%
%
\begin{figure}[h]
	\centering
	\begin{subfigure}[t]{0.48\textwidth}
		\centering
		\includegraphics[scale=0.38]{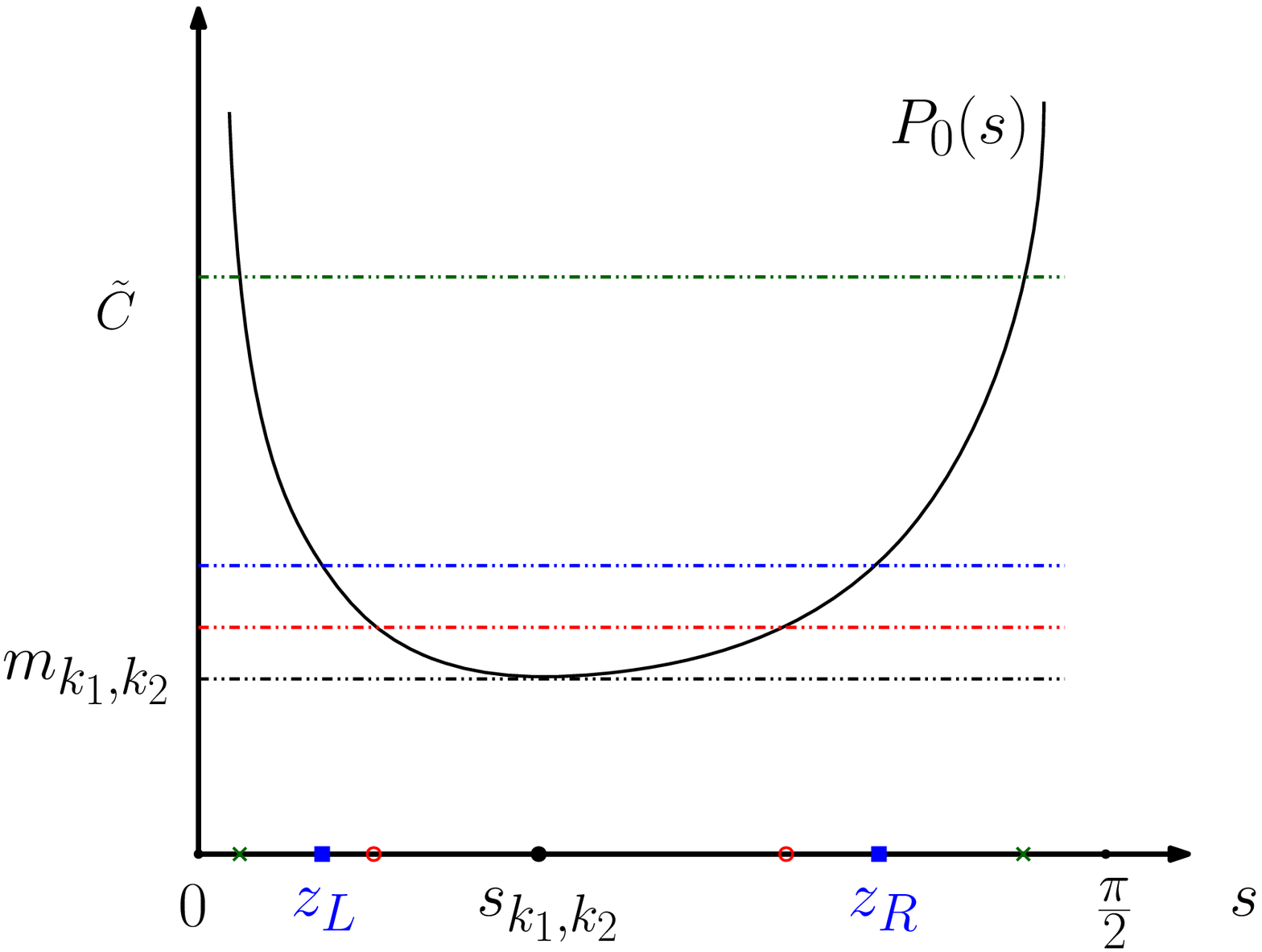}
                              \captionsetup{font={scriptsize}} 
                               \caption{Range of pendulum decided by $\tilde C$}
                               \label{fig:1a}
	\end{subfigure}
	\begin{subfigure}[t]{0.48\textwidth}
		\centering
	 \includegraphics[scale=0.49]{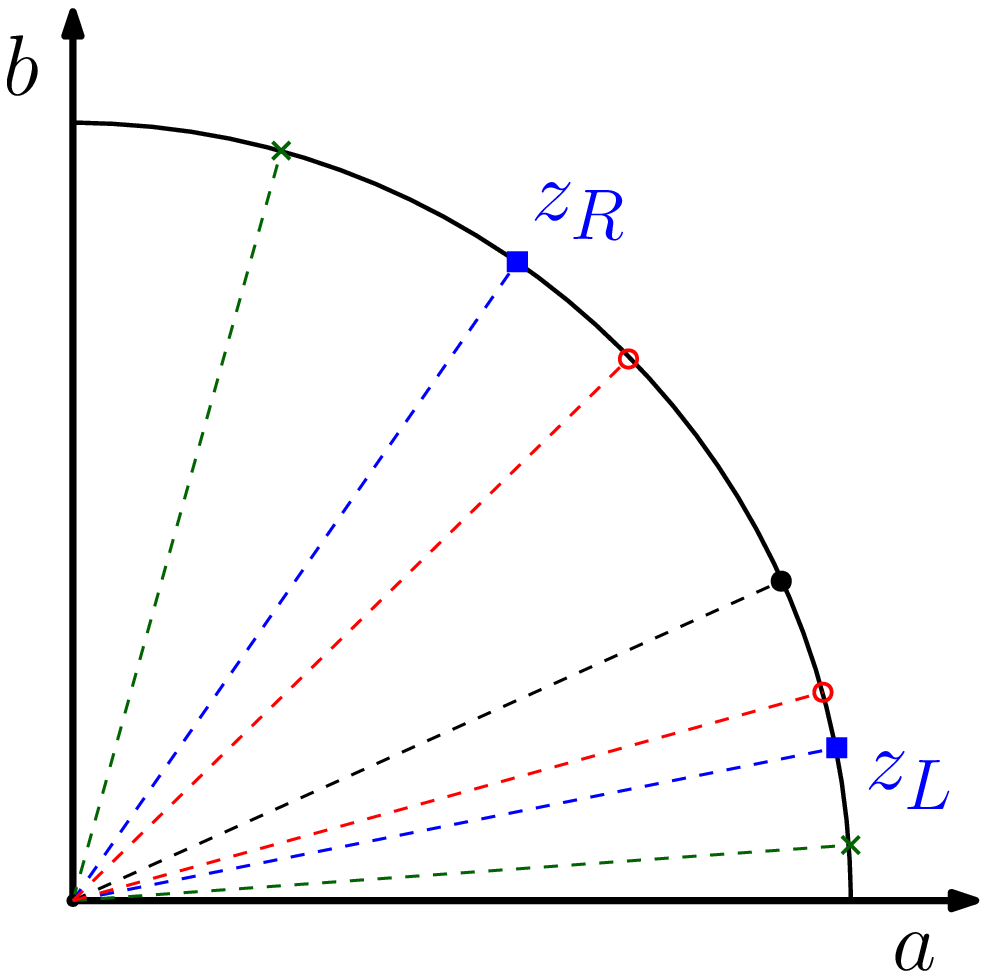}
                     \captionsetup{font={scriptsize}} 
                               \caption{Appearance of the pendulum in $\big(a(t), b(t)\big)$}\label{fig:1b}
	\end{subfigure}
	\caption{Pendulum phenomenon for singly spiral minimal products} 
\end{figure}
                   
                     It is obvious that, when $k_2>0$, function $P_0(s)$ has a unique interior critical point $s_0=s_{k_1,k_2}$
                  of value
                  $m_{0}=m_{k_1, k_2}=P_0(s_{k_1,k_2})$.
                  Then, for any $\tilde C>m_{0}$,
                  there 
                  are 
                   two solutions $z_L$ and $z_R$ for $P_0(s)=\tilde C$
                  as shown in Figure \ref{fig:1a}
                  and 
                  the pendulum
                  of
                  $\big(a(t), b(t)\big)$
                  moves
                  back and
                  forth
                  between angles $z_L$ and $z_R$ in Figure \ref{fig:1b}.
                  As $\tilde C\downarrow m_{0}$ in Figure \ref{fig:1a},
                  the amplitude of the pendulum shrinks to zero, i.e., limiting to steady magnitudes
                  with $a(t)\equiv \cos s_{0}$ and $b(t)\equiv \sin s_{0}$.
%
%
           For doubly spiral products, 
                  the same pendulum phenomena  
                  also happen but with a bit more complicated 
                  apparences
               characterized by
           $C$ and $\tilde C$   
                 (to be explained)
                  in \eqref{circledno}, \eqref{Gexp} and \eqref{dotsss}.
                  By studying \eqref{dotsss}
                  we obtain the following.
\begin{thm}\label{main0}
                   %
                   Additionally assume that $M^{k_1}_1$ and $M^{k_2}_2$  in Theorem \ref{main} are closed manifolds. 
                   Then there are $\mathbb Q \times \mathbb Q$ 
                   many                    doubly spiral minimal products
                    of varying magnitudes  
                    each of which factors through a minimal immersion
                   of some \textbf{closed} manifold.
\end{thm}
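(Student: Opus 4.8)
The plan is to run the pendulum analysis behind Theorem \ref{main}\,\ding{173} and then single out the parameters for which the spin curve $\gamma$ closes up. Write $\gamma_1(t)=a(t)\,e^{\sqrt{-1}\,\theta_1(t)}$ and $\gamma_2(t)=b(t)\,e^{\sqrt{-1}\,\theta_2(t)}$ with $a^2+b^2\equiv1$. By the discussion surrounding \eqref{circledno}, \eqref{Gexp} and \eqref{dotsss}, a doubly spiral minimal product of varying magnitudes corresponds to a pair of constants $C,\tilde C$ ranging over an open set $\mathcal U\subset\mathbb{R}^2$: for $(C,\tilde C)\in\mathcal U$ the arc position $s(t)$ of $\big(a(t),b(t)\big)$ is the pendulum oscillating, with a finite period $T=T(C,\tilde C)$, between the two turning points $z_L,z_R$ at which the function in \eqref{Cfun} takes the value $\tilde C$ (its interior minimum value we call $m_C$, attained at $s_C$), while $\theta_1$ and $\theta_2$ satisfy first-order equations whose right-hand sides, by \eqref{dotsss}, depend only on the magnitudes $\big(a(t),b(t)\big)$ and hence are $T$-periodic in $t$. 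Consequently
\[
\Delta_i(C,\tilde C):=\theta_i(t+T)-\theta_i(t)=\int_0^T\dot\theta_i\,dt\qquad(i=1,2)
\]
is a finite quantity independent of $t$.

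First, the closure criterion. If $\gamma(t+L)=\gamma(t)$ for some $L>0$, then $G_\gamma(t+L,x,y)=G_\gamma(t,x,y)$ for all $(t,x,y)$, so $G_\gamma$ descends to an immersion of $(\mathbb{R}/L\mathbb{Z})\times M_1\times M_2\cong\mathbb{S}^1\times M_1\times M_2$; this is a closed manifold because $M_1$ and $M_2$ are closed, and it carries a minimal immersion because both being an immersion and being minimal are local properties and the covering $(\mathbb{R}/L\mathbb{Z})\times M_1\times M_2\to\mathbb{R}\times M_1\times M_2$ is a local isometry for the induced metrics. Conversely, since $a$ is non-constant in Type \ding{173}, every period $L$ of $\gamma$ is an integer multiple $NT$ of the pendulum period, and then $\gamma(t+NT)=\gamma(t)$ forces $N\Delta_1,N\Delta_2\in2\pi\mathbb{Z}$. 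Hence $\gamma$ closes up (for $N$ a common denominator) and $G_\gamma$ factors through a minimal immersion of the closed manifold $\mathbb{S}^1\times M_1\times M_2$ precisely when both rotation numbers
\[
\rho_1(C,\tilde C):=\frac{\Delta_1(C,\tilde C)}{2\pi},\qquad\rho_2(C,\tilde C):=\frac{\Delta_2(C,\tilde C)}{2\pi}
\]
are rational.

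It remains to examine $\Psi:=(\rho_1,\rho_2)\colon\mathcal U\to\mathbb{R}^2$. Off the degenerate locus the period and the increments depend real-analytically on $(C,\tilde C)$, so $\Psi$ is real-analytic, and it is enough to exhibit one point of $\mathcal U$ where the Jacobian $\partial(\rho_1,\rho_2)/\partial(C,\tilde C)$ does not vanish: near such a point $\Psi$ is a local diffeomorphism, $\Psi(\mathcal U)$ contains an open box $B$, and $\Psi^{-1}(\mathbb{Q}^2\cap B)$ is a subset of $\mathcal U$ in bijection with $\mathbb{Q}^2\cap B$, hence of cardinality $|\mathbb{Q}\times\mathbb{Q}|$. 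Each such parameter yields, by the previous steps, a doubly spiral minimal product of varying magnitudes that factors through a minimal immersion of $\mathbb{S}^1\times M_1\times M_2$; distinct parameters give distinct pairs of rotation numbers and hence genuinely distinct examples, and the theorem follows.

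The crux is therefore the non-degeneracy of $\Psi$, i.e.\ the functional independence of $\rho_1$ and $\rho_2$, and I would establish it by a boundary expansion. As $\tilde C\downarrow m_C$ the amplitude of the pendulum shrinks to zero and the solution limits to the steady-magnitude Type \ding{172} solution with $a\equiv\cos s_C$, $b\equiv\sin s_C$ and constant angular speeds; in this regime $T$, the $\Delta_i$, and their $\tilde C$-derivatives admit explicit small-amplitude expansions produced by the linearized pendulum about $s_C$, giving the boundary values $\rho_i^0(C):=\lim_{\tilde C\downarrow m_C}\rho_i$ and the transverse derivatives $\partial_{\tilde C}\rho_i$ in closed form in $C$. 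One then checks that the $2\times2$ matrix with rows $\big((\rho_i^0)'(C),\ \partial_{\tilde C}\rho_i\big)$ is non-singular for some $C$ — equivalently, that the steady rotation-number curve $C\mapsto\big(\rho_1^0(C),\rho_2^0(C)\big)$ is not tangent to the pendulum's leading correction there — which reduces to a concrete but routine estimate on the integrals in \eqref{dotsss}. (Alternatively one can bypass the explicit Jacobian: were $\rho_1,\rho_2$ functionally dependent, real-analyticity and connectedness would force $\Psi(\mathcal U)$ onto a fixed analytic curve, and this is ruled out by comparing the asymptotics of $\rho_1/\rho_2$ in two different corners of $\mathcal U$.) Granting the openness of $\Psi$, the soft argument above concludes the proof.
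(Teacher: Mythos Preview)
Your sketch is on the right track and follows essentially the same strategy as the paper: reduce closure of $\gamma$ to rationality of the two rotation numbers, observe their real-analytic dependence on $(C,\tilde C)$, and then show the parameter-to-rotation-number map has two-dimensional image via a boundary computation as $\tilde C\downarrow m_C$. The closure criterion and the reduction to non-degeneracy of $\Psi=(\rho_1,\rho_2)$ are correctly identified.

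The paper's implementation differs from yours in one economical respect. Rather than computing the full Jacobian of $\Psi$, the paper changes coordinates to the pair $\bigl(R,J_1\bigr)$ where $R=CJ_2^C/J_1^C=\rho_2/\rho_1$, and treats the two directions separately. For the boundary limit $\tilde C\downarrow m_C$ it is immediate that $R\to C\cot^2 s_C$; differentiating this explicit boundary value in $C$ at $C=1$ (using \eqref{sCd}) gives $\partial R/\partial C\neq0$ when $k_1+k_2\geq1$, so level sets $R^{-1}(q)$ foliate a neighbourhood by analytic curves. Then one only needs $J_1^C(\tilde C)$ to be non-constant along those leaves, which is forced because $J_1$ is already non-constant along the transversal $\{C=1\}$ by the two distinct asymptotic limits in Lemma~\ref{C1}. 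This sidesteps your proposed computation of the transverse derivatives $\partial_{\tilde C}\rho_i$ via the linearized pendulum, which would work but is more laborious; your ``alternative'' remark about functional dependence and comparing asymptotics of $\rho_1/\rho_2$ is in fact exactly the route the paper takes.

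The only genuine gap in your write-up is that the crux --- the non-degeneracy --- is asserted rather than proved: you describe the boundary expansion and call the resulting determinant ``routine'' without carrying it out. The paper's $(R,J_1)$ decomposition shows how to close this with almost no computation beyond differentiating $C\cot^2 s_C$, so you could complete your argument by adopting that step.
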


    When  constructing singly spiral minimal products, there is no need for the non-spiral part to be $\mathscr C$-totally real.
    As a useful application, a bunch of  generalizations of 
    the construction of minimal tori in $\mathbb S^3$ in \cite{Brendle} 
    (see \eqref{BK} in below) can be gained
    in Corollary \ref{corpm0}.
    The following is a special case of Corollary \ref{corpm0}.
    \begin{cor}\label{S1xSn}
                   For either odd or even $n\geq 1$,
                   there are infinitely  many singly spiral minimal products
                    of varying $\gamma_2(t)=b(t)$ 
                    for $f_1(point_1)=(1,0)\in \mathbb S^1$
                    and $f_2=id_{\mathbb S^{n}}$                     such that 
                           each can  induce a minimal immersion from 
                     $S^1\times S^n$
                    into $\mathbb S^{n+2}$.
\end{cor}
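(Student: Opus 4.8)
The plan is to obtain Corollary~\ref{S1xSn} as the instance of the singly spiral construction with $f_1$ a point sent to $(1,0)\in\mathbb S^1\subset\mathbb C$ and $f_2=\mathrm{id}_{\mathbb S^n}$, and then to select from the uncountable Type~\ding{174} family precisely those curves $\gamma$ that close up --- the reduction that Theorem~\ref{main0} performs for doubly spiral products. First I would specialize the inputs: take $M_1^{k_1}$ a point, so $k_1=0$, with $f_1(\cdot)=(1,0)$, and $M_2^{k_2}=\mathbb S^n$, so $k_2=n$, with $f_2=\mathrm{id}_{\mathbb S^n}$. Since the spin is confined to the first complex slot, $\gamma_2(t)=b(t)\in\mathbb R$, and --- as noted just before Corollary~\ref{corpm0} --- the non-spiral factor $f_2$ need not be $\mathscr C$-totally real; only its minimality in $\mathbb S^n$ enters the computation. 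Thus $\mathrm{id}_{\mathbb S^n}$ is an admissible non-spiral input for every $n\ge1$, of either parity, while the one-point $f_1$ is trivially $\mathscr C$-totally real. As $k_1+k_2=n\ge1$, Theorem~\ref{main} (Type~\ding{174}) applies, and here \eqref{P(s)} becomes $P_0(s)=\bigl(\cos^2 s\,\sin^{2n}s\bigr)^{-1}$, with unique interior minimum $m_0=P_0(s_0)$; for each $\tilde C>m_0$ the magnitudes $\bigl(a(t),b(t)\bigr)=(\cos s(t),\sin s(t))$ oscillate between the roots $0<z_L<s_0<z_R<\tfrac\pi2$ of $P_0=\tilde C$, so $a,b>0$ throughout, $s(t)$ is periodic of least period $T=T(\tilde C)$, and $z_L(\tilde C),z_R(\tilde C)$ vary strictly monotonically with $\tilde C$.

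Next I would pin down when $G_\gamma$ descends to $S^1\times\mathbb S^n$. With $M_1$ a point, $G_\gamma$ lives on $\mathbb R\times\mathbb S^n$ and sends $(t,y)\mapsto(\gamma_1(t),b(t)y)$; by Theorem~\ref{main} it is a minimal immersion, and since $\mathbb R\times\mathbb S^n\to S^1\times\mathbb S^n$ is a local diffeomorphism it factors through a minimal immersion of $S^1\times\mathbb S^n$ into $\mathbb S^{n+2}$ exactly when $\gamma:\mathbb R\to\mathbb S^3$ is periodic. Writing $\gamma_1(t)=a(t)e^{i\phi(t)}$ (legitimate since $a>0$), the magnitude part $(a,b)$ is already $T$-periodic, while the Type~\ding{174} balancing relation \eqref{dotsss} makes $\phi$ strictly increasing, with $\dot\phi$ an explicit positive function of $s$ and $\tilde C$ (constant in $t$ when $\tilde C=m_0$); over one magnitude period the argument therefore advances by the fixed amount
\[
\Delta\Phi(\tilde C)\;=\;\oint\dot\phi\,dt\;=\;2\int_{z_L}^{z_R}\left|\frac{d\phi}{ds}\right|\,ds,
\]
an improper integral whose integrand is $\sim(s-z_L)^{-1/2}$ near $z_L$ and $\sim(z_R-s)^{-1/2}$ near $z_R$ (where $\dot s=0$). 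Hence $\gamma$ is $qT$-periodic for some $q\in\mathbb N$ precisely when $\Delta\Phi(\tilde C)\in 2\pi\,\mathbb Q$, in which case $G_\gamma$ induces the required minimal immersion of $S^1\times\mathbb S^n$; distinct admissible $\tilde C$ give distinct amplitudes $[z_L,z_R]$, hence distinct immersions. So it remains only to exhibit infinitely many $\tilde C>m_0$ with $\Delta\Phi(\tilde C)/(2\pi)\in\mathbb Q$.

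The crux is to prove that $\tilde C\mapsto\Delta\Phi(\tilde C)$ is \emph{non-constant} on $(m_0,\infty)$: then its image is a nondegenerate interval, which contains infinitely many rationals, and the proof closes. I would first check --- by the standard analyticity of period functions for a potential well with nondegenerate minimum (here $\log P_0$ is strictly convex, so $P_0''(s_0)>0$) and infinitely high walls, which $P_0$ provides on $(0,\tfrac\pi2)$ --- that $\Delta\Phi$ is real-analytic in $\tilde C$ on $(m_0,\infty)$. Then I would establish non-constancy by comparing the two ends of the parameter interval: as $\tilde C\downarrow m_0$ the oscillation collapses to the steady state $s\equiv s_0$, and $\Delta\Phi\to 2\pi\,\dot\phi(s_0)/\omega_0$, where $\omega_0$ is the linearized frequency of the magnitude pendulum at $s_0$ and $\dot\phi(s_0)$ its (constant) argument speed there --- a concrete number depending only on $n$; as $\tilde C\uparrow\infty$ one has $z_L\downarrow0$, $z_R\uparrow\tfrac\pi2$, and a boundary asymptotic estimate of the integrand shows $\Delta\Phi$ approaches a different value (finite or $+\infty$, according to $n$). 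Since the two limits differ, $\Delta\Phi$ is non-constant; equivalently, one differentiates under the integral sign and checks $d\Delta\Phi/d\tilde C\not\equiv0$. For $n=1$ the resulting family contains a one-parameter subfamily of the minimal tori of \cite{Brendle}, in accordance with \eqref{BK}.

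The step I expect to be the main obstacle is exactly this last one: the fine control of the improper integral $\Delta\Phi(\tilde C)$ --- proving its real-analytic dependence on $\tilde C$ through the square-root singularities at the turning points, and evaluating its two limiting values sharply enough (via linearization at $s_0$ and via boundary asymptotics near the ends of $(0,\tfrac\pi2)$) to conclude that they are distinct. Everything else is routine bookkeeping on top of Theorem~\ref{main}.
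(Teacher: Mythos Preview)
Your proposal is correct and follows essentially the same route as the paper. The paper's proof (embedded in Corollary~\ref{corpm0} together with Lemma~\ref{B1}) reduces to exactly your closing-up criterion --- in the paper's notation $I(\tilde C)=\tfrac12\Delta\Phi(\tilde C)\in\pi\mathbb Q$ --- and establishes non-constancy precisely by computing the two endpoint limits you sketch: $\lim_{\tilde C\uparrow\infty}I(\tilde C)=\tfrac{\pi}{2(k_1+1)}$ and $\lim_{\tilde C\downarrow m_0}I(\tilde C)=\tfrac{\pi}{\sqrt{2(k_1+1)}}$, so with $k_1=0$ these are $\tfrac\pi2$ and $\tfrac{\pi}{\sqrt2}$, visibly different; note that the large-$\tilde C$ limit is always finite (your hedge ``finite or $+\infty$, according to $n$'' is unnecessary), and that mere continuity of $I$ suffices --- real-analyticity, while true, is not needed for the argument.
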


                    We remark that
                    formulation \eqref{expression} was first introduced
                               in \cite{CLU} to construct special Legendrian submanifolds 
                              based on special Legendrian $f_1$ and $f_2$,
                               and further
                               developed in  \cite{HK1}.
 Actually, $C$ in our construction means the ratio of $\gamma$'s signed angular momenta, see Remark \ref{AM}.
%
                       We find
                        that 
                        the value of $C$  in \eqref{circledno}
                        completely determines
                        whether $G_\gamma$ of two $\mathscr C$-totally real immersions
                         is also $\mathscr C$-totally real or not.
\begin{lem}\label{Lgn}
                Suppose that spherical immersions $f_1$ and $f_2$ are $\mathscr C$-totally real.
                              Then a spiral product $G_\gamma$ 
                                  of them
                              is $\mathscr C$-totally real
                                 if and only if 
                                      $C=-1$.
\end{lem}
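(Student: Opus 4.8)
The plan is to verify the condition of Definition \ref{wiso} for $F:=G_\gamma$ directly, by testing the normality of the field $i\cdot F$ against a spanning set of tangent vectors of $\R\times M_1\times M_2$. At a point $(t,x,y)$ that tangent space is spanned by $\partial_t$ together with $T_xM_1$ and $T_yM_2$, and, with the complex multiplications understood slotwise,
\[
dF(\partial_t)=\big(\gamma_1'(t)f_1,\gamma_2'(t)f_2\big),\quad dF(u)=\big(\gamma_1(t)\,df_1(u),0\big),\quad dF(w)=\big(0,\gamma_2(t)\,df_2(w)\big)
\]
for $u\in T_xM_1$, $w\in T_yM_2$, while $i\cdot F=\big(i\gamma_1(t)f_1,\,i\gamma_2(t)f_2\big)$. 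So the first step is to compute the three pairings $\langle i\cdot F,\,dF(\cdot)\rangle$ with respect to the ambient real inner product, which splits as a sum over the two complex slots $\C^{m_1+1}$ and $\C^{m_2+1}$ (in particular there are no cross terms between the slots).

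The two factor directions cost nothing. Since multiplication by the complex scalar $\gamma_j(t)$ is a real-conformal automorphism of $\C^{m_j+1}$ that commutes with multiplication by $i$, one gets $\langle i\cdot F,\,dF(u)\rangle=|\gamma_1|^2\langle i f_1,\,df_1(u)\rangle$ and $\langle i\cdot F,\,dF(w)\rangle=|\gamma_2|^2\langle i f_2,\,df_2(w)\rangle$, and both vanish identically precisely because $f_1$ and $f_2$ are assumed $\mathscr C$-totally real, so $if_j$ is normal to $df_j(TM_j)$. Hence the $\mathscr C$-totally real condition for $G_\gamma$ collapses to the single scalar identity coming from $\partial_t$:
\[
0\equiv\big\langle i\gamma_1 f_1,\gamma_1' f_1\big\rangle+\big\langle i\gamma_2 f_2,\gamma_2' f_2\big\rangle=\Re\big(i\gamma_1\overline{\gamma_1'}\big)+\Re\big(i\gamma_2\overline{\gamma_2'}\big),
\]
where $|f_1|=|f_2|=1$ was used. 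Writing $\gamma_1=a\,e^{i\phi_1}$ and $\gamma_2=b\,e^{i\phi_2}$ with $a=|\gamma_1|$, $b=|\gamma_2|$, each summand is the signed angular momentum $a^2\phi_1'$, respectively $b^2\phi_2'$, of the planar curve $\gamma_j$, so the requirement becomes $a^2\phi_1'+b^2\phi_2'\equiv0$ in $t$.

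It then remains to match this balance with $C=-1$. By the definition of $C$ in \eqref{circledno} (cf.\ Remark \ref{AM}, where $C$ is read off as the ratio of the two signed angular momenta of $\gamma$), the condition $a^2\phi_1'+b^2\phi_2'\equiv0$ is literally the statement $C=-1$, and conversely; this delivers both directions of the equivalence at once. I expect the only genuine care needed to be sign bookkeeping — confirming $\Re(i\gamma_j\overline{\gamma_j'})=a^2\phi_j'$ (resp.\ $b^2\phi_j'$) and checking the orientation convention fixed in \eqref{circledno} so that the balance reads $C=-1$ rather than $C=+1$ — whereas the geometric heart of the argument, namely that the $M_1$- and $M_2$-directions contribute nothing and the whole obstruction lives in the curve direction $\partial_t$, is immediate from the hypothesis on $f_1$ and $f_2$ and involves no computation with $\gamma$ itself.
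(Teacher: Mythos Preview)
Your proposal is correct and follows essentially the same approach as the paper's proof in \S6.2: both show that the $M_1$- and $M_2$-directions are automatically orthogonal to $i\cdot G_\gamma$ by the $\mathscr C$-totally real hypothesis on $f_1,f_2$, reducing the condition to the single scalar identity $a^2 s_1'+b^2 s_2'=0$ from the $\partial_t$-direction, which is exactly $C=-1$ via \eqref{circledno}. Your presentation via $\Re(i\gamma_j\overline{\gamma_j'})$ is a slightly more coordinate-free packaging of the same computation the paper writes out directly in the $(a,b,s_1,s_2)$ variables.
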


        By fixing $C=-1$, a corollary of Theorems \ref{main} and \ref{main0} is the following.

           \begin{cor}\label{main2}
    Let
          $f_1:M_1^{k_1}\longrightarrow \mathbb S^{n_1}$ and 
                       $f_2:M_2^{k_2}\longrightarrow \mathbb S^{n_2}$
                       be $\mathscr C$-totally real
    minimal  immersions of $k_1+k_2\geq 1$.
      Then there are uncountably many          $\mathscr C$-totally real                     doubly spiral minimal products of varying magnitudes with $C=-1$.
      When  $M_1$ and $M_2$ are closed, 
      infinitely many among these products
        can descend to  minimal immersions
                    from some closed manifolds
                    into  $\mathbb S^{n_1+n_2+1}$,
                    and moreover,
in conjunction  with the Hopf projection, each  leads to  a totally real
         \footnote{Here totally real means that the tangent space at each point of the immersion
         is mapped into its orthogonal space by the complex structure action of the ambient K\"ahler manifold.
         A totally real submanifold is called Lagrangian if it has exactly half dimension of the ambient manifold.}
minimal immersion into $\mathbb C P^{\frac{n_1+n_2}{2}}$.
    %
    \end{cor}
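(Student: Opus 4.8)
The plan is to deduce everything from Theorems \ref{main} and \ref{main0} and Lemma \ref{Lgn}, together with elementary facts about the Hopf fibration. Observe first that a $\mathscr C$-totally real immersion lands in an odd-dimensional sphere by Definition \ref{wiso}, so $n_1$ and $n_2$ are odd, $n_1+n_2$ is even, and both the target $\mathbb C P^{(n_1+n_2)/2}$ and the Hopf submersion $\pi:\mathbb S^{n_1+n_2+1}\to\mathbb C P^{(n_1+n_2)/2}$ are well defined.

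For the first two assertions I would revisit the pendulum construction behind Theorems \ref{main} and \ref{main0} with the parameter $C$ in \eqref{circledno} set to $-1$. The first task is to check that $C=-1$ is non-degenerate: the function produced by \eqref{Cfun} at $C=-1$ — the doubly spiral analogue of \eqref{P(s)} — should still have a single interior critical point, with some critical value $m_{-1}$, so that for every $\tilde C>m_{-1}$ the level set at height $\tilde C$ has exactly two roots $z_L<z_R$ and the pair $\big(a(t),b(t)\big)$ genuinely oscillates between $z_L$ and $z_R$; since $C=-1\neq0$, neither argument of $\gamma$ is constant, so each resulting $G_\gamma$ is a doubly spiral minimal product of varying magnitudes, and by Lemma \ref{Lgn} it is $\mathscr C$-totally real precisely because $C=-1$. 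Letting $\tilde C$ range over $(m_{-1},\infty)$ yields uncountably many such $G_\gamma$, which is the first assertion. For the second assertion I would feed $C=-1$ into the argument for Theorem \ref{main0}: with $C$ now pinned down, the condition that $\gamma$ close up — commensurability with $2\pi$ of the argument increments of $\gamma_1$ and $\gamma_2$ accumulated over one pendulum period, read off from \eqref{dotsss} — collapses to a single $\mathbb Q$-type condition on the remaining parameter $\tilde C$, hence is met for infinitely many $\tilde C$; each such $\gamma$ makes $G_\gamma$ descend to a minimal immersion of the closed manifold $(\mathbb R/\ell\mathbb Z)\times M_1\times M_2$ into $\mathbb S^{n_1+n_2+1}$, $\ell$ being the resulting period.

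The third assertion is soft. Given one such $\mathscr C$-totally real minimal immersion $\bar G:\Sigma\to\mathbb S^{2N+1}$ with $\Sigma$ closed and $2N+1=n_1+n_2+1$, recall that the Hopf fibres are the totally geodesic great circles tangent to the orbits of the scalar action $e^{i\theta}\cdot$. Being $\mathscr C$-totally real is exactly the statement that $\bar G$ is horizontal, i.e.\ $T\Sigma$ is orthogonal to the vertical field; hence no tangent vector of $\bar G$ is vertical and $\pi\circ\bar G$ is again an immersion of $\Sigma$ of the same dimension. With $\eta$ the standard contact form dual to the vertical field, horizontality gives $\bar G^{*}\eta=0$, hence $\bar G^{*}d\eta=0$; since $d\eta$ is a nonzero constant multiple of $\pi^{*}\omega_{FS}$, the tangent planes of $\pi\circ\bar G$ are isotropic for the Fubini--Study form, i.e.\ the complex structure maps every tangent space into its orthogonal complement, which is precisely the totally real condition in the footnote. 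Minimality descends by the standard horizontal-lifting argument for a Riemannian submersion with totally geodesic fibres: the O'Neill $A$-tensor is skew-symmetric in its horizontal arguments, so $\sum_i\nabla_{e_i}e_i$ remains horizontal for a horizontal orthonormal frame $\{e_i\}$ of $T\Sigma$, $d\pi$ therefore intertwines the mean curvature vectors, and $\pi\circ\bar G$ is minimal. (This is the familiar correspondence between $\mathscr C$-totally real, in particular Legendrian, minimal submanifolds of spheres and totally real, in particular Lagrangian, minimal submanifolds of complex projective spaces.)

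The genuinely new content having already been carried out in the earlier results, the only real work here is (a) verifying that the pendulum data attached to \eqref{Cfun} are non-degenerate at $C=-1$ and that, once $C$ is fixed to $-1$, the commensurability condition of Theorem \ref{main0} still has infinitely many solutions — this is where the corollary touches the concrete analysis of \eqref{dotsss}; and (b) checking the compatibility of the descent of $G_\gamma$ with $\pi$: a priori ``$\gamma$ closes up'' might only mean $\gamma(t+\ell)=e^{i\theta}\gamma(t)$, but such a phase $e^{i\theta}$ acts on $\mathbb S^{2N+1}$ as an isometry covering the identity of $\mathbb C P^N$, so after Hopf projection the phase disappears and the projected immersion still factors through a closed manifold. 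I expect (a) to be the main obstacle, as it is the one place where this corollary is not purely formal.
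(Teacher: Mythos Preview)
Your proposal is correct and follows the paper's approach. The paper likewise declares this a corollary of Theorems \ref{main} and \ref{main0} with $C=-1$ fixed, invokes Lemma \ref{Lgn} for the $\mathscr C$-totally real property, and handles the Hopf-projection step by the standard correspondence recalled in \S\ref{Rctt}. The one place where the paper is more explicit than your sketch is your point (a): the ``collapse to a single $\mathbb Q$-condition'' is made concrete by the integral identity $(k_1+1)J_1(\tilde C)=(k_2+1)J_2(\tilde C)$ of Proposition \ref{Cpm0} (an exact antiderivative computation specific to $C=-1$), and the non-constancy of $J_1$ is then read off from the two distinct asymptotic limits in Lemma \ref{C1} when $k_1+k_2\ge 1$; this is precisely Corollary \ref{corpm1}, rather than a specialization of the $R(C,\tilde C)$ argument used for Theorem \ref{main0}.
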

                     \begin{rem}
    When $f_1$ and $f_2$ are special Legendrian immersions,
    the construction recaptures relative work in \cite{CLU} and \cite{HK1}.
    \end{rem}
          
         Since embedded submanifolds are extremely important, 
         we emphasize the following.
        Given embedded submanifolds
         $M_1 \subset\mathbb S^{r_1}\subset \R^{r_1+1}$ and $M_2 \subset\mathbb S^{r_2}\subset \R^{r_2+1}$,
        by complexifications
 we have
                       \begin{equation}\label{Cpx1}
                               M_1 \times \big\{0\big\}
                                           \hookrightarrow
                                                  \mathbb S^{r_1}\times \big\{0\big\}
                                                            \subset \C^{r_1+1}=\R^{r_1+1}\oplus\,  \R^{r_1+1}
                      \end{equation}
                          \begin{equation}\label{Cpx2}
                          M_2 \times \big\{0\big\}
                                           \hookrightarrow
                                                     \mathbb S^{r_2}\times \big\{0\big\}
                                                               \subset \C^{r_2+1}=\R^{r_2+1}\oplus \,\R^{r_2+1}
                                  \end{equation}
                  $\mathscr C$-totally real with respect to the $i$-multiplication: $i\cdot (v, w)=(-w, v)$.
\begin{thm}\label{mainebd}
  Let $M_1\subset \mathbb S^{r_1}$ and  $M_2\subset \mathbb S^{r_2}$ be  embedded minimal submanifolds.
          Then, by virtue of \eqref{Cpx1} and \eqref{Cpx2},
           there
           {exist} infinitely many 
                   spiral minimal products 
                   with $C=-1$,
 %
               {each of which} factors through
                    a $\mathscr C$-totally real minimal
                    embedding 
                    of some suitable quotient 
                    of $S^1\times M_1\times M_2$
                    into  $\mathbb S^{2r_1+2r_2+3}$.
                    Moreover, 
                   joint with the Hopf projection,
                  %
               infinitely many  embedded totally real minimal submanifolds
                    in $\mathbb C P^{\,r_1+r_2+1}$ can be gained.
\end{thm}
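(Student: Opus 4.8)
The plan is to feed the complexified embeddings into Corollary~\ref{main2} and then to promote the resulting minimal \emph{immersion} to an \emph{embedding} by a coordinate-recovery argument that exploits the fact that, after complexification, $M_1$ and $M_2$ sit inside genuinely \emph{real} linear subspheres. The first step is to regard, via \eqref{Cpx1} and \eqref{Cpx2}, the maps $f_1\colon M_1\hookrightarrow\mathbb S^{r_1}\times\{0\}\subset\mathbb S^{2r_1+1}\subset\mathbb C^{r_1+1}$ and $f_2\colon M_2\hookrightarrow\mathbb S^{r_2}\times\{0\}\subset\mathbb S^{2r_2+1}\subset\mathbb C^{r_2+1}$ as $\mathscr C$-totally real minimal embeddings with $k_j=\dim M_j$ (minimality is inherited since $\mathbb S^{r_j}\times\{0\}$ is totally geodesic); when $k_1=k_2=0$ one falls back on Corollary~\ref{S1xSn}, so I assume $k_1+k_2\ge1$ and $M_1,M_2$ closed. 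Then I would apply Corollary~\ref{main2} with $C=-1$ to this pair: running the pendulum analysis behind Theorems~\ref{main} and~\ref{main0} produces an infinite set of pendulum constants $\tilde C$ for which the doubly spiral curve $\gamma\subset\mathbb S^3$ of varying magnitudes is an embedded periodic curve, of minimal period $L$ say, along which the magnitudes $a(t)=|\gamma_1(t)|$ and $b(t)=|\gamma_2(t)|$ stay in a compact subinterval of $(0,1)$ and hence are bounded away from $0$.

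The next step is the injectivity analysis. Writing $\gamma_j(t)=a_j(t)e^{i\theta_j(t)}$ with $a_1=a$, $a_2=b$, I would suppose $G_\gamma(t,x,y)=G_\gamma(t',x',y')$ for $t,t'\in\mathbb R/L\mathbb Z$; taking norms in the two complex blocks gives $a(t)=a(t')$ and $b(t)=b(t')$, and dividing the first block by $a(t)>0$ yields $e^{i\theta_1(t)}x=e^{i\theta_1(t')}x'$. Since $x,x'$ are \emph{real} unit vectors, this forces $x'=\varepsilon_1x$ with $\varepsilon_1\in\{\pm1\}$ and $\theta_1(t')-\theta_1(t)\equiv0$ or $\pi\pmod{2\pi}$ according as $\varepsilon_1=+1$ or $-1$, and similarly $y'=\varepsilon_2y$ with $\varepsilon_2\in\{\pm1\}$ and $\theta_2(t')-\theta_2(t)\equiv0$ or $\pi$. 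Hence any self-intersection of $G_\gamma$ on $S^1\times M_1\times M_2$, with $S^1=\mathbb R/L\mathbb Z$, arises from a pair $t\ne t'$ satisfying $\gamma(t')\in\{\,\gamma(t),\ (-\gamma_1(t),\gamma_2(t)),\ (\gamma_1(t),-\gamma_2(t)),\ -\gamma(t)\,\}$ coupled to the sign change $(x,y)\mapsto(\varepsilon_1x,\varepsilon_2y)$; together with the orientation-reversing reflection of the periodic pendulum, these should assemble into a finite group $\Gamma$ acting on $S^1\times M_1\times M_2$. Because the antipodal involution of a sphere and the diagonal antipodal map $\zeta\mapsto-\zeta$ of $\mathbb S^3$ are fixed-point-free, I expect $\Gamma$ to act freely (at a pendulum turning point $\dot a=0$ a would-be stabilizer still acts by a nontrivial sign change on $M_1\times M_2$), so that $(S^1\times M_1\times M_2)/\Gamma$ is a closed manifold onto which $G_\gamma$ descends as a $\mathscr C$-totally real minimal embedding into $\mathbb S^{2r_1+2r_2+3}=\mathbb S^{n_1+n_2+1}$; letting $\tilde C$ range over the infinite set above then produces infinitely many such embeddings.

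Finally, for the projective statement I would compose with the Hopf map $\pi\colon\mathbb S^{2r_1+2r_2+3}\to\mathbb C P^{\,r_1+r_2+1}$. Being $\mathscr C$-totally real, the embedded image $\Sigma$ of $G_\gamma$ is horizontal for $\pi$ (the fiber tangent $i\cdot p$ is normal to $\Sigma$), so by the standard horizontal-lifting correspondence $\pi|_\Sigma$ is a local isometric immersion onto a totally real minimal submanifold of $\mathbb C P^{\,r_1+r_2+1}$; and $\pi(p)=\pi(q)$ with $p,q\in\Sigma$ forces $q=e^{i\phi}p$, where redoing the computation above with an overall phase $e^{i\phi}$ confines $\phi$ to the group of diagonal rotations carrying $\gamma$ to itself (shifted by the $\varepsilon_j\pi$), a finite subgroup of $U(1)$ because $\gamma$, having varying magnitudes, is not a diagonal-rotation orbit. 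Quotienting $\Sigma$ by this residual finite group --- equivalently, enlarging $\Gamma$ --- would yield an embedded totally real minimal submanifold of $\mathbb C P^{\,r_1+r_2+1}$, and infinitely many as $\tilde C$ varies.

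I expect the main obstacle to be the injectivity analysis of the second step: pinning down exactly which finite group of symmetries of $\gamma$, coupled with the sign involutions $x\mapsto-x$ and $y\mapsto-y$ of $M_1$ and $M_2$, exhausts the self-intersections of $G_\gamma$, and checking that this group acts freely so that the ``suitable quotient'' is a genuine closed manifold. The role of the complexification is to make this group \emph{finite}: it places $f_j(M_j)$ inside a real subsphere, which is exactly what collapses the \emph{a priori} continuous phase indeterminacy in $e^{i\theta}x=e^{i\theta'}x'$ to the two-element ambiguity $x'=\pm x$.
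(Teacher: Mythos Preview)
Your reduction to the four sign cases $(\varepsilon_1,\varepsilon_2)\in\{\pm1\}^2$ is correct and matches the paper's Observation~$(\star)$. But the step you yourself flag as the main obstacle is a genuine gap, not a detail to be filled in: the assertion that the self-intersections ``assemble into a finite group~$\Gamma$'' acting on $S^1\times M_1\times M_2$ fails for a generic closed~$\gamma$. A single transverse crossing of~$\gamma$ with, say, the curve $(-\gamma_1,\gamma_2)$ at one pair $(t_0,t_1)$ is not a symmetry of~$\gamma$ and does not extend to a global shift $\gamma(\cdot+c)=(-\gamma_1,\gamma_2)(\cdot)$. If moreover $M_1$ contains some antipodal pairs but is not globally antipodally symmetric (for instance a Lawson surface $\xi_{k,l}$ with $k\not\equiv l\bmod2$), then the resulting self-intersection locus of~$G_\gamma$ is the proper slice $\{(t_0,x,y):-x\in M_1\}$, and the image has a genuine transverse self-crossing there; no free group action on $S^1\times M_1\times M_2$ can absorb this. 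You also assert that Theorems~\ref{main} and~\ref{main0} produce \emph{embedded} periodic curves~$\gamma$, but they only produce closed ones; even the $(+1,+1)$ case (simplicity of~$\gamma$ itself) is not automatic.

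The paper's route supplies precisely the missing ingredient and is quite different in spirit. One first treats the model case $M_j=\mathbb S^{r_j}$ (Theorem~\ref{spl}): since the complexified spheres are special Legendrian, $G_\gamma$ is special Legendrian and its cone is calibrated by a special Lagrangian form; a non-tangential self-intersection would force two distinct calibrated planes to share a common hyperplane, which is ruled out by the Harvey--Lawson decomposition lemma (Lemma~\ref{hl1}). This yields the dichotomy of Corollary~\ref{corgamma}: as sets either $\gamma=-\gamma$ or $\gamma\cap(-\gamma)=\emptyset$, and the same reasoning applies to the mixed-sign reflections. One then does \emph{not} take an arbitrary closed~$\gamma$ but constructs specific $\tilde C_j$ by a $2$-adic argument on $J_1$ and $J_2$ so that $2mJ_i(\tilde C_j)$ is never an odd multiple of~$\pi$ for any integer~$m$, forcing the ``disjoint'' alternative in every dichotomy. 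For these~$\gamma$, Observation~$(\star)$ shows $G_\gamma$ has no non-tangential self-intersections whatever embedded $M_1,M_2$ are, and tangential ones are harmless by Harvey--Lawson extension. The calibration input is the decisive idea your proposal lacks; without it there is no mechanism to exclude isolated crossings of $\gamma$ with its sign-reflections, and the argument cannot be completed for general $M_1,M_2$.
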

        %
%

  
%
  
                  Another situation where the spiral minimal products show the power is to
                  generate embedded minimal Lagrangians
                   in complex projective spaces.    
                   
                   \begin{thm}\label{mL}
                   Given two connected embedded minimal Lagrangians
                   $$
                   u_1': M_1'\longrightarrow \mathbb CP^{m_1} 
                   \text{\ \ \ \ \  and \ \ \ \ \ }
                    u_2':M_2'\longrightarrow \mathbb CP^{m_2} 
                   $$
                   with $m_1, m_2\geq 1$.
                   Then there exist $\mathbb Q\times S^1$
                   many  connected embedded minimal Lagrangians in $\mathbb CP^{m_1+m_2+1}$
                   based on $u_1'$, $u_2'$ and related spiral minimal product procedure.
                   \end{thm}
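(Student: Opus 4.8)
The plan is to reach Theorem~\ref{mL} from the $C=-1$ spiral minimal product machinery of Corollary~\ref{main2} by passing through the Hopf fibration. First I would lift each input: pull back the Hopf $S^1$-bundle $\pi_i:\mathbb S^{2m_i+1}\to \mathbb{CP}^{m_i}$ along $u_i'$, restrict the tautological connection, and integrate the horizontal distribution. Since $u_i'$ is Lagrangian the horizontal leaves are $m_i$-dimensional, and since $u_i'$ is minimal -- so that $\mathbb{CP}^{m_i}$ being K\"ahler--Einstein forces the relevant phase/Maslov one-form on $M_i'$ to have periods commensurable with $2\pi$ -- a suitable finite connected cover $M_i\to M_i'$ carries a closed, embedded horizontal leaf. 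This yields a connected compact embedded minimal Legendrian immersion $f_i:M_i\to\mathbb S^{2m_i+1}$ covering $u_i'$. Each $f_i$ is $\mathscr C$-totally real of dimension $k_i=m_i$, and $k_1+k_2=m_1+m_2\ge 1$, so $(f_1,f_2)$ is a legitimate input pair for Corollary~\ref{main2}. The dimensional point that makes the whole scheme work is that the Legendrian lift of an $m_i$-dimensional Lagrangian is again $m_i$-dimensional, so the spiral product, of dimension $1+m_1+m_2$, will project to a submanifold of $\mathbb{CP}^{m_1+m_2+1}$ of exactly half the real dimension.

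Next I would apply Corollary~\ref{main2} to $f_1,f_2$. Because the $M_i$ are closed, it produces infinitely many $\mathscr C$-totally real doubly spiral minimal products $G_\gamma$ of varying magnitudes with $C=-1$ that descend to minimal immersions $\bar G:N\to\mathbb S^{2m_1+2m_2+3}$ of a closed manifold $N$ -- a finite quotient of $S^1\times M_1\times M_2$. By Corollary~\ref{main2} the Hopf image $\Pi\circ\bar G$ in $\mathbb{CP}^{m_1+m_2+1}$ is a totally real minimal immersion (the $\mathscr C$-totally real condition makes $\bar G$ transverse to the Hopf fibers, so $\Pi\circ\bar G$ is an immersion and dimension is preserved); counting dimensions, it has real dimension $1+m_1+m_2=\dim_{\mathbb C}\mathbb{CP}^{m_1+m_2+1}$, hence is Lagrangian.

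It then remains to cut $\Pi\circ\bar G$ down to an embedding and to identify the parameter space. For injectivity modulo the residual overall $S^1$-action: if $\Pi$ identifies $\big(\gamma_1(t)f_1(x),\gamma_2(t)f_2(y)\big)$ with $\big(\gamma_1(t')f_1(x'),\gamma_2(t')f_2(y')\big)$, then $e^{i\phi}\gamma_j(t)f_j(\cdot)=\gamma_j(t')f_j(\cdot)$ for some $\phi$ and $j=1,2$; comparing moduli gives $a(t)=a(t')$ and $b(t)=b(t')$, and the reflection/rotation symmetry of the magnitude pendulum (for varying magnitudes with $C=-1$), which is exactly the rotational reflection principle established via the Harvey--Lawson extension theorem cited in the introduction, constrains $t'$ to $t$ up to the pendulum's finite symmetry group. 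Then $e^{i\psi}f_j(M_j)\cap f_j(M_j)\neq\varnothing$ is possible only for $\psi$ in a finite cyclic group $\Gamma_j$ (finite because $f_j$ and $u_j'$ are embeddings), which forces $x'=x$, $y'=y$. Quotienting $N$ by the resulting finite group yields a connected embedded minimal Lagrangian in $\mathbb{CP}^{m_1+m_2+1}$. For the cardinality: fixing $C=-1$ leaves, as in Theorem~\ref{main0}, a rational parameter $\mathbb Q$ governing the magnitude pendulum and its closing up; the total phase accumulated by $\gamma$ over one period, which had to be a rational multiple of $2\pi$ to close up \emph{inside the sphere}, becomes unconstrained after projecting to $\mathbb{CP}^{m_1+m_2+1}$ since the Hopf direction has been divided out, contributing a free $S^1$; together this is the asserted $\mathbb Q\times S^1$ family.

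The hard part will be the last step together with the finite-cover claim: namely, showing that a minimal Lagrangian in $\mathbb{CP}^{m_i}$ has holonomy commensurable with the circle so that $M_i$ -- and hence $N$ and its quotient -- is genuinely closed, and then controlling the self-intersections of $\Pi\circ\bar G$ precisely enough, via the pendulum's reflection symmetry, to land on an embedding of a closed quotient rather than merely an immersed Lagrangian. Once the symmetry group of the pendulum and the groups $\Gamma_j$ are pinned down, separating the closing-up conditions into those that persist under the Hopf projection (giving the $S^1$) and those that do not (giving the $\mathbb Q$) is essentially bookkeeping.
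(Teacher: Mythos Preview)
Your overall architecture --- lift $u_i'$ to connected embedded minimal Legendrians $f_i$ in $\mathbb S^{2m_i+1}$, run the $C=-1$ spiral minimal product, then Hopf-project --- matches the paper. The existence of closed embedded Legendrian lifts is exactly what the paper establishes in \S\ref{Rctt} and Proposition~\ref{emdL}, and your dimension count for the Lagrangian condition downstairs is correct.

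There is, however, a genuine gap in your embedding step. From $e^{i\phi}\gamma_j(t)f_j(x)=\gamma_j(t')f_j(x')$ you correctly extract $a(t)=a(t')$, $b(t)=b(t')$ and that the angular discrepancy $\psi$ must lie in the finite cyclic $\Gamma_j$ for which $e^{i\psi}f_j(M_j)=f_j(M_j)$. But this does \emph{not} force $x'=x,\ y'=y$; it only says the two image points coincide. What you have not ruled out is that the two local sheets of $G_\gamma$ through that common point have \emph{different} tangent planes --- a non-tangential self-intersection as in Figure~\ref{n-i}. Quotienting by a finite group cannot repair that. The paper handles precisely this issue by a calibration argument you have not invoked: since $f_1,f_2$ are special Legendrian, so is $G_\gamma$ with $C=-1$, hence the cone $C(G_\gamma)$ is special Lagrangian and calibrated by a constant form $\phi$; then Harvey--Lawson's form decomposition (Lemma~\ref{hl1}, from \cite{HL1}) forbids two $\phi$-calibrated sheets from meeting non-tangentially. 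You cite instead ``the rotational reflection principle established via the Harvey--Lawson extension theorem,'' but that is the \emph{other} Harvey--Lawson result \cite{HL0}, used in Lemma~\ref{analytic} only to glue $\gamma^0$ to $\gamma^1$ analytically; it says nothing about self-intersections of the assembled $G_\gamma$.

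Your identification of the $\mathbb Q\times S^1$ is also off. The $\mathbb Q$ is indeed the choice of $\tilde C$ with $J_1(\tilde C)\in\pi\mathbb Q$ (Lemma~\ref{C1}). But the $S^1$ does \emph{not} come from relaxing the closing-up condition under $\Pi$: with $C=-1$ Proposition~\ref{Cpm0} ties $J_1$ and $J_2$ together, so $\gamma$ closes in $\mathbb S^3$ exactly when it closes modulo the Hopf action. The paper's $S^1$ is the relative phase of the two liftings: one may replace $(f_1,f_2)$ by $(e^{i\lambda_1}f_1,\ e^{i\lambda_2}f_2)$, and after Hopf projection only the difference $\lambda_1-\lambda_2\in S^1$ survives.
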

                   The paper is organized as follows.
                   Due to different natures,
                   we
                   demonstrate  basic ideas 
               for global construction 
                  of spiral minimal products of steady magnitudes
                   in \S \ref{SMP},
                  while 
                  in
                  \S \ref{GSMP} 
                             we   derive local      construction of  spiral minimal products of varying magnitudes.
                                %
                                       The partial differential equation system 
                                       for $G_\gamma$ to be minimal
                                      gets reduced
                                       to a pair of ordinary differential equations.
                                    By employing arc parameter $s$ for curve $(a(t), b(t))$ in $\mathbb S^1$
                                                (not that of curve $\gamma$ in $\mathbb S^3$),
                                      we have more geometric visions 
                                      and 
                                                 are able to show that the minimal equation system of $G_\gamma$
                                                             can be solved.
                                    In \S \ref{S4},
                                    we
                               assemble local pieces together for global construction of spiral minimal products of varying magnitudes
                               through a wonderful extension result by Harvey and Lawson \cite{HL0}.
                               \S \ref{C=0} is devoted to singly spiral minimal products with $C=0$
                               and 
                               \S \ref{C2=1} for $C=-1$.
                               In particular, in \S \ref{calG} 
                               we apply a fundamental decomposition result of forms due to Harvey and Lawson in \cite{HL1}
                               for useful information regarding antipodal symmetry of $\gamma$
                               and are fortunately able to  further
                                prove Theorem \ref{mainebd}.
                               Based on nice symmetric property of horizontal liftings of minimal Lagrangians in complex projective spaces,
                                Theorem \ref{mL} can be proved in a similar way in \S \ref{calG}.
                             Proofs of Theorems \ref{main} and \ref{main0}
                              will be presented in 
                              \S \ref{S4}
                                                           and \S \ref{GC}
     respectively.
     Finally, we show  in \S \ref{MCR}  that,
           when $k_1+k_2\geq 1$,
           all created spiral minimal products 
           are geometrically distinct from each other
           and, when $k_1=k_2=0$,
           all the created are geometrically the same.

                               {\ }
                               
 \section{Spiral  Minimal Product of Steady Magnitudes}\label{SMP}      
                               In this section,  
                                we focus on the situation where $|\gamma_1|$ and $|\gamma_2|$ remain constant, i.e.,
                                   \begin{equation}\label{A0}
                                           \gamma_1(t)=a e^{is_1(t)} 
                                                 \text{\ \ \ \ and \ \ \ \ }
                                                     \gamma_2(t)=b e^{is_2(t)}
                                   \end{equation} 
                                    for  some $a, b\in \mathbb R$ 
                                     and  $s_1(t), s_2(t)$ of class $C^2$ satisfying
                                                     \begin{equation}\label{A2}
                                                  a^2+b^2=1 , \, \, \, ab\neq 0\, \text{ \ \ and \ }\, \,\,
        \bigg(\frac{ds_1}{dt}\bigg)^2+\bigg(\frac{ds_2}{dt}\bigg)^2\neq 0.
                                                     \end{equation}

                   Given spherical immersions 
                    $f_1:M_1^{k_1}\longrightarrow \mathbb S^{n_1}$ and 
                       $f_2:M_2^{k_2}\longrightarrow \mathbb S^{n_2}$,
                   choose open neighborhoods $U_1$ and $U_2$ of $x\in M_1$ and $y\in M_2$
                   respectively
                   such that  $f_1\vert_{{}_{U_1}}$ and $f_2|_{{}_{U_2}}$ form embeddings.
                  Then
                   we have local         orthonormal bases
         \begin{equation}\label{ns}
                         \Big\{\sigma_1,\cdots, \sigma_{n_1-k_1}\Big\}\text{\ \ \ and \ \ \ }\Big\{\tau_1,\cdots,\tau_{n_2-k_2}\Big\}
         \end{equation}
         of $T^\perp f_1(U_1)$ and $T^\perp f_2(U_2)$ 
and
  orthonormal bases 
                         \begin{equation}\label{ts}
                                       \Big\{e_1,\, \cdots, e_{k_1}\Big\}\text{ \ \ \ and \ \ \ }\Big\{v_1,\, \cdots, v_{k_2}\Big\}
                         \end{equation}
                                       of $Tf_1(U_1)$ and $Tf_1(M_2)$
                                          around $f_1(x)$ and $f_2(y)$ respectively.

                  By Definitions \ref{wiso},
                   $f_1$ and $f_2$ being $\mathscr C$-totally real implies that
      \begin{equation}\label{perp}
                    i\cdot f_1(x)\in \text{ span}_{\mathbb R}\Big\{\sigma_1,\,\cdots, \, \sigma_{n_1-k_1}\Big\}
               \text{\ \ \ and\ \ \ }
                i\cdot f_2(y)\in \text{ span}_{\mathbb R}\Big\{\tau_1,\,\cdots, \, \tau_{n_2-k_2}\Big\}.
        \end{equation}
               So, via selecting bases accordingly,
              one can assume 
     \begin{equation}\label{select}
              i\cdot f_1(x)=\sigma_1 \text{\ \ \ and\ \ \ }i\cdot f_2(y)=\tau_1.
      \end{equation}

               Now we 
               derive preferred local orthonormal bases for the tangential and normal spaces of a $\gamma$-spiral product with \eqref{A0} and \eqref{A2}.
\begin{lem}\label{ltb}
      Assume that spherical immersions $f_1$ and $f_2$ are both $\mathscr C$-totally real.
           Then,
            locally around 
               $p=G_\gamma(t, x, y)=\left(a e^{is_1(t)} f_1(x), b e^{is_2(t)} f_2(y)\right)$
              in $\mathbb S^{n_1+n_2+1}$, 
 the tangent space of  $G_\gamma$                    has 
                      orthonormal basis
    \begin{equation}\label{tangent}
                     \Big\{\left(e^{is_1(t)} e_1,0\right),\,\cdots,\, \left(e^{is_1(t)} e_{k_1},0\right),\,
         \left(0,e^{is_2(t)} v_1\right),\,\cdots,\, \left(0,e^{is_2(t)} v_{k_2}\right), \, E
         \Big\}
      \end{equation}
         where 
                    \begin{equation}\label{E}
                                            E= \dfrac{\frac{dG_\gamma}{dt}}{\left\|\frac{dG_\gamma}{dt}\right\|}
                                            =
         \dfrac{
         \left(
                      a\frac{ds_1}{dt}e^{is_1(t)}\sigma_1, 
         \,
                       b\frac{ds_2}{dt}e^{is_2(t)}\tau_1
         \right)}
                          {\sqrt{
                                    \big(
                                          a\frac{ds_1}{dt}
                                      \big)^2+
                                      \left(
                                            b\frac{ds_2}{dt}
                                       \right)^2}
                            }
                                       .
                       \end{equation}
\end{lem}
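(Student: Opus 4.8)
The plan is to differentiate $G_\gamma$ along the coordinate directions of $\mathbb R\times M_1\times M_2$ and read off the image of $dG_\gamma$ directly. Pick local coordinates $x^1,\dots,x^{k_1}$ on $U_1$ and $y^1,\dots,y^{k_2}$ on $U_2$. From \eqref{A0},
\[
\pd{G_\gamma}{x^i}=\Big(a\,e^{is_1(t)}\pd{f_1}{x^i},\,0\Big),\qquad
\pd{G_\gamma}{y^j}=\Big(0,\,b\,e^{is_2(t)}\pd{f_2}{y^j}\Big),
\]
\[
\pd{G_\gamma}{t}=\Big(a\,\tfrac{ds_1}{dt}\,i\,e^{is_1(t)}f_1(x),\;b\,\tfrac{ds_2}{dt}\,i\,e^{is_2(t)}f_2(y)\Big).
\]
Since $a\neq 0$ and $\{e_1,\dots,e_{k_1}\}$ is a basis of $Tf_1(U_1)$ at $f_1(x)$, the vectors $\partial G_\gamma/\partial x^1,\dots,\partial G_\gamma/\partial x^{k_1}$ span exactly $\text{span}\{(e^{is_1(t)}e_1,0),\dots,(e^{is_1(t)}e_{k_1},0)\}$, and likewise the $y$-derivatives span $\text{span}\{(0,e^{is_2(t)}v_1),\dots,(0,e^{is_2(t)}v_{k_2})\}$. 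For the $t$-direction I invoke that $f_1,f_2$ are $\mathscr C$-totally real: by \eqref{select} (available after adjusting the bases) $i\cdot f_1(x)=\sigma_1$ and $i\cdot f_2(y)=\tau_1$, so $\partial G_\gamma/\partial t$ is precisely the numerator displayed in \eqref{E}. Its length is $\sqrt{(a\,\tfrac{ds_1}{dt})^2+(b\,\tfrac{ds_2}{dt})^2}$, because multiplication by a unit complex number is an $\mathbb R$-linear isometry of each ambient space and $|f_1(x)|=|f_2(y)|=1$; by \eqref{A2} this is positive, so $E$ in \eqref{E} is a well-defined unit vector.

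Next I check that the $1+k_1+k_2$ listed vectors are pairwise orthogonal. Because multiplication by $e^{is_1(t)}$ preserves the real inner product, $\lan e^{is_1(t)}e_i,\,e^{is_1(t)}e_{i'}\ran=\lan e_i,e_{i'}\ran=\delta_{ii'}$, and similarly for the $v_j$'s; vectors supported in different complex slots are automatically orthogonal. Finally $\lan E,(e^{is_1(t)}e_i,0)\ran$ is proportional to $\lan e^{is_1(t)}\sigma_1,\,e^{is_1(t)}e_i\ran=\lan\sigma_1,e_i\ran=0$, since $\sigma_1$ is normal and $e_i$ tangent to $f_1$, and analogously $\lan E,(0,e^{is_2(t)}v_j)\ran=0$; together with $\|E\|=1$ this gives orthonormality of the whole system \eqref{tangent}.

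The one point requiring care is that $G_\gamma$ is not assumed in advance to be an immersion, so one must still see that these orthonormal vectors span all of $dG_\gamma(T_{(t,x,y)}(\mathbb R\times M_1\times M_2))$; but orthonormal vectors are linearly independent and $\dim(\mathbb R\times M_1\times M_2)=1+k_1+k_2$, so the image has exactly that dimension, $G_\gamma$ is an immersion near $(t,x,y)$, and \eqref{tangent} is an orthonormal basis of its tangent space. This is exactly where the $\mathscr C$-totally real hypothesis is essential: it forces the velocity $\partial G_\gamma/\partial t$ into the $\sigma_1$- and $\tau_1$-directions, hence orthogonal to, and independent from, the tangents of the two factors. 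Once that normalization is arranged, everything else is the bookkeeping consequence that multiplication by a unit complex number is a Euclidean isometry, so I do not expect any genuine obstacle here.
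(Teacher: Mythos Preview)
Your proof is correct and follows essentially the same approach as the paper: compute the partial derivatives of $G_\gamma$, invoke the $\mathscr C$-totally real condition via \eqref{select} to rewrite $\partial G_\gamma/\partial t$ in terms of $\sigma_1,\tau_1$, and use that multiplication by $e^{is}$ is a Euclidean isometry to verify orthonormality. Your explicit remark that orthonormality forces linear independence (hence $G_\gamma$ is an immersion) is a nice touch that the paper leaves implicit.
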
        
                     \begin{proof}
                     It is clear that all elements in \eqref{tangent}
                     but the last one are  unit tangents 
                             corresponding to 
                                  \eqref{ts}, 
                     mutually orthogonal to each other and perpendicular to the position vector $G_\gamma$.
                     Expression \eqref{E} announces  that $E$ is the unit vector of $\frac{dG_\gamma}{dt}$.
                     Since $\|G_\gamma\|\equiv 1$, it follows automatically that $E\perp G_\gamma$.
                     So we only need to show that $E$ is perpendicular to the rest in \eqref{tangent}.
                     Taking the first element in \eqref{tangent} for example,
                     we have
                     \begin{equation}\label{ptt}
                                   \left\|\frac{dG_\gamma}{dt}\right\|                
                                               \cdot
         \left<
             \left(e^{is_1(t)} e_1,0\right)                                           , 
             E
                                  \right>
 =
          \left<
             e^{is_1(t)} e_1                                         , \,
               { a\frac{ds_1}{dt}e^{is_1(t)}\sigma_1}
                                  \right>
=
   \left<
            e_1                                         , \,
               { a\frac{ds_1}{dt}\sigma_1}
                                                       \right>
 =0.
\end{equation}
                                       The same perpendicular property to others in \eqref{tangent} similarly follows.
                                          \end{proof}
                                          \begin{rem}\label{RRA}
                                             In the second last equality of \eqref{ptt},
                     we consider the $e^{is_1(t)}$-multiplication as a rigid rotational action on $\R ^{n_1+1}$.
                                          \end{rem}
   \begin{lem}\label{lnb}
    Assume that spherical immersions $f_1$ and $f_2$ are both $\mathscr C$-totally real.
           Then,
            locally around 
               $p=G_\gamma(t, x, y)=\left(a e^{is_1(t)} f_1(x), b e^{is_2(t)} f_2(y)\right)$
              in $\mathbb S^{n_1+n_2+1}$,
         the normal space of $G_\gamma$ has orthonormal basis 
         \begin{align}\label{normal}
      \Big\{%
                             \left(e^{is_1(t)} \sigma_2,0\right),\cdots, \left(e^{is_1(t)}  \sigma_{n_1-k_1},0\right),
                   \left(0, e^{is_2(t)}  \tau_2\right),\cdots, \left(0, e^{is_2(t)} \tau_{n_2-k_2}\right),  \eta_1, \eta_0
            \Big\}
         \end{align}
              where
     \begin{equation}\label{eta}
                   \eta_1= 
                                \dfrac{
                                    \left(-b\frac{ds_2}{dt}e^{is_1(t)}\sigma_1, 
         \,
                                       a\frac{ds_1}{dt}e^{is_2(t)}\tau_1\right)
                                       }
         {\sqrt{\left(a\frac{ds_1}{dt}\right)^2+\left(b\frac{ds_2}{dt}\right)^2}}
         \text{\ \ \ and \ \ \ }
         \eta_0=\Big(b e^{is_1(t)} f_1(x),
         \,
          -a e^{is_2(t)}f_2(y)\Big) .
   \end{equation}
   \end{lem}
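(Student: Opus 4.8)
The plan is to verify directly that the displayed set in \eqref{normal} is an orthonormal family of the right cardinality that is orthogonal to the tangent space described in Lemma \ref{ltb}; since the ambient sphere $\mathbb S^{n_1+n_2+1}$ has dimension $n_1+n_2+1$ and $G_\gamma$ has domain dimension $1+k_1+k_2$, the normal space has dimension $(n_1+n_2+1)-(1+k_1+k_2) = (n_1-k_1)+(n_2-k_2)-1$, which matches the count $(n_1-k_1-1)+(n_2-k_2-1)+2$ of vectors listed. So it suffices to check orthonormality and orthogonality to the position vector $G_\gamma$ and to each element of \eqref{tangent}.

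First I would dispatch the ``block'' vectors $\big(e^{is_1(t)}\sigma_j,0\big)$ and $\big(0,e^{is_2(t)}\tau_l\big)$ for $j,l\geq 2$. Because $e^{is_1(t)}$-multiplication is a rigid rotation on $\R^{n_1+1}$ (Remark \ref{RRA}), these are unit vectors and are mutually orthogonal, orthogonal to the analogous tangential block vectors $\big(e^{is_1(t)}e_i,0\big)$, $\big(0,e^{is_2(t)}v_i\big)$, and — using \eqref{select}, i.e. $\sigma_1 = i\cdot f_1(x)$, $\tau_1 = i\cdot f_2(y)$ — orthogonal to $E$, $\eta_1$, $\eta_0$ as well, since all of $E,\eta_1,\eta_0$ have first slot a real multiple of $e^{is_1(t)}\sigma_1$ (or $e^{is_1(t)}f_1(x)$) and second slot a real multiple of $e^{is_2(t)}\tau_1$ (or $e^{is_2(t)}f_2(y)$), and $\sigma_1\perp\sigma_j$, $f_1(x)\perp\sigma_j$, etc. This is all rotation-invariance of the Euclidean inner product plus the $\mathscr C$-totally real hypothesis.

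The remaining work is the $2\times 2$ interaction among $\{E,\eta_1,\eta_0\}$ together with their orthogonality to the position vector and to the $E$-direction of the tangent space. Computing $\|\eta_1\|=1$ is immediate from the normalizing denominator and $\|\sigma_1\|=\|\tau_1\|=1$; likewise $\|\eta_0\|^2 = b^2\|f_1(x)\|^2 + a^2\|f_2(y)\|^2 = a^2+b^2 = 1$. For $\langle E,\eta_1\rangle$ the two slot-contributions are $-ab\frac{ds_1}{dt}\frac{ds_2}{dt}\langle\sigma_1,\sigma_1\rangle$ and $+ab\frac{ds_1}{dt}\frac{ds_2}{dt}\langle\tau_1,\tau_1\rangle$, which cancel. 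For $\langle E,\eta_0\rangle$ one gets $ab\frac{ds_1}{dt}\langle\sigma_1,f_1(x)\rangle - ab\frac{ds_2}{dt}\langle\tau_1,f_2(y)\rangle$, and each inner product vanishes because $\sigma_1 = i\cdot f_1(x)\perp f_1(x)$ (and similarly $\tau_1\perp f_2(y)$) as $i\cdot v \perp v$ in $\C^{m+1}=\R^{2m+2}$. For $\langle\eta_1,\eta_0\rangle$ one similarly obtains $-b^2\frac{ds_2}{dt}\langle\sigma_1,f_1(x)\rangle + a^2\frac{ds_1}{dt}\langle\tau_1,f_2(y)\rangle = 0$. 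Orthogonality of $\eta_0$ and $\eta_1$ to $G_\gamma = (ae^{is_1}f_1(x), be^{is_2}f_2(y))$ follows the same way: for $\eta_0$ it is $ab\langle f_1(x),f_1(x)\rangle - ab\langle f_2(y),f_2(y)\rangle = ab - ab = 0$, and for $\eta_1$ it is $-ab\frac{ds_2}{dt}\langle\sigma_1,f_1(x)\rangle + ab\frac{ds_1}{dt}\langle\tau_1,f_2(y)\rangle = 0$. Finally $E,\eta_1,\eta_0$ are each orthogonal to the block tangent vectors by the same $\sigma_1\perp e_i$, $f_1(x)\perp e_i$ argument already used.

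I do not expect a genuine obstacle here — the statement is a bookkeeping verification — so the ``hard part'' is merely organizational: keeping track of which hypothesis ($\mathscr C$-totally real, i.e. $i\cdot f_1(x)=\sigma_1$; rigidity of the rotation $e^{is_j(t)}$; $a^2+b^2=1$; $f_1(x)\perp i\cdot f_1(x)$) is invoked at each cancellation, and confirming the dimension count so that an orthonormal family of the correct size is automatically a basis of the normal space. One small point worth stating explicitly in the write-up: the condition $\big(\frac{ds_1}{dt}\big)^2+\big(\frac{ds_2}{dt}\big)^2\neq 0$ from \eqref{A2} guarantees the denominators in \eqref{E} and \eqref{eta} are nonzero, so $E$ and $\eta_1$ are well defined, and $ab\neq 0$ guarantees $\eta_0\neq 0$ is genuinely transverse.
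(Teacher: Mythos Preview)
Your proposal is correct and follows exactly the same direct-verification approach as the paper's proof, just spelled out in more detail; the paper merely asserts that by Remark~\ref{RRA} and the choices \eqref{ns}, \eqref{select} the vectors in \eqref{normal} are unit, mutually orthogonal, orthogonal to $G_\gamma$, and orthogonal to those in \eqref{tangent}. One trivial slip to fix in your write-up: the normal dimension is $(n_1+n_2+1)-(1+k_1+k_2)=(n_1-k_1)+(n_2-k_2)$, not $(n_1-k_1)+(n_2-k_2)-1$, which then does match your count $(n_1-k_1-1)+(n_2-k_2-1)+2$.
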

   \begin{proof}
   According to Remark \ref{RRA} and  our preferences \eqref{ns} and \eqref{select},
   it can be seen that elements of \eqref{normal} are of unit length and mutually orthogonal.
   Also,  all of them are perpendicular to the position vector $G_\gamma$.
   Moreover, it is easy to check that
    elements in \eqref{normal} are perpendicular to those in \eqref{tangent}.
   \end{proof}

                            {\ }
                            
                            \subsection{Second fundamental form of spiral products}\label{2nd}
                            Based on the preparations,
                            we study the second fundamental form of $G_\gamma$ around $p=G_\gamma(t, x, y)$ in this subsection.
                           If need be, shrink the previously chosen $U_1$ and $U_2$ a bit
                           and
                           there exists some sufficiently small $\delta>0$ 
                                    such that $G_\gamma$ restricted to $(t-\delta, t+\delta)\times U_1\times U_2$ is an embedding
                                    with image 
                                    $N$.
                                    For $2\leq k\leq n_1-k_1$
                                                                 and $2\leq l\leq n_2-k_2$,
                                                                 let  
                                                                 $$A_{(e^{is_1(t)}\sigma_{k},0)}:T_pN\longrightarrow T_pN$$
                                                                 $$ A_{(0,e^{is_2(t)}\tau_{l})}:T_pN\longrightarrow T_pN$$
                             $$A_{\sigma_{k}}:
                                             T_{f_1(x)}f_1(U_1)
                                                    \longrightarrow 
                                                    T_{f_1(x)}f_1(U_1)$$
                           %
       $$
         A_{\tau_{l}}:
                        T_{f_2(y)}f_2(U_2)
                                  \longrightarrow 
                                  T_{f_2(y)}f_2(U_2)$$
         be the corresponding shape operators.
         (When $n_1-k_1=1$ or $n_2-k_2=1$, we  skip disappeared indices.)
       Moreover, there are 
                     $$
                     A^{e^{is_1(t)}f_1(U_1)}_{e^{is_1(t)}\sigma_{k}}: 
                            T_{e^{is_1(t)}f_1(x)}\left(e^{is_1(t)}f_1(U_1)\right)
                                \longrightarrow 
                                T_{e^{is_1(t)}f_1(x)}\left(e^{is_1(t)}f_1(U_1)\right)
                                $$
                                and
                       $$A^{e^{is_2(t)}f_2(U_2)}_{e^{is_2(t)}\tau_{l}}: 
                            T_{e^{is_2(t)}f_2(y)}\left(e^{is_2(t)}f_2(U_2)\right)
                                \longrightarrow 
                                T_{e^{is_2(t)}f_2(y)}\left(e^{is_2(t)}f_2(U_2)\right)
                                $$
         for  fixed  $t$.
         Since multiplications $e^{is_1(t)}$ and $e^{is_2(t)}$
         are isometries,  
              $ A^{e^{is_1(t)}f_1(U_1)}_{e^{is_1(t)}\sigma_{k}}$ and $A^{e^{is_2(t)}f_2(U_2)}_{e^{is_2(t)}\tau_{l}}$
         are exactly those of $A_{\sigma_{k}}$ and  $A_{\tau_{l}}$ respectively under natural corresponding bases.
         Hence we identify each corresponding pair of them 
         throughout this paper.

         The following two basic propositions regarding covariant derivatives in Euclidean space 
         will be repeatedly employed.
         The importance is this. 
         Given two tangent vector fields along a hypersurface, for example, a hypersphere,
         projections of resulting derivatives induce covariant derivatives on the hypersurface \cite{doCarmo, Top}.
              \begin{prop}\label{001}
              Given embedding $f:U\longrightarrow \R^ {n+1}$,
              tangent vector fields $V_1(x), V_2(x)$   on $f(U)$
              and positive $a\in \R$.
              Multiply base points by $a$ to get
               $V_1(ax), V_2(ax)$  as tangent vector fields on $a\cdot f(U)$.
               With Levi-Civita connection  $\overline{\mathcal D}$ of $\R^ {n+1}$,
               we use
               $\overline{\mathcal D}_{V_1}V_2\big |_{(\cdot)} $
               for $\left(\overline{\mathcal D}_{V_1 \, |_{(\cdot)}}V_2\right) {(\cdot)}$\, .
               Then by ignoring base points we have that
               $\overline{\mathcal D}_{V_1}V_2\big |_{a\cdot f(x)}
               =\frac{1}{a}\overline{\mathcal D}_{V_1}V_2\big |_{f(x)}$ .
              \end{prop}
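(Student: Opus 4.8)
The plan is to reduce the statement to the single fact that the Levi--Civita connection $\overline{\mathcal D}$ of $\R^{n+1}$ is nothing but componentwise directional differentiation in Cartesian coordinates, and then to track how one application of the chain rule interacts with the dilation $\Lambda_a\colon\R^{n+1}\to\R^{n+1}$, $\Lambda_a(y)=ay$.

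First I would pin down the set-theoretic picture. The differential of $\Lambda_a$ at every point is $a\cdot\mathrm{Id}$, and since scaling a linear subspace of $\R^{n+1}$ by $a>0$ gives back the same subspace, one has $T_{a\cdot f(x)}\big(a\cdot f(U)\big)=a\cdot T_{f(x)}f(U)=T_{f(x)}f(U)$ as subspaces of $\R^{n+1}$. This is precisely what makes the recipe ``$V_i(a\cdot q):=V_i(q)$'' produce genuine tangent fields $V_i$ along $a\cdot f(U)$, and it is exactly what the phrase ``ignoring base points'' refers to.

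Next comes the computation. Fix $x\in U$ and choose a smooth curve $\alpha\colon(-\ep,\ep)\to f(U)$ with $\alpha(0)=f(x)$ and $\alpha'(0)=V_1\big(f(x)\big)$. Put $\beta(r):=a\cdot\alpha(r/a)$, a smooth curve in $a\cdot f(U)$; then $\beta(0)=a\cdot f(x)$ and, by the chain rule, $\beta'(0)=a\cdot\frac1a\,\alpha'(0)=V_1\big(f(x)\big)$, so $\beta$ realizes the direction $V_1$ at the scaled base point. Since $\overline{\mathcal D}$ is the flat connection, $\overline{\mathcal D}_{V_1}V_2\big|_{a\cdot f(x)}=\frac{d}{dr}\big|_{r=0}V_2\big(\beta(r)\big)$, and the recipe gives $V_2\big(\beta(r)\big)=V_2\big(a\cdot\alpha(r/a)\big)=V_2\big(\alpha(r/a)\big)$; one more chain rule yields $\frac{d}{dr}\big|_{r=0}V_2\big(\alpha(r/a)\big)=\frac1a\,\frac{d}{du}\big|_{u=0}V_2\big(\alpha(u)\big)=\frac1a\,\overline{\mathcal D}_{V_1}V_2\big|_{f(x)}$, which is exactly the assertion.

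There is essentially no real obstacle here; the only point that needs care is the bookkeeping of base points $-$ making sure the auxiliary curve at $a\cdot f(x)$ carries velocity exactly $V_1$ rather than $aV_1$, so that the factor $1/a$ enters once (from differentiating $r\mapsto\alpha(r/a)$) and not twice or zero times. An equivalent organization: $\Lambda_a$ is linear, hence affine, hence preserves the flat connection, so $(\Lambda_a)_*\big(\overline{\mathcal D}_{V_1}V_2\big)=\overline{\mathcal D}_{(\Lambda_a)_*V_1}\big((\Lambda_a)_*V_2\big)$; inserting $(\Lambda_a)_*V_i=a\,V_i$ (as fields along $a\cdot f(U)$, under the identification of tangent spaces above) together with the constancy of $a$ turns both sides into explicit multiples of $\overline{\mathcal D}_{V_1}V_2$, and solving for the ratio reproduces the factor $\frac1a$. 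I expect the curve computation to be the cleanest to write up, and I anticipate the companion proposition stated alongside this one will be handled in exactly the same spirit.
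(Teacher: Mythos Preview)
Your proof is correct and follows essentially the same approach as the paper: both compute $\overline{\mathcal D}_{V_1}V_2$ by differentiating $V_2$ along a curve and use that the flat connection is ordinary directional differentiation. The only cosmetic difference is bookkeeping: the paper scales the integral curve $\mu(t)$ of $V_1$ to $a\cdot\mu(t)$ (an integral curve of $aV_1$), obtains $\overline{\mathcal D}_{aV_1}V_2\big|_{a\cdot f(x)}=\overline{\mathcal D}_{V_1}V_2\big|_{f(x)}$, and then divides by $a$ using linearity in the direction, whereas you reparametrize to $\beta(r)=a\cdot\alpha(r/a)$ so the velocity is already $V_1$ and the factor $\tfrac{1}{a}$ falls out of the chain rule.
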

              \begin{proof}
              Let $\mu(t)$ be an integral curve of $V_1$ in $f(U)$ with $\frac{d\mu}{dt}=V_1 |_{\mu(t)}$.
              Then $a\cdot \mu(t)$  gives an integral curve of  vector field $a\cdot V_1$ in $a\cdot f(U)$ with $\frac{d (a\cdot \mu)}{dt}=a\cdot V_1 |_{a\cdot \mu(t)}$.
              Ignoring base points, $V_2$ at  $\mu(t)$ equals the corresponding vector at $a\cdot \mu(t)$.
              Hence, 
                     $\overline{\mathcal D}_{a\cdot V_1}V_2\big |_{a\cdot \mu(t)}=\frac{d}{dt}V_2|_{a\cdot \mu(t)}=\frac{d}{dt}V_2|_{\mu(t)}=\overline{\mathcal D}_{V_1}V_2\big |_{\mu(t)}$,
                     which arrives at the conclusion.
              \end{proof}

              \begin{prop}\label{002}
              Let $f:U\longrightarrow \R^ {n+1}=\mathbb C^{m+1}$ be an embedding
              when $n+1=2m+2$,
              $V_1, V_2$   tangent vector fields  on $f(U)$
              and 
              $e^{\lambda i}$
              with $\lambda\in \R$
              regarded
             as rotation $e^{\lambda i}I_{m+1}$.
              Then
              we have isometry
                $e^{\lambda i}:  f(U)\longrightarrow  e^{\lambda i} f(U)$
                and push-forwarding vector fields $e^{\lambda i}V_1, e^{\lambda i}V_2$   on $e^{\lambda i}f(U)$.
                Ignoring base points, 
                it follows that
                 $$\overline{\mathcal D}_{e^{\lambda i}V_1}e^{\lambda i}V_2\big |_{e^{\lambda i} f(x)}
               =e^{\lambda i}\left(\overline{\mathcal D}_{V_1}V_2\big |_{f(x)}\right).$$
              \end{prop}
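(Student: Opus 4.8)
The plan is to reuse, almost verbatim, the argument for Proposition~\ref{001}: in Euclidean space the Levi-Civita connection $\overline{\mathcal D}$ is just componentwise differentiation along curves, and $e^{\lambda i}I_{m+1}$ is a \emph{constant} (base-point-free) linear isometry of $\R^{n+1}\cong\mathbb C^{m+1}$, hence it commutes with $\frac{d}{dt}$. So the asserted identity is simply an instance of the naturality of the flat connection under linear isometries, and everything after the key step is bookkeeping.

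Concretely, I would first choose an integral curve $\mu(t)$ of $V_1$ inside $f(U)$ with $\mu(0)=f(x)$ and $\frac{d\mu}{dt}=V_1|_{\mu(t)}$. Since $e^{\lambda i}$ is linear, $e^{\lambda i}\mu(t)$ is a curve in $e^{\lambda i}f(U)$ with $\frac{d}{dt}\big(e^{\lambda i}\mu(t)\big)=e^{\lambda i}\frac{d\mu}{dt}=e^{\lambda i}V_1|_{\mu(t)}$, i.e.\ an integral curve of the push-forward field $e^{\lambda i}V_1$ passing through $e^{\lambda i}f(x)$ at $t=0$. Then, using the description of $\overline{\mathcal D}$ via differentiation along such integral curves and ignoring base points exactly as in the statement,
\begin{align*}
\overline{\mathcal D}_{e^{\lambda i}V_1}e^{\lambda i}V_2\big|_{e^{\lambda i}f(x)}
&=\frac{d}{dt}\Big|_{t=0}\big(e^{\lambda i}V_2\big|_{e^{\lambda i}\mu(t)}\big)
=e^{\lambda i}\,\frac{d}{dt}\Big|_{t=0}\big(V_2\big|_{\mu(t)}\big)
=e^{\lambda i}\Big(\overline{\mathcal D}_{V_1}V_2\big|_{f(x)}\Big),
\end{align*}
where the middle equality is precisely the commutation of the constant linear map $e^{\lambda i}$ with $\frac{d}{dt}$. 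As $V_1,V_2$ and $x$ are arbitrary, this is the claim.

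I do not expect a genuine obstacle here; the only point requiring a little care is the ``ignoring base points'' bookkeeping: one must read $V_2|_{\mu(t)}$ as an element of the ambient $\R^{n+1}$, read $e^{\lambda i}V_2$ at the base point $e^{\lambda i}\mu(t)$ as $e^{\lambda i}$ applied to that element, and take $\frac{d}{dt}$ inside the fixed vector space $\R^{n+1}$, so that no connection coefficients or parallel transport enter. It is worth recording that the same computation, after projecting the Euclidean identity onto tangential and normal parts with respect to $f(U)$ (equivalently $e^{\lambda i}f(U)$), shows that $e^{\lambda i}$ intertwines the induced Levi-Civita connections and the second fundamental forms of $f(U)$ and $e^{\lambda i}f(U)$ — which is exactly the fact invoked implicitly in Remark~\ref{RRA} and in the shape-operator identifications of \S\ref{2nd}.
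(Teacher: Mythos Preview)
Your proof is correct and is essentially the same as the paper's, which simply says ``This is clear by isometry $e^{\lambda i}$ of $\mathbb C^{m+1}$''; you have just unpacked that one-liner via integral curves and the commutation of the constant linear map $e^{\lambda i}$ with $\frac{d}{dt}$. The additional remark about intertwining induced connections and second fundamental forms is a nice supplement but goes beyond what the paper records here.
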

              \begin{proof}
              This is clear by isometry $e^{\lambda i}$ of $\mathbb C^{m+1}$.
              \end{proof}
              
              \begin{rem}
              When there is no ambiguity, 
              we ignore base points 
              as in Lemmas \ref{ltb}  and \ref{lnb}
              unless necessary for some computations.
              \end{rem}

         With identifications explained in the beginning of this subsection, 
                we have the following result on the second fundamental form of $G_\gamma$ with \eqref{A0} and \eqref{A2}.

         \begin{lem}\label{L1}
         At $p=
          \left(a e^{is_1(t)}f_1(x),\, b e^{is_2(t)}f_2(y)
         \right)$,
         it follows that, with respect to the rotated tangent basis \eqref{tangent},
         \begin{align}
A_{\left(e^{is_1(t)}\sigma_{k},0\right)}&=
                   \begin{pmatrix}
                          \frac{1}{a} A^{e^{is_1(t)}f_1(U_1)}_{e^{is_1(t)}\sigma_{k}} &  & *\\
                           &O & *\\
                           *&* & 0
                    \end{pmatrix}
            =
                      \begin{pmatrix}
                     \frac{1}{a}A_{\sigma_{k}} &  & *\\
                     &O & *\\
                     *&* & 0
                     \end{pmatrix}
            \text{\ \ for \  } 2\leq {k}\leq n_1-k_1, 
             \label{R1}
            \\
A_{\left(0, \, \, e^{is_2(t)} \tau_{l}\right)} &=
                      \begin{pmatrix}
                         O &  &* \\
                          & \frac{1}{b} A^{e^{is_2(t)}f_2(U_2)}_{e^{is_2(t)}\tau_{l}} & * \\
                          *&* & 0
                          \end{pmatrix}
 =
                      \begin{pmatrix}
                      O &  &* \\
                      & \frac{1}{b}A_{\tau_{l}} & * \\
                      *&* & \,0 {\ }
                      \end{pmatrix}
                  \text{\ \ for \  } 2\leq {l}\leq n_2-k_2,
                  \label{R2}
\end{align}
   %
          %
          %
\begin{equation}\label{R3} 
                   A_{\eta_1} =
                   \dfrac{         1}
                   {\sqrt{\Theta}}
                           \begin{pmatrix}
                           -\frac{b}{a}\frac{ds_2}{dt}A^{e^{is_1(t)}f_1(U_1)}_{e^{is_1(t)}\sigma_{1}}   &  & \\
                           & \frac{a}{b}\frac{ds_1}{dt} A^{e^{is_2(t)}f_2(U_2)}_{e^{is_2(t)}\tau_{1}} & \\
                           & & \circledast
                           \end{pmatrix}
        =
                               \dfrac{1}
                                {\sqrt{\Theta}}
                                \begin{pmatrix}
                                -\frac{b}{a}\frac{ds_2}{dt}A_{\sigma_1}  &  & \\
                                & \frac{a}{b}\frac{ds_1}{dt}A_{\tau_1} & \\
                                & & \circledast
                                \end{pmatrix}
\end{equation}
          where 
                  \begin{equation}
                  \Theta={\left(a\frac{ds_1}{dt}\right)^2+
                                        \left(b\frac{ds_2}{dt}\right)^2}
                                        \text{\ \ \ \ \ and\ \ \ \ \ }
                                        \circledast=
                          \frac{
                             ab
                          }
                                        {\sqrt \Theta}
                                               \left(
                             \frac{ds_1}{dt}
                             \frac{d^2s_2}{dt^2}
                             -
                             \frac{d^2s_1}{dt^2}
                                                           \frac{ds_2}{dt}
                             \right)
                                       .
              \end{equation}
\end{lem}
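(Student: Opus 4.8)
The plan is to compute the shape operators of $G_\gamma$ at $p$ directly from the definition, using the ambient second fundamental form $\mathrm{II}(X,Y) = \big(\overline{\mathcal D}_X Y\big)^\perp$ of the image $N$ in $\mathbb S^{n_1+n_2+1}$, and then reading off the component against each normal field in \eqref{normal}. Concretely, for a normal field $\xi$ and tangent fields $X,Y$ from the basis \eqref{tangent}, one has $\langle A_\xi X, Y\rangle = \langle \overline{\mathcal D}_X Y, \xi\rangle$ (the sphere term contributes only the radial direction, which is orthogonal to every $\xi$ in \eqref{normal}). So the whole computation reduces to evaluating $\overline{\mathcal D}_X Y$ in Euclidean space for $X,Y$ ranging over the three blocks: the $e^{is_1}e_j$-fields (tangent to the first factor), the $e^{is_2}v_j$-fields (tangent to the second factor), and the distinguished field $E$ in \eqref{E}.

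First I would handle the ``easy'' off-diagonal-in-factor vanishing: whenever $X$ comes from the first factor block and $Y$ from the second (or vice versa), $\overline{\mathcal D}_X Y = 0$ since the two factors live in complementary coordinate subspaces of $\mathbb R^{n_1+1}\oplus\mathbb R^{n_2+1}$ and neither field depends on the other's variables. This immediately produces the block-diagonal shape (the $O$ blocks) in \eqref{R1}–\eqref{R3}. Next, for $\xi = (e^{is_1(t)}\sigma_k, 0)$ with $k\ge 2$ and $X,Y$ both in the first block, I would use Proposition \ref{002} to strip off $e^{is_1(t)}$ and Proposition \ref{001} to account for the scaling by $a$: this reduces $\langle \overline{\mathcal D}_X Y, \xi\rangle$ to $\tfrac1a\langle \overline{\mathcal D}_{e_i}e_j, \sigma_k\rangle = \tfrac1a\langle A_{\sigma_k}e_i,e_j\rangle$ on $f_1(U_1)$, giving the $\tfrac1a A_{\sigma_k}$ block; symmetrically for $(0, e^{is_2(t)}\tau_l)$. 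For these two normals I would then argue the $*$-entries pairing the factor blocks with $E$ need not vanish in general (hence the $*$'s), while the $(E,E)$-entry $\langle \overline{\mathcal D}_E E, \xi\rangle$ is zero because $\sigma_k, \tau_l$ with index $\ge 2$ are orthogonal to $\sigma_1 = i\cdot f_1(x)$, $\tau_1 = i\cdot f_2(y)$ and to the tangent directions appearing in $\overline{\mathcal D}_E E$ (this uses \eqref{select} and that $f_i$ is $\mathscr C$-totally real, so $\overline{\mathcal D}$ of the $\gamma$-rotation stays within $\mathrm{span}\{f_i, i f_i, \text{tangent}\}$).

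The substantive computation is \eqref{R3}. I would write $E$ and $\eta_1$ in terms of the moving frame, note $\eta_1$ is (up to the normalization $1/\sqrt\Theta$) obtained from $\tfrac{dG_\gamma}{dt}$ by the substitution $(a\tfrac{ds_1}{dt}\sigma_1,\, b\tfrac{ds_2}{dt}\tau_1)\mapsto(-b\tfrac{ds_2}{dt}\sigma_1,\, a\tfrac{ds_1}{dt}\tau_1)$, i.e. a rotation in the $\mathrm{span}\{\sigma_1\}\oplus\mathrm{span}\{\tau_1\}$ plane. For $X = (e^{is_1}e_i, 0)$ in the first block, $\overline{\mathcal D}_X \big(e^{is_1(t)}e_i\big)$ along $G_\gamma$ involves the $t$-derivative of $e^{is_1(t)}$, which produces a term proportional to $i\,e^{is_1(t)}e_i$; pairing with $\eta_1$ and using that the $\sigma_1$-slot of $\eta_1$ is $-\tfrac{b}{\sqrt\Theta}\tfrac{ds_2}{dt}e^{is_1}\sigma_1$ together with $\langle i e_i, \sigma_1\rangle = \langle i e_i, i f_1(x)\rangle = \langle e_i, f_1(x)\rangle$ — here I would instead pair the normal-bundle piece: $\langle \overline{\mathcal D}_{e_i} e_j^{(1)}, \sigma_1\rangle = \langle A_{\sigma_1} e_i, e_j\rangle$ after stripping rotation and scaling, yielding the $-\tfrac{b}{a}\tfrac{ds_2}{dt}A_{\sigma_1}$ block; similarly the second block gives $+\tfrac{a}{b}\tfrac{ds_1}{dt}A_{\tau_1}$. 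Finally the $(E,E)$-entry: $\langle \overline{\mathcal D}_E E, \eta_1\rangle$ is a one-dimensional curvature-of-the-curve computation in the $(\sigma_1,\tau_1)$-plane after scaling out $f_1,f_2$; carrying out the $t$-derivatives of $a\tfrac{ds_1}{dt}e^{is_1}$ and $b\tfrac{ds_2}{dt}e^{is_2}$, dividing by $\sqrt\Theta$ twice (once for $E$ normalized, once for $\eta_1$), and projecting onto $\eta_1$ should collapse to $\circledast = \tfrac{ab}{\sqrt\Theta}\big(\tfrac{ds_1}{dt}\tfrac{d^2s_2}{dt^2} - \tfrac{d^2s_1}{dt^2}\tfrac{ds_2}{dt}\big)$, with the $a^2+b^2=1$ constraint and the orthogonality $E\perp \eta_1$ eliminating the non-mixed terms.

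The main obstacle I anticipate is precisely this last $(E,E)$ entry: one must differentiate the curve $\gamma$ twice, keep careful track of which terms are tangential to $N$ (hence irrelevant for $A_{\eta_1}$) versus normal, and verify that all contributions along $\sigma_1, \tau_1$ except the Wronskian-type combination cancel. The $\mathscr C$-totally real hypothesis enters crucially here — it is what guarantees $\sigma_1 = i f_1$, $\tau_1 = i f_2$ are genuinely normal, so that the $i$-rotation terms coming from $\tfrac{d}{dt}e^{is_j(t)}$ land in the normal bundle spanned by $\sigma_1,\tau_1,\eta_1,\eta_0$ and can be sorted cleanly; without it these terms would have uncontrolled tangential components and the block structure would fail. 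Everything else is bookkeeping with Propositions \ref{001} and \ref{002}.
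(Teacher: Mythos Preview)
Your overall strategy matches the paper's exactly: compute $\langle A_\xi X, Y\rangle = \langle \mathcal D_X Y, \xi\rangle$ in the ambient Euclidean space, use Propositions~\ref{001} and~\ref{002} to strip scaling and rotation for the factor blocks, and compute $\frac{d^2 G_\gamma}{dt^2}$ directly for the $(E,E)$ corners. The paper does precisely this, via \eqref{N5}--\eqref{ffff} for the diagonal blocks and \eqref{ddG1}--\eqref{ddG2} for the corners.

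Two points need attention. First, your paragraph on the factor-block entries of $A_{\eta_1}$ is muddled: for $X=(e^{is_1}e_i,0)$ and $Y=(e^{is_1}e_j,0)$ there is \emph{no} $t$-derivative of $e^{is_1(t)}$ involved in $\mathcal D_X Y$, since $X$ differentiates in the $M_1$-direction with $t$ fixed. You course-correct to the right computation (pairing $\mathcal D'_{e_i}e_j$ with $\sigma_1$ after stripping rotation and scaling), but the preceding sentence about ``$i\,e^{is_1(t)}e_i$ from the $t$-derivative'' should be deleted; it describes a different entry.

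Second, and more substantively, you have not addressed why the off-diagonal $(X,E)$ entries in \eqref{R3} are \emph{zero} rather than unspecified~$*$'s as in \eqref{R1}--\eqref{R2}. This is a separate claim: $\langle A_{\eta_1}X, E\rangle = 0$ for $X$ in either factor block. The paper proves it by computing $\langle \mathcal D_X E, \eta_1\rangle$ directly (equations \eqref{N4}--\eqref{eeee}): one differentiates the first component $a\frac{ds_1}{dt}e^{is_1}if_1(x)$ of $\sqrt\Theta\,E$ in the $M_1$-direction to get $i\frac{ds_1}{dt}X_1$, then pairs with the $\sigma_1$-slot of $\eta_1$, and the result vanishes because $\langle iX_1, e^{is_1}if_1\rangle = \langle e^{-is_1}X_1, f_1\rangle = 0$. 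Your final paragraph locates the role of $\mathscr C$-total reality correctly (it is what makes $\sigma_1=if_1$ normal so that the frames \eqref{tangent}, \eqref{normal} are legitimate), but you should not fold this vanishing into the $(E,E)$ discussion --- it is its own computation and is the reason \eqref{R3} has blanks where \eqref{R1}--\eqref{R2} have $*$'s.
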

\begin{proof}
          We take the advantage of the following three Levi-Civita connections.
            \begin{eqnarray}
                    N\ \longrightarrow &\mathbb S^{n_1+n_2+1}\longhookrightarrow &\mathbb R^{n_1+n_2+2}
                       \nonumber\\
                          \specialrule{0em}{2pt}{2pt} 
                    \nabla\ \ \ \ \ \ \
                    \ \ &
                     \overline{\nabla}\ \ \ \ \ \ \ &\ \ \ \ \ \mathcal D
                       \nonumber
                       \end{eqnarray}
Here $\nabla$ is the induced Levi-Civita connection.
For $p\in N$, tangent vector fields $X$, $Y$
and normal vector field $\eta$ of $N$  around $p$,
we have that  (at $p$)
\begin{eqnarray}
                     \left<A_{\eta} X, Y\right>
                     &=&
                    \left <\overline{\nabla}_XY-{\nabla}_XY,\ \eta\right>
                    \nonumber\\
                    &=& 
                     \left <\overline{\nabla}_XY,\ \eta\right>
  \nonumber\\
              &=& 
                     \left <\mathcal D_XY-\bar h(X,Y),\ \eta\right>
  \label{n01}\\
                   &=& \left<\mathcal D_XY,\ \eta\right>
           \label{N2}\\
                    &=& -\left<\mathcal D_X\eta,\ Y\right> 
           \label{N3}
\end{eqnarray}
where    
                  $\bar h$ in \eqref{n01} means the second fundamental form of 
$\mathbb S^{n_1+n_2+1}$ in $\mathbb R^{n_1+n_2+2}$.
               \begin{rem}\label{exind}
                Here note that local   
          extensions of $X, Y, \eta$ along $N$ to $\mathbb S^{n_1+n_2+1}$ and further to $\mathbb R^{n_1+n_2+2}$
          are taken by default.
         Since results are independent of extensions, they will be omitted unless useful in some computations.
               \end{rem}

            With vector fields $X=(X_1,X_2)$ and $Y=(Y_1,Y_2)$ around $p=(p_1,p_2)$
where 
 \begin{eqnarray}
         & X_1, Y_1\in T
               \big(ae^{is_1(t)} f_1(U_1)\big),
     \label{splitXY1}           \\
                    &            X_2, Y_2\in T
                                  \big(be^{is_2(t)} f_2(U_2)\big),
                                  \label{splitXY2}
 \end{eqnarray}
                                  and splitting normal vector (at $p$)
              \begin{eqnarray}
                          \ \ \ \ \ \ \ \ \ \ \      \eta &=& 
                                                                        \ \ \ \ \ \ \ \ \ \ \ \ \ \ \ \ \ \eta_A
                                                                            \ \ \ \ \ \ \ \ \ \ \ \ \ \  \ \  \ +
                                                                       \   \ \ \ \ \ \ \ \ \ \ \ \ \  \ \  \  \eta_B
                                \\
                                       &= & \overbrace{
                                                                   \sum_{2\leq k\leq n_1-k_1} c_k\cdot  
                                                                                             {\left(e^{is_1(t)}\sigma_{k},0\right)} 
                                                               }
                                        \ +\ 
                                               \overbrace{
                                                           \sum_{2\leq l\leq n_2-k_2} c'_l\cdot
                                                                                             {\left(0, e^{is_2(t)} \tau_{l}\right)} 
                                                                }
                                        \text{ \ with \ }
                                        c_k,\, c'_l\in \R,
              \end{eqnarray}
                             by \eqref{N2} we have that
\begin{alignat}{3}
       &\, \, \, \ \ \left<A_{\eta} X, Y\right>
      \nonumber \\
                     &= \left<    
                                   \left.
                                         \mathcal D_XY
                                    \right|_p,\ 
                            \eta\right>
  \nonumber\\
            &=
         \left<
              \left(
              \left.
              \mathcal D'_{X_1}Y_1 
              \right|_{p_1},
              \, 
                    \left.
                 \mathcal D''_{X_2}Y_2
                 \right|_{p_2}
              \right)
                                           , \eta
                                  \right>
  \label{N5}\\
      &=  \left<
                                          \left(
                                           \left.
                                       \frac{1}{a}   \mathcal D'_{{X_1}}Y_1
                                          \right|_{e^{is_1(t)}f_1(x)},
                                          \, 
                                         \left.
                                         \frac{1}{b} \mathcal D''_{{X_2}}Y_2
                                           \right|_{e^{is_2(t)}f_2(y)}
                                          \right)
                                          , \eta\right>_{\mathbb R^{n_1+n_2+2}}
  \label{ff} \\
      &=   \left<
                                          \left(
                \left.
                                  \frac{e^{is_1(t)}}{a}
                                          \mathcal D'_{{e^{-is_1(t)}X_1}} e^{-is_1(t)}Y_1
                  \right|_{f_1(x)},
                                          \, 
                                    \left.
                                         \frac{e^{is_2(t)}}{b}
                                          \mathcal D''_{{e^{-is_2(t)}X_2}}  e^{-is_2(t)}Y_2
                                     \right|_{f_2(y)}
                                          \right)
                                           , \eta\right>_{\mathbb R^{n_1+n_2+2}} 
      \label{fff}\\
        &=
                                               \frac{1}{a}
                                                    \left<
                \left.
                                                                        \mathcal D'_{{e^{-is_1(t)}X_1}} e^{-is_1(t)}Y_1
                  \right|_{f_1(x)}
                                           , e^{-is_1(t)}\eta_A\right>_{\mathbb R^{n_1+1}} 
           \\
  &   \ \ \ \ \ \ \ \ \ \ \ \ \ \ \ \ \ \ \ \ \ \ \ \ \ \ \ \ \ \ \ \ \ \   \ \ \ \ \ \ \ \ \ \ \ \   +
                                            \frac{1}{b}
                                                    \left<
                \left.
                                                                        \mathcal D''_{{e^{-is_2(t)}X_2}} e^{-is_2(t)}Y_2
                  \right|_{f_2(x)}
                                           , e^{-is_2(t)}\eta_B\right>_{\mathbb R^{n_2+1}} 
                                           \nonumber
     \\
                   &=
                                    \frac{1}{a}
                          \left<
                                    A_{e^{-is_1(t)}\eta_A}     {e^{-is_1(t)}X_1}
                                    ,
                                    e^{-is_1(t)}Y_1
                          \right>
         +
           \frac{1}{b}
                          \left<
                                    A_{e^{-is_2(t)}\eta_B}     {e^{-is_2(t)}X_2}
                                    ,
                                    e^{-is_2(t)}Y_2
                          \right>.
  \label{ffff}
\end{alignat}
            Here $\mathcal D'$ and $\mathcal D''$ represent the Levi-Civita connections of $\mathbb R^{n_1+1}$ and $\mathbb R^{n_2+1}$,
            and according to Remark \ref{exind}  one can extend $X_1,Y_1,\eta_A$ and $X_2,Y_2,\eta_B$ to be local vector fields around $p_1$ and $p_2$ in $\mathbb R^{n_1+1}$ and $\mathbb R^{n_2+1}$ respectively
            for understandings of \eqref{N5} and \eqref{ffff}.
            Equalities \eqref{ff} and \eqref{fff} are due to Propositions \ref{001} and \ref{002}.
                 Expression \eqref{ffff} then verifies all elements except in the last row or last column of \eqref{R1} and \eqref{R2}.

                                              For elements in  the bottom right corners  of  \eqref{R1} and \eqref{R2},   we need to do some computations.
With $\Theta={\left(a\frac{ds_1}{dt}\right)^2+
                                        \left(b\frac{ds_2}{dt}\right)^2}$,
                                 on one hand,     
     \begin{equation}\label{ddG1}
                  \frac{d^2 G_\gamma}{dt^2}
                  =
                          \mathcal D_{\sqrt \Theta E}{\sqrt \Theta E}
                  =        \Theta\, \mathcal D_EE
                         +\dfrac{a^2\frac{ds_1}{dt}\frac{d^2s_1}{dt^2}+b^2\frac{ds_2}{dt}\frac{d^2s_2}{dt^2}}{\sqrt \Theta}\, E.
     \end{equation}
            On the other hand,
     \begin{equation}\label{ddG2}
        \frac{d^2 G_\gamma}{dt^2}
                  =
                     \bigg(
                          ae^{is_1(t)}
                           \bigg[
                           \frac{d^2s_1}{dt^2}\sigma_1-\left(\frac{ds_1}{dt}\right)^2f_1
                           \bigg]
                           ,\,
                            be^{is_2(t)}
                           \bigg[
                           \frac{d^2s_2}{dt^2}\tau_1-\left(\frac{ds_2}{dt}\right)^2f_2
                           \bigg]
                       \bigg).
         \end{equation}
                        Hence, $\mathcal D_EE$ is perpendicular to the normals in \eqref{R1} and \eqref{R2}.
                            Therefore, relations \eqref{R1} and \eqref{R2} get proved.
                            
                            As for relation \eqref{R3}, 
                            elements except in the last row or last column can be gained in the same way by \eqref{N5}$-$\eqref{ffff}.
                            Furthermore, 
                            by \eqref{ddG1} and \eqref{ddG2},
                             the bottom right corner is filled by 
          \begin{equation}
                            \circledast
                            =
                            \sqrt\Theta
                         \big  <\mathcal D_EE, \eta_1 \big>
                            =
                            \frac{ab}{\sqrt \Theta}
               \left(
                             -
                             \frac{d^2s_1}{dt^2}
                             \frac{ds_2}{dt}
                             +
                              \frac{ds_1}{dt}
                             \frac{d^2s_2}{dt^2}
               \right).
           \end{equation}
                            For the rest part of \eqref{R3}, we can employ \eqref{N2}
                            to have, for $X=(X_1,X_2)$ as above, that
                                  \begin{eqnarray}
                             \ \ \ \  \ \ \ \ \ \     \left<A_{\eta_1} X, E\right>
&=&
                                       \big<\mathcal D_XE,\, \eta_1\big>
   \nonumber\\
&=&           
                        \frac{1}{\sqrt\Theta}\,
   \left<
              \left(
                     \left.
                             \mathcal D'_{X_1} a\frac{ds_1}{dt}e^{is_1(t)}if_1(x)
                     \right|_{p_1},
              \, 
                    \left.
                               \mathcal D''_{X_2} b\frac{ds_2}{dt}e^{is_2(t)}if_2(y)
                      \right|_{p_2}
              \right)
                                           ,\, \eta_1
                    \right>
 \label{N4}\\
                    &=&   
                 \   \       \frac{1}{\Theta}\,\,\
        \Bigg\{
                           \left<
                                        i  {\frac{ds_1}{dt}}
                                                 X_1
                                                 ,\, 
                                          -b\frac{ds_2}{dt}e^{is_1(t)}if_1(x)
                             \right>_{\mathbb R^{n_1+1}} 
   \label{ee}\\
                            & &  \ \  \ \  \ \  \ \  \ \ \ \ \  \ \ \ \ \ \ \ \ \ \  \  \ \ \ \ \ \ \ \ \  \ \  \ \ \ \ \ \ \, +
                       \left<
                                   i     \frac{ds_2}{dt}
                                                X_2
                              ,\; 
                               a\frac{ds_1}{dt}e^{is_2(t)}if_2(y)
                        \right>_{\mathbb R^{n_2+1}}
    \Bigg\}
\nonumber  \\
                                    &=& \ 0  .
 \label{eeee}
      \end{eqnarray}
      Here \eqref{N4} comes from the vanishments of $\mathcal D_{X_1}'{\frac{1}{\sqrt \Theta}}$ and $\mathcal D_{X_2}''{\frac{1}{\sqrt \Theta}}$,
                the equality \eqref{ee} is due to the fact that $X_1\vert_{p_1}$ corresponds to $\frac{e^{-is_1(t)}X_1}{a}\big\vert_{f_1(x)}$ 
                and $X_2\vert_{p_2}$ corresponds to $\frac{e^{-is_2(t)}X_2}{b}\big\vert_{f_2(y)}$,
                 and
                 \eqref{eeee} is  by Proposition \ref{002} and the assumption that $f_1$ and $f_2$ are $\mathscr C$-totally real.         Now the proof gets completed.
\end{proof}


       \begin{rem}\label{rem1}
                  By replacing the normals in \eqref{N4}, we can get expressions of all $*$'s values in \eqref{R1} and \eqref{R2}.
                  For example,
                  the $(w,k_1+k_2+1)$-element of \eqref{R1} is 
                               $\frac{1}{\sqrt\Theta}
                                   \big<\frac{ds_1}{dt}ie_{w},\, \sigma_{k}
                                   \big>
                                   $
                               and the $(k_1+k_2+1,w)$-element shares the same value.
                               Situation of \eqref{R2} is similar.
        \end{rem}
              \begin{rem}\label{rem2}
              If $\frac{ds_1}{dt}\neq 0$ (or $\frac{ds_2}{dt}\neq 0$) pointwise in local,
              then $$\circledast=\frac{ab}{\sqrt \Theta}\dfrac{d}{dt}\left(\frac{ds_2}{dt}\bigg/\frac{ds_1}{dt}\right)\left(\frac{ds_1}{dt}\right)^2.$$
             So $\circledast$ vanishes if and only if 
                   $\frac{ds_2}{dt}\big/\frac{ds_1}{dt}$ (or $\frac{ds_1}{dt}\big/\frac{ds_2}{dt}$)
                   is a constant.
                   By \eqref{A2}, we know that $\frac{ds_1}{dt}$ and $\frac{ds_2}{dt}$ cannot be zero simultaneously.
                   Therefore, 
                  $\circledast$ vanishes
                  if and only if
                 there exists some constant $c\in \mathbb R$
                   such that
                   $\frac{ds_2}{dt}=c\frac{ds_1}{dt}$ (or $\frac{ds_1}{dt}=c\frac{ds_2}{dt}$) 
                   for every $t\in\mathbb R$.
              \end{rem}

              Now let us consider the second fundamental form of $G_\gamma$ with respect to the last normal vector $\eta_0$ in \eqref{normal}.
 \begin{lem}\label{L2}
                          At $p=\left(a e^{is_1(t)}f_1(x),\, b e^{is_2(t)}f_2(y)\right)$,
                           it follows that, with respect to the rotated tangent basis \eqref{tangent},
        \begin{equation}\label{R4}
                          A_{\eta_0} =
                                  \begin{pmatrix}
                                          -\dfrac{b}{a} I_{k_1} &  & \\
                                          & \dfrac{a}{b} I_{k_2} & \\
                                          &  & \boxast
                                    \end{pmatrix}
        \end{equation}
     where 
                  \begin{equation}
                  \Theta={
                  \bigg(
                       a\frac{ds_1}{dt}
                   \bigg)^2+
                                        \left(b\frac{ds_2}{dt}\right)^2}
                                        \text{\ \ \ \ \ and\ \ \ \ \ }
                                        \boxast=
                         \frac{ab}{\Theta}
               \bigg[
                        \left(
                             \frac{ds_2}{dt}
                        \right)^2
                             -
                              \left(
                             \frac{ds_1}{dt}
                        \right)^2
               \bigg].
              \end{equation}
\end{lem}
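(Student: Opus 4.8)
The plan is to read off each block of $A_{\eta_0}$ from the two equivalent formulas $\langle A_{\eta_0}X,Y\rangle=\langle\mathcal D_XY,\eta_0\rangle=-\langle\mathcal D_X\eta_0,Y\rangle$ already used in the proof of Lemma \ref{L1} (see \eqref{N2} and \eqref{N3}), evaluated on the rotated orthonormal tangent frame \eqref{tangent}. What makes this quick is that $\eta_0=\big(be^{is_1(t)}f_1(x),\,-ae^{is_2(t)}f_2(y)\big)$ is, up to the constant factors $b$ and $-a$, assembled from the position vectors of $f_1$ and $f_2$; so differentiating $\eta_0$ along a tangent direction reduces to differentiating those position vectors, as long as one keeps careful track of base points and of the fact that the base point of $p$ carries scaling $a$ in the first slot and $b$ in the second, whereas $\eta_0$ carries the \emph{other} scaling in each slot. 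Recall also from Lemmas \ref{ltb} and \ref{lnb} that $E$ is tangent and $\eta_0$ normal, so $\langle E,\eta_0\rangle=0$.

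\textbf{The $M_1$- and $M_2$-blocks.} First I would handle the $k_1$ directions $X=(e^{is_1(t)}e_w,0)$. Realizing $X$ by a curve $\epsilon\mapsto G_\gamma(t,c(\epsilon),y)$ that moves only the $M_1$-factor with $t$ frozen — so that $(f_1)_*c'(0)=\tfrac1a e_w$, exactly the correspondence used around \eqref{ee} — and differentiating, Proposition \ref{001} gives $\mathcal D_X\eta_0=\tfrac{b}{a}\big(e^{is_1(t)}e_w,0\big)=\tfrac{b}{a}X$. Hence $\langle A_{\eta_0}X,Y\rangle=-\tfrac{b}{a}\langle X,Y\rangle$ for every frame vector $Y$, which at once yields the block $-\tfrac{b}{a}I_{k_1}$ and shows these rows and columns carry nothing outside that block: $\tfrac{b}{a}X$ sits in the first slot and is orthogonal to each $(0,e^{is_2(t)}v_\ell)$ and to $E$ (whose first-slot component is a multiple of $\sigma_1\perp e_w$). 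A symmetric computation for $X=(0,e^{is_2(t)}v_\ell)$ gives $\mathcal D_X\eta_0=-\tfrac{a}{b}X$, hence the block $\tfrac{a}{b}I_{k_2}$ and, again, no off-diagonal contributions. The mixed entries $\langle A_{\eta_0}E,(e^{is_1(t)}e_w,0)\rangle$ and $\langle A_{\eta_0}E,(0,e^{is_2(t)}v_\ell)\rangle$ then vanish by symmetry of the shape operator together with what was just shown.

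\textbf{The corner $\boxast$.} It remains to compute $\boxast=\langle A_{\eta_0}E,E\rangle=\langle\mathcal D_EE,\eta_0\rangle$. For this I would extract $\mathcal D_EE$ from the two expressions \eqref{ddG1} and \eqref{ddG2} for $\tfrac{d^2G_\gamma}{dt^2}$: since $\tfrac{dG_\gamma}{dt}=\sqrt\Theta\,E$, formula \eqref{ddG1} says $\Theta\,\mathcal D_EE$ equals $\tfrac{d^2G_\gamma}{dt^2}$ minus a multiple of $E$, and $E\perp\eta_0$, so $\langle\mathcal D_EE,\eta_0\rangle=\tfrac1\Theta\langle\tfrac{d^2G_\gamma}{dt^2},\eta_0\rangle$. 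Pairing the explicit \eqref{ddG2} against $\eta_0$, and using that $e^{is_j(t)}$ is an isometry, the $\sigma_1$- and $\tau_1$-terms drop out (being normal to $f_1$ and $f_2$, hence orthogonal to the position vectors in $\eta_0$), leaving $-ab(\tfrac{ds_1}{dt})^2+ab(\tfrac{ds_2}{dt})^2$; dividing by $\Theta$ gives $\boxast=\tfrac{ab}{\Theta}\big[(\tfrac{ds_2}{dt})^2-(\tfrac{ds_1}{dt})^2\big]$. Assembling the blocks yields \eqref{R4}.

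\textbf{Main point of care.} The whole argument is bookkeeping, and the sole thing demanding care — the very source of the asymmetry between $-\tfrac{b}{a}$ and $\tfrac{a}{b}$ in \eqref{R4} — is the consistent tracking of base points and of the two different scalings $a,b$ occurring in $p$ versus in $\eta_0$, exactly as codified in Propositions \ref{001} and \ref{002}. I do not anticipate any genuine obstacle beyond this.
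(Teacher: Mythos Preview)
Your proof is correct and follows essentially the same route as the paper: using $-\langle\mathcal D_X\eta_0,Y\rangle$ for the $M_1$- and $M_2$-blocks and extracting $\boxast$ from \eqref{ddG1}--\eqref{ddG2}. Your treatment of the off-diagonal entries involving $E$ is in fact slightly slicker than the paper's (which recomputes $\langle\mathcal D_XE,\eta_0\rangle$ via \eqref{N4} and invokes the $\mathscr C$-totally real property), since you observe directly that $\mathcal D_X\eta_0$ is a scalar multiple of $X$ and hence orthogonal to $E$.
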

         \begin{proof}
         By \eqref{N3} 
         and
         $\eta_0=\big(b e^{is_1(t)} f_1(x),
         \,
          -a e^{is_2(t)}f_2(y)\big)$, 
          we deduce similarly 
          with $X=(X_1, X_2)$ and  $Y=(Y_1, Y_2)$ in \eqref{splitXY1} and   \eqref{splitXY2} 
          that
\begin{eqnarray}
                                    \left<A_{\eta_0} X, Y\right>
                     &=&
                   -\big<\mathcal D_X\eta_0,\ Y\big>
   \nonumber\\
                   &=& 
                   -     \left<
              \left(
              \left.
              \mathcal D'_{X_1} be^{is_1(t)}f_1(x)
              \right|_{p_1},
              \, 
                    \left.
                - \mathcal D''_{X_2} ae^{is_2(t)}f_2(y)
                 \right|_{p_2}
              \right)
                                           ,\, (Y_1, Y_2)
                                  \right>
           \label{N6}\\
                     &=& 
               \ \ \ \Big<
               \Big(                           -               {\frac{b}{a}
                                          {X_1}}
               ,
                \frac{a }{b}X_2
                \Big),\; (Y_1,Y_2)
                \Big>_{\mathbb R^{n_1+n_2+2}} .
   \nonumber
      \end{eqnarray}
                            The last equality is because $X_1\vert_{p_1}$ corresponds to $\frac{e^{-is_1(t)}X_1}{a}\big\vert_{f_1(x)}$
                            and similarly $X_2\vert_{p_2}$  to $\frac{e^{-is_2(t)}X_2}{b}\big\vert_{f_2(y)}$.
                                   By replacing $\eta_1$ by $\eta_0$  in \eqref{N4}, 
                                             we can also figure out that elements except $\boxast$ in the last column (or row by symmetry of shape operator) are all zeros.
                                             Here we use
                                             the $\mathscr C$-totally real property: $if_1(x)\perp X_1$ and $if_2(y)\perp X_2$.
                                   
                                   Since $\eta_0\perp E$,
                                   by utilizing \eqref{ddG1} and \eqref{ddG2}
                                   we have the element in the bottom right corner
     \begin{equation}
                            \boxast
                            =
                            \big<\mathcal D_EE, \eta_0\big>
                            =
                            \frac{ab}{\Theta}
               \bigg[
                             -
                        \left(
                             \frac{ds_1}{dt}
                        \right)^2
                             +
                              \left(
                             \frac{ds_2}{dt}
                        \right)^2
               \bigg].
    \end{equation}
Thus, the proof is complete.
         \end{proof}

                            {\ }


              \subsection{Spiral minimal  products of steady magnitudes}\label{poc}
               With
               $n_1+1=2m_1+2$ and $n_2+1=2m_2+2$, 
                        we show part \ding{172} of Theorem \ref{main} about doubly spiral minimal products of steady magnitudes
                        in the following plus the complementary situation in Remark \ref{Csing}.
    \begin{thm}\label{T1}    
                Assume that
                            $k_1,\, k_2\geq 1$.
                  Given a pair of $\mathscr C$-totally real minimal       
                                       immersions 
                                               $f_1:M_1^{k_1}\longrightarrow \mathbb S^{n_1}\subset \mathbb C^{m_1+1}$ and 
                                                $f_2:M_2^{k_2}\longrightarrow \mathbb S^{n_2}\subset \mathbb C^{m_2+1}$,
                there are {uncountably} 
                                       many 
                spiral minimal products $G_\gamma$ into $\mathbb S^{n_1+n_2+1}$
                by curve 
                $\gamma$ of format
                $
                      \left(
                               a e^{is_1(t)} 
                               , \,
                               b e^{is_2(t)}
                        \right).
                               $
     \end{thm}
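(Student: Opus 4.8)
The strategy is to read the mean curvature of $G_\gamma$ off the shape operators already computed in Lemmas \ref{L1} and \ref{L2}, observe that the minimality of $f_1$ and $f_2$ annihilates all but two scalar equations in the data $(a,b,s_1,s_2)$, and then solve those two equations over an open interval of admissible magnitudes $a$, which manufactures uncountably many curves $\gamma$.

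First I would pin down the minimal equation system. By Lemma \ref{lnb}, $\{(e^{is_1(t)}\si_2,0),\dots,(0,e^{is_2(t)}\tau_{n_2-k_2}),\eta_1,\eta_0\}$ is an orthonormal frame of the normal bundle of $G_\gamma$, so $G_\gamma$ is minimal precisely when every shape operator in this frame is trace-free. From \eqref{R1} and \eqref{R2}, the traces of $A_{(e^{is_1(t)}\si_k,0)}$ and $A_{(0,e^{is_2(t)}\tau_l)}$ are $\frac1a\tr A_{\si_k}$ and $\frac1b\tr A_{\tau_l}$, which vanish since $f_1$ and $f_2$ are minimal in their spheres (the mean curvature vector of a minimal submanifold is zero in \emph{every} normal direction, hence $\tr A_{\si_j}=\tr A_{\tau_j}=0$ for all $j$, in particular for the distinguished $\si_1=i\cdot f_1$ and $\tau_1=i\cdot f_2$). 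Feeding $\tr A_{\si_1}=\tr A_{\tau_1}=0$ back into \eqref{R3}, only two conditions survive:
\[
\tr A_{\eta_1}=\frac{\circledast}{\sqrt{\Theta}}=0
\qquad\text{and}\qquad
\tr A_{\eta_0}=-\frac ba k_1+\frac ab k_2+\boxast=0,
\]
with $\circledast$, $\boxast$, $\Theta$ as in Lemmas \ref{L1} and \ref{L2}; the $*$ entries in \eqref{R1}--\eqref{R3} are off the diagonal and contribute nothing.

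To solve this, set $K:=k_1+k_2+1$ and fix any $a\in(0,1)$ with $a^2\in\bigl(\frac{k_1}{K},\frac{k_1+1}{K}\bigr)$ --- a nonempty open subinterval of $(0,1)$ because $k_1,k_2\ge 1$ --- put $b=\sqrt{1-a^2}$ and
\[
c=\Bigl(\frac{a^2\,(k_1+1-a^2K)}{b^2\,(a^2K-k_1)}\Bigr)^{1/2}\in(0,\infty),
\]
which is finite and positive exactly on this interval. Take $s_1(t)=t$ and $s_2(t)=ct$, so that $\gamma(t)=\bigl(ae^{it},be^{ict}\bigr)$ is an immersed curve in $\mathbb S^3$ obeying \eqref{A2}. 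Then $\circledast=\frac{ab}{\sqrt{\Theta}}(s_1's_2''-s_1''s_2')=0$, and a one-line computation ($\boxast=\frac{ab(c^2-1)}{a^2+b^2c^2}$, clear denominators, replace $b^2$ by $1-a^2$) shows that our choice of $c^2$ is precisely the root of $-\frac ba k_1+\frac ab k_2+\boxast=0$. Hence both surviving conditions hold, so $G_\gamma$ is a spiral minimal product of steady magnitudes; since $a=|\gamma_1|$ ranges over an interval we obtain uncountably many distinct $\gamma$, and for $a^2$ strictly interior both $s_1'=1$ and $s_2'=c$ are nonzero, so these are doubly spiral. The two boundary values $a^2=\frac{k_1}{K}$, $a^2=\frac{k_1+1}{K}$ force $s_2'$ or $s_1'$ to vanish (use the symmetric parametrization when $c\to\infty$) and yield the singly spiral products of steady magnitudes recorded in Remark \ref{Csing}.

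I do not anticipate a genuine obstacle: the substance is organizational --- checking that all but the two $\eta$-traces are forced to vanish by minimality of $f_1,f_2$, plus the brief algebra locating the interval on which $c^2>0$. The only points to treat with care are that minimality is local, so it suffices to verify it on the embedded patches $(t-\delta,t+\delta)\times U_1\times U_2$ where the frames of Lemmas \ref{ltb}--\ref{lnb} are defined, and that replacing $s_1,s_2$ by $s_1+\mathrm{const}$, $s_2+\mathrm{const}$ (absorbable into rotations of $f_1,f_2$) preserves the $\mathscr C$-totally real minimal hypotheses.
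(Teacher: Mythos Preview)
Your proposal is correct and follows essentially the same approach as the paper: reduce minimality via Lemmas \ref{L1} and \ref{L2} to the pair $\circledast=0$ and $\mathrm{trace}(A_{\eta_0})=0$, impose $s_1'=1$, $s_2'=c$, and exhibit a one-parameter family of solutions. The only cosmetic difference is that the paper fixes $c$ and locates $a$ by the intermediate value theorem (deferring the explicit inversion to Remark \ref{absqrt}), whereas you fix $a$ in the interval $\bigl(\tfrac{k_1}{K},\tfrac{k_1+1}{K}\bigr)$ and solve for $c$ in closed form---these are inverse parametrizations of the same solution set.
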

                     \begin{proof}
                     By Lemmas \ref{L1} and \ref{L2}, 
                                we know that 
                                       $G_\gamma$
                                              with 
                                                      $
                                                              \gamma
                                                              =
                                                         \left(
                                                               a e^{is_1(t)} 
                                                               ,
                                                               \, 
                                                                b e^{is_2(t)}
                                                          \right)
                                                      $
                                               is minimal if and only if
                     $$\circledast=0 
                               \text{\ \ \ \ \ \ and\ \ \ \ \ } 
                                -\frac{b}{a}k_1+\frac{a}{b}k_2+\boxast=0$$
                      which by Remark \ref{rem2}
                                become, for some $c\in \R$,
                                      \begin{equation}\label{c}
                                      \frac{ds_2}{dt}=c\frac{ds_1}{dt} \ \ \ \ \text{ (or the other way around) }
                                      \end{equation}
                                       and
\begin{equation}\label{cplugin}
                                               -\frac{b}{a}k_1
                                                 +\frac{a}{b}k_2
                                                   + 
                            \frac{ab}{\Theta}
               \left(
               c^2-1
               \right)
                        \left(
                             \frac{ds_1}{dt}
                        \right)^2                     
                                     =0    .
      \end{equation}                                     
                      Since $\Theta=$ $\left(a^2+b^2c^2\right)$ $\left(\frac{ds_1}{dt}\right)^2$, 
                      the only requirement for $G_\gamma$ to be minimal
                              is
                              that
      \begin{equation}\label{trace}
         \text{trace}(A_{\eta_0})=-\frac{b}{a}k_1+\frac{a}{b}k_2+ \frac{ab(c^2-1)}{a^2+b^2c^2 } \text{ \ \ becomes zero}.
       \end{equation}

                               Without loss of generality for our geometric concern (as reparametrizations do not change mean curvature vector field of $G_\gamma$), 
                              one can require that
                                            $\frac{ds_1}{dt}=1$
                                            in the very beginning
                                            and then
                                             $\frac{ds_2}{dt}=c\, (\in \R)$.
                                             Assume that $c\neq 0$.
                                            Then $a^2+b^2c^2\geq \min \{c^2, 1\}>0$.
                               Set $b=\sqrt {1-a^2}>0$.
                               Observe that, by $k_1,k_2>0$,
                                 $\text{trace }(A_{\eta_0})$ trends toward $-\infty$ 
                                           as $a\downarrow 0$
                                           and toward $+\infty$
                                           while $a\uparrow 1$.
                               Thus, by continuity, 
                                          there exist  real number $a_0$ and corresponding $b_0$ 
                                          with vanishing  $\text{trace }(A_{\eta_0})$
                                           so that 
                                                 $G_\gamma$                                                  with 
                                                     $
                                                              \gamma
                                                              =
                                                         \left(
                                                               a_0 e^{i t} 
                                                               , 
                                                                b_0 e^{ic t}
                                                          \right)
                                                      $
                                            is minimal.
                                     Evidently,
                                     with different nonvanishing $c\in \R$
                                     our construction 
                                     produces uncountably many doubly spiral minimal products of $f_1$ and $f_2$.
                     \end{proof}

     \begin{rem}\label{rem3} 
                    In the case of  $c=0$ in \eqref{c} 
                    there is a singly spiral minimal product of steady magnitudes
             satisfying
                                                                      $\text{trace }(A_{\eta_0})=-\frac{b}{a}(k_1+1)+\frac{a}{b}k_2.$
                        Geometrically, it is exactly the constant minimal product 
                        (see \S \ref{P} )
                                  of $\mathcal Sf_1$ and $f_2$
                                   based on the following proposition.
      \end{rem} 
       \begin{prop}\label{prop1}
                           Let
                                               $f:M\longrightarrow \mathbb S^{2m+1}\subset \mathbb C^{m+1}$
                                   be a $\mathscr C$-totally real minimal       immersion.
                                   Then the spiral immersion
                     \begin{eqnarray} 
                                \mathcal Sf: \R\times M&\longrightarrow& \ \ \ \mathbb S^{2m+1} \text{ \ \ \ \  \ \ given by}\\
                                      (t, x)\  \  \ &\longmapsto & \ e^{it} f(x)
    \nonumber
                   \end{eqnarray}
                                          is minimal in $\mathbb S^n$.
      \end{prop}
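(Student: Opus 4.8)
The plan is first to show that $\mathcal Sf$ is an immersion whose induced metric is a Riemannian product, and then to read off vanishing of its mean curvature vector from essentially the shape-operator computation already performed for $G_\gamma$; a shorter alternative via the eigenmap characterization of spherical minimal immersions is noted at the end. Write $p=\mathcal Sf(t,x)=e^{it}f(x)$ and regard $e^{it}$ as the rotation $e^{it}I_{m+1}$ of $\mathbb R^{2m+2}=\mathbb C^{m+1}$. The differential of $\mathcal Sf$ sends $\partial_t\mapsto ie^{it}f(x)=e^{it}\big(if(x)\big)$ and $w\mapsto e^{it}df_x(w)$ for $w\in T_xM$. Since $f$ is $\mathscr C$-totally real, $if(x)$ is a unit normal of $f(M)$ in $\mathbb S^{2m+1}$ — after adapting the normal frame we may take $\sigma_1=if(x)$ as in \eqref{select} — so $E:=ie^{it}f(x)$ is a unit vector orthogonal to every $e^{it}df_x(w)$. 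Hence $\mathcal Sf$ is an immersion, its induced metric is the Riemannian product $dt^2\oplus f^*g_{\mathbb S^{2m+1}}$, and, choosing a local orthonormal tangent frame $\{e_1,\dots,e_k\}$ of $f(M)$ and a local orthonormal normal frame $\{\sigma_1=if(x),\sigma_2,\dots,\sigma_{2m+1-k}\}$ of $f(M)$ in $\mathbb S^{2m+1}$, we obtain the orthonormal tangent frame $\{E,e^{it}e_1,\dots,e^{it}e_k\}$ and the orthonormal normal frame $\{e^{it}\sigma_2,\dots,e^{it}\sigma_{2m+1-k}\}$ of $\mathcal Sf$ in $\mathbb S^{2m+1}$.

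For $2\le\alpha\le 2m+1-k$ I would compute the shape operator $A_{e^{it}\sigma_\alpha}$ of $\mathcal Sf$ exactly by the manipulations of the proof of Lemma \ref{L1}: using \eqref{N2} and Proposition \ref{002} (there is no radial rescaling here, so Proposition \ref{001} is not needed) one gets $\big\langle A_{e^{it}\sigma_\alpha}(e^{it}e_i),e^{it}e_j\big\rangle=\big\langle A_{\sigma_\alpha}e_i,e_j\big\rangle$, i.e. the $f(M)$-block of $A_{e^{it}\sigma_\alpha}$ is the shape operator of $f(M)$. For the $E$-direction, $\mathcal D_EE=\partial_t\big(ie^{it}f(x)\big)=-e^{it}f(x)=-p$ is proportional to the position vector, hence orthogonal to each $e^{it}\sigma_\alpha$, so the $(E,E)$-entry of $A_{e^{it}\sigma_\alpha}$ vanishes; the remaining entries in the $E$-row and $E$-column are off-diagonal ``$*$''-type terms of the form $\langle ie_j,\sigma_\alpha\rangle$ (cf. Remark \ref{rem1}) and never meet the diagonal. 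Therefore $\operatorname{tr}A_{e^{it}\sigma_\alpha}=\operatorname{tr}A_{\sigma_\alpha}=0$ for every such $\alpha$, because $f$ is minimal in $\mathbb S^{2m+1}$, and the mean curvature vector $\tfrac1{k+1}\sum_{\alpha\ge2}\big(\operatorname{tr}A_{e^{it}\sigma_\alpha}\big)\,e^{it}\sigma_\alpha$ of $\mathcal Sf$ is zero, which is the assertion.

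Equivalently and more quickly: since the induced metric of $\mathcal Sf$ is the product $dt^2\oplus f^*g$, acting componentwise on the $\mathbb R^{2m+2}$-valued map $\mathcal Sf$ gives $\Delta_{\,dt^2\oplus f^*g}(\mathcal Sf)=\partial_t^2\big(e^{it}f\big)+e^{it}\,\Delta_{f^*g}f=-e^{it}f+e^{it}(-kf)=-(k+1)\,\mathcal Sf$, where $\Delta_{f^*g}f=-kf$ is the eigenmap equation equivalent to minimality of $f$ in $\mathbb S^{2m+1}$; by Takahashi's theorem $\mathcal Sf$ is then a minimal immersion into $\mathbb S^{2m+1}$. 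In either route the only input beyond routine bookkeeping is the $\mathscr C$-totally real hypothesis, invoked once to see that the induced metric is a product (equivalently that the tangent frame above is orthonormal) and once to control the $E$-row and $E$-column of each $A_{e^{it}\sigma_\alpha}$; I expect this two-fold use of the hypothesis, rather than any computational difficulty, to be the crux of the proof.
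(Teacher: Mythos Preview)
Your proposal is correct; the paper's proof of this proposition is literally the sentence ``By simple computation,'' so you have supplied exactly the kind of details the authors omit, and both of your routes (the shape-operator computation in the spirit of Lemma~\ref{L1} and the Takahashi argument, cf.\ \S\ref{MCR}) are valid realizations of that computation.
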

       \begin{proof}
       By simple computation.
       \end{proof}
           \begin{rem}\label{rem33} 
                   Compared with the structure explained in Remark \ref{rem3},
                   the case of $c= 1$  in \eqref{c} 
                   is the other way around.
                   It is first to take the constant minimal product of $f_1$ and $f_2$ 
                   and then apply Proposition \ref{prop1}.
                   Spiral minimal product $G_\gamma$ with $c=-1$ 
                          is geometrically
                                 the mirror,  through the complex conjugate action on the $\C^{m_2+1}$ part, of the spiral minimal product with $c=1$ of $f_1$ and complex conjugate $\overline{f_2}$ of $f_2$.
      \end{rem} 
       \begin{rem}\label{absqrt}
               Without sign restrictions, 
                  values of $a$ and $b$ for vanishing $\text{trace }(A_{\eta_0})$ in \eqref{trace}
              satisfy
\begin{equation}\label{b/a}
          \frac{b}{a}=\pm 
          \sqrt{
          \dfrac{-\left((k_1+1)-(k_2+1)c^2\right)+
          \sqrt{\left((k_1+1)-(k_2+1)c^2\right)^2+4k_1k_2c^2}
          }
          {2k_1c^2}}.
\end{equation}
                         Letting $c^2\uparrow \infty$,
                         one gets $\left|\frac{b}{a}\right|\longrightarrow \sqrt{\frac{k_2+1}{k_1}}$
                         and
                         the limit corresponds to the constant minimal product of $f_1$ and $\mathcal Sf_2$ with $\frac{ds_1}{dt}=0$
                        (see Remark \ref{rem3} and Proposition \ref{prop1}).      
      \end{rem}
 \begin{rem}
             Although for any $c\in \R$,
             by \eqref{b/a}
             there exists a unique solution $(a,b)$ with $a, b>0$ to generate a spiral minimal product,
            however it is not true that 
                         given any positive pair $(a,b)$ 
                               there will be suitable $c$
                               to generate a spiral minimal product.
              The reason is this.
                     When $1-c^2>0$, 
             \begin{equation}
                         \label{ineq1}
                               \dfrac{ab(1-c^2)}{a^2+b^2c^2}
                                        \leq 
                                 \dfrac{ab(1-c^2)}{a^2}
                                        \leq
                                        \dfrac{b}{a}
                                         ;
              \end{equation}
                         and when $1-c^2<0$,
        \begin{equation}
                     \label{ineq2}
                               \dfrac{ab(1-c^2)}{a^2+b^2c^2}
                                        \geq 
                                 \dfrac{ab(1-c^2)}{b^2c^2}
                                        \geq
                                        -\dfrac{a}{b}
                                         .
                    \end{equation}
             Hence, plugging \eqref{ineq1} and \eqref{ineq2} into vanishing $\text{trace}(A_{\eta_0})$ in \eqref{trace}
                       gives
             $$
             \dfrac{k_1}{k_2+1}
             \leq
                   \dfrac{a^2}{b^2}
                   \leq
                  \dfrac{k_1+1}{k_2}
                  .
              $$
              The equalities correspond to extremal situations with 
              $c^2=\infty$ and $c^2=0$
              (see Remark \ref{rem3}).
              Therefore, domain of $\frac{a^2}{b^2}$ 
                          on which spiral minimal products of steady magnitudes exist 
                           is exactly the interval $\big[\frac{k_1}{k_2+1},  \frac{k_1+1}{k_2}\big]$
                           by Remarks \ref{rem3} and \ref{absqrt}.
\end{rem}
      \begin{rem}\label{Csing}
              When only one of $k_1$ and $k_2$ is zero,
             say $k_2=0$,
            we have 
              \begin{equation}\label{trace2}
                                                 \text{trace }(A_{\eta_0})=-\dfrac{b}{a}k_1-\dfrac{ab(1-c^2)}{a^2+b^2c^2}  .
                                       \end{equation}
                  When $a,b>0$,
                   the inequality $1-c^2<0$ is a necessary condition for $A_{\eta_0}$ possible to be traceless.
                  Actually, \eqref{ineq2} tells that
                              $$
                                      \bigg[  -\dfrac{a}{b}, 0\bigg)
                                      =
                                      \left\{
                              \left.
                                      \dfrac{ab(1-c^2)}{a^2+b^2c^2}
                             \right| 1-c^2<0, 
                             \, c\in \R \cup \{\pm\infty\}
                                      \right\}
                                       .
                                      $$
                                      
                                            Taking singly spiral situation $-\frac{a}{b}$ ( i.e, $c^2=\infty$) into consideration,
        \eqref{trace2} 
               becomes $-\frac{b}{a}k_1+\frac{a}{b}$
              and we get $a=\sqrt{\frac{k_1}{k_1+1}}, b=\sqrt{\frac{1}{k_1+1}}$ for $G_\gamma$ to be minimal.
               See Remark \ref{absqrt}.
               
               When $1<c^2<\infty$, for $\text{trace }(A_{\eta_0})$ to be zero, we have
\begin{equation}
          \label{spfrac}
                        \dfrac{b^2}{a^2}=\dfrac{c^2-(k_1+1)}{k_1 c^2}\, .
\end{equation}
               
               As a result, we establish:
               When $k_1\neq 0$ and $k_2=0$, 
               for every  $c\in \R$ 
                         satisfying $c^2>k_1+1$,
               there exists a unique positive pair $(a, b)$ 
                           such that $G_\gamma$ is minimal.
                           Similarly, when $k_2\neq 0$ and $k_1=0$,
                           the same existence and uniqueness hold for every $c\in \R$ with $c^2<\frac{1}{k_2+1}$.
\end{rem}

\begin{rem}
                              When $k_1=k_2=0$,
        it follows that vanishing $\text{trace}(A_{\eta_0})$ 
        under restriction \eqref{A2} forces $c=\pm 1$ (cf. Proposition \ref{prop1} and Remark \ref{rem33})
        which run  great circles.
Moreover, once sizes $a$ and $b$ are fixed, 
the resulting two circles are, in the geometric sense, the only great circles in $\mathbb S^3$ with projection to each $\C$-component  of $\C\oplus\C$ forming a round circle.
Compare with the case of varying magnitudes for $k_1=k_2=0$ in \S \ref{k120}.
\end{rem}

{\ }

 \section{Local Construction of Spiral  Minimal Products of Varying Magnitudes}\label{GSMP} 
                   For more diversities of spiral minimal products,
                  in this section we
 consider
                                     \begin{equation}\label{ggamma}
                                     \gamma(t)
                                                              =
                                                         \left(
                                                               a(t) e^{is_1(t)} 
                                                                , \, 
                                                                b(t) e^{is_2(t)}
                                                          \right)
                                                          \in
                                                          \mathbb S^3
                                                          \,  
                                                         \text{\ \ \ for  \ }
                                                         t\in \R
                                                         \, 
                                         \end{equation}
                                       of varying $(a(t), b(t))$  in $\mathbb S^1$ and
          \begin{equation}\label{2expression}
                             G_\gamma:\  \big(t,x,y\big)\longmapsto \Big( a(t) e^{is_1(t)} f_1(x),\,  b(t) e^{is_2(t)} f_2(y)\Big) .
        \end{equation}

          Let us first study  the doubly spiral type.
         Morrey's regularity theory \cite{M1, M2}
         implies that every spherical $C^2$ minimal submanifold is analytic.
         Hence, 
                     for a
                     doubly spiral minimal product,
 there                   must be  $t_0\in \R$
                     such that
                                     $
                                           \frac{ds_1}{dt}\cdot\frac{ds_2}{dt}\neq0.
                                     $
       By continuity, 
          $\frac{ds_1}{dt}\cdot\frac{ds_2}{dt}\neq0$ locally around  $t_0$.
          So, around $p=G_\gamma(t_0, x, y)=\left(a(t_0) e^{is_1(t_0)} f_1(x), b(t_0) e^{is_2(t_0)} f_2(y)\right)$,
           we can assume that $\frac{ds_1}{dt}\cdot\frac{ds_2}{dt}\neq0$ on $(t_0-\delta, t_0+\delta)$
          and that $G_\gamma\, |_{(t_0-\delta, t_0+\delta)\times U_1\times U_2}$ is an embedding
          where $U_1, U_2$ are some open neighborhoods of $x,y$ respectively. 
   

%
%
\subsection{Local format of doubly spiral minimal products}\label{snd}
%
%
                                            Abbreviate $a(t)$,\, $b(t)$,\, $s_1(t)$,\, $s_2(t)$,\, $f_1(x)$,\, $f_2(y) $ to $a, b, s_1, s_2, f_1, f_2$
                                            and use $'$ for taking derivative in $t$.
                                            Denote the image 
                                           $G_\gamma\left( (t_0-\delta, t_0+\delta)\times U_1\times U_2 \right)$
                                            by $N$.
                                          For bases of tangent and normal spaces of $N$,
                                               we have the following.

                                            \begin{lem}
      Assume that spherical immersions $f_1$ and $f_2$ are both $\mathscr C$-totally real.
           Then,
            locally around 
               $p$ 
              in $\mathbb S^{n_1+n_2+1}$
              with $s'_1s'_2\neq0$,
 the tangent space of  $N$                    has 
                      orthonormal basis
                     \begin{equation}\label{tangent2}
                     \Big\{ 
                      \left(e^{is_1} e_1,0\right),\,\cdots,\, \left(e^{is_1} e_{k_1},0\right),\,
         \left(0,e^{is_2} v_1\right),\,\cdots,\, \left(0,e^{is_2} v_{k_2}\right), \, \tilde E
         \Big\}. 
         \end{equation}
         where 
              \begin{equation}\label{tE}
                                            \tilde E
                    = 
                                            \dfrac{G'_\gamma}
                                            {
                          \left
                          \|
                          G'_\gamma
                          \right
                          \|
                            }
 \text{ \ \ \ \ and\ \ \ \ }
 G'_\gamma={
                                               \bigg(\,
                   \Big  [
                         i a s'_1 
                         +
                         a'
                    \Big  ]
                         e^{is_1}
                         f_1, 
         \,
            \Big [
                         i b s'_2 
                         +
                         b'
             \Big ]
                      e^{is_2}
                         f_2
                                            \bigg) .
                      }
  \end{equation}
\end{lem}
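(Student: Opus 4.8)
The proof will run parallel to that of Lemma \ref{ltb}, the only new feature being the radial derivatives $a'$ and $b'$ entering the $t$-direction. The plan is to verify in turn that $\tilde E$ is well defined and tangent, that the remaining $k_1+k_2$ vectors are unit tangents, that all of them are mutually orthogonal and orthogonal to the position vector $G_\gamma$, and finally that they span $T_pN$ by a dimension count.

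First I would check that $\tilde E$ makes sense, i.e. $G'_\gamma\neq 0$. Writing $\gamma_1=ae^{is_1}$, $\gamma_2=be^{is_2}$, one has $G'_\gamma=\big(\gamma'_1 f_1,\gamma'_2 f_2\big)$ with $\gamma'_1=(a'+ias'_1)e^{is_1}$, $\gamma'_2=(b'+ibs'_2)e^{is_2}$, and since $\|f_1\|=\|f_2\|=1$ this gives $\|G'_\gamma\|^2=|\gamma'_1|^2+|\gamma'_2|^2=|\gamma'|^2>0$ because $\gamma$ is an immersion. Thus $\tilde E=G'_\gamma/\|G'_\gamma\|$ is a unit vector; being the velocity of the $t$-curve $t\mapsto G_\gamma(t,x,y)$ it is tangent to $N$, and since $\|G_\gamma\|\equiv 1$ it is automatically perpendicular to $G_\gamma$. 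The vectors $(e^{is_1}e_\bullet,0)$ and $(0,e^{is_2}v_\bullet)$ are the images, under the fixed-$t$ rigid rotations $e^{is_1}$ on the first $\mathbb C^{m_1+1}$ factor and $e^{is_2}$ on the second, of the orthonormal tangent frames \eqref{ts} of $f_1(U_1)$ and $f_2(U_2)$; since these rotations are isometries and differentiating the defining relation of $G_\gamma$ in the $M_1$- and $M_2$-directions only affects the respective factor, they are unit tangents to $N$, mutually orthogonal, and orthogonal to $G_\gamma$.

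It then remains to show $\tilde E$ is perpendicular to each of them. Taking $(e^{is_1}e_1,0)$ as a sample and using that multiplication by $i$ and by $e^{is_1}$ commute and act as rigid rotations of $\mathbb R^{n_1+1}$ (Remark \ref{RRA}, Proposition \ref{002}), I would split $\gamma'_1=a'e^{is_1}+as'_1\,e^{is_1}(i\,\cdot\,)$ and compute
\[
\|G'_\gamma\|\,\langle (e^{is_1}e_1,0),\tilde E\rangle=a'\langle e^{is_1}e_1,e^{is_1}f_1\rangle+as'_1\langle e^{is_1}e_1,e^{is_1}(if_1)\rangle=a'\langle e_1,f_1\rangle+as'_1\langle e_1,if_1\rangle=0,
\]
the first term vanishing because $e_1$ is tangent while $f_1$ is normal to $\mathbb S^{n_1}$, the second because $f_1$ is $\mathscr C$-totally real, so $if_1\perp Tf_1$ by \eqref{perp}. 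The computation is identical for the other $(e^{is_1}e_j,0)$ and, with $\gamma'_2$ and $if_2\perp Tf_2$, for the $(0,e^{is_2}v_j)$. Finally $N=G_\gamma\big((t_0-\delta,t_0+\delta)\times U_1\times U_2\big)$ has dimension $1+k_1+k_2$, and we have exhibited that many mutually orthonormal tangent vectors, so \eqref{tangent2} is an orthonormal basis of $T_pN$. I do not expect a genuine obstacle here; the only point needing care is the bookkeeping of the complex coefficient $ias'_1+a'$ (and $ibs'_2+b'$) together with the consistent use of the $\mathscr C$-totally real hypothesis and the rigid-rotation interpretation of $e^{is_j}$ and $i$.
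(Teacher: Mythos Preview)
Your proof is correct and follows essentially the same approach as the paper's own (very terse) proof, which simply says ``Similar to the proof of Lemma~\ref{ltb}. By Proposition~\ref{002}, it can be seen that $G'_\gamma$ is perpendicular to the rest elements in \eqref{tangent2}.'' Your version spells out the details that the paper leaves implicit, in particular the splitting $\gamma_1' = a'e^{is_1} + as_1' e^{is_1}(i\,\cdot\,)$ and the separate vanishing of the two inner products via $e_1\perp f_1$ and the $\mathscr C$-totally real hypothesis $e_1\perp if_1$.
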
        
  \begin{rem}
  When $a, b$ remain constant, basis \eqref{tangent2}  is exactly \eqref{tangent} in Lemma \ref{ltb}.
  \end{rem}
                  \begin{proof}
                  Similar to the proof of Lemma \ref{ltb}.
                  By Proposition \ref{002}, 
                  it can be seen that $G'_\gamma$ is perpendicular to the rest elements in \eqref{tangent2}.
                  \end{proof}

                    For simplicity, unlike the normalized $\eta_1$  in \eqref{eta} 
                    we use
                     \begin{equation}\label{eta_1}
                         \tilde \eta_1= 
                                    \left(
                                          -i b s'_2 e^{is_1} 
                                           f_1, 
         \,
                                       i a s'_1 e^{is_2} 
                                           f_2
                                       \right)
         .
                        \end{equation}
                        Set $\mathcal V=a'b -ab'$
                        and
                        recall that $\Theta=(as'_1)^2+(bs'_2)^2$.
                        Since $s_1's_2'\neq 0$, we use the following instead of previous $\eta_0$
\begin{equation}\label{te0} 
                        \tilde \eta_0=
                          \Big(be^{is_1}f_1, -ae^{is_2}f_2\Big) 
                         -
                          \frac{\mathcal V}
                        {         %
                         \Theta
                          }
                         \Big( ias'_1 e^{is_1}f_1, ibs'_2 e^{is_2}f_2\Big) .
\end{equation}

                     \begin{lem}\label{lnb2}
    Assume that spherical immersions $f_1$ and $f_2$ are $\mathscr C$-totally real.
           Then,
            locally around 
               $p$ 
              in $\mathbb S^{n_1+n_2+1}$
                            with $s'_1s'_2\neq0$,
         the normal space of $N$ has orthogonal basis 
        \begin{equation}\label{2normal}
             \Big\{  
                             \left(e^{is_1} \sigma_2,0\right),\,\cdots,\, \left(e^{is_1}  \sigma_{n_1-k_1},0\right),\,
                   \left(0, e^{is_2}  \tau_2\right),\,\cdots,\, \left(0, e^{is_2} \tau_{n_2-k_2}\right), \, \tilde \eta_1,\, \tilde\eta_0
           \Big\}
            .
        \end{equation}
   \end{lem}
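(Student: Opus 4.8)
The plan is to follow the pattern of the proof of Lemma \ref{lnb}, the steady–magnitude analogue, while carefully tracking the two extra radial derivatives $a'$ and $b'$ appearing in $G'_\gamma$. First I would note the dimension bookkeeping: $N$ is a $(k_1+k_2+1)$–dimensional submanifold of $\mathbb S^{n_1+n_2+1}$, so its normal bundle has rank $(n_1-k_1)+(n_2-k_2)$, which is exactly the number of vectors listed in \eqref{2normal}. Hence it suffices to verify three things for the listed vectors: (i) each is tangent to the sphere at $p$, i.e. orthogonal to the position vector $G_\gamma$; (ii) each is orthogonal to every vector of the tangent frame \eqref{tangent2}, in particular to $\tilde E$, i.e. to $G'_\gamma$ from \eqref{tE}; and (iii) they are mutually orthogonal. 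Throughout, Proposition \ref{002} lets me pull out the rotations $e^{is_1}$, $e^{is_2}$ as isometries, reducing every inner product to one in $\mathbb R^{n_1+1}$ or $\mathbb R^{n_2+1}$, where I use $\langle i f_j, f_j\rangle = 0$, $i f_1=\sigma_1$, $i f_2=\tau_1$ from \eqref{select}, and the orthonormality of the chosen frames.

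For the split vectors $(e^{is_1}\sigma_k,0)$ and $(0,e^{is_2}\tau_l)$ with $k,l\geq 2$, the verification is essentially identical to Lemma \ref{lnb}: orthogonality to $G_\gamma$, to the split tangent vectors $(e^{is_1}e_j,0)$, $(0,e^{is_2}v_j)$, and to each other is immediate, and the only new ingredient is that the radial part $a'e^{is_1}f_1$ of $G'_\gamma$ is still a multiple of $f_1$, hence still orthogonal to $\sigma_k$, so $(e^{is_1}\sigma_k,0)\perp G'_\gamma$ as well (and symmetrically on the second factor). For $\tilde\eta_1$ in \eqref{eta_1}, orthogonality to $G_\gamma$, to the split tangent vectors, and to the split normal vectors again follows from $i f_j\perp f_j$ and $i f_1=\sigma_1\perp e_j,\sigma_k$ (resp. $i f_2=\tau_1$); orthogonality to $G'_\gamma$ is a short computation in which the $a'$– and $b'$–contributions vanish because $\langle i f_j, f_j\rangle=0$, leaving $-ab\,s_1's_2' + ab\,s_1's_2'=0$.

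The one genuinely delicate point — and what I expect to be the main obstacle — is showing $\tilde\eta_0\perp\tilde E$, i.e. $\tilde\eta_0\perp G'_\gamma$. Writing $\tilde\eta_0=\big(be^{is_1}f_1,-ae^{is_2}f_2\big)-\tfrac{\mathcal V}{\Theta}\big(ia s_1' e^{is_1}f_1, ib s_2' e^{is_2}f_2\big)$ with $\mathcal V=a'b-ab'$ and $\Theta=(as_1')^2+(bs_2')^2$, one computes $\big\langle (be^{is_1}f_1,-ae^{is_2}f_2),\,G'_\gamma\big\rangle = ba'-ab' = \mathcal V$ (the $is_j'$–terms die by $\langle i f_j,f_j\rangle=0$) and $\big\langle (ia s_1' e^{is_1}f_1, ib s_2' e^{is_2}f_2),\,G'_\gamma\big\rangle = (as_1')^2+(bs_2')^2 = \Theta$ (now the $a',b'$–terms die). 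Thus the coefficient $\tfrac{\mathcal V}{\Theta}$ is forced: it is precisely what makes $\langle\tilde\eta_0,G'_\gamma\rangle=\mathcal V-\mathcal V=0$, and this is exactly why $\tilde\eta_0$ must be modified from the steady–case $\eta_0$. The remaining checks — $\tilde\eta_0\perp G_\gamma$ (using $a^2+b^2=1$ and $\langle i f_j,f_j\rangle=0$), $\tilde\eta_0\perp$ each split tangent and normal vector (as for $\tilde\eta_1$), and $\tilde\eta_0\perp\tilde\eta_1$ (both summands of $\tilde\eta_0$ pair to zero against $\tilde\eta_1$) — are then routine, and the rank count from the first paragraph closes the argument. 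Finally I would record that, unlike in Lemma \ref{lnb}, the frame \eqref{2normal} is only orthogonal and not orthonormal (for instance $\|\tilde\eta_1\|=\sqrt\Theta$ and $\|\tilde\eta_0\|=\sqrt{1+\mathcal V^2/\Theta}$); this is harmless and is tolerated on purpose to keep the subsequent shape–operator computations clean.
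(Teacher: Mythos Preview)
Your proposal is correct and follows essentially the same approach as the paper: reduce all inner products via the rotations $e^{is_j}$ (Proposition~\ref{002}) and the $\mathscr C$-totally real choice \eqref{select}, and identify the one nontrivial check as $\langle\tilde\eta_0,G'_\gamma\rangle=\mathcal V-\tfrac{\mathcal V}{\Theta}\cdot\Theta=0$. Your write-up is in fact more explicit than the paper's (dimension count, the three-part checklist, and the remark that \eqref{2normal} is only orthogonal rather than orthonormal), but the underlying argument is the same.
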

        \begin{rem}
        When $a$ and $b$ remain constant, up to normalizations of $\tilde \eta_1$ and $\tilde \eta_0$,
        this lemma goes back to Lemma \ref{lnb}.
        \end{rem}
                  \begin{proof}
                                      By  \eqref{select} and Proposition \ref{002},
                                      $\tilde \eta_1$
                                      and $\tilde \eta_0$
                                      are perpendicular to rest elements in \eqref{2normal}.
                                      Moreover, it is easy to see from \eqref{eta_1} and \eqref{te0} that 
                                      $\tilde \eta_1$ is orthogonal to $\tilde \eta_0$ as well.
                                      
                                    To finish the proof, 
                                    we need to verify that elements in \eqref{2normal} are perpendicular to those in \eqref{tangent2}.
                                    Again, by  \eqref{select} and Proposition \ref{002},
                                    elements except $\tilde \eta_1$  and $\tilde \eta_0$ in \eqref{2normal}
                                    are orthogonal to all elements of \eqref{tangent2},
                                    and it is easy to see that 
                                    $\tilde \eta_1$  and $\tilde \eta_0$
                                    are perpendicular to elements except $\tilde E$ in \eqref{tangent2}.
                                   Finally, direct computations
                                  show that
                                     $\left<\tilde \eta_1,  G'_\gamma\right>=0$ 
                                     and that
                                     $\big<\tilde \eta_0,  G'_\gamma\big>
                                        =  \left<\Big(be^{is_1}f_1, -ae^{is_2}f_2\Big) 
                                            -
                          \frac{\mathcal V}
                        {         %
                         \Theta
                          }
\Big( ias'_1 e^{is_1}f_1, ibs'_2 e^{is_2}f_2\Big),  G'_\gamma \right>=\mathcal V-  \mathcal V=0
$.
                  \end{proof}

              Note that \eqref{ddG1} and  \eqref{ddG2} are replaced  by
                    \begin{equation}\label{tddG1}
                  \frac{d^2 G_\gamma}{dt^2}
               \, \,   =\,\,
                          \mathcal D_{\left\|G'_\gamma\right\| \tilde E}{\left\|G'_\gamma\right\| \tilde E}
                 \, \, =\,  \,     \left\|G'_\gamma\right\|^2 \, \mathcal D_{\tilde E}\tilde E
                         +\ \left\|G'_\gamma\right\| 
                         \left( 
                             \mathcal D_{\tilde E}\left\|G'_\gamma\right\|
                             \right)\, \tilde E,
     \end{equation}     
         and
\begin{eqnarray}
       \ \ \ \ \           \frac{d^2 G_\gamma}{dt^2}
                 & =&
                     \bigg(
                         %
                           \Big[
                           i
                           (2a's'_1
                           +as''_1)
                           +a''
                           -a\cdot(s'_1)^2
                           \Big]
                            e^{is_1}
                           f_1
                           ,\,
 \label{tddG2}      \\ & &
            \ \ \ \ \ \ \ \    \ \ \ \  \ \ \   \ \ \   \ \ \ \ \ \ \ \ \ \ \ \    %
                           \Big[
                           i
                           (2b's'_2
                           +bs''_2)
                           +b''-b\cdot(s'_2)^2
                           \Big]
                           e^{is_2}
                           f_2
                       \bigg).
                       \nonumber
\end{eqnarray}
%
             The second fundamental form of $N$ can be described as follows.

   \begin{lem}\label{tL1}
         At $p$,
         it follows that, with respect to the rotated tangent basis \eqref{tangent2} of $N$,
                 \begin{equation*}\label{tR1}
                                                                    A_{\left(e^{is_1} \sigma_{k},0\right)} 
                                                                          =
                                                                          \begin{pmatrix}
                                                                          \frac{1}{a(t)}A
                                                                          _{\sigma_{k}} &  & * \\
                                                                           &O & * \\
                                                                      *     & * & 0
                                                                           \end{pmatrix}
                                                                           \, ,
%
  \text{\ \ \ \ \ \ \ \ \ \ \ \ \ \ \ \ \ \ \ \ \ \ \ \ \ \ \ \  \ \ \ \ \  } 
                 A_{\left(0,  e^{is_2}\tau_{l}\right)} 
                    =
                      \begin{pmatrix}
                      O &  & *\\
                      & \frac{1}{b(t)}A_{\tau_{l}} & * \\
                    *  & * & 0
                      \end{pmatrix}
%
                 , 
\end{equation*}
\begin{equation*}\label{tR3} 
 \   \ \  \ \     A_{\tilde \eta_1}
                         =
                                \begin{pmatrix}
                                -\frac{bs'_2}{a}A_{\sigma_1}  &  & *\\
                                & \frac{as'_1}{b}A_{\tau_1} & *\\
                                * &  * & \tilde \circledast
                                \end{pmatrix},
\text{\ \ \, \, \ \ \ \ }                        
     A_{\tilde \eta_0}
                         =
                                \begin{pmatrix}
                                -\frac{b}{a}
                                 I_{k_1}
                                -\frac{\mathcal Vs'_1}{\Theta}
                               A_{\sigma_1}  &  & *\\
                                & 
                                \frac{a}{b}I_{k_2}
                                 -\frac{\mathcal Vs'_2}{\Theta}
                                A_{\tau_1} & *\\
                                * &  * & \tilde \boxast
                                \end{pmatrix}, \ \ \ \ \text{ \ \ \ \ }
\end{equation*}
           %
           %
           %
           %
               %
               %
               where
                           $2\leq k\leq n_1-k_1$, 
                            $2\leq l\leq n_1-k_1$,
                            $\mathcal V=a'b-ab'$,
                            $\Theta=(as'_1)^2+(bs'_2)^2$,
                            $\tilde \circledast=\big<\mathcal D_{\tilde E}\tilde E, \tilde \eta_1\big>$
               and
                           $\tilde \boxast=\big<\mathcal D_{\tilde E}\tilde E, \tilde \eta_0\big>$ 
                           with
\begin{eqnarray}
                           \left\|G'_\gamma\right\|^2 \cdot \tilde \circledast
                            &=&
                                    -2 s'_1 s'_2 b^2 \left(\frac{a}{b}\right)'
                                    -ab{s'_2}^2 \left(\frac{s'_1}{s'_2}\right)', \ \ \ \ \ \ \ \ \ \ \ \ \ \ \ \,
\label{circled}
\end{eqnarray}
                             and
\begin{eqnarray}
               \ \ \ \ \ \  \ \ \   \ \ \    \ \     \ \ \ \ \         \left\|G'_\gamma\right\|^2 \cdot \tilde \boxast
                    &=&
                                       \Bigg\{
                                          \Big[
                                          a''-a(s'_1)^2
                                          \Big]
                                          b
                                                                                 -
                                                       \Big[
                                                       b''-b(s'_2)^2
                                                       \Big]
                                                       a
                                                                                 \bigg\}
\label{box}                    \\&&         \ \ \ \ \ \     \ \ \ \ \                                          -
                                       \frac{\mathcal V}{\Theta}
                                           \bigg\{
                                              \Big(2a's'_1+as''_1\Big)as'_1+\Big(2b's'_2+bs''_2\Big)bs'_2
                                          \bigg \} .
\nonumber
\end{eqnarray}
\end{lem}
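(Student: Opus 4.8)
The plan is to mimic the proofs of Lemmas \ref{L1} and \ref{L2}, working at the fixed point $p$ (where $a=a(t_0)$ and $b=b(t_0)$ are constants) and using the three Levi-Civita connections $\nabla$ on $N$, $\overline\nabla$ on $\mathbb S^{n_1+n_2+1}$ and $\mathcal D$ on $\mathbb R^{n_1+n_2+2}$. As in \eqref{n01}--\eqref{N3}, for tangent fields $X,Y$ and a (not necessarily unit) normal field $\eta$ of $N$ one has $\langle A_\eta X,Y\rangle=\langle\mathcal D_XY,\eta\rangle=-\langle\mathcal D_X\eta,Y\rangle$, and the shape operator is linear in $\eta$; thus each of the four operators in the statement will be assembled from such pairings.

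First I would handle the entries indexed by the two factor blocks. Splitting $X=(X_1,X_2)$ and $Y=(Y_1,Y_2)$ with $X_i,Y_i$ tangent to the rotated-and-scaled factors, the chain \eqref{N5}--\eqref{ffff} goes through verbatim, Propositions \ref{001} and \ref{002} replacing derivatives along $a\,e^{is_1}f_1(U_1)$ by $\tfrac1a$ times those along $f_1(U_1)$ (and similarly with $b$ for the second factor); this produces the $\tfrac1aA_{\sigma_k}$ and $\tfrac1bA_{\tau_l}$ blocks. Invoking the $\mathscr C$-totally real normalizations $if_1=\sigma_1$, $if_2=\tau_1$ from \eqref{select}, one rewrites $\tilde\eta_1=-bs_2'(e^{is_1}\sigma_1,0)+as_1'(0,e^{is_2}\tau_1)$ and $\tilde\eta_0=\eta_0-\tfrac{\mathcal V}{\Theta}\bigl(as_1'(e^{is_1}\sigma_1,0)+bs_2'(0,e^{is_2}\tau_1)\bigr)$ with $\eta_0=(be^{is_1}f_1,-ae^{is_2}f_2)$. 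Then linearity, the block formula applied with $k=l=1$, and the computation of $A_{\eta_0}$ from the proof of Lemma \ref{L2} give the stated blocks $-\tfrac{bs_2'}{a}A_{\sigma_1}$, $\tfrac{as_1'}{b}A_{\tau_1}$, $-\tfrac baI_{k_1}-\tfrac{\mathcal Vs_1'}{\Theta}A_{\sigma_1}$ and $\tfrac abI_{k_2}-\tfrac{\mathcal Vs_2'}{\Theta}A_{\tau_1}$. The cross terms between the two factor blocks vanish because a vector field supported in one factor has vanishing $\mathcal D$-derivative along a direction tangent to the other factor.

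It remains to treat the last row and column. Since $\|G_\gamma'\|^2=(as_1')^2+(a')^2+(bs_2')^2+(b')^2$ depends only on $t$ along $N$, for $X_1$ tangent to the first factor one has $\mathcal D_{X_1}\tilde E=\tfrac{1}{\|G_\gamma'\|}\mathcal D_{X_1}G_\gamma'=\tfrac{1}{\|G_\gamma'\|}\bigl((\tfrac{a'}{a}+is_1')X_1,\,0\bigr)$; as the factor-$1$ part of each of $\tilde\eta_1,\tilde\eta_0$ is a complex multiple of $e^{is_1}f_1$, and $X_1$ is orthogonal both to $e^{is_1}f_1$ (tangency to the round sphere) and to $ie^{is_1}f_1=e^{is_1}\sigma_1$ (the $\mathscr C$-totally real hypothesis), the pairings $\langle A_{\tilde\eta_1}X_1,\tilde E\rangle$ and $\langle A_{\tilde\eta_0}X_1,\tilde E\rangle$ vanish, and symmetrically for the second factor, so these $*$-entries may in fact be taken to be $0$ (or recorded as $*$ and computed as in Remark \ref{rem1}). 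Similarly, \eqref{tddG2} shows that $G_\gamma''$, and hence $\mathcal D_{\tilde E}\tilde E$ modulo $\tilde E$, lies in $\mathrm{span}\bigl\{(e^{is_1}f_1,0),(e^{is_1}\sigma_1,0),(0,e^{is_2}f_2),(0,e^{is_2}\tau_1)\bigr\}$, which is orthogonal to $(e^{is_1}\sigma_k,0)$ and $(0,e^{is_2}\tau_l)$ for $k,l\geq2$, yielding the zeros in the bottom-right corners of $A_{(e^{is_1}\sigma_k,0)}$ and $A_{(0,e^{is_2}\tau_l)}$.

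Finally, for $\tilde\circledast$ and $\tilde\boxast$ I would use $\tilde E\perp\tilde\eta_1$ and $\tilde E\perp\tilde\eta_0$ (shown in the proof of Lemma \ref{lnb2}), so that \eqref{tddG1} collapses to $\|G_\gamma'\|^2\tilde\circledast=\langle G_\gamma'',\tilde\eta_1\rangle$ and $\|G_\gamma'\|^2\tilde\boxast=\langle G_\gamma'',\tilde\eta_0\rangle$. Substituting \eqref{tddG2}, \eqref{eta_1}, \eqref{te0} and evaluating the real inner product on $\mathbb C^{m_1+1}\oplus\mathbb C^{m_2+1}$ by $\langle\alpha e^{is}f,\beta e^{is}f\rangle=\mathrm{Re}(\alpha\bar\beta)$ for a real unit vector $f$, one gets $\|G_\gamma'\|^2\tilde\circledast=-bs_2'(2a's_1'+as_1'')+as_1'(2b's_2'+bs_2'')$ and the two-brace expression on the right of \eqref{box} for $\|G_\gamma'\|^2\tilde\boxast$. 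The expected main obstacle is exactly this last bookkeeping: recognizing $b^2\bigl(\tfrac ab\bigr)'=\mathcal V$ and $(s_2')^2\bigl(\tfrac{s_1'}{s_2'}\bigr)'=s_1''s_2'-s_1's_2''$ to massage the $\tilde\circledast$-expression into the form \eqref{circled}, and assembling the braces of \eqref{box} by pairing the radial parts and the $\sigma_1/\tau_1$ parts of $G_\gamma''$ separately against the two summands of $\tilde\eta_0$.
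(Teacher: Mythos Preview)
Your proposal is correct and follows essentially the same approach as the paper: reduce the factor-block entries to the steady-magnitude Lemmas \ref{L1}--\ref{L2} via Propositions \ref{001}--\ref{002}, then read off the bottom-right corners by pairing $G_\gamma''$ from \eqref{tddG2} against the normals. The only noteworthy difference is that you go slightly further than the paper by showing the $*$-entries in the last row/column of $A_{\tilde\eta_1}$ and $A_{\tilde\eta_0}$ actually vanish, whereas the paper is content to leave them as unspecified $*$'s since only the traces matter for the minimality system.
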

                \begin{proof}
               Note that the ideas for shape operators in \S \ref{SMP} also work with varying $(a(t), b(t))$.
                                       Elements except in the last row or last column of  $A_{\left(e^{is_1} \sigma_{k},0\right)} $
                                                    $A_{\left(0,  e^{is_2}\tau_{l}\right)} $
                                                    and 
                                                    $A_{\tilde \eta_1}$
                                                    are clear by the proof of Lemma \ref{L1},
                                                                 and so are those except in the last row or last column  of $A_{\tilde \eta_0}$ by Lemma  \ref{L2}.
                                                    To determine elements in the bottom right corner of them,
                                                    we compute 
                                                    \begin{equation}\label{tocompute} 
                                                            \left< \frac{d^2}{dt^2}G_\gamma,\, \left(e^{is_1} \sigma_{k},0\right)\right>,\,\,
                                                             \left< \frac{d^2}{dt^2}G_\gamma,\, \left(0,  e^{is_2}\tau_{l}\right)\right>,\,\,
                                                              \left< \frac{d^2}{dt^2}G_\gamma,\, \tilde \eta_1\right>,\,\,
                                                               \left< \frac{d^2}{dt^2}G_\gamma,\, \tilde \eta_0\right> .
                                                      \end{equation}

                                                            By \eqref{tddG1}, \eqref{tddG2} and Proposition \ref{002},
                                                            it follows that the first two quantities of \eqref{tocompute} vanish
                                                            and so do the elements in the bottom right corner of 
                                                             $A_{\left(e^{is_1} \sigma_{k},0\right)} $
                                                             and
                                                    $A_{\left(0,  e^{is_2}\tau_{l}\right)} $.
                                                    By \eqref{tddG1}, \eqref{tddG2} and Proposition \ref{002}, the third term of \eqref{tocompute}
reads
             $$
             \left\|G'_\gamma\right\|^2 \cdot \tilde \circledast=
                             -2 a'b s'_1 s'_2
        -abs''_1s'_2 
                             + 2 a b' s'_1 s'_2
        +abs'_1s''_2
                             =
                                    -2 s'_1 s'_2 b^2 \left(\dfrac{a}{b}\right)'
                                    -ab{s'_2}^2 \left(\dfrac{s'_1}{s'_2}\right)',
              $$
                           while the last gives
                                                $$
                                              \left\|G'_\gamma\right\|^2 \cdot\, \tilde \boxast=
                                       \bigg\{
                                          \Big[
                                          a''-a(s'_1)^2
                                          \Big]
                                          b
                                                                                 -
                                                       \Big[
                                                       b''-b(s'_2)^2
                                                       \Big]
                                                       a
                                                                                 \bigg\}
                                                                                 -
                                       \frac{\mathcal V}{\Theta}
                                           \bigg\{
                                              \Big(2a's'_1+as''_1\Big)as'_1+\Big(2b's'_2+bs''_2\Big)bs'_2
                                          \bigg \}\, .
                                                $$            
             Hence, we finish the proof.
              \end{proof}

                Note that trace$(A_{\tilde \eta_1})=\tilde \circledast$ vanishes if and only if 
                        \begin{equation}\label{Coccurs}
                       2\bigg(\dfrac{a}{b}\bigg)^{-1}\bigg(\frac{a}{b}\bigg)'
                       +
                       \left(\dfrac{s'_1}{s'_2}\right)^{-1}\left(\frac{s'_1}{s'_2}\right)'=0,
                        \end{equation}
                        which, by integration, leads to
                                    \begin{equation}\label{circledno} 
                                   \frac{b^2}{a^2}
                                    =C \cdot\dfrac{s'_1}{ s'_2}
                                   \text{\ \ \ \  for some nonzero constant \ }C\in \R.
\end{equation}

                          Since \eqref{box} looks quite complicated and our concern is purely geometric
                          (whether or not an immersion is minimal is independent of choice of parametrization), 
                                 without loss of generality, 
                                           we  employ arc parameter $s$ of 
                                              curve $(a(t),\, b(t))$ in $\mathbb S^1\subset \C^1$ instead of $t$
                                              at the very beginning
                                                                                                    and use $\dot{\ }$ for derivative in $s$.
                                                    We can  assume that $a(s)=\cos s$ and $b(s)=\sin s$.
                                                    \footnote
                                                    {
                                                    Although it seems that we take a peculiar choice of parametrization in analysis, 
                                                    actually we do not lose any information (of solution $\gamma$ in local) from the geometric viewpoint.
                                                    In the current situation, 
                                                    curve $(a(t), b(t))$ has nonvanishing velocity in local.
                                                     Any immersed (connected piece of) curve has arc parameters
                                                     and,
                                                     compared with the canonical anti-clockwise arc parameter $s$ of $\mathbb S^1$
                                                     measuring oriented 
                                                     length from 
                                                     starting point $(1,0)$,
                                                     all arc parameters have to be 
                                                     $\pm s$ plus some constant.
                                                     }
                                                    This preference of parameter will dramatically reduce the difficulty of solving 
                                                    trace$(A_{\tilde \eta_0})=0$.
By $\dot a =-b$, $\dot b= a$ and \eqref{box},
                            $\left\|G'_\gamma\right\|^2\cdot$ trace$(A_{\tilde \eta_0})=0$ means
\begin{eqnarray}
       & & \left(
         1+ (a\dot s_1 )^2
         + (b\dot s_2 )^2
         \right)
         \left(-k_1\frac{b}{a}+k_2 \frac{a}{b}\right)
 \nonumber        \\
& &\ \ \ \ \ \ \ \ \ \ \ \ \ \ \ \ \ \ \ =
         ab
         \left[
         (\dot s_1)^2-(\dot s_2)^2
         \right]
 -
\frac{
             -2ab(\dot s_1)^2
             +a^2\dot s_1 \ddot s_1
             +2ab(\dot s_2)^2
             +b^2 \dot s_2 \ddot s_2
             %
}
         {(a\dot s_1)^2+(b\dot s_2)^2} .
          \label{boxno}
\end{eqnarray}
              Further simplification transforms \eqref{boxno} to
\begin{equation}\label{dotG}
        %
         -k_1\frac{b}{a}+k_2 \frac{a}{b}
                  -
          \frac{ab\left[
         (\dot s_1)^2-(\dot s_2)^2
         \right]}{(a\dot s_1)^2+(b\dot s_2)^2}
=-
                \frac{\dot \Theta}{2\Theta  \left(
         1+\Theta
         \right)}
             %
\end{equation}
 where $\Theta=(a\dot s_1)^2+(b\dot s_2)^2$.
           Relation \eqref{circledno} now gives us
\begin{equation}
              \left(\dot s_2\right)^2=\frac{C^2 a^4}{b^4}\left(\dot s_1\right)^2,
\end{equation}
           and hence, 
   \begin{equation}
                     \frac{
                     ab  
         \left[
         (\dot s_1)^2-(\dot s_2)^2
         \right]
         }
         {(a\dot s_1)^2+(b\dot s_2)^2}
=\frac{b^4-C^2 a^4}{ab(b^2+C^2a^2)}\, .
\end{equation}         
                                     So, by integrating double of \eqref{dotG}, we get
\begin{equation}\label{Gamma}
                                     \exp  {  \left\{2\displaystyle \int
                                             k_1\frac{b}{a}-k_2 \frac{a}{b}+ \frac{b^4-C^2 a^4}{ab(b^2+C^2a^2)}
                                             %
                                               \right\}
                                               }
                                                    =\dfrac{ \tilde C\,  \Theta}{1+\Theta}  ,
                                                    \text{\ \  for some  \ } \tilde C\in \R_{>0} .
\end{equation}
   As an antiderivative of 
            $
            2\frac{b^4-C^2 a^4}{ab(b^2+C^2a^2)}
            $
            is
            $
            \ln
                     \left|
                     1+(C^2-1)\cos^2 s
                     \right|
            -\ln
                          \big|
                          \cos^2 s \sin^2 s
                           \big|
                          \, 
            $,
after absorbing a positive constant from the left hand side of \eqref{Gamma} into $\tilde C$, 
          we obtain
 explicitly
\begin{equation}\label{Gexp}
            \displaystyle 
            \Theta
            =\dfrac
            { 
                   {
                      1+\big(C^2-1\big)\cos^2 s
                   }
              }
            {\tilde C  
                 \Big(\cos s\Big)^{2k_1+2}
                  \Big( \sin s\Big)^{2k_2+2}
                  \,
                   -
                   {
                      1-\big(C^2-1\big)\cos^2 s
                   }
            }
            \, .
\end{equation}
                                By \eqref{circledno},
                             up to $\pm$ sign    the expression \eqref{Gexp} produces
                                 
                \begin{equation}\label{dotsss}
                 \begin{pmatrix}
       \dot s_1\\
     \dot s_2
       \end{pmatrix}
       = 
                             \sqrt{\frac{1}{ 
                                   {\tilde C  
                 \Big(\cos s\Big)^{2k_1+2}
                  \Big( \sin s\Big)^{2k_2+2}
                  \,
                   -
                   {
                      1-\big(C^2-1\big)\cos^2 s
                   }
            }
            }
 }
           \,     
       \begin{pmatrix}
       \tan s\\
     C\cot s
       \end{pmatrix}.
                \end{equation}

     %
%
%
%
                 As a result, 
                        we  gain a local version of
                        part \ding{173} of Theorem \ref{main} about doubly spiral minimal products of varying magnitudes
                  in
                         the following.
\begin{thm}\label{P2}
                   There is a two-dimensional family of doubly spiral minimal local patches 
                   of varying magnitudes
                   depending on $C$ and $\tilde C$ 
                according to     
                   \eqref{Gexp} and \eqref{dotsss} 
into the target sphere. 
                  %
\end{thm}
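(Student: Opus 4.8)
\emph{Proof plan.} Lemmas \ref{tL1} and \ref{lnb2} already provide closed formulas for every shape operator of $G_\gamma$ in terms of $\gamma$, so the task is to solve the resulting system. The first step is to note that, under the standing hypothesis that $f_1$ and $f_2$ are $\mathscr C$-totally real \emph{minimal} immersions, minimality of $G_\gamma$ with the ansatz \eqref{2expression} is equivalent to the two scalar equations $\operatorname{tr}(A_{\tilde\eta_1})=0$ and $\operatorname{tr}(A_{\tilde\eta_0})=0$. Indeed, by Lemma \ref{tL1} the operators $A_{(e^{is_1}\sigma_k,0)}$ and $A_{(0,e^{is_2}\tau_l)}$ have traces $\tfrac1a\operatorname{tr}(A_{\sigma_k})$ and $\tfrac1b\operatorname{tr}(A_{\tau_l})$, which vanish since $f_1,f_2$ are minimal; the same minimality kills the $\operatorname{tr}(A_{\sigma_1})$ and $\operatorname{tr}(A_{\tau_1})$ contributions, so that $\operatorname{tr}(A_{\tilde\eta_1})=\tilde\circledast$ and $\operatorname{tr}(A_{\tilde\eta_0})=-\tfrac{b}{a}k_1+\tfrac{a}{b}k_2+\tilde\boxast$.

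Next I would integrate these two equations in turn. By \eqref{circled}, the equation $\tilde\circledast=0$ is \eqref{Coccurs}, which integrates to \eqref{circledno}, i.e. $b^2/a^2=C\,\dot s_1/\dot s_2$ for a nonzero constant $C$; this is the first free parameter. Passing to the arc parameter $s$ of the curve $(a,b)\subset\mathbb S^1$, so that $a=\cos s$ and $b=\sin s$ — legitimate because minimality is parametrization-independent and $(a,b)$ has nonvanishing speed in the local range under consideration — the equation $-\tfrac{b}{a}k_1+\tfrac{a}{b}k_2+\tilde\boxast=0$ becomes \eqref{boxno}; inserting $(\dot s_2)^2=C^2a^4b^{-4}(\dot s_1)^2$ from \eqref{circledno} reduces it to the separable first-order ODE \eqref{dotG} for $\Theta=(a\dot s_1)^2+(b\dot s_2)^2$. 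Integrating \eqref{dotG}, using the elementary antiderivative displayed just after \eqref{Gamma}, yields the explicit formula \eqref{Gexp} for $\Theta(s)$, with the integration constant $\tilde C\in\R_{>0}$ as the second free parameter; then \eqref{circledno} together with $\Theta=(a\dot s_1)^2+(b\dot s_2)^2$ fixes $(\dot s_1,\dot s_2)$ up to an overall sign exactly as in \eqref{dotsss}.

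Finally I would run the construction backwards, and this is where the only real content lies. For fixed $C\neq 0$, the numerator $1+(C^2-1)\cos^2 s$ of \eqref{Gexp} is a convex combination of $1$ and $C^2$, hence strictly positive on $(0,\tfrac\pi2)$; therefore $\Theta>0$ on a subinterval is equivalent to positivity of $D(s):=\tilde C(\cos s)^{2k_1+2}(\sin s)^{2k_2+2}-\bigl(1-(C^2-1)\cos^2 s\bigr)$. Since $(\cos s)^{2k_1+2}(\sin s)^{2k_2+2}>0$ on $(0,\tfrac\pi2)$, taking $\tilde C$ large enough makes $\{\,s:D(s)>0\,\}$ contain a nonempty open interval $I$; qualitatively this is the pendulum picture of Figures \ref{fig:1a}--\ref{fig:1b}, with $I$ the band bounded by the two roots of $D$. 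On $I$ the right-hand sides of \eqref{Gexp} and \eqref{dotsss} are real-analytic, $\dot s_1$ and $\dot s_2$ are nowhere zero (as $\tan s,\cot s$ do not vanish on $(0,\tfrac\pi2)$), and integrating them over a compact subinterval produces real-analytic $s_1(s),s_2(s)$, hence a doubly spiral curve $\gamma(s)=\bigl(\cos s\,e^{is_1(s)},\sin s\,e^{is_2(s)}\bigr)$ of genuinely varying magnitudes; it is an immersion since $\|G'_\gamma\|^2=1+\Theta>0$. Reversing the trace computation of Lemma \ref{tL1}, the associated $G_\gamma$ restricted to $I\times U_1\times U_2$ is a minimal immersion, and letting $(C,\tilde C)$ range over the open set of admissible pairs (for each $C\neq 0$, a nonempty open set of $\tilde C$) produces the asserted two-dimensional family, geometric distinctness of its members being postponed to \S\ref{MCR}. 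The principal obstacle is precisely the sign analysis of $D(s)$ — deciding for which $(C,\tilde C)$ the square root in \eqref{dotsss} is defined on a nonempty interval — but this is an elementary calculus question about the explicit function in \eqref{Gexp}, made transparent by the pendulum description.
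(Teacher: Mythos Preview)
Your proposal is correct and follows essentially the same approach as the paper. The paper's own proof is much terser---it simply observes that for any $C\neq 0$ one can take $\tilde C$ large enough that $\{\tilde C>P_C(s)\}\neq\emptyset$, whereupon the denominator in \eqref{Gexp} is positive and \eqref{dotsss} defines a solution curve segment---while you additionally recapitulate the derivation of \eqref{Gexp} and \eqref{dotsss} (which the paper carries out \emph{before} stating the theorem) and spell out why the resulting $\gamma$ is a genuine doubly spiral immersion of varying magnitudes.
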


            \begin{proof}
            Given any $C\neq 0\in \R$, we can choose  sufficiently large $\tilde C$ so that 
            $\left\{\tilde C> P_C(s)\right\}\neq \emptyset$
            where
            \begin{equation}\label{0Cfun}
      P_C(s): =\dfrac{ 1+\big(C^2-1\big)\cos^2 s}{
                  \Big(\cos s\Big)^{2k_1+2}
                  \Big( \sin s\Big)^{2k_2+2}
                  }\, .
\end{equation} 
                   Then, 
                   whenever the denominator of $\Theta=(a\dot{s}_1)^2+(b\dot{s}_2)^2$ in \eqref{Gexp} is positive,
                   \eqref{dotsss} has meanings
                   and  accordingly
                   induces a spiral minimal  local patch by a solution curve segment.
            \end{proof}

     {\ }
     
In the coming subsection we focus on local patch of the singly spiral case.
                    %
%
\subsection{Local format of singly spiral minimal products}\label{ind1} %
%
%

    Note that \eqref{circledno} works when $s'_1s'_2\neq 0$
    and that $C\neq 0$ in
                   $
                      b^2 {s_2'}
                                    =C {a^2}{s_1'}.
                   $
                              However, the value of $P_C(s)$ in \eqref{0Cfun} decreases in $C^2$.
                   Namely, if $\tilde C>P_{C_0}(s)$ over some open interval $(\alpha, \beta)$,
                   then $\tilde C>P_{C}(s)$ also holds 
                   over $(\alpha, \beta)$
                   for $C={\tt t}C_0$ where ${\tt t}\in [0,1)$.
                    So,
                    with a fixed $\tilde C$ and a common starting point $(\cos \alpha, \sin \alpha)$ with $s_1(\alpha), s_2(\alpha)\in \R$,
                    letting ${\tt t}\downarrow 0$
                    we can see that
                    the solution curves over $(\alpha, \beta)$ converge
                    and correspondingly
                    those doubly spiral minimal patches limit to 
                    a 
                    singly spiral minimal patch with  $s_2\equiv s_2(\alpha)$.
                      Now, 
                         \eqref{dotG} 
                       becomes
                        \begin{equation}\label{dotG2}
        %
         -(k_1+1)\frac{b}{a}+k_2 \frac{a}{b}
=-
                \frac{\dot \Theta}{2\Theta  \left(
         1+\Theta
         \right)}\, 
             %
\end{equation}
 in the arc parameter $s$ of $(a(t), b(t))$ with $(a(t), b(t))=(\cos s, \sin s)$ and 
 $\Theta=(a\dot s_1)^2$.
 Further, integrating double of \eqref{dotG2} produces
\begin{equation}\label{dotG3}
        %
         \Big(\cos s\Big)^{-2k_1-2}
         \Big( \sin s\Big)^{-2k_2}
         = \dfrac{\tilde C\, \Theta}{1+\Theta}
         \end{equation}
and thus
\begin{equation}\label{0dotG3}
\big(\dot s_1\big)^2
       =
         \dfrac{ 1}{\tilde C\, \Big(\cos s\Big)^{2k_1+4}
         \Big( \sin s\Big)^{2k_2}-\cos^2 s}
  \text{ \ \  for some } \tilde C >0\, .       
\end{equation}
%

Therefore, we obtain  a local version of
                        part \ding{174} in Theorem \ref{main} regarding singly spiral minimal products of varying magnitudes
                  in
                         the following.

\begin{thm}\label{P3}
With  vanishing $s'_2$, 
            there is a one-dimensional family of  singly spiral minimal local patches 
                of varying magnitudes
                   depending on $\tilde C$ 
                according to     
                   \eqref{0dotG3}
                   into the target sphere.
\end{thm}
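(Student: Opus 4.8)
The plan is to run the argument of Theorem~\ref{P2} in the degenerate regime $\dot s_2\equiv 0$; all the analytic content has already been isolated in \S\ref{ind1}. First I would record the minimality reduction. Working in the arc-length parameter $s$ of the curve $(a(t),b(t))=(\cos s,\sin s)$ in $\mathbb S^1$ (which alters neither the image nor the mean curvature vector of $G_\gamma$), Lemma~\ref{tL1} together with the minimality of $f_1$ and $f_2$ (so that $\text{trace}\,A_{\sigma_k}=\text{trace}\,A_{\tau_l}=0$) shows that the mean curvature vector of $G_\gamma$ is a combination of $\tilde\eta_1$ and $\tilde\eta_0$ only, with coefficients $\text{trace}(A_{\tilde\eta_1})=\tilde\circledast$ and $\text{trace}(A_{\tilde\eta_0})=-k_1\tfrac ba+k_2\tfrac ab+\tilde\boxast$. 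When $\dot s_2\equiv 0$ (hence $\ddot s_2\equiv 0$), every term contributing to $\tilde\circledast$ in the unsimplified form of \eqref{circled} carries a factor $\dot s_2$ or $\ddot s_2$, so $\tilde\circledast\equiv 0$ automatically; and the specialization of \eqref{box} reduces $\text{trace}(A_{\tilde\eta_0})=0$ to the single scalar ODE \eqref{dotG2}. Hence $G_\gamma$ is minimal precisely when \eqref{dotG2} holds along the curve.

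Next I would integrate: doubling \eqref{dotG2} and integrating once gives \eqref{dotG3}, and solving that algebraic relation for $\Theta=(a\dot s_1)^2$ produces the closed form \eqref{0dotG3},
\[
(\dot s_1)^2=\frac{1}{\tilde C\,(\cos s)^{2k_1+4}(\sin s)^{2k_2}-\cos^2 s},\qquad \tilde C>0 .
\]
It then remains to check that the right side is positive on a nonempty open subinterval of $(0,\tfrac\pi2)$ for a genuine range of $\tilde C$. Since $\tilde C(\cos s)^{2k_1+4}(\sin s)^{2k_2}>\cos^2 s$ is equivalent to $\tilde C>P_0(s)$ with $P_0$ the function in \eqref{P(s)}, and $P_0$ possesses (for $k_2>0$) a finite interior minimum $m_0=m_{k_1,k_2}$ attained at $s_0$, every $\tilde C>m_0$ yields a nonempty admissible interval $I$, collapsing to $\{s_0\}$ as $\tilde C\downarrow m_0$ (the steady-magnitude limit); when $k_2=0$ one has $\inf_{(0,\pi/2)}P_0=1$, so any $\tilde C>1$ works. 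On such an $I$, choose $\dot s_1=\pm\bigl(\tilde C(\cos s)^{2k_1+4}(\sin s)^{2k_2}-\cos^2 s\bigr)^{-1/2}$, which never vanishes — so $\tilde E$, $\tilde\eta_1$ and the normal frame \eqref{2normal} remain well-defined — integrate once more to obtain $s_1=s_1(s)$, and freeze $s_2$ at its constant value; the curve $\gamma(s)=\bigl(\cos s\,e^{is_1(s)},\sin s\,e^{is_2}\bigr)$ then induces a singly spiral minimal local patch of varying magnitudes by a solution curve segment. Letting $\tilde C$ vary over this half-line exhibits the asserted one-dimensional family.

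I do not expect a serious obstacle, since every computation rests on material already in place through Lemma~\ref{tL1} and the derivation of \eqref{0dotG3}; equivalently, one may view this patch as the ${\tt t}\downarrow 0$ limit, explained in \S\ref{ind1}, of the doubly spiral patches of Theorem~\ref{P2} along $C={\tt t}C_0$. The one step with real content — the one I would write out carefully — is the positivity/threshold analysis of the denominator of \eqref{0dotG3} through the critical behaviour of $P_0$ in \eqref{P(s)}, together with the routine verification that setting $\dot s_2\equiv 0$ is consistent with the frame of Lemma~\ref{tL1} (it is, on any interval where $\dot s_1\neq 0$, which \eqref{0dotG3} forces).
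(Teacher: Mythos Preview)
Your proposal is correct and follows essentially the same approach as the paper: reduce minimality to the single ODE \eqref{dotG2} once $\dot s_2\equiv0$ kills $\tilde\circledast$, integrate to \eqref{0dotG3}, and then check positivity of the denominator via $P_0(s)$, splitting into the cases $k_2>0$ (take $\tilde C>m_0$) and $k_2=0$ (take $\tilde C>1$). Your write-up is simply more explicit than the paper's one-line ``Similar to that of Theorem~\ref{P2}'' and already anticipates the frame subtlety addressed in Remark~\ref{P3R0}.
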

                        \begin{proof}
                        Similar to that of Theorem \ref{P2}.
                        When $k_2>0$, we take $\tilde C$ larger than  the minimal value $m_{0}=P_0(s_{0})$ of $P_0(s)$ 
                         described in \eqref{P(s)} of \S \ref{P}.
                        When $k_1\geq k_2=0$,
                        we take $\tilde C>1$.
                        Plug this $\tilde C$ into \eqref{0dotG3} for a singly spiral minimal local patch 
                        by a solution curve segment of varying magnitudes.
                        \end{proof}
\begin{rem}\label{P3R}
                     With vanishing $ s_2'$,
                     by checking our construction
                     there is no need to require $f_2$ to be $\mathscr C$-totally real
                      and then $k_2$ is allowed to take $0, 1,\cdots, n_2$.
                      \end{rem}

\begin{rem}\label{P3R0}
If one prefers to follow arguments in \S \ref{snd},
         then there are two cases.
          When $k_2<n_2$ with $s_2\equiv 0$,
        the normalization of  $\tilde \eta_1$ is $(0, e^{is_2}  \tau_1)$
        and
        $\tilde \circledast$
        vanishes automatically
        which results in \eqref{dotG2}.
         When $k_2=n_2$,
         the normal basis \eqref{2normal} needs to be replaced by
         $$
             \Big\{  
                             \left(e^{is_1} \sigma_2,0\right),\,\cdots,\, \left(e^{is_1}  \sigma_{n_1-k_1},0\right),\,
                   \tilde\eta_0
           \Big\}
            .
         $$
        Thus, the system of minimal surfaces now turns out to be the single equation \eqref{dotG2}.
           \end{rem}

                                        {\ }
                                        
                              \section{Global Construction of Spiral  Minimal Products of Varying Magnitudes}\label{S4}
                              In this section, we assemble suitable local patches gained in \S \ref{GSMP} together.
                              A key ingredient is to apply the beautiful extension result of minimal submanifolds by Harvey and Lawson \cite{HL0} 
                             which leads to a rotational reflection principle for our situation, see Lemma \ref{analytic} and Remark \ref{rrp}.
                              %
                              %
                              %
                              
                              %
                              %
                              %
                              In \S \ref{GSMP},
                              we construct spiral minimal local patches 
                              by 
                              \eqref{dotsss} 
                              for $C\neq 0$ and 
                              \eqref{0dotG3} when $C=0$.
                              More precisely, once $C$ is fixed, 
                              in order for $\Theta>0$
                              with an allowed $\tilde C$
                              one can figure out a maximally defined connected interval $\Omega_{C, \tilde C}$ of $s$ in $(0, \frac{\pi}{2})$
                              for
\begin{equation}\label{4.1}
                                {\tilde C  
                 \Big(\cos s\Big)^{2k_1+2}
                  \Big( \sin s\Big)^{2k_2+2}
                  \,
                   -
                   {
                      1-\big(C^2-1\big)\cos^2 s
                   }
            }>0.
\end{equation}
                                    When $C\neq 0$ or $C=0$ but $k_2\neq 0$,
                                    the $\Omega_{C, \tilde C}=\big(z^{C, \tilde C}_L, \,    z^{C, \tilde C}_R \big)\Subset \left(0,\frac{\pi}{2}\right)$;
                                     when $C=k_2=0$,
                                 the  $\Omega_{\, 0, \tilde C}=(0, z^{0, \tilde C}_R)\subsetneq (0,\frac{\pi}{2})$.
 \begin{figure}[h]
	\centering
	\begin{subfigure}[t]{0.48\textwidth}
		\centering
		\includegraphics[scale=0.5]{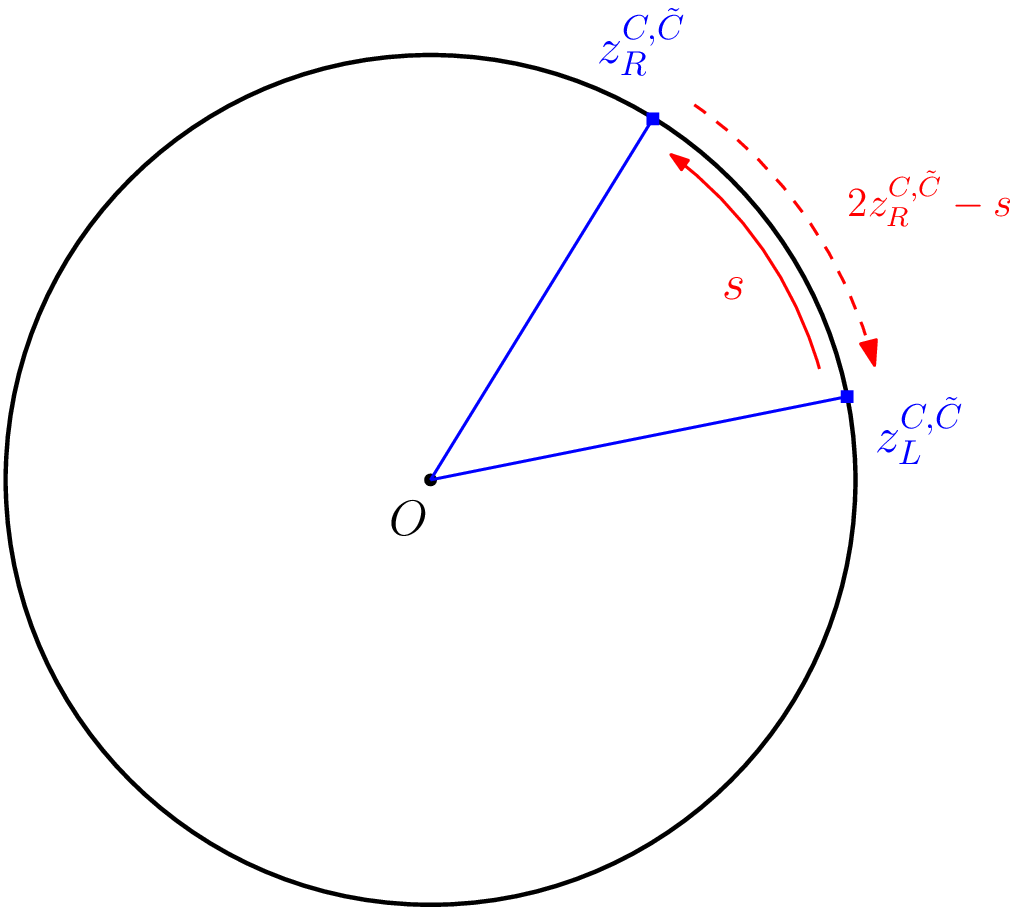}
                              \captionsetup{font={scriptsize}} 
                               \caption{$C\neq 0$ or $C=0$ but $k_2\neq 0$ \ \ \ \  \ \ \ \ }
                               \label{fig:2a}
	\end{subfigure}
	\begin{subfigure}[t]{0.48\textwidth}
		\centering
	 \includegraphics[scale=0.5]{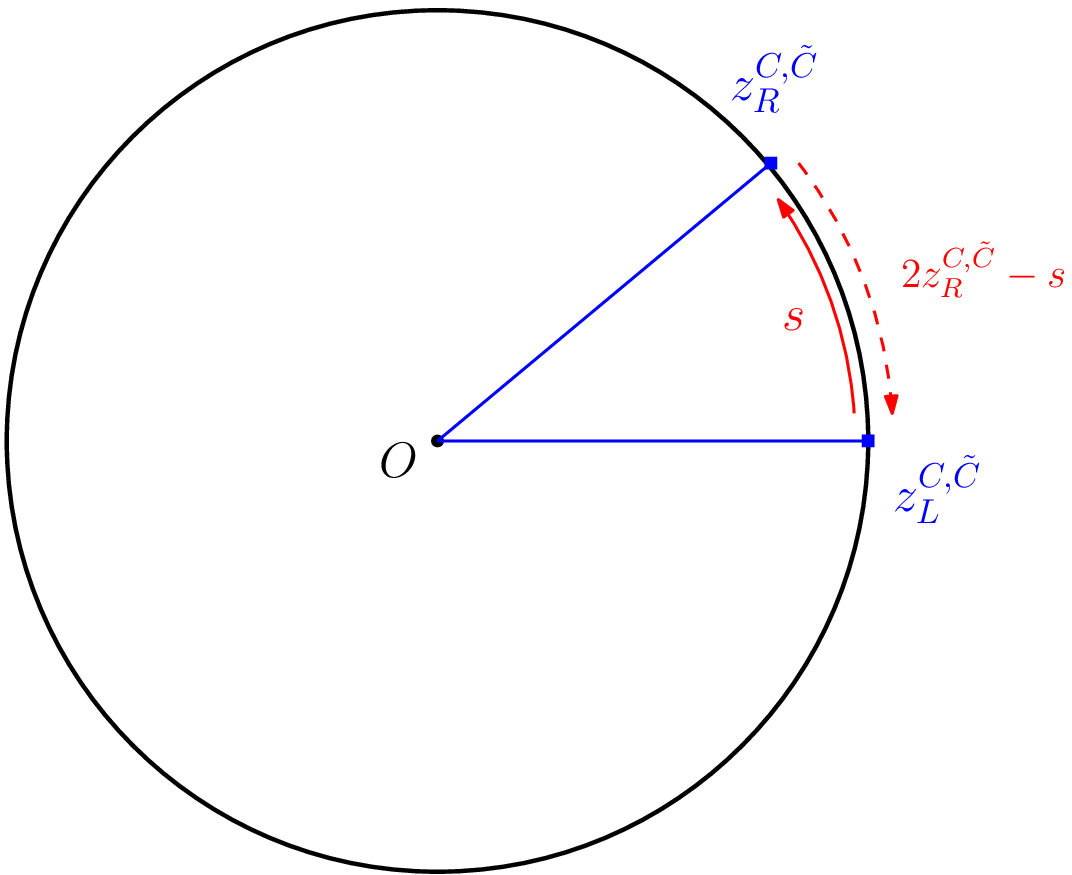}
                           \captionsetup{font={scriptsize}} 
                               \caption{$C=k_2=0$ 
                                \ \ \ \ \ \ \ \ \ \ }\label{fig:2c}
	\end{subfigure}
	 \caption{Basic domains of different pendulum situations}\label{pend}
\end{figure}
                  Now we can flip the basic domain $\Omega_{C, \tilde C}^0=\Omega_{C, \tilde C}$ repeatedly to pave the entire real line as follows.
                  Denote by $\Omega_{C, \tilde C}^{1}$ 
                  the interval $\big(z^{C, \tilde C}_{L, 1},\, z^{C, \tilde C}_{R, 1}\big)=\big(z^{C, \tilde C}_R,\, 2 z^{C, \tilde C}_R-z^{C, \tilde C}_L\big)$
                             which is the image of $\Omega^0_{C, \tilde C}$ under the reflection $\iota^R_0: s\mapsto 2 z^{C, \tilde C}_R-s$.
                                   We define $\iota^R_k$ inductively for $\Omega_{C, \tilde C}^{k}$
                                    and  $\iota^L_{k+1}$ for $\Omega_{C, \tilde C}^{k+1}$ to be $\big(\iota^R_k\big)^{-1}$  for every $k\in \mathbb N$.
                              
                                      Before proceeding, 
                                      we need to mention a bit more.
                                       At ending point(s) of $\Omega_{C, \tilde C}^0$ in the interior of $(0,\frac{\pi}{2})$, 
               the slope of \eqref{4.1}  is nonzero.
                              So, around each of them, up to finite constants, $\dot s_{1}$ and $\dot s_{2}$ dominated by \eqref{dotsss}  
                              behave like $\frac{1}{\sqrt x}$ around $x=0$.
         Therefore, 
         $$ 
         \lim_{s\downarrow z^{C, \tilde C}_L}s_{l}(s)
         \text{ and}
         \lim_{s\uparrow z^{C, \tilde C}_R} s_{l}(s)
         \text{ exist of finite values} \
         L_{l}^{0,-}
         \text{ and }
         L_{l}^{0,+}
         \text{ respectively for } l=1,2.
         $$
         When $k_2=0$ and $z_L^{0, \tilde C}=0$, the same finiteness of the limit at $z_L^{0, \tilde C}=0$ also holds. 
             As a result, 
             we eliminate
             the possibility that a spiral minimal patch may have infinitely many windings as $s$ approaches $\p\Omega_{C, \tilde C}^0$. 
                 Denote such a solution curve segment for $G_\gamma$ to be minimal over the basic domain $\Omega_{C, \tilde C}^0$ by $\gamma^0$.
                 
                 We discover 
                that the $\gamma^0$-spin can survive through the ending points of $\Omega_{C, \tilde C}^0$.
                     To see this, 
                            define $(a(s), b(s))=\left(a\big(\iota^L_1(s)\big), b\big(\iota^L_1(s)\big)\right)$
                            for $s\in \Omega_{C, \tilde C}^{1}$
                            and 
                            define
\begin{equation}\label{Dext}
                                                       s_l(s)
                                                               =L_{l}^{0,+}+\mathlarger{\int}_{\iota^L_1(s)}^{z^{C, \tilde C}_{R}} \dot{s_l}\left(\zeta\right)\, d\zeta\, , \ \ \text{ for } l=1,2,
\end{equation}
                                                       where $\dot{s_l}(\cdot)$ is defined by \eqref{dotsss} over $\Omega_{C, \tilde C}^{0}$.
                            Then, on $\Omega_{C, \tilde C}^{0}$,
                            $$
                            \frac{d s_l(s)}{ds}=\dot{s_l}\left(\iota^L_1(s)\right)
                            $$
                            and
                                  $\gamma^1$ 
                                  satisfying 
                                  \eqref{dotsss}
                                    over $\Omega_{C, \tilde C}^{1}$
                            with initial    data                    $
                            a\big(z^{C, \tilde C}_R\big), b\big(z^{C, \tilde C}_R\big)
                            $
                            and 
                            $s_{l}\big(z^{C, \tilde C}_R\big)=   L_{l}^{0,+}$
                                       is defined.
Up to an ambient isometry (i.e., a composition of suitable separate rotations in two $\C$-components of $\C\oplus\C$), 
one can assume that both $    s_1\big(z^{C, \tilde C}_{R}\big)=L_{1}^{0,+}$ and 
                   $    s_2\big(z^{C, \tilde C}_{R}\big)=L_{2}^{0,+}$ vanish.
                                                               Then, 
                                                               for any $0<\delta< z^{C, \tilde C}_{R}-z^{C, \tilde C}_{L}$,
                                                               the extension \eqref{Dext}
                                                               implies that
                                                               $
                                                              s_l\big(z^{C, \tilde C}_R-\delta\big)
                                                                 +
                                                        s_l\big(z^{C, \tilde C}_R+\delta\big)=0.
$
Now, $\gamma^1$ is exactly the complex conjugate mirror of $\gamma^0$. 
                                     In this way, we extend the spiral minimal open patch on $\Omega_{C, \tilde C}^{0}$ to its adjacent domain 
                                     $\Omega_{C, \tilde C}^{1}$ in a $C^1$ manner along their common
                                     border ${\tt A}=  \left(a(s)\, e^{i s_1(s)}f_1(M_1),
                                 \, b(s)\, e^{i s_2(s)} f_2(M_2)\right)|_
                                       {s=z^{C, \tilde C}_{R}}$.             
                                     
                                     Geometrically in $\mathbb S^3$, what we have done is in fact taking a $\pi$-rotation
                                     of the solution segment $\gamma^0$ 
                                     about the direction by 
                                     $
                                     q=\left(
                                                     a(s)\, e^{i s_1(s)}, 
                                                     \, 
                                                     b(s)\, e^{i s_2(s)}
                                       \right)\, \big|_
                                       {s=z^{C, \tilde C}_{R}}
                                     $
                                     and hence
                                     the resulting open curve segment $\gamma^1$ corresponding to $\Omega_{C, \tilde C}^{1}$  
                               is sewed with $\gamma^0$ at $q$ in a $C^1$ manner.
                               Denote the $C^1$ joint curve segment by $\tensor[^0]{\gamma}{^1}$.

                               \begin{lem}\label{analytic}
                               $\gamma^1$ is sewed with $\gamma^0$ analytically through $q$.
                                       \end{lem}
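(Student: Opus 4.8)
The plan is to deduce the real-analyticity of the sewing from the extension theorem for minimal submanifolds of Harvey and Lawson \cite{HL0}, applied as a rotational reflection principle. The construction just above already casts the situation in this form: after the ambient normalization making $s_1(z^{C,\tilde C}_R)=s_2(z^{C,\tilde C}_R)=0$, formula \eqref{Dext} exhibits $\gamma^1$ as the image of $\gamma^0$ under the $\pi$-rotation of $\mathbb S^3$ about $q$ composed with the parameter reversal $s\mapsto 2z^{C,\tilde C}_R-s$; consequently the spiral minimal patch $N^1$ carried by $\gamma^1$ is a rotational reflection of the patch $N^0$ carried by $\gamma^0$, across their common border ${\tt A}$. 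Since $N^0$ is real-analytic in its interior by Morrey's regularity \cite{M1,M2}, so is $N^1$; the border ${\tt A}$, being a constant product of the analytic immersions $f_1,f_2$, is real-analytic; and the two patches carry the same $1$-jet along ${\tt A}$ — this is what the $C^1$-matching established above says.

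To invoke \cite{HL0} I must verify that $N^0$ is $C^1$ up to ${\tt A}$ and meets, orthogonally along ${\tt A}$, the totally geodesic locus of the reflection. The delicate point is that the arc-length parameter $s$ of $(a(t),b(t))$ degenerates at $s=z^{C,\tilde C}_R$: there $\dot s_1,\dot s_2\to\infty$ while $s_1,s_2$ stay finite, by \eqref{dotsss} and the finiteness of the limits $L^{0,\pm}_l$ recorded above. So the check has to be carried out after reparametrizing $\gamma^0$ by its own arc length, and then \eqref{dotsss} shows that the unit tangent of $\gamma^0$ at $q$ is proportional to the purely imaginary vector $i\bigl(\sin z^{C,\tilde C}_R,\ C\cos z^{C,\tilde C}_R\bigr)$, hence is normal to the reflecting locus, whereas the remaining tangent directions $\left(e^{is_1}e_i,0\right)$ and $\left(0,e^{is_2}v_j\right)$ of $N^0$ along ${\tt A}$ lie inside it. This is the orthogonal-meeting condition.

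Granting this, Harvey--Lawson's theorem gives that $N^0$ extends across ${\tt A}$ to a real-analytic minimal submanifold, and by construction the extension is the reflected union $N^0\cup N^1$; equivalently $N^0\cup N^1$ and the analytic extension are both minimal with the same analytic Cauchy data on ${\tt A}$, so they coincide. Reading off the profile curve, $\tensor[^0]{\gamma}{^1}$ is real-analytic through $q$, which is the assertion.

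The main obstacle is the second step: identifying the rotational reflection and its fixed locus correctly and confirming the orthogonal meeting \emph{at} $q$, where the natural parameter breaks down; everything there rests on pinning down the limiting tangent of $\gamma^0$ at $q$ from \eqref{dotsss}. As a cross-check independent of \cite{HL0}, one can take $s_1$ as the independent variable near $q$: \eqref{dotsss} gives $\dfrac{ds_2}{ds_1}=C\cot^2 s$ and $\dfrac{ds}{ds_1}=\cot s\,\sqrt{\,\tilde C(\cos s)^{2k_1+2}(\sin s)^{2k_2+2}-1+(C^2-1)\cos^2 s\,}$, whose radicand vanishes simply at $z^{C,\tilde C}_R$, so that $\rho:=\sqrt{\,z^{C,\tilde C}_R-s\,}$ converts the system into an analytic, non-degenerate ODE; its unique solution is analytic in $s_1$, and the symmetry $(s_1,\rho,s_2)\mapsto(-s_1,-\rho,-s_2)$ of that ODE identifies its continuation past $s_1=0$ with $\gamma^1$. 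We nonetheless follow the route through \cite{HL0}, which is uniform in $(C,\tilde C)$.
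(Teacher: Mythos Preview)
Your main argument invokes a reflection principle requiring ``the totally geodesic locus of the reflection'' in the ambient sphere, but no such ambient isometry exists in general. The rotational reflection acts only on $\gamma\subset\mathbb S^3$; for it to lift to an isometry of $\mathbb S^{n_1+n_2+1}$ sending $N^0$ to $N^1$ one would need a linear $\Phi$ with $\Phi(e^{i\theta}f_1(x))=e^{-i\theta}f_1(x)$ for all $\theta,x$, which forces $\Phi=\mathrm{id}$ on the real span of $f_1(M_1)$ and $\Phi=-\mathrm{id}$ on that of $i\cdot f_1(M_1)$. This is consistent only when $f_1(M_1)$ spans a totally real subspace --- far stronger than $\mathscr C$-totally real (e.g.\ the Legendrian circle $\theta\mapsto\tfrac{1}{\sqrt2}(e^{i\theta},e^{-i\theta})$ in $\mathbb S^3$ spans all of $\mathbb C^2$). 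So the orthogonality you verify for $\tilde E$ at $q$, while correct as a computation, is not the hypothesis of any applicable reflection theorem; your ``totally geodesic locus'' has not been identified.

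The paper sidesteps this. It takes the already-formed $C^1$ immersion $G_{\tensor[^0]{\gamma}{^1}}$, notes that it is minimal off the codimension-one border ${\tt A}$, that ${\tt A}$ has locally finite $(k_1+k_2)$-Hausdorff measure, and applies Theorem~1.4 of \cite{HL0} --- the removable-singularity/extension theorem, not Corollary~1.1 --- to conclude minimality across ${\tt A}$; analyticity then follows from Morrey or Allard. The rotational reflection is used only to manufacture $\gamma^1$ and hence the $C^1$ join, never as an ambient symmetry (the paper's own Remark after the lemma makes this explicit).

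Your ODE cross-check, by contrast, is sound and self-contained, modulo a sign slip: the radicand should be $-1-(C^2-1)\cos^2 s$ as in \eqref{4.1}. Passing to $s_1$ as parameter and then to $\rho$ yields an analytic non-singular system whose $(s_1,\rho,s_2)\mapsto(-s_1,-\rho,-s_2)$ symmetry forces the unique solution through the origin to continue past $s_1=0$ precisely as $\gamma^1$. That argument stands on its own and is a legitimate alternative to the paper's route; you should promote it to be the main proof rather than a cross-check.
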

                                       \begin{proof}
                                     Recall that \eqref{dotsss} is derived from the system of minimal surface equations.
                                     So both $G_{\gamma^0}$ and $G_{\gamma^1}$ are minimal.
                                    As $G_{\tensor[^0]{\gamma}{^1}}$
                                     is a   $C^1$ submanifold of dimension $k_1+k_2+1$ in the target sphere,
                               and
                              $                    
                                 {\tt A}
                                                                        $
                              has local finite Hausdorff $(k_1+k_2)$-measure,
                               we can apply 
                                    the powerful extension result $-$ Theorem 1.4 in \cite{HL0}
                                     to deduce that $G_{\tensor[^0]{\gamma}{^1}}$ is minimal 
                                     over the entire interior part of $\overline {\Omega_{C, \tilde C}^{0}\bigcup \Omega_{C, \tilde C}^{1}}\times M_1\times M_2$.
                                Since the ambient space is analytic, 
                                by Allard \cite{Allard} or Morrey \cite{M1, M2},
                                the $C^1$ patch $G_{\tensor[^0]{\gamma}{^1}}$ turns out to be analytic, and in particular it is analytic through ${\tt A}$.
                                     Namely,
                                     $\gamma^0$ and $\gamma^1$ are  joint together through $q$ analytically. 
                                       \end{proof}
                               
                               \begin{rem}\label{rrp}
                               A punch line for $\tensor[^0]{\gamma}{^1}$ to be $C^1$ differentiable
                               is that $a$ and $b$ attain their extremal values simultaneously at $z^{C, \tilde C}_{R}$.
                               This rotational reflection principle in some sense generalizes  Corollary 1.1 of \cite{HL0}.
                          Here we  replace ambient space reflection in \cite{HL0} by the rotational reflection on the generating curve segment $\gamma^0$.
                               \end{rem}

                                     \begin{thm}\label{ltog}
                                     Every spiral minimal local patch in Theorems \ref{P2} and \ref{P3}
                                     can develop to a globally defined spiral minimal product.
                                     Hence, Theorem \ref{main} is true.
                                                                          \end{thm}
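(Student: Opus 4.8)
The plan is to promote the single rotational reflection of Lemma \ref{analytic} — and, at the one exceptional border where $b$ hits zero, a genuine ambient reflection — to an infinite iteration that tiles the whole real line, so that the local patches of Theorems \ref{P2} and \ref{P3} glue along analytic seams into one globally defined minimal immersion $G_\gamma$.

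First I would fix $C$ and an admissible $\tilde C$: for type \ding{173} any $C\neq 0$ with $\tilde C$ large enough that $\{\tilde C>P_C(s)\}\neq\emptyset$, and for type \ding{174} the value $C=0$ with $\tilde C>m_0$ (resp. $\tilde C>1$ when $k_2=0$); then I pick a solution segment $\gamma^0$ of \eqref{dotsss} (resp. \eqref{0dotG3}) over the basic domain $\Omega_{C,\tilde C}^0$. Two ingredients are already available: the one-sided limits $L_l^{0,\pm}=\lim s_l$ at the interior endpoints of $\Omega_{C,\tilde C}^0$ are finite, since the integrands of \eqref{dotsss} blow up only like $(\cdot)^{-1/2}$ so $\gamma^0$ does not wind infinitely often; and the reflections $\iota_k^R,\iota_k^L$ tile $\mathbb R$ by isometric copies $\Omega_{C,\tilde C}^k$ of $\Omega_{C,\tilde C}^0$. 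I would then construct $\gamma^k$ over $\Omega_{C,\tilde C}^k$ by induction on $|k|$, extending via formula \eqref{Dext} and its left-endpoint analogue, so that each freshly attached segment is the complex-conjugate mirror of its predecessor about the common border point $q_k$; since $a=\cos s$ and $b=\sin s$ attain their extrema simultaneously at every $q_k$, the concatenated image $G_{\tensor[^{0}]{\gamma}{^{k+1}}}$ is a $C^1$ submanifold across the corresponding border slice (this is the rotational reflection principle of Remark \ref{rrp}). Feeding that single seam into the proof of Lemma \ref{analytic} — Theorem 1.4 of \cite{HL0} followed by the regularity of Allard \cite{Allard} and Morrey \cite{M1,M2} — upgrades the $C^1$ matching to real-analytic matching; and because all the seams are isometric images of the first, the induction runs without new input. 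After reparametrizing $\gamma$ by its arc length in $\mathbb S^3$, which remains finite across each seam, the segments assemble into a single real-analytic immersion $\gamma\colon\mathbb R\to\mathbb S^3$, so that $G_\gamma\colon\mathbb R\times M_1\times M_2\to\mathbb S^{n_1+n_2+1}$ is a globally defined immersion, minimal on the interior of every patch by construction and, by the extension argument applied at every seam, minimal there as well — hence a globally defined spiral minimal product.

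The only seam not of the rotational type occurs when $C=k_2=0$, where the left endpoint of $\Omega_{0,\tilde C}^0$ is $s=0$, at which $b=\sin s$ vanishes and the whole $M_2$-factor degenerates into $\mathbb S^{n_1}\times\{0\}$. There I would verify that the profile $(a,b)=(\cos s,\sin s)$ meets the great circle $\mathbb S^1\times\{0\}$ transversally — equivalently that $\dot s_1$ stays bounded as $s\downarrow 0$, which is precisely the hypothesis $\tilde C>1$ of Theorem \ref{P3} — and that the degenerate slice $f_1(M_1)\times\{0\}$ carries locally finite $(k_1+k_2)$-dimensional Hausdorff measure. Then the ambient reflection across $\mathbb R^{n_1+1}\times\{0\}$ in $\mathbb R^{n_1+n_2+2}$ (that is, $(z_1,z_2)\mapsto(z_1,-z_2)$ on $\mathbb C^{m_1+1}\oplus\mathbb C^{m_2+1}$) lets me invoke Corollary 1.1 of \cite{HL0} to continue $G_\gamma$ analytically past $s=0$, after which the rotational induction above takes over at the remaining seams.

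This settles the first assertion of the theorem, and Theorem \ref{main} follows immediately: type \ding{172} is furnished by \S\ref{SMP} (Theorem \ref{T1} when $k_1,k_2\geq 1$, and Remark \ref{Csing} when one index is $0$), type \ding{173} by the global doubly spiral products obtained above from Theorem \ref{P2}, and type \ding{174} by the global singly spiral products obtained from Theorem \ref{P3} (with $k_2$ unrestricted, by Remark \ref{P3R}); in each of the last two cases the parameters $(C,\tilde C)$, respectively $\tilde C$, range over an uncountable set, distinct values yield distinct $\gamma$, and each such $\gamma$ truly has varying magnitudes because $\tilde C$ is chosen strictly above the relevant threshold, making the pendulum amplitude $z_R-z_L$ positive. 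I expect the crux to be the verification, at each of the two kinds of seam, of the hypotheses of the extension results of \cite{HL0} — local finiteness of the Hausdorff measure of the gluing set, and the $C^1$ matching coming from the simultaneous extrema of $a$ and $b$ — after which what remains is the bookkeeping needed to coalesce the countably many reflected patches into one real-analytic object.
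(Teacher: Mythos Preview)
Your iterated rotational-reflection argument, glued via Lemma \ref{analytic}, is exactly the paper's approach and is correct at every interior seam. The gap is in your treatment of the exceptional seam $s=0$ when $C=k_2=0$. To invoke Corollary~1.1 of \cite{HL0} with the reflection $(z_1,z_2)\mapsto(z_1,-z_2)$, the conormal $\tilde E$ to the boundary slice of $G_\gamma$ must lie in $\{0\}\times\mathbb C^{m_2+1}$; but at $s=0$ one has (normalizing $s_1(0)=0$)
\[
\tilde E\ \propto\ \big(\,i\,\dot s_1(0)\,f_1(x),\ f_2(y)\,\big),
\]
and the very boundedness you correctly note---$\dot s_1(0)^2=(\tilde C-1)^{-1}\neq 0$---forces a nonzero first component. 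So $G_\gamma$ does \emph{not} meet the reflecting hyperplane orthogonally, the reflected piece fails to match $C^1$, and Corollary~1.1 does not apply.

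The correct continuation is simpler than any reflection. With $k_2=0$ the right-hand side of \eqref{0dotG3} contains no $(\sin s)$-factor and is analytic and positive on a full neighborhood of $s=0$; hence $\dot s_1$, and with it $\gamma(s)=\big(\cos s\cdot e^{is_1(s)},\,\sin s\cdot e^{is_2}\big)$, extends analytically through $s=0$ as a solution of the same ODE---the second component merely changes sign. Since $\dot b(0)=1$ supplies the nonzero $f_2$-term in $\tilde E$, the map $G_\gamma$ remains an immersion there, and minimality across $s=0$ follows by analyticity (equivalently, \eqref{dotG2} with $k_2=0$ reads $-(k_1+1)\tan s=-\dot\Theta/\big(2\Theta(1+\Theta)\big)$, which is regular at $s=0$). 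No reflection principle is needed at this seam.
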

                                                        \begin{proof}
                                                        Via repeated rotational reflections, $\gamma^0$ over the basic domain $\Omega^0_{C, \tilde C}$ extends itself geometrically to a solution curve (see Figure \ref{fig:000})
                                                        $$\gamma=\bigcup_{\cdots\,  -1, 0, 1, \cdots \, \,}
                                                          \overline{\,\gamma^l\,} \, \, \subset \mathbb S^3$$
                                                        with infinite length in both directions
                                                        and Lemma \ref{analytic} guarantees corresponding pieces assembled analytically.
                                                        
                                                        Part \ding{172} of Theorem \ref{main} has been shown in \S \ref{poc}.
                                                        Now, the global constructions for \ding{173} and \ding{174} have been confirmed. 
                                                        Hence, Theorem \ref{main} holds.
                                                        \end{proof}
                                                         \begin{rem}
                              Here we consider $\Omega_{C, \tilde C}^{l}$ as the domain by $l$-th reflection (toward positive infinity) of the basic domain 
                              $\Omega_{C, \tilde C}^{0}=\Omega_{C, \tilde C}$ and
                              $\gamma^l$ to be the                   corresponding $l$-th rotational reflection image 
                              (toward right hand side) of $\gamma^0$ in  $\mathbb S ^3$.
                              \end{rem}
                                         \begin{figure}[h]               
                                                        \includegraphics[scale=0.8]{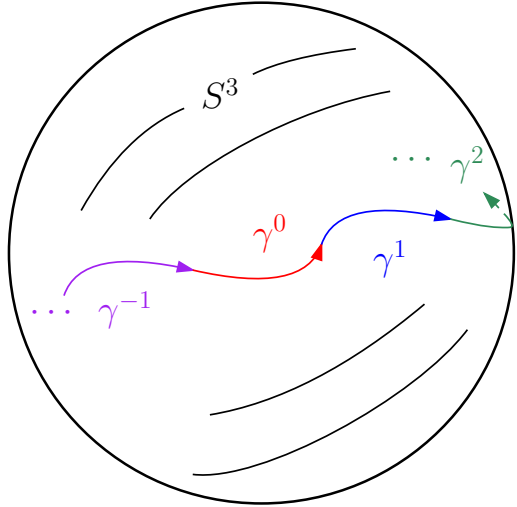}
                            \caption{Illustration of a solution curve $\gamma$ in $\mathbb S^3$} 
                          \label{fig:000}
                             \end{figure}

                              \begin{rem}\label{AM}
                              In view of $(a(t), b(t))$, the generating curve $\gamma$ performs a pendulum back and forth
                              (with a full round of the pendulum over $\overline{\Omega_{C, \tilde C}^{k}\bigcup \Omega_{C, \tilde C}^{k+1}}$,
                              cf. Figure \ref{pend})
                              with an amplitude determined by $C$ and $\tilde C$.
                              Moreover, 
                              $C$ in \eqref{circledno} actually serves as a fixed ratio of projected signed areas 
                              (to the two $\C$-components of $\C\oplus \C$ respectively)
                caused by instant movement  of segment $\overrightarrow{0p}$
               where 
               $p\in \gamma \subset \mathbb S^3$.
               This also means that globally $C$ stands for the ratio of signed angular momenta in the two complex components 
               and the quantity is independent of the choice of variables of $\gamma$ as its first appearance in \eqref{circledno}.
                              \end{rem}%
          \begin{rem}\label{Rork}
                          We  remark that combinations of separate rotations in two $\C$-components of $\C\oplus\C$
                          for group actions on solution curves of \eqref{dotsss}.
                          So, in the sequel,
                          we only need to study solution curves $\gamma$ with starting point of $\gamma^0$ given by $(\cos z^{C, \tilde C}_L, \sin z^{C, \tilde C}_L)\in \mathbb S^1\subset \mathbb S^3\subset \C\oplus \C$.
                      Once these curves are known, solution curves with other starting points
                      can be generated by the group actions.
           \end{rem}%

                                        {\ }

                              \section{Situation for $C=0$}\label{C=0}

In this section, we consider singly spiral minimal products, which seem not having beed discussed elsewhere before,
except the construction of two-dimensional minimal tori (in $\mathbb S^3$) 
occurred in Theorem 1.4 of \cite{Brendle} (informed  to Brendle by Kusner)
                                    with $M_1=$ $\{point_1\}$  and $M_2=\mathbb S^1$.
                                    The minimal tori were given  by
\begin{eqnarray}
            F(\tilde s, t)
            &=&
                       \left(
                       \, r(t)\cos t,\, r(t)\sin t, 
                       \sqrt{1-r(t)^2} \cos \tilde s,\, \sqrt{1-r(t)^2} \sin \tilde s
                         \right)
      \nonumber         \\    &      =&
                             \Big(
                       a(t)\, e^{it},\, 
                      b(t)\,
                       (\cos \tilde s,\, \sin \tilde s)
                         \Big)
                         \,
\label{BK} 
\end{eqnarray}
which falls in our construction of singly spiral minimal products.
A profound advantage of singly spiral minimal products is that this situation can generate immersed minimal submanifolds of dimensions larger than half  that of the ambient sphere. Example 1 in below
will highlight this feature.

                                    Let us fix $C=0$ (so $\dot{s_2}\equiv 0$), i.e., $M_2$  not being rotated but just dilated.
                                        In order to get closed immersed spherical minimal submanifolds, 
                                        we consider when $\gamma$ can be closed.
By the rotational reflection extension of $\gamma^0$,
                                      we have   
 \begin{equation}\label{sQ0}
                                \mathlarger{\int}_{
                                \Omega_{0, \tilde C}^{0} \bigcup \cdots \bigcup \Omega_{0, \tilde C}^{l-1}}
                               \dot{s_1}
\, ds 
=  l\mathlarger{\int}_{\Omega_{0, \tilde C}^0}
                               \dot{s_1} \, ds\, .
\end{equation}
                                         As a result, to obtain a closed solution curve $\gamma$, one only needs to adjust $\tilde C$ such that 
\begin{equation}\label{sQ}
                              I({\tilde C})= \mathlarger{\int}_{\Omega^0_{0, \tilde C}}\sqrt{   \dfrac{ 1}{\tilde C\, \Big(\cos s\Big)^{2k_1+4}
         \Big( \sin s\Big)^{2k_2}-\cos^2 s}
}\, ds \, \in \pi\mathbb Q.
\end{equation}

                    \begin{cor}\label{corpm0}
                              When $C=0$, 
                              there are infinitely many solution curves (for singly spiral minimal products) which factor through closed curves.
                              In particular, Corollary \ref{S1xSn} holds with $k_1=0$. 
                              \end{cor}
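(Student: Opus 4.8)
The plan is to present the $C=0$ spiral minimal products as a one–parameter family indexed by the constant $\tilde C$ of \eqref{0dotG3}, to reduce ``$G_\gamma$ factors through a closed manifold'' to an arithmetic condition on one period integral, and then to produce infinitely many admissible $\tilde C$ by a soft continuity argument. Fix $C=0$, so $\dot s_2\equiv 0$ and, after an ambient rotation, $s_2\equiv 0$. By Theorems \ref{P3} and \ref{ltog}, together with Remark \ref{P3R} (which removes the $\mathscr C$-totally real hypothesis on $f_2$ and allows $k_2\in\{0,1,\dots,n_2\}$), every admissible $\tilde C$ — meaning $\tilde C>m_0$ when $k_2\ge 1$, and $\tilde C>1$ when $k_2=0$ — yields a globally defined minimal product $G_{\gamma_{\tilde C}}$ with $\gamma_{\tilde C}(s)=\big(a(s)\,e^{i s_1(s)},\,b(s)\big)$, $(a(s),b(s))=(\cos s,\sin s)$, obtained by the rotational reflections of \S\,\ref{S4} starting from the basic domain $\Omega^0_{0,\tilde C}$.

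Next I would determine when $\gamma_{\tilde C}$ closes up. Over one full back–and–forth pendulum round, i.e. two consecutive basic domains of total $s$-length $2\,|\Omega^0_{0,\tilde C}|$, the pair $(a(s),b(s))$ returns to its starting position while, by the extension rule \eqref{Dext} and the additivity \eqref{sQ0}, the argument $s_1$ increases by exactly $2I(\tilde C)$, where $I(\tilde C)$ is the integral of \eqref{sQ}. Since $e^{i s_1}$ acts on $\mathbb C^{m_1+1}$ as scalar multiplication, after $N$ such rounds the curve $\gamma_{\tilde C}$ returns to its start precisely when $2NI(\tilde C)\in 2\pi\mathbb Z$; hence $\gamma_{\tilde C}$ is periodic — equivalently $G_{\gamma_{\tilde C}}$ descends to a minimal immersion of $S^1\times M_1\times M_2$ — if and only if $I(\tilde C)\in\pi\mathbb Q$. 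It therefore suffices to show that $\{\tilde C:\ I(\tilde C)\in\pi\mathbb Q\}$ is infinite.

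For that I would first check that $I$, although an improper integral with inverse–square–root singularities at the (moving) endpoints of $\Omega^0_{0,\tilde C}$, is a continuous function of $\tilde C$ on the admissible interval; this is clear after a substitution that carries $\Omega^0_{0,\tilde C}$ onto a fixed interval and an application of dominated convergence. The essential point is that $I$ is non-constant, which I would prove by evaluating its two boundary limits. A short substitution (of the type $\cos s=\cos z_R\cdot\psi$ near the upper end, plus a Laplace estimate at the collapsing pendulum near the unique interior minimum $s_0$ of $P_0$ when $k_2\ge1$) gives $\lim_{\tilde C\to\infty}I(\tilde C)=\tfrac{\pi}{2(k_1+1)}$ and $\lim_{\tilde C\downarrow m_0}I(\tilde C)=\tfrac{\pi}{\sqrt{2(k_1+1)}}$ (respectively $\lim_{\tilde C\downarrow 1}I(\tilde C)=\tfrac{\pi}{2\sqrt{k_1+1}}$ when $k_2=0$), and these two limits differ whenever $k_1+k_2\ge 1$. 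Consequently the image of the continuous map $I$ is a non-degenerate interval, which contains infinitely many rational multiples of $\pi$; since distinct values of $I$ are attained at distinct parameters and distinct $\tilde C$ give genuinely different curves $\gamma_{\tilde C}$ (the pendulum amplitude $|\Omega^0_{0,\tilde C}|$ is strictly increasing in $\tilde C$), we get infinitely many solution curves factoring through closed curves. (When $k_1=k_2=0$ one computes $I(\tilde C)\equiv\tfrac{\pi}{2}$ outright, so the conclusion holds trivially.) For the final assertion, take $M_1=\{point_1\}$ with $f_1(point_1)=(1,0)\in\mathbb S^1$ — which is $\mathscr C$-totally real, $k_1=0$ — and $f_2=\mathrm{id}_{\mathbb S^n}$, $M_2=\mathbb S^n$, $k_2=n\ge1$ (permitted by Remark \ref{P3R}, $n$ odd or even); then each of the infinitely many periodic $\gamma_{\tilde C}$ makes $G_{\gamma_{\tilde C}}$ descend to a minimal immersion $S^1\times\mathbb S^n\hookrightarrow\mathbb S^{n+2}$, which is Corollary \ref{S1xSn} with $k_1=0$.

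The step I expect to be the main obstacle is exactly the non-constancy of $I$, i.e. the two limit computations above: the upper-end value hinges on the concentration of the $s_1$-advance near $s=\tfrac{\pi}{2}$ (where $a=\cos s\to 0$), and the lower-end value is a stationary-phase/Laplace computation at the shrinking pendulum; one must also isolate the exceptional case $k_1=k_2=0$ in which $I$ is genuinely constant. Everything else — global existence of $\gamma_{\tilde C}$, the reduction of closedness to $I(\tilde C)\in\pi\mathbb Q$, and the descent of $G_{\gamma_{\tilde C}}$ — is either already in the excerpt (Theorems \ref{P3}, \ref{ltog}, equations \eqref{Dext}, \eqref{sQ0}, \eqref{sQ}, Remark \ref{P3R}) or routine.
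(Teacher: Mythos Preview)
Your proposal is correct and follows essentially the same route as the paper: reduce closedness to $I(\tilde C)\in\pi\mathbb Q$, establish non-constancy of $I$ via the two boundary limits $\tfrac{\pi}{2(k_1+1)}$ and $\tfrac{\pi}{\sqrt{2(k_1+1)}}$ (resp.\ $\tfrac{\pi}{2\sqrt{k_1+1}}$ when $k_2=0$), and conclude by continuity. The paper records those limit computations as a separate appendix lemma (Lemma~\ref{B1}) rather than sketching them inline, and for $k_1=k_2=0$ it hedges with ``either constant $\tfrac{\pi}{2}$ or contains an open interval'' and only later (\S\ref{k120}) shows $I\equiv\tfrac{\pi}{2}$ via an explicit antiderivative---so your direct treatment of that degenerate case is slightly cleaner but not materially different.
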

                              \begin{proof}
                                   Suppose that, 
                                   for some $\tilde C$,
                                     the integral in \eqref{sQ} is $\pi\frac{j_1}{j_2}$ where $j_1$ and $j_2$ are positive  integers  prime to each other.    
                 Then, 
                 at the right hand side ending point $z^{0, \tilde C}_{R, 2j_2-1}$ of $\Omega_{0, \tilde C}^{2j_2-1}$,
                 $\gamma$ has the same local data as 
                 $z^{C, \tilde C}_{L, 0}$
                 and it then forms a closed curve over 
                  the closure 
                  $\big[z^{0, \tilde C}_{R,0},\, z^{0, \tilde C}_{R, 2j_2-1}\big]$
                  of ${\Omega_{0, \tilde C}^{0} \bigcup\Omega_{0, \tilde C}^{1} \bigcup \cdots \bigcup \Omega_{0, \tilde C}^{2j_2-1} }$
                  which runs $j_2$ rounds of the pendulum.

                              By Lemma \ref{B1} and Corollary \ref{BC1}, 
                              $  I({\tilde C})$ is not a constant function 
                              when $k_1+k_2\geq 1$
                              and
                                   can take infinitely many numbers
                                    in $\pi\mathbb Q\bigcap\big(\frac{\pi}{2(k_1+1)}, \frac{\pi}{\sqrt {2(k_1+1)}}\big)$
                                    as values.
                                    When $k_1=k_2=0$,
                                    either the range of $I(\cdot)$
                                    is $\{\frac{\pi}{2}\}$
                                    or it contains an open interval
                                    and either case can lead to infinitely many solution curves.
         \footnote{The second possibility will be eliminated in \S \ref{k120}.}
                              Hence we finish the proof and Corollary \ref{S1xSn} holds as a special case.
                              \end{proof}

{\ }

The reason why our current arguments cannot 
                        (perhaps, after analyzing the exact range of $   I(\cdot)$, still no way to) 
                        strengthen the conclusion of Corollary \ref{S1xSn}
                        with $M_2=\mathbb S^n$
                         to be embedding
is that, when $k_1=0$, we have $2\pi \in (2r,4r)$ for any rational number $r$ in the interval $\big(\frac{\pi}{2}, \frac{\pi}{\sqrt 2}\big)$ derived by \eqref{B101} and \eqref{B102}.
%
%
                       \begin{figure}[h]
	\centering
	\begin{subfigure}[t]{0.48\textwidth}
		\centering
		\includegraphics[scale=0.5]{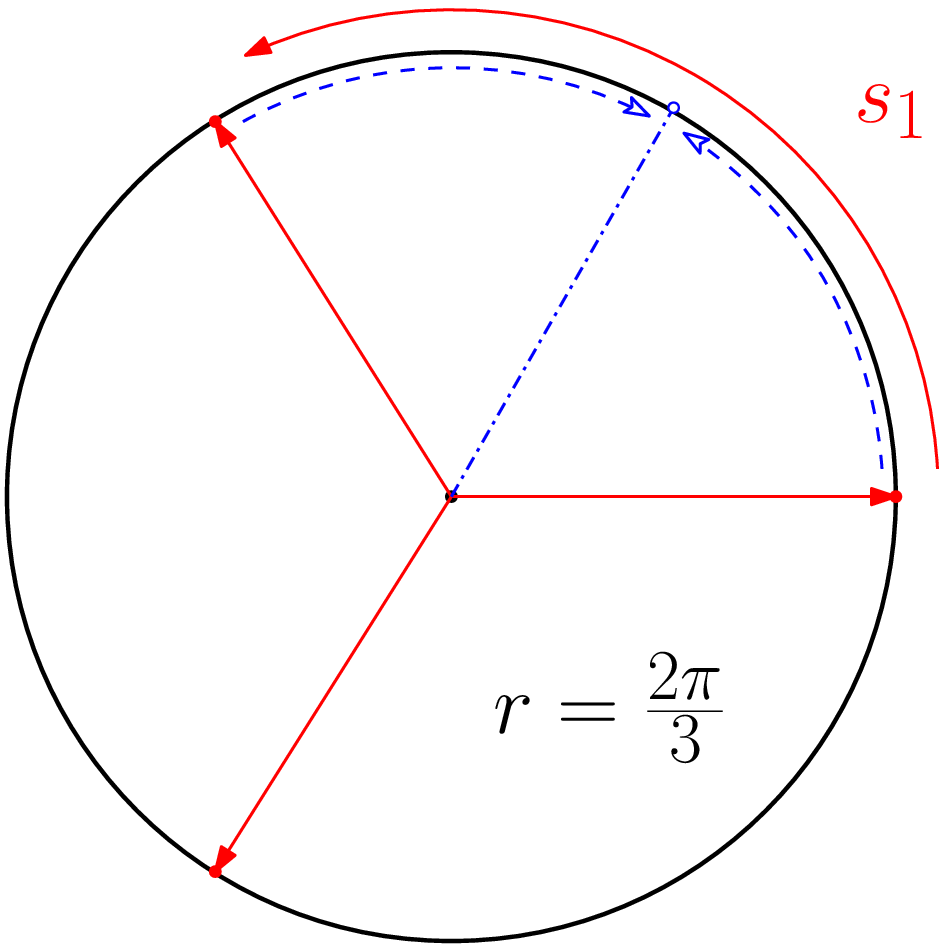}
                              \captionsetup{font={scriptsize}} 
                               \caption{ Variation of $s_1$ when $  I({\tilde C})=\frac{2\pi}{3} $\ \ \ \  \ \ \ \ }
                               \label{fig:3a}
	\end{subfigure}
	\begin{subfigure}[t]{0.48\textwidth}
		\centering
	 \includegraphics[scale=0.5]{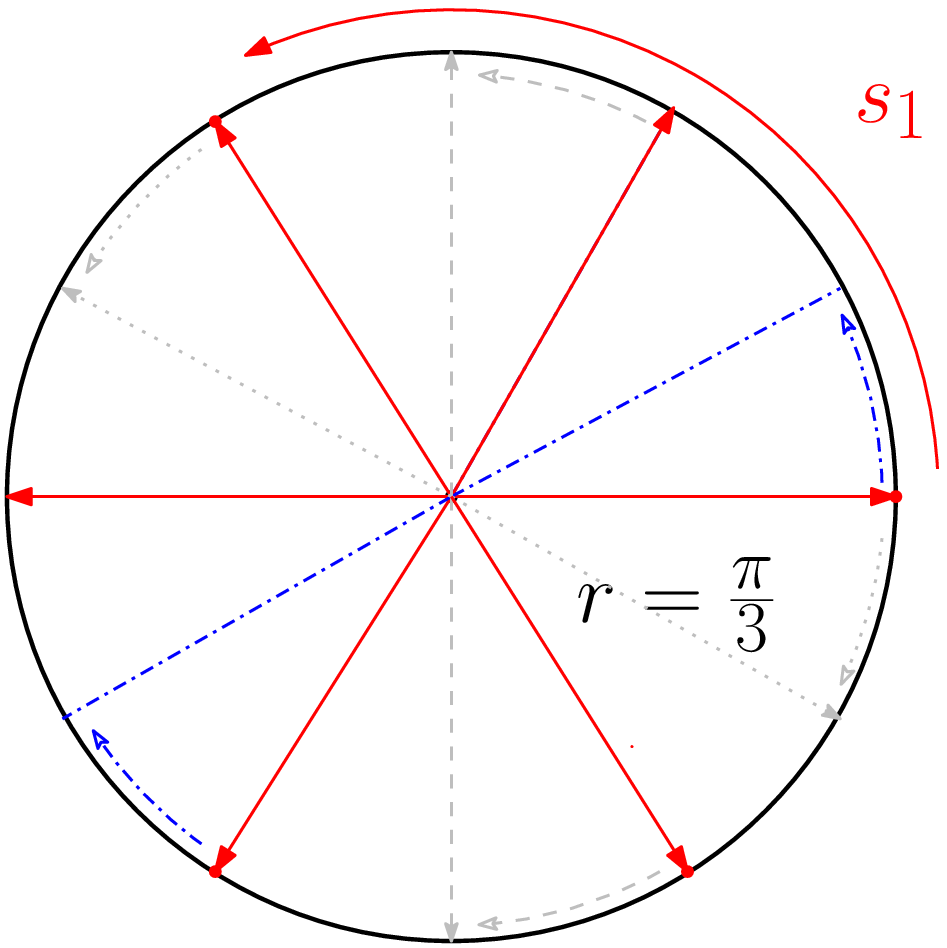}
                           \captionsetup{font={scriptsize}} 
                               \caption{Variation of $s_1$ when {$  I({\tilde C})=\frac{\pi}{3}$ }} \label{fig:3c}
	\end{subfigure}
	 \caption{Pictures for changes of $s_1$ by $2\pi$ and $\pi$}\label{ell}
\end{figure}
           %
           %
           %
           With $I(\tilde C)=r$, by our rotational reflection principle (cf. Figure \ref{fig:2a}) 
           we know that the pair $(a, b)$ shares same values at $s$ and $4 z^{0, \tilde C}_R-2z^{0, \tilde C}_L-s$
           but the argument $s_1$ differs by the integral $\Upsilon_4(s)$ of $\dot{s_1}$ over the interval 
           $$
           \left(s,\, 4 z^{0, \tilde C}_R-2z^{0, \tilde C}_L-s\right).
           $$
           Since $\Upsilon_4 \big(z^{0, \tilde C}_L\big)=4r>2\pi$ and $\Upsilon_4 \big(z^{0, \tilde C}_R\big)=2r<2\pi$,
           there exists $z\in \big(z^{0, \tilde C}_L, z^{0, \tilde C}_R\big)$ 
                   such that 
                         $\Upsilon_4(z)=2\pi$.
\footnote{
                         As shown in Figure \ref{fig:3a} for $r=\frac{2\pi}{3}$, a pair of  angles of $s_1$ for intersections are $\frac{\pi}{3}$ and $\frac{7\pi}{3}$ respectively.
          So $\gamma$ cannot avoid  self-intersection 
          in
          the interior of the closure of
            $\Omega_{0, \tilde C}^{0} \bigcup \cdots\bigcup \Omega_{0, \tilde C}^{5}$.
            As for exact locations $s$ and $s'$ for $s_1(s)=\frac{\pi}{3}$ and $s_1(s')=\frac{7\pi}{3}$,
            one needs to solve
            
 \text{ \ \ \ \ \ \ \ \ \ \ \ \ \ \ \ \ \ \ \ \ \ \ \  \ \ \ \ \ \ \ \ \ \ \ \ \ \ \ \ \ \ \ \ \ \ }$\int_{z_{L}^{0,\tilde C}}^{s}\dot{s}_1\, ds=\frac{r}{2}
 \text{\ \ \ and meanwhile \ }      
 s'=\frac{8\pi}{3}-s.
  $
  }
            Hence, $G_\gamma$ non-tangentially
\footnote{If two pieces of immersions contact each other at a point with the same tangent space there, 
then such a point is called a tangential intersection point and two immersions tangentially intersect  in the point.
If otherwise, the intersection point is called a non-tangential intersection point and two immersions non-tangentially intersect there.\label{footN}}
              self-intersects along the set (cf. Remark \ref{Rork})
            $$\Big(\cos z \, e^{is_1(z)}, \sin z \cdot \mathbb S^n\Big).$$
%
                          %
                       \begin{figure}[h]
	\centering
	\begin{subfigure}[t]{0.48\textwidth}
		\centering
		 \includegraphics[scale=0.8]{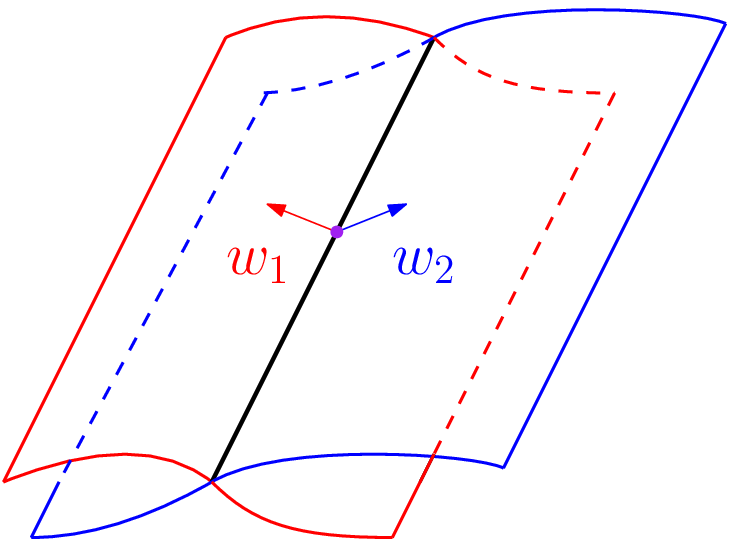}
                              \captionsetup{font={scriptsize}} 
                                 \caption{Non-tangential intersection along a set}\label{nTi}
                              	\end{subfigure}
	\begin{subfigure}[t]{0.48\textwidth}
		\centering
	 \includegraphics[scale=0.76]{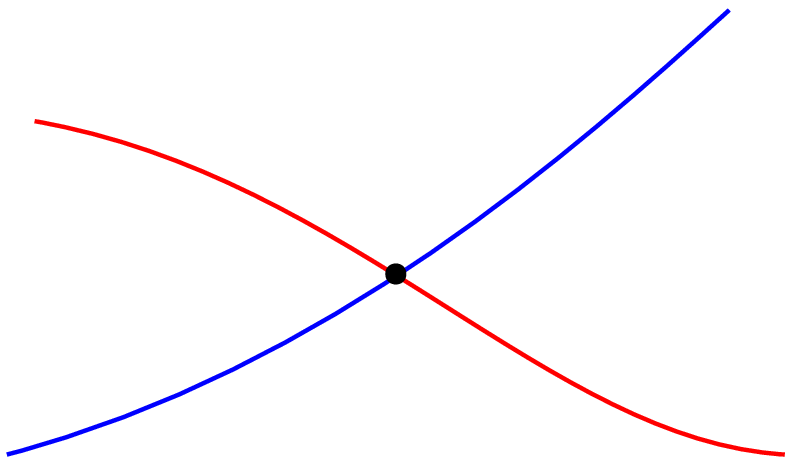}
                           \captionsetup{font={scriptsize}} 
                               \caption{Non-tangential intersection of curves} \label{nti}
	\end{subfigure}
	 \caption{Illustration of a non-tangential intersection}\label{n-i}
\end{figure}
%
%

           In fact, regarding embeddedness related to singly spiral minimal products through complexification \eqref{Cpx1} for the first slot,
           we can  establish a  characterization
           of when
           such a singly spiral minimal product of embedded spherical minimal submanifolds 
            $M_1^{k_1}\subset \mathbb S^{n_1}$ and $M_2^{k_2}\subset \mathbb S^{n_2}$ 
            factors through an embedding.
           The following observation is useful to us.
           
            {\ }\\
            {\bf Observation $(\star)$.} 
            By virtue of \eqref{Cpx1}, it can be seen, in the first slot of a spiral minimal product of embedded minimal submanifolds $M_1^{k_1}\subset \mathbb S^{n_1}$ and $M_2^{k_2}\subset \mathbb S^{n_2}$  with $C=0$, 
                        that 
                        $$a(s)e^{is_1(s)}\cdot\big(p,0\big)=a(s')e^{is_1(s')}\cdot\big(q,0\big)\in \mathbb C^{n_1+1}\, \, \,  \text{\ where }p, q\in M_1$$
                       demands that
            $|a(s)|=|a(s')|>0$ and $p=\pm q$ with $s_1(s)-s_1(s')\in \pi \mathbb Z$.
            {\ }\\

            Let $\N$ be the set of natural numbers, i.e., positive integers.

                       \begin{thm}\label{52}
           Assume that $k_1+k_2\geq 1$
           and that $M^{k_2}_2\subset \mathbb S^{n_2}$ is an arbitrary embedded minimal submanifold.
           If $M_1^{k_1}\subset \mathbb S^{n_1}$ is an embedded minimal submanifold of antipodal symmetry, 
           then
           by virtue of \eqref{Cpx1}
            a singly spiral minimal product  
            factors through an embedding of some 
            manifold
            if and only if
           $r=  I({\tilde C})\in \frac{\pi}{2\N}$.
           If $M_1^{k_1}$ contains no antipodal pairs at all, 
           then
           such a  singly spiral minimal product  
            factors through an embedding of some 
            manifold
            if and only if
           $r=  I({\tilde C})\in \frac{\pi}{\mathbb N}$.
           \end{thm}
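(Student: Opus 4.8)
The plan is to convert the statement into an elementary arithmetic of how the argument $s_1$ advances along the generating curve $\gamma$, organized by the pendulum structure of \S\ref{S4}. Since $C=0$ we have $s_2\equiv$ const, so $G_\gamma(s,p,y)=\bigl(a(s)e^{is_1(s)}(p,0),\,b(s)f_2(y)\bigr)$, and because $f_1,f_2$ are embeddings and the pendulum angle $\theta$ with $(a(s),b(s))=(\cos\theta,\sin\theta)$ stays in a compact subinterval of $(0,\tfrac\pi2)$ (so $a,b>0$; when $k_2=0$ the angle reaches $0$ and the $M_2$-factor degenerates there, which is handled by the evident adjustment with $M_2$ a single point), Observation $(\star)$ applies. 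A first remark: over a single basic domain $\Omega^0_{0,\tilde C}$ the assignment $s\mapsto(a(s),b(s),s_1(s))$ is a graph over $s$, so $G_\gamma$ restricted to $\Omega^0\times M_1\times M_2$ is automatically an embedding; hence the whole question is how the rotationally-reflected copies $G_\gamma(\Omega^j\times M_1\times M_2)$ overlap one another and whether those overlaps assemble into a group action. A second remark: over one full pendulum round $\overline{\Omega^{2k}\cup\Omega^{2k+1}}$ the pair $(a,b)$ returns to its initial value while $s_1$ increases by exactly $2r$, $r=I(\tilde C)$; consequently $\gamma$ closes up after finitely many rounds iff $r\in\pi\mathbb Q$, which we henceforth assume (if $r\notin\pi\mathbb Q$ the image of $\gamma$ is not a closed curve, the image of $G_\gamma$ is not a submanifold, and no factorization through an embedding can exist).

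Next I would run the coincidence analysis. Suppose $G_\gamma(s,p,y)=G_\gamma(s',p',y')$ with $s\in\Omega^0$ and $s'\in\Omega^j$. From $b(s)y=b(s')y'$ with $b>0$ we get $b(s)=b(s')$ and $y=y'$, and together with $|a(s)|=|a(s')|$ from Observation $(\star)$ this means exactly that $(a(s),b(s))=(a(s'),b(s'))$; since the pendulum is monotone on each basic domain this pins down $s'$ uniquely from $s$ and $j$. Using the rotational reflection principle (Lemma \ref{analytic}, Remark \ref{rrp}) to track $s_1$ across the reflected domains, one finds that at a common pendulum angle $\theta_0$ the difference $s_1(s)-s_1(s')$ equals the \emph{constant} $2kr$ when $j=2k$ is even, but equals $(2k+2)r-2g(\theta_0)$ when $j=2k+1$ is odd, where $g(\theta_0)$ is the $s_1$-increment from the turning point to the angle $\theta_0$; as $\theta_0$ sweeps the open swing, $g$ sweeps $(0,r)$, so the odd-case difference sweeps the open interval $(2kr,(2k+2)r)$ of length $2r$. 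By Observation $(\star)$ the coincidence is realized precisely when $s_1(s)-s_1(s')\in\pi\mathbb Z$, the parity of that integer dictating $p'=p$ versus $p'=-p$. Thus even-$j$ coincidences are all-or-nothing and, when present, are exactly restrictions of a global symmetry, whereas an odd-$j$ coincidence, if it occurs, occurs only along the proper sublocus cut out by $s_1(s)-s_1(s')\in\pi\mathbb Z$ — a self-intersection accounted for by no symmetry, which a factorization through an embedding cannot tolerate. Hence $G_\gamma$ factors through an embedding if and only if no odd-$j$ coincidence occurs at all.

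Then I would extract the arithmetic. When $M_1$ has antipodal symmetry both $p'=p$ and $p'=-p$ are admissible, so odd-$j$ coincidences are avoided iff $\bigcup_{k\geq0}(2kr,(2k+2)r)=(0,\infty)\setminus\{2kr:k\geq1\}$ is disjoint from $\pi\mathbb Z_{>0}$, i.e. iff $\pi\mathbb Z_{>0}\subseteq\{2kr:k\geq1\}$; since $r\in\pi\mathbb Q$, the smallest instance $\pi=2kr$ forces the rest and this holds iff $2r=\pi/N$ for some $N\in\mathbb N$, i.e. $r\in\tfrac\pi{2\mathbb N}$. When $M_1$ contains no antipodal pair, Observation $(\star)$ forbids $p'=-p$, so instead $s_1(s)-s_1(s')\in2\pi\mathbb Z$ is required throughout, and the same argument with $2\pi\mathbb Z$ in place of $\pi\mathbb Z$ gives avoidance iff $r=\pi/N$ for some $N\in\mathbb N$, i.e. $r\in\tfrac\pi{\mathbb N}$. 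This delivers both the necessity and, via the displayed interval, the converse that $r\notin\tfrac\pi{2\mathbb N}$ (resp. $r\notin\tfrac\pi{\mathbb N}$) always produces an obstructing odd-$j$ intersection.

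Finally, for sufficiency assume $r=\tfrac\pi{2N}$ (resp. $\tfrac\pi N$). The surviving even-$j$ coincidences are exactly those with $N\mid k$, and they are generated by the single glide $\sigma$ advancing the $\gamma$-parameter by $N$ full pendulum rounds and applying the antipodal map to the $M_1$-factor (resp. the identity): $\sigma$ advances $s_1$ by $2Nr=\pi$ (resp. $2\pi$) and fixes $(a,b)$ and $y$, so $G_\gamma\circ\sigma=G_\gamma$. The group $\langle\sigma\rangle\cong\mathbb Z$ acts freely — the antipodal map has no fixed point on $M_1\subset\mathbb S^{n_1}$ — and properly discontinuously (it translates the $\gamma$-parameter), the restriction of $G_\gamma$ to one $\sigma$-fundamental domain is injective by the coincidence analysis, and hence $G_\gamma$ factors through an embedding of the quotient manifold $(\mathbb R\times M_1\times M_2)/\langle\sigma\rangle$. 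The step I expect to be most delicate is the precise bookkeeping of $s_1$ across the successively reflected basic domains — in particular deriving the formulas $2kr$ and $(2k+2)r-2g(\theta_0)$ and confirming that an odd-case coincidence is a genuine extra self-intersection rather than something removable — for which the analyticity of $G_\gamma$ furnished by \S\ref{S4} is the decisive input.
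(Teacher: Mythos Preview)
Your proposal is correct and rests on the same ingredients as the paper's proof --- the pendulum bookkeeping of \S\ref{S4}, Observation $(\star)$, and tracking how $s_1$ advances across reflected basic domains --- but your organization is cleaner. The paper splits the ``only if'' direction into two sub-cases: for $r\notin\frac{\pi}{\mathbb N}$ it builds the function $\Upsilon_{\ell+1}$ (integral of $\dot s_1$ over $\ell+1$ reflected domains, with $\ell=\lfloor\pi/r\rfloor$) and applies the intermediate value theorem to locate a self-intersection; for $r=\frac{\pi}{2\ell+1}\in\frac{\pi}{2\mathbb N-1}$ it switches to a second function $\Psi(s)=a(s)-a(\hat\pi_1(s))$, tracking instead the pendulum-angle mismatch at points whose $s_1$-values differ by exactly $\pi$. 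Your even/odd-$j$ dichotomy with the explicit range formulas --- $s_1$-difference $=2kr$ for $j=2k$, $s_1$-difference sweeping the open interval $(2kr,(2k+2)r)$ for $j=2k+1$ --- subsumes both of the paper's sub-cases in one stroke and reduces the question directly to whether $\pi\mathbb Z_{>0}$ (resp.\ $2\pi\mathbb Z_{>0}$) is contained in $\{2kr:k\ge1\}$. This buys you uniformity at no cost. One small caveat: your invocation of analyticity to certify that an odd-$j$ coincidence is a genuine obstruction is the right instinct, but the cleanest justification is the direct check that at such a point the two tangent spaces of $G_\gamma$ differ (because $\dot a,\dot b$ flip sign while $\dot s_1$ does not, and $i\cdot f_1$ is normal by the $\mathscr C$-totally real property of \eqref{Cpx1}); the paper simply calls these intersections non-tangential and points to Figure~\ref{n-i}.
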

   
   \begin{rem}\label{exasym}
   There are abundant examples of embedded spherical minimal submanifolds with antipodal symmetry,
   such as Euclidean spheres,
   Lawson surfaces $\xi _{k,l}$  in $\mathbb S^3$  \cite{L} with $k,l$ simultaneously even or odd,
   and an isoparametric minimal hypersurface or a focal submanifold (automatically being minimal) 
   of isoparametric foliations of spheres with $g=2,4,6$ different principal curvatures.
   \end{rem}

   \begin{rem}\label{plentiful}
            Among  embedded spherical minimal submanifolds, those without any antipodal pairs
            are plentiful.
            Take $M_A$ to be any of an isomparametric minimal hypersurface or a focal submanifold of an isoparametric foliation 
            with
            $g=3$ different principal curvatures of multiplicities $m_1=m_2=m_3=1, 2, 4 \text{ or } 8$
            in $\mathbb S^4$, $\mathbb S^7$, $\mathbb S^{13}$ or  $\mathbb S^{25}$ respectively,
            and $M_B$ be any embedded spherical minimal submanifold.
            Then the constant minimal product of $M_A$ and $M_B$ 
            provides such an example.
            \end{rem}

            \begin{proof}
            Case 1.
            Let $M_1$ be antipodal symmetric.
            Set $r=I(\tilde C)$.

            If $r\notin \frac{\pi}{\N}$,
            then
                      there exists a unique nonnegative integer $\ell$ such that 
         $\frac{\pi}{r} \in (\ell, \ell+1)$.
       If $\ell$ is odd, then  
               one can similarly define $\Upsilon_{\ell+1}(s)$
       to be the integral of $\dot {s_1}$ over
              $\big(s, (\ell+1) z^{0, \tilde C}_R-(\ell-1) z^{0, \tilde C}_L-s\big)$
              for $s\in \big(z^{0, \tilde C}_L, z^{0, \tilde C}_R\big)$.
              Then 
                   $\Upsilon_{\ell+1} \big(z^{0, \tilde C}_L\big)=(\ell+1)r>\pi$
                   and 
                   $\Upsilon_{\ell+1}\big(z^{0, \tilde C}_R\big)=(\ell-1)r<\pi$.
                   Hence, there exists  $z\in  \big(z^{0, \tilde C}_L, z^{0, \tilde C}_R\big)$
       such that $\Upsilon_{\ell+1}(z)=\pi$.
      Namely, the spiral minimal product non-tangentially self-intersects 
      in the set (cf. Remark \ref{Rork})
                 $$\Big(\cos z \, e^{is_1(z)}\cdot 
                 {\big(M_1,\, 0\big)}, \sin z \cdot M_2\Big) .$$
                  Note that $\Upsilon$ can be  well defined only for even subscript (for running integer rounds of the pendulum).
                   If $\ell$ is even, 
            then use $\Upsilon_{\ell+2}$.
            Now 
                           $\Upsilon_{\ell+2} \big(z^{0, \tilde C}_L\big)=(\ell+2)r>\pi$
                   and 
                   $\Upsilon_{\ell+2} \big(z^{0, \tilde C}_R\big)=\ell\, r<\pi$.
                   Similarly, non-tangential self-intersection occurs.
                   
                     If $r=\frac{\pi}{2\ell+1}\in \frac{\pi}{2\N-1}$, 
                          then integrating $\dot{s}_1$ over the closure of $\Omega_{0, \tilde C}^{0}\bigcup \cdots \bigcup\Omega_{0, \tilde C}^{2\ell}$ 
                          gives exactly $\pi$.  
                          However, 
                          value pairs $(a, b)$ have farthest distance at two ending points  of the closure.
                          Now, for $s\in \Omega_{0, \tilde C}^{0}$, let 
                        $\hat{\pi}_1(s)$ be the smallest positive value for 
                        $\int_{s}^{\hat{\pi}_1(s)}\dot{s_1}(\zeta)d\zeta=\pi$.
                          Define 
                          $$\Psi(s)=a(s)-a (\hat{\pi}_1(s)).$$
                          It is clear that 
                               $$\Psi\big(z^{0, \tilde C}_L\big)=\cos z^{0, \tilde C}_L -\cos z^{0, \tilde C}_R>0,$$
                          while 
                             $$\Psi\big(z^{0, \tilde C}_R\big)=\cos z^{0, \tilde C}_R -\cos z^{0, \tilde C}_L<0.$$
                             Thus, there exists some $z\in \Omega_{0, \tilde C}^{0}$ 
                             (as shown in Figure \ref{fig:3c} of corresponding $s_1(z)$)
                                   with
                                          $$\int_{z}^{\hat{\pi}_1(z)}\dot{s_1}(\zeta)d\zeta=\pi, \ 
                                              \text { and moreover, }\,  
                                              \big(a(z), b(z)\big)=\big(a(\hat\pi_1(z)),\, b(\hat\pi_1(z)) \big). $$
                           Namely, the spiral minimal product non-tangentially self-intersects 
      in (cf. Remark \ref{Rork}) 
                 $$\Big(\cos z \, e^{is_1(z)}\cdot 
                 {\big(M_1,\, 0\big)},\,\, \sin z \cdot M_2\Big) .$$    
                 
                  If $r=\frac{\pi}{2\ell}\in \frac{\pi}{2\N}$,
                  then  
                  %
                  after $\ell$ rounds of the pendulum
                  the generated 
                  spiral minimal patch smoothly closes up
                      by the antipodal symmetry of $M_1$.
On the other hand, by virtue of \eqref{Cpx1} and the {Observation $(\star)$},
        the first slot of $G_\gamma$ will not run the same vector unless 
        $s_1$ differs by an integer multiple of $\pi$ and the positive pair $(a, b)$  remains the same at $s$ and $s'$.
                      Since $r=\frac{\pi}{2\ell}$, 
                      the former requirement already forces 
                      $s'-s$
                      to be 
                      an integer multiple of 
                      $z^{C, \tilde C}_{R, 2\ell-1}-z^{C, \tilde C}_{L, 0}$.
                      Consequently, 
whenever $G_\gamma$ has a self-intersection point, 
        that means, locally around the point, two pieces of $G_\gamma$ must completely coincide due to the antipodal symmetry of $M_1$.
        Now $G_\gamma$ factors through an embedding of 
                  $Q_\gamma=\left(\big[-1,1\big]\times  M_1\big/\big\{\big(-1, -x\big)\sim \big(1,x\big)\big\}\right)\times M_2$
                  into $\mathbb S^{2r_1+r_2+2}$.

                  Case 2. $M_1$ has no antipodal pairs at all.
                   A similar argument
                  works with $\pi$ replaced by $2\pi$,
                  and 
                  $G_\gamma$ factors through an embedding of 
                  $Q_\gamma=\mathbb S_1\times M_1\times M_2$
                  into $\mathbb S^{2r_1+r_2+2}$.
 \end{proof}

             \begin{cor}
             Assume that $k_1+k_2\geq 1$ and that  $M^{k_2}_2\subset \mathbb S^{r_2}$ is an arbitrary embedded minimal submanifold.
             If $M^{k_1}_1\subset \mathbb S^{r_1}$ is a connected embedded minimal submanifold which does not have entire antipodal symmetry but does contain some antipodal pairs,
             then no singly spiral minimal product of them with $C=0$ by virtue of  \eqref{Cpx1}   can factor through an embedding of some manifold into $\mathbb S^{2r_1+r_2+2}$.
             \end{cor}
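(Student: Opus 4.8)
The plan is a proof by contradiction combining one topological observation about $M_1$ with the pendulum structure of $\gamma$. Suppose some singly spiral minimal product $G_\gamma$ of $M_1$ and $M_2$ with $C=0$, realized via \eqref{Cpx1}, factors through an embedding of a manifold into $\mathbb S^{2r_1+r_2+2}$; it is determined by a parameter $\tilde C$ with $r:=I(\tilde C)$ (see \eqref{sQ}), and its image $\Sigma=G_\gamma(\mathbb R\times M_1\times M_2)$ would then be a $C^1$ embedded submanifold of dimension $k_1+k_2+1$. First I would isolate a ``bad'' antipodal pair: since $M_1$ is connected, contains an antipodal pair, and $M_1\neq -M_1$, the set $M_1\cap(-M_1)$ is nonempty, closed in $M_1$, and a proper subset of $M_1$; if $M_1$ were antipodally symmetric in a neighborhood of every point of $M_1\cap(-M_1)$, then $M_1\cap(-M_1)$ would also be open in $M_1$, hence all of $M_1$ by connectedness, forcing $M_1=-M_1$. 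So I fix $x_0\in M_1$ with $-x_0\in M_1$ such that $M_1$ is \emph{not} antipodally symmetric in any neighborhood of $\{x_0,-x_0\}$.

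Next I would show that, for this $\tilde C$ (indeed for any admissible one), $G_\gamma$ has a self-intersection ``twisted'' by $x_0\mapsto -x_0$. Working in the arc parameter $s$ of $(a,b)=(\cos\phi,\sin\phi)$ and using the rotational reflection extension of \S\ref{S4}, $s_1$ is strictly monotone and increases by $r$ over each basic domain, while $\phi$ oscillates between $z^{0,\tilde C}_L$ and $z^{0,\tilde C}_R$. For a fixed pendulum height $v\in(z^{0,\tilde C}_L,z^{0,\tilde C}_R)$, the possible $s_1$-differences between two visits to $v$ form the set $\{\,2kr:k\in\mathbb Z\,\}\cup\{\,\pm\delta(v)+2kr:k\in\mathbb Z\,\}$, where $\delta(v)=2\int_v^{z^{0,\tilde C}_R}\dot s_1$ decreases continuously and bijectively from $2r$ to $0$ as $v$ moves across the basic interval; the first family records visits on swings of equal direction, the second on opposite directions. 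Consequently $\pi$ occurs as such a difference for an appropriate $v$ and pair of visits: if $\pi=2kr$ (the resonant case $r\in\frac{\pi}{2\mathbb N}$) take two equal-direction visits $k$ pendulum rounds apart, otherwise pick $v$ with $\delta(v)=\pi-2kr\in(0,2r)$ for the suitable $k\ge 0$ and take one up-swing and one down-swing visit. This yields $s\neq s'$ with $\phi(s)=\phi(s')$ and $s_1(s)-s_1(s')\in\pi+2\pi\mathbb Z$. Since $b=\sin\phi>0$ and $M_2$ is embedded, the second slot of $G_\gamma$ forces the two $M_2$-points to agree, and by Observation $(\star)$ a coincidence in the first slot forces the two $M_1$-points to be antipodal; choosing them to be $x_0$ and $-x_0$ and fixing any $y\in M_2$ produces a self-intersection point $P:=G_\gamma(s,x_0,y)=G_\gamma(s',-x_0,y)\in\Sigma$.

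The last step is to see that $\Sigma$ cannot be a $C^1$ submanifold near $P$. Because $s_1(s)-s_1(s')$ is an odd multiple of $\pi$, multiplication by $e^{i(s_1(s)-s_1(s'))}$ is multiplication by $-1$, so near $P$ the sheet of $G_\gamma$ through $(s',-x_0,y)$ is $\{(\,-\cos\phi\,e^{is_1(s)}x',\ \sin\phi\,e^{is_2}f_2(\hat y)\,):x'\in M_1\ \text{near}\ -x_0,\ \hat y\in M_2\}$, whereas the sheet through $(s,x_0,y)$ is $\{(\,\cos\phi\,e^{is_1(s)}x,\ \sin\phi\,e^{is_2}f_2(\hat y)\,):x\in M_1\ \text{near}\ x_0,\ \hat y\in M_2\}$. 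These two $(k_1+k_2+1)$-dimensional $C^1$ sheets agree on a neighborhood of $P$ precisely when $-(M_1\cap U_{-x_0})=M_1\cap U_{x_0}$ for small neighborhoods, i.e. precisely when $M_1$ is locally antipodally symmetric at $\{x_0,-x_0\}$ — which fails by the first step. But two distinct equidimensional $C^1$ submanifolds through a common point can never have a union that is a $C^1$ submanifold there (each would be relatively open in it, hence they would coincide locally). Hence $\Sigma$ is not a submanifold near $P$, contradicting the assumption, so no singly spiral minimal product with $C=0$ factors through an embedding.

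I expect the main obstacle to be making the second step watertight: verifying that a self-intersection with $s_1$-offset an odd multiple of $\pi$ exists for \emph{every} admissible $\tilde C$ — in particular handling the resonant values $r\in\frac{\pi}{2\mathbb N}$, where the up/down-swing mechanism degenerates and one must fall back on equal-direction visits several pendulum rounds apart — and, on the conceptual side, confirming that mere non-coincidence of the two sheets (even were they tangent) already destroys embeddedness, so that the argument does not quietly rely on a transversality statement.
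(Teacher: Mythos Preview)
Your proof is correct, and both of the worries you flag at the end are unfounded: the resonant case $r\in\frac{\pi}{2\mathbb N}$ is handled exactly as you say (equal-direction visits $\ell$ pendulum rounds apart give an $s_1$-offset of $\pi$), and your final ``open-in-$\Sigma$'' argument genuinely does not need transversality --- each sheet, being a $d$-dimensional immersed piece inside the $d$-dimensional embedded $\Sigma$, is open in $\Sigma$, so the two sheets share a full neighborhood of $P$ in $\Sigma$ whether or not their tangent spaces agree.

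Your route differs from the paper's in two ways.  First, the paper does not redo the pendulum analysis: it simply cites the proof of Theorem~\ref{52} (applied to the fixed antipodal pair $p,-p$) to dispose of all $\tilde C$ with $I(\tilde C)\notin\frac{\pi}{2\mathbb N}$ by a non-tangential self-intersection, and then treats only the resonant case $r=\frac{\pi}{2\ell}$.  Second, and more interestingly, for that resonant case the paper exploits \emph{analyticity}: it observes $\gamma_1(s)\cdot(p,0)=\gamma_1(s+2\ell(z_R-z_L))\cdot(-p,0)$ for \emph{every} $s$, and argues that if the two analytic sheets coincided anywhere then by analytic continuation and connectedness $M_1$ and $-M_1$ would coincide globally, contradicting the hypothesis.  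You sidestep analyticity entirely by your Step~1: choosing $x_0\in M_1\cap(-M_1)$ at which local antipodal symmetry \emph{already} fails (via the open--closed argument on $M_1\cap(-M_1)$) lets you deduce non-coincidence of the sheets from purely $C^1$ information.  The paper's argument is shorter because it recycles Theorem~\ref{52} and Morrey regularity; yours is more elementary and self-contained, and would survive in a merely $C^1$ setting.
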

             \begin{rem}
             $M_1$ can take a Lawson minimal surface  $\xi_{k,l}$ with $k,l>0$ and $k\not\equiv l$ mod 2.
             \end{rem}
             \begin{proof}
             Stare at an antipodal pair $p$ and  $-p$ on $M_1$.
             Then, following the argument in the proof of Theorem  \ref{52}, 
             in order to get a spiral minimal product by virtue of \eqref{Cpx1} factoring through an embedding of some manifold,
             a necessary condition is $ I({\tilde C})=\frac{\pi}{2\ell}\in \frac{\pi}{2\N}$.
             This implies that, in view of \eqref{Cpx1}, for any $s\in \R$ 
\begin{equation}\label{00}
\gamma_1(s)\cdot (p,0)=\gamma_1\Big(s+2\ell \big(z^{0, \tilde C}_R- z^{0, \tilde C}_L\big)\Big)\cdot (-p,0).
\end{equation}
                       However, there are points of $M_1$ whose antipodal companions are not in $M_1$.
                       So, two pieces of 
                       analytic $G_\gamma$ arising from expressions in two sides of \eqref{00} cannot coincide in local.
                       Otherwise, by the connectedness assumption, 
                       entire $M_1$ should enjoy antipodal symmetry
                       which contradicts with the assumption.
                       As a result, such an intersection in \eqref{00}
                       asserts that $G_\gamma$ cannot factor through an embedding of some manifold.
             \end{proof}
             
            It shows in Lemma \ref{B1eq}
            that the range of $I(\cdot)$ contains
            $\big(\frac{\pi}{2(k_1+1)}, \frac{\pi}{\sqrt{2(k_1+1)}}\big)$
           if $k_2>0$,
           and $\big(\frac{\pi}{2(k_1+1)}, \frac{\pi}{2\sqrt{(k_1+1)}}\big)$
           if $k_1>k_2=0$.
           The interval expressions rely on the value of $k_1$ only.
           Hence, whenever $k_1\geq 1$, through simple calculations,
          $\frac{\pi}{2k_1+1}$ belongs to both intervals.
           If $k_1\geq 2$ or $k_2\geq k_1= 1$,
           so does $\frac{\pi}{2k_1}$.
           Hence, we get the following corollary of Theorem \ref{52}.

           \begin{cor}\label{singlyemb}
           Assume that $M_1^{k_1}\subset \mathbb S^{r_1}$ and $M_2^{k_2}\subset \mathbb S^{r_2}$ are embedded minimal submanifolds.
           If $M_1$ has no antipodal pairs at all 
           with $k_1\geq 1$ or $M_1$ has  antipodal symmetry with $k_1\geq 2$ or  $k_2\geq k_1= 1$,
           then  by virtue of \eqref{Cpx1} there is at least one 
           singly spiral minimal product of them with $C=0$
           which factors through an embedding 
           into $\mathbb S^{2r_1+r_2+2}$.
           \end{cor}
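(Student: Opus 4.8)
To prove Corollary~\ref{singlyemb} the plan is to read it off the embedding criterion of Theorem~\ref{52} together with the range estimate for $I(\cdot)$ supplied by Lemma~\ref{B1eq}. Recall that, by Theorem~\ref{52}, a singly spiral minimal product with $C=0$ built by virtue of \eqref{Cpx1} factors through an embedding into $\mathbb S^{2r_1+r_2+2}$ precisely when $r=I(\tilde C)\in\frac{\pi}{\mathbb N}$ (if $M_1$ has no antipodal pairs) or when $r=I(\tilde C)\in\frac{\pi}{2\mathbb N}$ (if $M_1$ is antipodally symmetric). Since the range of $I(\cdot)$ contains the interval $\mathcal I_{k_1}:=\big(\frac{\pi}{2(k_1+1)},\frac{\pi}{\sqrt{2(k_1+1)}}\big)$ when $k_2>0$ and $\big(\frac{\pi}{2(k_1+1)},\frac{\pi}{2\sqrt{k_1+1}}\big)$ when $k_1>k_2=0$ --- each depending only on $k_1$ --- it will suffice, in each case of the hypothesis, to locate a suitable rational multiple of $\pi$ inside the relevant interval and then quote Theorem~\ref{52}.

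First I would dispatch the two cases where a genuine varying--magnitude curve does the job. If $M_1$ has no antipodal pairs and $k_1\geq 1$, the candidate is $r=\frac{\pi}{2k_1+1}\in\frac{\pi}{\mathbb N}$; the lower inequality $\frac{1}{2(k_1+1)}<\frac{1}{2k_1+1}$ is automatic, and whichever of the two upper bounds applies reduces to $2(k_1+1)<(2k_1+1)^2$ or $4(k_1+1)<(2k_1+1)^2$, i.e.\ in the worst case to $4k_1^2>3$, which holds for all $k_1\geq 1$. If $M_1$ is antipodally symmetric and $k_1\geq 2$, the candidate is $r=\frac{\pi}{2k_1}\in\frac{\pi}{2\mathbb N}$; the lower inequality is again automatic, while the upper ones reduce to $2(k_1+1)<4k_1^2$ (equivalently $(2k_1+1)(k_1-1)>0$) and $k_1+1<k_1^2$, each of which holds exactly for $k_1\geq 2$. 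In either situation one picks $\tilde C$ with $I(\tilde C)$ equal to the candidate and applies Theorem~\ref{52}.

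The one remaining possibility, $k_2\geq k_1=1$ with $M_1$ antipodally symmetric, escapes the intervals $\mathcal I_{k_1}$, because here $\frac{\pi}{2k_1}=\frac{\pi}{2}$ is the right endpoint $\frac{\pi}{\sqrt{2(k_1+1)}}$ rather than an interior point. In this case I would abandon varying magnitudes and use the degenerate, steady--magnitude solution of Type \ding{172} with $C=0$: by Remark~\ref{rem3} this $G_\gamma$ is geometrically the constant minimal product of $\mathcal S f_1$ and $f_2$, with $a^2=\frac{k_1+1}{k_1+1+k_2}$. Since $M_1\subset\mathbb S^{r_1}$ is embedded and $-M_1=M_1$, one checks by hand that $\mathcal S f_1$ is injective on the (closed) mapping torus of the antipodal involution $x\mapsto -x$ on $M_1$ --- a $\pi$--shift in the spinning variable being absorbed by that involution --- so that $G_\gamma$ descends to an embedding of that mapping torus times $M_2$ into $\mathbb S^{2r_1+r_2+2}$. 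This may also be recorded as the limiting ($r=\frac{\pi}{2}$, zero--amplitude) case of the criterion of Theorem~\ref{52}.

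The only real obstacle is this last, borderline subcase: the rest is routine bookkeeping with the two intervals $\mathcal I_{k_1}$, but the value $\pi/2$ lies exactly on the boundary of the range of $I(\cdot)$, so instead of appealing to Lemma~\ref{B1eq} one must fall back on the degenerate Type \ding{172} curve and verify its embeddedness directly, which is precisely the place where the antipodal symmetry of $M_1$ is needed.
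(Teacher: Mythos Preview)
Your argument for the first two cases (no antipodal pairs with $k_1\ge1$; antipodal symmetry with $k_1\ge2$) is exactly the paper's: the paper inserts the candidates $\frac{\pi}{2k_1+1}$ and $\frac{\pi}{2k_1}$ into the open interval coming from Lemma~\ref{B1} and invokes Theorem~\ref{52}, just as you do.

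Where you diverge is the borderline case $k_2\ge k_1=1$ with $M_1$ antipodally symmetric, and here you are more careful than the paper. The paper simply asserts that ``if $k_1\ge2$ or $k_2\ge k_1=1$, so does $\tfrac{\pi}{2k_1}$,'' i.e.\ that $\tfrac{\pi}{2k_1}$ lies in the open interval $\big(\tfrac{\pi}{2(k_1+1)},\tfrac{\pi}{\sqrt{2(k_1+1)}}\big)$. But for $k_1=1$ this reads $\tfrac{\pi}{2}\in(\tfrac{\pi}{4},\tfrac{\pi}{2})$, which is false: $\tfrac{\pi}{2}$ is only the limiting value of $I(\tilde C)$ as $\tilde C\downarrow m_0$, and Lemma~\ref{B1} gives no information that it is actually attained. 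So the paper's written argument has a genuine gap in precisely this subcase, and you have correctly spotted it.

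Your patch --- passing to the degenerate steady-magnitude curve (Remark~\ref{rem3}), which geometrically is the constant minimal product of $\mathcal Sf_1$ with $f_2$, and checking by hand that the antipodal symmetry of $M_1$ makes $\mathcal Sf_1$ descend to an embedding of the mapping torus --- is sound and fits the statement of the corollary (which asks only for \emph{some} singly spiral minimal product with $C=0$, not necessarily one of varying magnitudes). This is a legitimate completion of the proof that the paper itself does not supply.
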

           
           {\ }\\
            {\bf Example 1.} With $r_1=k_1=49$ and $r_2=k_2\geq 1$,
            we take $M_1=\mathbb S^{49}$ and $M_2=\mathbb S^{r_2}$.
            Then elements of $\mathcal R=\big\{\frac{\pi}{98}, \frac{\pi}{96}, \cdots, \frac{\pi}{12}\big\}$ belong to the interval $\left(\frac{\pi}{100}, \frac{\pi}{10}\right)$
            which is contained in the range of $I(\cdot)$ according to \eqref{B101} and \eqref{B102}.
            Then, with $C=0$ by virtue of \eqref{Cpx1}, 
            every element in $\mathcal R$ can induce an embedded minimal submanifold 
            of topological type $Q_\gamma=\left(\big[-1,1\big]\times \mathbb S^{49} \big/\big\{\big(-1, -x\big)\sim \big(1,x\big)\big\}\right)\times S^{r_2}$ 
            in $\mathbb S^{100+r_2}$.
            This $Q_\gamma$ is not orientable.
            
          {\ }  
          
          More generally,
             assume that $M_2$ is an arbitrary oriented spherical embedded submanifold
             and that $M_1^{k_1}\subset S^{r_1}$ is an oriented embedded submanifold of antipodal symmetry.
             Then we have that,
             when $k_1$ is odd, 
              the embedded minimal $Q_\gamma$ induced by the singly spiral minimal is always non-orientable (as in Example 1),
              whereas, when $k_2$ is even, 
              the corresponding $Q_\gamma$ is orientable.

            {\ }
            
              \section{Situation for $C^2=1$} \label{C2=1}

                 Let us emphasize on the case of $C=\pm 1$.
                 As pointed out in Remark \ref{rem33},
                 cases of $(C, \tilde C)$ and $(-C, \tilde C)$ are
                 essentially the same
                through actions of complex conjugates.
                So we shall only talk about  $C=-1$ in this section.
                          
\subsection{Basis relation and result}
                          The first complexity beyond the situation of $C=0$ 
                          is that both complex slots spin when $C\neq 0$.                   
                              Let us derive a basic relation between variations of $s_1$ and $s_2$ over 
                              $\Omega_{-1, \tilde C}$. 
                              Set $\Delta=(\cos s)^{k_1+1} (\sin s)^{k_2+1}$ and $\Lambda=\tilde C\Delta^2-1$ (presumed to be positive in below).
Consider 
\begin{eqnarray}
                       &  &      \mathlarger{\int} \dfrac{1}{\Delta\sqrt{\tilde C\Delta^2-1}} \, d\Delta
       \nonumber                  \\
                        &= & \dfrac{1}{2} \mathlarger{\int}\dfrac{1}{\Delta^2\sqrt{\tilde C\Delta^2-1}} \, d\Delta^2
             =\dfrac{1}{2} \mathlarger{\int} \dfrac{1}{(\Lambda+1)\sqrt{\Lambda}} \, d \Lambda
  \label{Delta}                      \\
                       & = & \mathlarger{\int} \dfrac{1}{(\Lambda^{\frac{1}{2}})^2+1} \, d \Lambda^{\frac{1}{2}}
                 = \arctan \sqrt{\tilde C\Delta^2-1} + \text{const}.
                              \nonumber        
\end{eqnarray}
                                             On the other hand,
             \begin{equation}\label{22Delta}
                                  \mathlarger{\int} \dfrac{1}{\Delta\sqrt{\tilde C\Delta^2-1}} \, d\Delta
                                  =\mathlarger{\int}\,\,
                                  \dfrac
                                  {-(k_1+1)\tan s+(k_2+1)\cot s}
                                      {\sqrt{\tilde C\Delta^2-1}} 
                                         \, ds .
                \end{equation}
                              Denote  
           \begin{equation}\label{J}
                              J_{1}(\tilde C)=                                  \mathlarger{\int}_{\Omega_{-1,\tilde C}}  \dfrac{\tan s }{\sqrt{\tilde C\Delta^2-1}} 
                                         \, ds 
              \text{     \ \        \ \ \                   and\ \ \ \  \ \ }
                               J_{2}(\tilde C)=                                   \mathlarger{\int}_{\Omega_{-1,\tilde C}}  \dfrac{\cot s}{\sqrt{\tilde C\Delta^2-1}} \, ds .
                                             \end{equation}
                                             
         Then, we have the following.

     \begin{prop}\label{Cpm0}
     For any allowed $\tilde C$ with $\Omega_{-1, \tilde C}\neq \emptyset$, it follows that
      \begin{equation}\label{2Delta}
                        -(k_1+1) J_{1}(\tilde C) +(k_2+1) J_{2} (\tilde C)=0.
                                            \end{equation}
                                            In particular, $J_1(\tilde C)\in \pi\mathbb Q$ if and only if $J_2(\tilde C)\in \pi \mathbb Q$.
     \end{prop}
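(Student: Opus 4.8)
The plan is to fuse the two expressions for $\displaystyle\int \frac{d\Delta}{\Delta\sqrt{\tilde C\Delta^{2}-1}}$ already recorded in \eqref{Delta} and \eqref{22Delta}, and to exploit that the antiderivative $\arctan\sqrt{\tilde C\Delta^{2}-1}$ vanishes at both endpoints of $\Omega_{-1,\tilde C}$. First I would make explicit the elementary identity underlying \eqref{22Delta}: since $\log\Delta=(k_{1}+1)\log\cos s+(k_{2}+1)\log\sin s$ on $\left(0,\frac{\pi}{2}\right)$, differentiation gives $\frac{d\Delta}{ds}=\Delta\left(-(k_{1}+1)\tan s+(k_{2}+1)\cot s\right)$, so that on any subinterval of $\left(0,\frac{\pi}{2}\right)$ one has
\[
\int \frac{-(k_{1}+1)\tan s+(k_{2}+1)\cot s}{\sqrt{\tilde C\Delta^{2}-1}}\,ds=\int\frac{d\Delta}{\Delta\sqrt{\tilde C\Delta^{2}-1}}=\arctan\sqrt{\tilde C\Delta^{2}-1}+\mathrm{const},
\]
the last equality being precisely the computation in \eqref{Delta}.

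Next I would analyze $\Delta=(\cos s)^{k_{1}+1}(\sin s)^{k_{2}+1}$ on the basic domain $\Omega_{-1,\tilde C}=\left(z^{-1,\tilde C}_{L},z^{-1,\tilde C}_{R}\right)$. This function is strictly increasing then strictly decreasing, with a unique interior critical point $s_{\ast}$ where $\tan^{2}s_{\ast}=\frac{k_{2}+1}{k_{1}+1}$; moreover the two endpoints are exactly the zeros of $\tilde C\Delta^{2}-1$, hence $\Delta\bigl(z^{-1,\tilde C}_{L}\bigr)=\Delta\bigl(z^{-1,\tilde C}_{R}\bigr)=\tilde C^{-1/2}$. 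Near each endpoint the slope of $\tilde C\Delta^{2}-1$ is nonzero, so the integrands defining $J_{1}(\tilde C)$ and $J_{2}(\tilde C)$ in \eqref{J} are a constant times $1/\sqrt{\,\cdot\,}$ there and the improper integrals converge — this is the same finiteness already invoked in \S\ref{S4}.

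Then I would split $\int_{\Omega_{-1,\tilde C}}$ at $s_{\ast}$ and change variable to $\Delta$ on each of the two monotone halves. On $\bigl(z^{-1,\tilde C}_{L},s_{\ast}\bigr)$, $\Delta$ runs from $\tilde C^{-1/2}$ up to $\Delta(s_{\ast})$, contributing $\arctan\sqrt{\tilde C\Delta(s_{\ast})^{2}-1}$; on $\bigl(s_{\ast},z^{-1,\tilde C}_{R}\bigr)$, $\Delta$ runs back down from $\Delta(s_{\ast})$ to $\tilde C^{-1/2}$, contributing $-\arctan\sqrt{\tilde C\Delta(s_{\ast})^{2}-1}$. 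The two contributions cancel, so
\[
\int_{\Omega_{-1,\tilde C}}\frac{-(k_{1}+1)\tan s+(k_{2}+1)\cot s}{\sqrt{\tilde C\Delta^{2}-1}}\,ds=0,
\]
which is exactly $-(k_{1}+1)J_{1}(\tilde C)+(k_{2}+1)J_{2}(\tilde C)=0$, i.e. \eqref{2Delta}. For the final assertion, since $k_{1},k_{2}$ are nonnegative integers the ratio $\frac{k_{1}+1}{k_{2}+1}$ is a positive rational and \eqref{2Delta} reads $J_{2}(\tilde C)=\frac{k_{1}+1}{k_{2}+1}J_{1}(\tilde C)$, so $J_{1}(\tilde C)\in\pi\mathbb{Q}$ iff $J_{2}(\tilde C)\in\pi\mathbb{Q}$. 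The only delicate point is the non-monotonicity of $\Delta$ combined with the endpoint singularities: one must justify the split-and-substitute step against the improper integral (e.g. by working on $\bigl[z^{-1,\tilde C}_{L}+\epsilon,\,z^{-1,\tilde C}_{R}-\epsilon\bigr]$ and letting $\epsilon\downarrow 0$, using the convergence noted above); once this is in place the conclusion follows immediately from \eqref{Delta} and \eqref{22Delta}.
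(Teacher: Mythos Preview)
Your proof is correct and follows essentially the same approach as the paper: combine \eqref{Delta} and \eqref{22Delta} and use that $\sqrt{\tilde C\Delta^{2}-1}$ (hence $\arctan\sqrt{\tilde C\Delta^{2}-1}$) vanishes at both endpoints of $\Omega_{-1,\tilde C}$. Your split at $s_{\ast}$ is more cautious than necessary---since $\arctan\sqrt{\tilde C\Delta(s)^{2}-1}$ is already an antiderivative in the variable $s$ on the whole interval, one may simply evaluate it at the two endpoints without worrying about the non-monotonicity of $\Delta$---but this does no harm.
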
    
     \begin{proof}
     Combining \eqref{Delta} and \eqref{22Delta} 
         and noting that $\sqrt{\tilde C\Delta^2-1} $ vanishes in
         $\p\Omega_{-1,\tilde C}$
         can result in
         \eqref{2Delta}
         for any allowed $\tilde C$.
     \end{proof}

                            An immediate  consequence is the following.
                              \begin{prop}\label{Cpm1}
                              For $C=- 1$,
                              if $J_{1}(\tilde C)\in \pi \mathbb Q$ for some allowed $\tilde C$, 
                              then corresponding solution curve $\gamma$ factors through a closed curve.
                              \end{prop}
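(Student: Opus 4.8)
The plan is to follow the two angular coordinates $s_1$ and $s_2$ through the rotational reflections of \S\ref{S4}, and to read off periodicity of $\gamma$ from the arithmetic of $J_1(\tilde C)$ and $J_2(\tilde C)$.

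First I would record the local picture for $C=-1$: the factor $C^2-1$ in \eqref{dotsss} vanishes, so on the basic domain $\Omega_{-1,\tilde C}=\big(z^{-1,\tilde C}_L,\, z^{-1,\tilde C}_R\big)$ one has $\dot s_1=\tan s/\sqrt{\tilde C\Delta^2-1}>0$ and $\dot s_2=-\cot s/\sqrt{\tilde C\Delta^2-1}<0$ with $\Delta=(\cos s)^{k_1+1}(\sin s)^{k_2+1}$; hence across $\Omega_{-1,\tilde C}$ the coordinate $s_1$ increases by exactly $J_1(\tilde C)$ and $s_2$ decreases by exactly $J_2(\tilde C)$, the two integrals in \eqref{J}. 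Next I would track the effect of one rotational reflection: after composing with an ambient isometry (as in \S\ref{S4} and Remark \ref{Rork}) so that $s_1$ and $s_2$ both vanish at $z^{-1,\tilde C}_R$, the reflection $\gamma^0\mapsto\gamma^1$ is the $\pi$-rotation about the border direction $q$, which acts in each $\mathbb C$-slot by complex conjugation; since it reverses both the orientation of the parameter and the sign of the argument, it contributes the \emph{same} net increments $+J_1(\tilde C)$ to $s_1$ and $-J_2(\tilde C)$ to $s_2$ over $\Omega_{-1,\tilde C}^{1}$. Iterating the reflections, I would conclude that over any $N$ consecutive basic domains beginning with $\Omega_{-1,\tilde C}^{0}$ the coordinate $s_1$ changes by $N\,J_1(\tilde C)$ and $s_2$ changes by $-N\,J_2(\tilde C)$, while the pendulum $(a,b)$ returns to its initial turning point, with its velocity, precisely when $N$ is even.

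It then remains to produce an even $N$ with $N\,J_1(\tilde C)\in 2\pi\mathbb Z$ and $N\,J_2(\tilde C)\in 2\pi\mathbb Z$. For such $N$, the restriction of $\gamma$ to $\Omega_{-1,\tilde C}^{N}\cup\Omega_{-1,\tilde C}^{N+1}$ equals, up to the parameter shift by $N\big(z^{-1,\tilde C}_R-z^{-1,\tilde C}_L\big)$, the ambient isometry $\big(w_1,w_2\big)\mapsto\big(e^{iN J_1(\tilde C)}w_1,\, e^{-iN J_2(\tilde C)}w_2\big)$ applied to its restriction to $\Omega_{-1,\tilde C}^{0}\cup\Omega_{-1,\tilde C}^{1}$; since that isometry is the identity, $\gamma$ repeats with period $N\big(z^{-1,\tilde C}_R-z^{-1,\tilde C}_L\big)$ and hence factors through a closed curve (so $G_\gamma$ factors through an immersion of a closed manifold). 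By hypothesis $J_1(\tilde C)\in\pi\mathbb Q$, and Proposition \ref{Cpm0} gives $J_2(\tilde C)=\tfrac{k_1+1}{k_2+1}J_1(\tilde C)\in\pi\mathbb Q$ as well; writing $J_1(\tilde C)=\pi p_1/q_1$ and $J_2(\tilde C)=\pi p_2/q_2$ in lowest terms, the choice $N=2q_1q_2$ works.

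The one genuinely delicate point is the bookkeeping in the second paragraph — checking that each rotational reflection contributes the \emph{same} signed increment to $s_1$ (and to $s_2$) rather than an alternating one. This rests on the description of $\gamma^{k+1}$ as a $\pi$-rotation of $\gamma^k$ about the common border direction in \S\ref{S4}, together with the finiteness established there of the limits of $s_1,s_2$ at the pendulum turning points, which is what makes these reflections well defined. Granting it, the remainder is the elementary arithmetic above and Proposition \ref{Cpm0}.
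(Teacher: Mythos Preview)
Your argument is correct and matches the paper's intended reasoning: the paper states Proposition~\ref{Cpm1} as ``an immediate consequence'' of Proposition~\ref{Cpm0} without further proof, relying implicitly on the rotational-reflection bookkeeping of \S\ref{S4} (the analogue of \eqref{sQ0}) together with the fact that $J_1(\tilde C)\in\pi\mathbb Q\Leftrightarrow J_2(\tilde C)\in\pi\mathbb Q$. You have simply written out that implicit argument in full, including the verification that each reflection adds the same signed increment to $s_1$ and $s_2$, which is exactly what \eqref{Dext} encodes.
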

   
                              By adjusting $\tilde C$, we arrive at the following.
                              \begin{cor}\label{corpm1}
                              When $C=-1$,
                              there are infinitely many solution curves in $\mathbb S^3$ which factor through closed curves.
                              \end{cor}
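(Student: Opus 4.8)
The plan is to follow the template of Corollary \ref{corpm0}: by Proposition \ref{Cpm1}, every admissible $\tilde C$ with $J_1(\tilde C)\in\pi\mathbb Q$ yields a solution curve $\gamma\subset\mathbb S^3$ that factors through a closed curve, and distinct values of the pendulum parameter $\tilde C$ give geometrically distinct $\gamma$ (their projections $(a(s),b(s))$ sweep different arcs $\Omega_{-1,\tilde C}\subset(0,\tfrac\pi2)$), so it suffices to show that $J_1(\cdot)$ attains infinitely many values in $\pi\mathbb Q$. First I would record that for $C=-1$ the function \eqref{0Cfun} collapses to $P_{-1}(s)=\big((\cos s)^{k_1+1}(\sin s)^{k_2+1}\big)^{-2}=\Delta^{-2}$, which is unimodal on $(0,\tfrac\pi2)$ with a single non-degenerate minimum $m$ at $s_m=\arctan\sqrt{(k_2+1)/(k_1+1)}$; hence the admissible parameters form the open half-line $\tilde C\in(m,\infty)$, for each such $\tilde C$ the basic domain $\Omega_{-1,\tilde C}=(z^{-1,\tilde C}_L,z^{-1,\tilde C}_R)\Subset(0,\tfrac\pi2)$ is a genuine interval, and $J_1(\tilde C)=\int_{\Omega_{-1,\tilde C}}\tan s\big/\sqrt{\tilde C\Delta^2-1}\;ds$.

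Next I would verify that $J_1$ is continuous on $(m,\infty)$: the integrand is jointly smooth in the interior, and at the two endpoints of $\Omega_{-1,\tilde C}$ the quantity $\tilde C\Delta^2-1$ vanishes to exactly first order (its $s$-derivative there is nonzero, as noted just before Lemma \ref{analytic}), so the integrand has an integrable $x^{-1/2}$ singularity in the distance $x$ to each endpoint, and the usual square-root substitution desingularizes it and shows continuous — indeed smooth — dependence on $\tilde C$. The crux is then non-constancy of $J_1$, which I would obtain by comparing its two boundary regimes. As $\tilde C\downarrow m$, the interval $\Omega_{-1,\tilde C}$ collapses to $s_m$ and, after rescaling $s-s_m$, the $(a,b)$-pendulum linearizes to a harmonic oscillator about the non-degenerate minimum of $P_{-1}$; a half-period computation gives a finite limit $\ell_0=\lim_{\tilde C\downarrow m}J_1(\tilde C)$ depending only on $k_1,k_2$. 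As $\tilde C\to\infty$, $\Omega_{-1,\tilde C}$ exhausts $(0,\tfrac\pi2)$; splitting $J_1$ into a compact middle piece (where the integrand is $O(\tilde C^{-1/2})$ and the contribution vanishes), a piece near $s=0$ (contribution vanishing as well) and a piece near $s=\tfrac\pi2$, and rescaling in the last piece, one finds a second finite limit $\ell_\infty=\lim_{\tilde C\to\infty}J_1(\tilde C)$ depending only on $k_1$ (for instance $\ell_\infty=\tfrac\pi2$ when $k_1=0$). Since $\ell_0\neq\ell_\infty$, $J_1$ is non-constant; being continuous on the connected set $(m,\infty)$ its range is an interval of positive length, which therefore contains infinitely many elements of $\pi\mathbb Q$. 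By Proposition \ref{Cpm1} (combined with Proposition \ref{Cpm0}, which then makes $J_2$ a rational multiple of $\pi$ too) each such value gives a closed solution curve, and realizing infinitely many values forces infinitely many distinct closed $\gamma$, proving the corollary.

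The step I expect to be the main obstacle is exactly the non-constancy of $J_1$ — concretely, pinning down the two limiting values $\ell_0$ and $\ell_\infty$ and verifying $\ell_0\neq\ell_\infty$. The $\tilde C\downarrow m$ limit requires a careful linearization at a non-degenerate minimum of $P_{-1}$ together with uniform-in-$\tilde C$ control of the improper endpoint integrals, while the $\tilde C\to\infty$ limit requires a delicate splitting of the integral to isolate the boundary contributions from the vanishing bulk. In the paper this analysis is presumably isolated in the $C=-1$ analogue of Lemma \ref{B1} and Corollary \ref{BC1} used for the $C=0$ case, from which non-constancy, hence Corollary \ref{corpm1}, is immediate; the remaining ingredients — continuity of $J_1$ and the elementary fact that a nondegenerate interval contains infinitely many rational multiples of $\pi$ — are routine.
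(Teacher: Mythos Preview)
Your approach is essentially the paper's: the proof there invokes Lemma \ref{C1} (the $C=-1$ analogue of Lemma \ref{B1}) for the explicit limits $\ell_\infty=\tfrac{\pi}{2(k_1+1)}$ and $\ell_0=\tfrac{\sqrt{k_2+1}\,\pi}{\sqrt{2(k_1+1)(k_1+k_2+2)}}$, then Remark \ref{RkC} to conclude non-constancy, exactly as you outline.

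One genuine caveat: your assertion ``since $\ell_0\neq\ell_\infty$'' is \emph{false} when $k_1=k_2=0$ --- both limits equal $\tfrac\pi2$ in that case, so the endpoint comparison gives no information. The paper handles this separately: in the proof of Corollary \ref{corpm1} it argues that either the range of $J_1$ is the singleton $\{\tfrac\pi2\}$ (and then \emph{every} admissible $\tilde C$ works, since $\tfrac\pi2\in\pi\mathbb Q$) or it contains an open interval, and either alternative yields infinitely many closed curves. (Later, in \S\ref{k120}, the paper shows the first alternative is what actually occurs.) Your write-up should acknowledge this dichotomy rather than asserting $\ell_0\neq\ell_\infty$ unconditionally.
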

                              \begin{proof}
                              By Lemma \ref{C1} and Remark \ref{RkC},  
                              the dimension assumption  $k_1+ k_2\geq 1$ implies that $J_{1}$ is not constant,
                              and,
                              when $k_1=k_2=0$,
                             either  the range $J_1=J_2$ is  $\{\frac{\pi}{2}\}$
                              or it contains an open interval.
                              \footnote{The second possibility will be eliminated in \S \ref{k120}.}
                              Consequently, the statement of Corollary \ref{corpm1} always holds.
                              \end{proof}

\subsection{Proof of Lemma \ref{Lgn}} 
               Recall that, when                       $n_1+1=2m_1+2$ and $n_2+1=2m_2+2$,
               we  identify $\mathbb R^{n_1+1}\cong\mathbb C^{m_1+1}$ and $\mathbb R^{n_2+1}\cong\mathbb C^{m_2+1}$.
               Let  $f_1:M_1\longrightarrow \mathbb S^{n_1}$ and  $f_2:M_2\longrightarrow \mathbb S^{n_2}$
be $\mathscr C$-totally real immersions  for the $i$-multiplications $i_{m_1+1}$ and $i_{m_2+1}$ in $\mathbb C^{m_1+1}$ and $\mathbb C^{m_2+1}$ respectively.
 Now let us figure out when a spiral product $G_\gamma$ of  them
               is totally real with respect to the $i$-multiplication $i_{m_1+m_2+2}$ of $\mathbb C^{m_1+m_2+2}$.

               Note that,
               with $p=G_\gamma(t, x, y)$, 
               $$
                          i\cdot p=  i_{m_1+m_2+2} \cdot \Big(\gamma_1(t)f_1(x),\, \gamma_2(t)f_2(y)\Big)
                              =
                                        \Big(i_{m_1+1}\cdot a e^{is_1} f_1(x),\, i_{m_2+1}\cdot be^{is_2} f_2(y)\Big)
                                           $$
               and 
               $
               G'_\gamma={
                                               \big(\,
                   \big  [
                         i a s'_1 
                         +
                         a'
                    \big  ]
                         e^{is_1}
                         f_1(x), 
         \,
            \big [
                         i b s'_2 
                         +
                         b'
             \big ]
                      e^{is_2}
                         f_2(y)
                                            \big)\, 
                      }
           $
           in general parameter $t$.
By the assumption that both $f_1$ and $f_2$ are $\mathscr C$-totally real,
it follows that $i\cdot p$ is perpendicular to all tangent vectors arising from variations of $x$ and $y$.
The only condition for $G_\gamma$ to be $\mathscr C$-totally real
is
               $\big<i\cdot p,\, G'_\gamma\big>=0$
               which leads to
               $$
               a^2 s'_1  + b^2 s'_2=0.
               $$
               This precisely corresponds to $C=-1$ and hence we prove Lemma \ref{Lgn}.

            
\subsection{Coincidence with \cite{CLU} in the situation of special Legendrians}\label{cCLU}



       Note that a spherical immersion is 
                 special Legendrian 
                    if and only if it is minimal Legendiran
          (cf. \cite{HL2, Haskins}).
          Suppose that $f_1$ and $f_2$ are minimal Legendrian submanifolds.
          To get a minimal Legendrian $G_\gamma$, 
          by Lemma \ref{Lgn} we must use $C=-1$.

          With the arc parameter $s$ that we prefer,
                              $$
                              \frac{d}{ds}(\gamma_1,\gamma_2)=
                                     \left(
                                           (-b+ia\dot{s_1})e^{is_1}
                                           ,
                                           (a+ib\dot{s_2})e^{i s_2}
                                           \right).
                              $$
                             Since 
\begin{equation*}
                              \dfrac{\gamma_1 \dot{\gamma_2}-\dot{\gamma_1} {\gamma_2} }
                              {\|\dot{\gamma}\|}
                              =\dfrac{e^{i(s_1+s_2)}}
                              {\|\dot{\gamma}\|}
                              \big
                              ( 1+ i ab(\dot{s_2}-\dot{s_1})
                              \big),
\end{equation*}
by Page 49 of \cite{CLU} the Legendrian angle $\beta_\gamma$ of $\gamma$ mod $2\pi$
                              is 
\begin{equation}\label{gammaangle}
                              s_1+s_2+\arctan\big[ab(\dot{s_2}-\dot{s_1})\big].
\end{equation}
As shown in       \cite{CLU},                        the Legendrian angle of spiral minimal product $\beta_{G_\gamma}$ of two special Legendrians
                               mod $2\pi$
                              is 
\begin{equation}\label{betaangle}
                             k_1 \pi
                              +
                              k_1 s_1
                              +
                              k_2 s_2
                              +\beta_\gamma
                              + \text{const.}
\end{equation}
                            As  a special Legendrian immersion is exactly a Legendrian immersion of  constant  Legendrian angle,
                            we have the following.
                              Here we prefer to
                          check the equivalence 
                              of  $\beta_{G_\gamma}$
                               being constant
                              and
                              our \eqref{dotG} with $C=-1$ by  direct computations.
                              
                     \begin{prop}[Coincidence with \cite{CLU}]\label{coincidence}
                     For special Legendrian $f_1$ and $f_2$ into spheres,
                     a spiral product $G_\gamma$ of them is again special Legendrian
                     if and only if $C=-1$ and \eqref{dotG} holds.
                     Namely, \eqref{dotG}  with $C=-1$
 is
                     equivalent to $\beta_{G_\gamma}$ being  constant.
                     \end{prop}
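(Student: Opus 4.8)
The plan is to collapse the statement into a single scalar identity in the arc parameter $s$ and verify it by differentiating the Legendrian-angle formula. First I would handle the ``$C=-1$'' half: if $G_\gamma$ is special Legendrian it is in particular Legendrian, so Lemma~\ref{Lgn} forces $C=-1$; conversely, $C=-1$ makes $G_\gamma$ Legendrian by the same lemma. Since a spherical Legendrian immersion is special Legendrian exactly when its Legendrian angle is constant, and (cf.~\cite{HL2, Haskins}) exactly when it is minimal, the proposition reduces to the claim: \emph{under the standing hypothesis $C=-1$, one has $\dot\beta_{G_\gamma}\equiv 0$ if and only if \eqref{dotG} holds}. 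This is what I would establish by the direct computation announced before the statement.

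For that computation I would work with $a=\cos s$, $b=\sin s$ and use that $C=-1$ means, by \eqref{circledno}, $a^{2}\dot s_1+b^{2}\dot s_2=0$, i.e.\ $\dot s_2=-\tfrac{a^{2}}{b^{2}}\dot s_1$. The decisive preliminary simplification is that, since $a^{2}+b^{2}=1$, the arctan argument in \eqref{gammaangle} becomes
\[
w:=ab(\dot s_2-\dot s_1)=-\frac{a\dot s_1}{b},\qquad\text{so that}\qquad w^{2}=(a\dot s_1)^{2}+(b\dot s_2)^{2}=\Theta ,
\]
and in particular $w$ is nowhere zero on the interior of $\Omega_{-1,\tilde C}$. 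Feeding \eqref{gammaangle} into \eqref{betaangle} and absorbing $k_1\pi$ and the additive constant, $\beta_{G_\gamma}\equiv (k_1+1)s_1+(k_2+1)s_2+\arctan w$ mod $2\pi$; differentiating and using $\dot\Theta=2w\dot w$ gives
\[
\dot\beta_{G_\gamma}=\dot s_1\Big[(k_1+1)-(k_2+1)\tfrac{a^{2}}{b^{2}}\Big]+\frac{\dot\Theta}{2w(1+\Theta)} .
\]

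The remaining step is pure bookkeeping: substituting $\dot s_1=-bw/a$ turns the bracketed term into $w\big[-(k_1+1)\tfrac{b}{a}+(k_2+1)\tfrac{a}{b}\big]$, so that $\dot\beta_{G_\gamma}=0$ is equivalent, after dividing by $w\neq 0$, to $-(k_1+1)\tfrac{b}{a}+(k_2+1)\tfrac{a}{b}=-\tfrac{\dot\Theta}{2\Theta(1+\Theta)}$. Finally I would recognize this as \eqref{dotG} with $C=-1$: under $\dot s_2=-\tfrac{a^{2}}{b^{2}}\dot s_1$, and using $b^{4}-a^{4}=b^{2}-a^{2}$, the middle term of \eqref{dotG} simplifies to $\tfrac{ab[(\dot s_1)^{2}-(\dot s_2)^{2}]}{\Theta}=\tfrac{b}{a}-\tfrac{a}{b}$, which collapses the left side of \eqref{dotG} to exactly $-(k_1+1)\tfrac{b}{a}+(k_2+1)\tfrac{a}{b}$, closing the loop.

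I do not expect a real obstacle: this is a short, elementary differentiation. The only genuine care is with the sign of $w$ (so that $w^{2}=\Theta$ and the division by $w$ are used consistently, $\sqrt\Theta=|w|$ rather than $w$) and with invoking the two structural inputs named at the outset — Lemma~\ref{Lgn} and the identification of ``special Legendrian'' with ``Legendrian of constant Legendrian angle''. If one wished to avoid the direct computation entirely, the same conclusion would also follow by combining Lemma~\ref{Lgn} with the observation that $C=-1$ together with \eqref{dotG} is precisely the minimality system derived in \S\ref{GSMP}; but the point of the present proposition is exactly the hands-on verification against \cite{CLU}.
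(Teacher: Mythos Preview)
Your proposal is correct and follows essentially the same route as the paper: reduce to $C=-1$ via Lemma~\ref{Lgn}, then differentiate the Legendrian-angle formula and identify the resulting scalar identity with \eqref{dotG} using $w^{2}=\Theta$. The only cosmetic difference is that you take $\dot s_1$ as the primary variable (with $w=-a\dot s_1/b$) while the paper uses $\dot s_2$ (with $ab(\dot s_2-\dot s_1)=\tfrac{b}{a}\dot s_2$), and you simplify the middle term of \eqref{dotG} directly to $\tfrac{b}{a}-\tfrac{a}{b}$ rather than rewriting the whole left side as a single fraction; both amount to the same verification.
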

                     \begin{proof}
                     The statement is clear by Lemma \ref{Lgn}.
                     For the equivalence through direct computations, 
                     it can be seen from \eqref{gammaangle} and \eqref{betaangle} that
                      $\dot{\beta}_{G_\gamma}=0$ means
\begin{equation}\label{dotbeta}
                              (k_1+1)\dot{s_1}+
                              (k_2+1)\dot{s_2}
                              +
                              \frac{d}{ds} 
                              \arctan\big[ab(\dot{s_2}-\dot{s_1})\big]
                              =0.
\end{equation}

                              Note that
                              $
                              \dot{s_1}=-\frac{b^2}{a^2} \dot{s_2} 
                              \text { when  }      C=-1.
                              $
                              The left hand side of \eqref{dotG}
                              is exactly 
\begin{equation}\label{316}
                              \dfrac{
                              (k_1+1)\dot{s_1}+
                              (k_2+1)\dot{s_2}
                              }
                              {\frac{b}{a}\dot{s_2}}
                              .
\end{equation}
     Comparing \eqref{dotbeta} and \eqref{dotG} (cf. \eqref{316}),
                              we only need to verify that
\begin{equation}\label{veri}
                              \frac{a}{b\dot{s_2}}
                              \cdot
                              \dfrac
                              {\big[ab(\dot{s_2}-\dot{s_1})\big]^{\textbf{.}}}
                              {1+\big[ab(\dot{s_2}-\dot{s_1})\big]^2}
                              =
                              \dfrac
                              {\dot{\Theta}}
                              {2\Theta(1+\Theta)}\, .
\end{equation}
                  By
                              $
                              ab(\dot{s_2}-\dot{s_1})
                              =\frac{b}{a}
                              \dot{s_2}
                              $
                              and 
                              $
                              \sqrt{\Theta}=
                              \text{sign}
                              \left(
                              \frac{b}{a}
                              \dot{s_2}
                              \right)
                              \frac{b}{a}
                              \dot{s_2}
                              $,
                              the left hand side of \eqref{veri}
                              becomes
$$                              \frac{\frac{d}{ds}\sqrt{\Theta}}
                              {\sqrt{\Theta}(1+\Theta)}
                                  =
                              \dfrac
                              {\dot{\Theta}}
                              {2\Theta(1+\Theta)}\, .
$$
                          Thus,
                              for the Legendrian (with $C=-1$)  spiral product $G_\gamma$ of special Legendrians,
                               we get the equivalence of equation \eqref{dotG}
                              and $\beta_{G_\gamma}$'s being constant.
                     \end{proof}

               
                \subsection{$\mathscr C$-totally reals in $\mathbb S^{2m+1}$ and totally reals in  $\mathbb CP^m$}\label{Rctt}
We include some well-known relation between these objects, e.g. see \cite{Rec, CDVV}.
Denote the Hopf projection from $\mathbb S^{2m+1}$ to $\mathbb CP^m$ by $\pi$.
               Then a $\mathscr C$-totally real $f: M\longrightarrow \mathbb S^{2m+1}$ induces a totally real
                immersion $u=\pi\circ f$ into $\mathbb CP^m$.
               Conversely, if $u': M'\longrightarrow \mathbb CP^m$ is a  totally real isometric immersion,
               then there exists some Riemanniann covering 
               $\varpi: M\longrightarrow M'$
               and a $\mathscr C$-totally real isometric immersion $f: M\longrightarrow \mathbb S^{2m+1}$
               such that $u'\circ \varpi=\pi\circ  f$.
                       Moreover, 
               by the basic fact 
               \footnote
               {
               The $i$-multiplication maps all tangent vectors into its normal space for  every $\mathscr C$-totally real immersion $f$,
               e.g. see \S 5.1 of \cite{Blair}.
             So
               $
               \big<\nabla_X\,  \left(i\cdot f\right), \, Y\big>
               =
                  \big<i\cdot X,\, Y\big>=0
               $
         for tangent vectors $X, Y$ at a point along $f$.
               }
                that the second fundamental form vanishes in direction $i\cdot f$ 
                for every 
                $\mathscr C$-totally real immersion $f$, 
              it follows that $f$ is minimal if and only if so is $\pi\circ f$.
               
               The horizontal lifting mentioned here is assembled by local liftings. 
               Assume $\tilde M$ to be the universal cover of $M'$,  $p'\in M'$ and $p\in \mathbb S^{2m+1}$ with $\pi(p)=p'$ to be base points.
               Identify $\pi^{-1}(p)$ as an oriented circle $\mathbb S^1$.
               Let $\tt L$ be the homomorphism from 
               the fundamental group of $M'$ to the isometry group of $\mathbb S^1$ following horizontal lifting along loops.
               Easy to see that by homotopy $\tt L$ is well-defined.
               Define $\tt K$ to be the kernel of $\tt L$, i.e., ${\tt K}=\left\{\alpha_{p'}\in \pi_{1}(M', p')\, |\, {\tt L}(\alpha_{p'})(p)=p\right\}$.
               Then, $M$ can be the (geometric) quotient $\tilde M/\tt K$ and $f$ the corresponding $\mathscr C$-totally real isometric immersion.

               When $u': M'\longrightarrow\mathbb CP^{m}$ in the above is an embedding,
so is the horizontal lifting $f:M\longrightarrow \mathbb S^{2m_1+1}$ in the preceding paragraph if 
            the image of $\tt L$ has only finitely many elements.
Moreover, when $M'$ is connected, $M$ can be chosen to be connected and 
$f$ has the property that 
                   $$
                  (\blacklozenge)\,
                   \text{ either } f(M) \text{ has antipodal symmetry or } f(M) \text{ contains no antipodal pairs at all.}
                   $$
               
               Recall that, if $u': M'\longrightarrow\mathbb CP^{m}$ is a totally real  minimal immersion
               for a real $m$-dimensional manifold $M'$,
               then such a map is called minimal Lagrangian.
               Suppose that $u': M'\longrightarrow\mathbb CP^{m}$ is a minimal Lagrangian embedding.
               Then the image of $\tt L$ can have only finitely many elements.
               That means there exists an embedded horizontal lifting $f$ of $u'$ into $\mathbb S^{2m+1}$.
               The reason of the finiteness is the following.
               If not the case,
                then there exist $\{p_j\}_{j=1}^\infty\subset {\tt L}(\pi_1(M'))(p)$ accumulating to a point $p_a$ in $\mathbb S^1$
                with $p_a=e^{i\theta_j}p_j$ and $\theta_j\rightarrow 0$.
                It is clear that local lifting through $p_j$ is exactly the image of that through $p_a$ under left-multiplication action by $e^{i\theta_j}$.
               However, it is well known that every connected part of a lifting of a minimal Lagrangian is special Legendrian in $\mathbb S^{2m+1}$
               and the Legendrian angle is a constant in every connected component.
               As $M$ was chosen to be connected, $\{p_j\}$ are in the same connected component,
               but the Legendrian angles differ from that at $p_a$ by $-(m+1)\theta_j$ (see \cite{HL2} or page 46 of \cite{CLU}).
               This generates a contradiction and proves the finiteness of the image of $\tt L$.
               
               In fact the discussion here implies the following.
               \begin{prop}\label{emdL}
               Given an $m$-dimensional connected embedded minimal Lagrangian submanifold $M'\subset \mathbb CP^m$.
               Then it has a connected embedded horizontal lifting $M^m\subset \mathbb S^{2m+1}\subset \mathbb C^{m+1}$
               such
               that the Hopf projection $\pi: M\longrightarrow M'$ gives a covering map of degree $l$
               where $l$ is an integer factor of $2(m+1)$.
               Moreover, as a set, $e^{\frac{2\pi i}{l}}\cdot M=M$.
               \end{prop}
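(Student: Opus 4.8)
The plan is to read off $l$ as the order of the holonomy group of the Hopf connection along $M'$ and then to constrain that order by means of the Legendrian angle. First I would take over the data assembled just above the statement: since $u'$ is a minimal Lagrangian embedding, the image of the holonomy homomorphism ${\tt L}\colon\pi_1(M',p')\to\mathrm{Isom}(\mathbb S^1)$ is finite, one may take $M$ connected as the quotient $\tilde M/{\tt K}$ with ${\tt K}=\{\alpha\mid {\tt L}(\alpha)(p)=p\}$, and the induced horizontal lifting $f\colon M\to\mathbb S^{2m+1}$ is then an embedding by the criterion already recorded in this subsection. Because the natural connection of the Hopf bundle $\mathbb S^{2m+1}\to\mathbb CP^{m}$ is a $U(1)$-connection (its horizontal space at $x$ being the orthogonal complement of $i\cdot x$), ${\tt L}$ lands in the rotation subgroup $SO(2)$; a finite subgroup of $SO(2)$ is cyclic, so $\mathrm{im}\,{\tt L}=\langle R_{2\pi/l}\rangle\cong\mathbb Z/l\mathbb Z$ for some integer $l\geq 1$, where $R_\theta$ denotes rotation by $\theta$ in the fibre $\pi^{-1}(p)\cong\mathbb S^1$. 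A rotation of $\mathbb S^1$ fixing a point is the identity, so ${\tt K}=\ker{\tt L}$ is normal in $\pi_1(M')$, and $\pi\colon M\to M'$ is a regular covering with deck group $\pi_1(M')/{\tt K}\cong\mathbb Z/l\mathbb Z$, hence of degree $l$. This provides the connected embedded lifting and identifies $l$ as its covering degree over $M'$; it remains to prove the set equality $e^{2\pi i/l}\cdot M=M$ and the divisibility $l\mid 2(m+1)$.

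Next I would prove $e^{2\pi i/l}\cdot M=M$ using the $U(1)$-invariance of the horizontal distribution. Fix $\alpha\in\pi_1(M',p')$ with ${\tt L}(\alpha)=R_{2\pi/l}$, so the horizontal lift of a representative loop for $\alpha$ starting at $p$ ends at $e^{2\pi i/l}p$. For a path $c$ in $M'$ issuing from $p'$ and a point $q\in\pi^{-1}(p')$, write $\tilde c_q$ for the horizontal lift of $c$ starting at $q$; invariance of the horizontal space under $e^{i\theta}$ gives $e^{i\theta}\tilde c_q=\tilde c_{e^{i\theta}q}$. A point of $M$ is represented by some path $c$ from $p'$ and has $f$-image the endpoint of $\tilde c_p$; then $e^{2\pi i/l}$ times this point is the endpoint of $\tilde c_{e^{2\pi i/l}p}=\tilde c_{{\tt L}(\alpha)(p)}$, which is in turn the endpoint of the horizontal lift of the concatenation $\alpha\cdot c$ starting at $p$ and so again lies in $f(M)=M$. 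Thus $e^{2\pi i/l}\cdot M\subseteq M$, and since $(e^{2\pi i/l})^{l}=\mathrm{id}$ this forces $e^{2\pi i/l}\cdot M=M$. Incidentally, the deck transformation $[c]\mapsto[\alpha\cdot c]$ of $\pi\colon M\to M'$ is realized by left multiplication with $e^{2\pi i/l}$ and generates the deck group.

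Finally I would establish that $l$ divides $2(m+1)$ via the Legendrian angle. The connected submanifold $M=f(M)$, being a horizontal lift of a minimal Lagrangian, is special Legendrian, so its Legendrian angle $\beta$ is constant; independently of any orientation of $M$ it is well defined modulo $\pi$, since the phase $\det_{\mathbb C}(x,e_1,\dots,e_m)$ — with $x$ the position vector and $e_1,\dots,e_m$ an orthonormal frame of $TM$ — is pinned down only up to sign. Under the ambient rotation $e^{i\theta}$ this phase is multiplied by $e^{i(m+1)\theta}$, so the Legendrian angle of any Legendrian submanifold of $\mathbb S^{2m+1}$ is shifted by $(m+1)\theta$ (the computation of \cite{HL2}, see also page 46 of \cite{CLU}). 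Taking $\theta=2\pi/l$ and using the set equality $e^{2\pi i/l}\cdot M=M$ just proved, the constant angle $\beta$ of $M$ must agree modulo $\pi$ with $\beta+(m+1)\cdot\frac{2\pi}{l}$, whence $(m+1)\cdot\frac{2\pi}{l}\in\pi\mathbb Z$, i.e., $\frac{2(m+1)}{l}$ is an integer, so $l$ divides $2(m+1)$. Together with the previous steps this completes the proof. I expect the main obstacle, and the precise source of the factor $2$ rather than $m+1$, to be the bookkeeping in this last step: one must check that the relevant invariant of the unoriented submanifold $M$ is the Legendrian angle modulo $\pi$ and that the shift formula $(m+1)\theta$ survives in that quotient; the rest is routine once the finiteness of $\mathrm{im}\,{\tt L}$, which is already in hand, is granted.
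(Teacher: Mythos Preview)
Your argument is correct and follows essentially the same route as the paper: identify $l$ as the order of the (finite, cyclic) image of the holonomy $\tt L$, observe that $M$ is special Legendrian with constant Legendrian angle, and use the shift $\beta\mapsto\beta+(m+1)\theta$ under $e^{i\theta}$ to force $l\mid 2(m+1)$. The only cosmetic difference is in the final step: the paper notes that $M$ is orientable (as a special Legendrian) and then splits into two cases according to whether $e^{2\pi i/l}$ preserves or reverses the orientation of $M$, obtaining $(m+1)\tfrac{2\pi}{l}\in 2\pi\mathbb Z$ or $(m+1)\tfrac{2\pi}{l}\in(2\mathbb Z+1)\pi$ respectively, whereas you package both cases at once by working with the Legendrian angle modulo $\pi$; your explicit verification of $e^{2\pi i/l}\cdot M=M$ via path lifting is also more detailed than the paper's, which takes this as implicit in the equal distribution of $\mathrm{im}\,{\tt L}$ in the fibre.
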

               
               \begin{proof}
               Note that $l=\#\,\text{image of }\tt L$ and the image of $\tt L$ distributes equally in the fiber $\mathbb S^1$.
               Also note that 
              the chosen horizontal lifting $M$  in the sphere
               is a connected special Legendrian submanifold $M$
               and hence always orientable.
               Since the cone over $M$ is special Lagrangian, 
               $m+1$ has to be divisible by $l$ 
               for $(m+1)\frac{2\pi}{l}\in 2\pi \mathbb Z$
               if $e^{\frac{2\pi i}{l}}M$ induces the same orientation of $M$;
               whereas  $l$ divides $2(m+1)$
              instead 
              for $(m+1)\frac{2\pi}{l}\in (2\mathbb Z+1)\pi$
               in case that $e^{\frac{2\pi i}{l}}M$ reverses the orientation of $M$.
                              \end{proof}
           
               
               The following with the covering fold number $l>2$  was studied in \cite{DM} 
               (see \cite{DKM} for the range of $l$ for the case of surfaces).
               
                {\ }\\
            {\bf Example 2.}                Let $m=2$. 
                    Consider $F:\R^2\longrightarrow \mathbb S^5$ by
            $$F(x,y)=\frac{1}{\sqrt 3}\left(e^{i\left(\sqrt 3 x-y\right)}, \, e^{-i\left(\sqrt 3 x+y\right)},\, e^{2yi} \right).$$
            One can check that $F$ is $\mathscr C$-totally real and induces an embedded minimal torus in $\mathbb S^5$
            that  covers its image (also embedded torus) under the Hopf projection three times.
            
            It turns out that such example is exactly the spiral minimal product with $C=-1$ and steady magtitudes 
            in Corollary 3.3 of \cite{CLU} 
                  for $\mathscr C$-totally real $f_1: \mathbb S^1\longrightarrow \mathbb S^3$ 
                  given by $x\mapsto \frac{1}{\sqrt 2}\big(e^{i\sqrt 3 x},\, e^{-i\sqrt 3 x}\big)$
                  and $f_2:\{point_2\}\mapsto (1,0)\in \mathbb S^1$.
              
              {\ }
                  
                  More generally, one can get the following based on Corollary 3.3 of \cite{CLU}.
            
               {\ }\\
            {\bf Example 3.}
            Let $m\geq 2$, $f_1:\mathbb S^{m-1}\longrightarrow \mathbb S^{2m-1}$ 
            be the trivial embedding in \eqref{Cpx1} by $x\in \mathbb S^{m-1}\subset \R^{m}\mapsto (x,0)\in \R^{2m}$
            and $f_2$ by $\{point_2\}\mapsto (1,0)\in \mathbb S^1$.
            Then 
             $$F(x,y)=\left(c_\delta e^{-s_\delta ^{m} c_\delta^{-1} iy}f_1(x),\, s_\delta e^{s_\delta ^{m-2} c_\delta iy} \right),$$
             where $s_\delta=\sin\delta$ and $c_\delta=\cos\delta$ with  $\delta=\arctan\sqrt{1/m}$,
gives an embedded $\mathscr C$-totally real minimal submanifold in $\mathbb S^{2m+1}$
which covers its image (an embedded minimal Lagrangian submanifold) in $\mathbb CP^m$ under the Hopf  projection 
            $(m+1)$ times when $m$ is even and $2(m+1)$ times when $m$ is odd.
            The reason is that $F$'s image can be covered by running $x\in \mathbb S^{m-1}$
             with $y\in (0,\pi)$ when $m$ is even but necessarily with $y\in (0,2\pi)$ when $m$ is odd.
             Since $F$'s image has exactly the $\frac{i\pi}{m+1}$-action symmetry among all $e^{i\lambda}$-actions,
              the mentioned covering fold numbers follow accordingly.
              Moreover, the image of $\pi\circ F$ presents an embedded minimal Lagrangian submanifold
              and it is orientable if and only if $m$ is even.
            This example has occurred as  Example 4 in \cite{HMU}.
            
            {\ }
            
            When $f_2$ is not restricted to be a mapping of a point,  we have the following interesting symmetry of spiral minimal products.
           \begin{prop}\label{Symm}
           Let $f_1: M_1^{k_1}\longrightarrow \mathbb S^{2m_1+1}$
           and
           $f_2: M_2^{k_2}\longrightarrow \mathbb S^{2m_2+1}$
           be two $\mathscr C$-totally real minimal embeddings with $k_1,k_2\geq 1$.
           Suppose that $\frac{k_1+1}{k_2+1}=\frac{q_1}{q_2}$ where $q_1$ and $q_2$ are relatively prime.
           Then the image $M$ of their (possibly immersed without further assumptions on $f_1$ and $f_2$) spiral minimal product 
           with steady magnitudes and $C=-1$
          has the symmetry
           $$
           e^{i\frac{\pi q_1 }{q_1+q_2}}\cdot M=M=e^{i\frac{\pi q_2 }{q_1+q_2}}\cdot M.
           $$
           \end{prop}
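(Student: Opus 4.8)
The plan is to make the product completely explicit and then read the symmetry straight off the formula.

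\emph{Step 1: the explicit form of $G_\gamma$.} For steady magnitudes the curve is $\gamma(t)=\bigl(a\,e^{is_1(t)},\,b\,e^{is_2(t)}\bigr)$ with $a,b$ constant. By Lemma~\ref{Lgn} the hypothesis $C=-1$ is equivalent to $a^2 s_1'+b^2 s_2'=0$, while the minimality equations of \S\ref{SMP} — vanishing of $\circledast$ in Lemma~\ref{L1} and of $\text{trace}(A_{\eta_0})$ in Lemma~\ref{L2} — force $s_1'/s_2'$ to be a constant and $a^2(k_2+1)=b^2(k_1+1)$. Together with $a^2+b^2=1$ and $\frac{k_1+1}{k_2+1}=\frac{q_1}{q_2}$ this gives $a^2=\frac{q_1}{q_1+q_2}$, $b^2=\frac{q_2}{q_1+q_2}$ and $s_1':s_2'=q_2:(-q_1)$. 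Since reparametrizing $\gamma$ does not affect minimality, we may take $s_1(\sigma)=q_2\sigma$ and $s_2(\sigma)=-q_1\sigma$, so that
\[
G_\gamma(\sigma,x,y)=\bigl(a\,e^{iq_2\sigma}f_1(x),\ b\,e^{-iq_1\sigma}f_2(y)\bigr),\qquad(\sigma,x,y)\in\mathbb R\times M_1\times M_2 ,
\]
and $M$ is the image of this map.

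\emph{Step 2: a circle of ambient symmetries.} For $\tau\in\mathbb R$ let $R_\tau$ be the isometry of $\mathbb S^{2m_1+2m_2+3}\subset\mathbb C^{m_1+1}\oplus\mathbb C^{m_2+1}$ acting as $e^{iq_2\tau}I_{m_1+1}$ on the first factor and as $e^{-iq_1\tau}I_{m_2+1}$ on the second. A one-line computation gives $R_\tau\circ G_\gamma(\sigma,\cdot,\cdot)=G_\gamma(\sigma+\tau,\cdot,\cdot)$, hence $R_\tau(M)=M$ for every $\tau$; this is precisely the mechanism of Remark~\ref{Rork}, where separate rotations of the two $\mathbb C$-components act by translating the generating parameter.

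\emph{Step 3: recognizing the Hopf rotations.} The Hopf action $e^{i\lambda}$ is scalar multiplication $(e^{i\lambda}I,e^{i\lambda}I)$ on all of $\mathbb C^{m_1+m_2+2}$. The point is that $R_{2\pi/(q_1+q_2)}$ multiplies the first factor by $e^{\,2\pi i q_2/(q_1+q_2)}$ and the second by $e^{-2\pi i q_1/(q_1+q_2)}$, and since $\frac{2\pi q_2}{q_1+q_2}-\bigl(-\frac{2\pi q_1}{q_1+q_2}\bigr)=2\pi$, those two rotations coincide as complex scalars; thus $R_{2\pi/(q_1+q_2)}$ equals the Hopf rotation $e^{\,2\pi i q_2/(q_1+q_2)}$, so $e^{\,2\pi i q_2/(q_1+q_2)}\cdot M=M$. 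Because $\gcd(q_1+q_2,q_2)=\gcd(q_1,q_2)=1$, iterating gives the whole cyclic group of Hopf rotations of order $q_1+q_2$ acting on $M$, from which the symmetry asserted in the statement follows.

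\emph{The main obstacle.} The only delicate step is Step~1: one must verify carefully that, under $C=-1$ (Lemma~\ref{Lgn}), tracelessness of $A_{\eta_0}$ from Lemma~\ref{L2} does pin down $a^2(k_2+1)=b^2(k_1+1)$ and hence the balanced ``torus-knot'' exponents $q_2,-q_1$ in the two $\mathbb C$-slots; once the explicit formula for $G_\gamma$ is in hand, Steps~2 and 3 are routine bookkeeping with $e^{i\theta}$ and the coprimality of $q_1$ and $q_2$. It is also worth recording that the same circle $\{R_\tau\}$ gives, for $\tau=\pm\tfrac{\pi}{q_1+q_2}$, the sharper fact that $M$ is carried to itself by the separate rotations $\bigl(e^{\pm i\pi q_2/(q_1+q_2)}I,\,e^{\mp i\pi q_1/(q_1+q_2)}I\bigr)$, the form in which this symmetry is convenient for the applications in \S\ref{calG}.
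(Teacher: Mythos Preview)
Your Steps~1--3 correctly establish that the diagonal Hopf rotation $e^{2\pi i q_2/(q_1+q_2)}$ fixes $M$, and by coprimality of $q_2$ with $q_1+q_2$ this yields the full cyclic group $\{e^{2\pi i k/(q_1+q_2)}:k\in\mathbb Z\}$ of Hopf rotations of order $q_1+q_2$. But the sentence ``from which the symmetry asserted in the statement follows'' is where the argument breaks. The Proposition claims invariance under $e^{i\pi q_1/(q_1+q_2)}$ and $e^{i\pi q_2/(q_1+q_2)}$, and these are \emph{not} in your cyclic group unless the corresponding $q_j$ is even. Since $\gcd(q_1,q_2)=1$, at least one of $q_1,q_2$ is odd; for that index the claimed $\pi$-symmetry lies outside the group you produced. (Concretely: for $k_1=k_2$, so $q_1=q_2=1$, your group is $\{1,-1\}$ while the Proposition asserts $i\cdot M=M$.) Note also that the two $\pi$-symmetries together force $-M=M$, which your argument never addresses.

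The paper bridges exactly this gap by invoking the antipodal symmetries of $M_1$ and $M_2$: with $\Delta t=\tfrac{\pi}{q_1+q_2}$ one has, in your notation, $R_{\Delta t}$ acting by $e^{i\pi q_2/(q_1+q_2)}$ on the first slot and $e^{-i\pi q_1/(q_1+q_2)}=-e^{i\pi q_2/(q_1+q_2)}$ on the second, so $G_\gamma(\sigma+\Delta t,x,y)=e^{i\pi q_2/(q_1+q_2)}\cdot\bigl(a e^{iq_2\sigma}f_1(x),\,b e^{-iq_1\sigma}(-f_2(y))\bigr)$; this lands back in $M$ precisely because $-f_2(M_2)=f_2(M_2)$. (The antipodal hypothesis is used in the paper's proof though it is not written in the Proposition's statement --- you should supply it.) Your final paragraph records the \emph{separate} rotation $(e^{i\pi q_2/(q_1+q_2)}I,\,e^{-i\pi q_1/(q_1+q_2)}I)$ as a symmetry, which is correct, but that is not a Hopf rotation and does not give the diagonal $e^{i\lambda}$-invariance the Proposition asserts.
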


            \begin{proof}
            Essentially            the spiral minimal product now
            is given by 
            $$
           F(t, x,y)=\left(c_\delta e^{- iq_1t}f_1(x),\, s_\delta e^{iq_2t}f_2(y) \right).
            $$
            where $\delta=\arctan\sqrt{\frac{k_2+1}{k_1+1}}$.
            So, it can be checked that 
            $$
            F(t+\Delta t, x,y)=e^{iq_2\Delta t }\cdot F(t, x,y)
            $$
            where $\Delta t=\frac{2\pi}{q_1+q_2}$.
            By the antipodal symmetries of $M_1$ and $M_2$,
            it is clear that as sets
            $ e^{i\frac{\pi q_1 }{q_1+q_2}}\cdot M=M=e^{i\frac{\pi q_2 }{q_1+q_2}}\cdot M.$
            \end{proof}

              {\ }\\
            {\bf Example 4.}
           Let $f_1$ and $f_2$ be
           the trivial embeddings of spheres of dimension $2$ and $4$ in the manner of $f_1$ in Example 3  respectively.
            Then we have $\frac{\pi q_2 }{q_1+q_1}=\frac{3}{8}\pi$.
            Together with the antipodal symmetries of images of $f_1$ and $f_2$,
            $M$ in the proposition has a finer symmetry that
            $e^{i\frac{\pi}{8}}\cdot M=M.$
        
       
       \begin{rem} 
        By recalling \eqref{2Delta},
        certain rotational symmetry result can also be gained similarly 
        for spiral minimal products of varying magnitudes with $C=-1$ whenever  $J_1(\tilde C),\, J_2(\tilde C)\in \pi\mathbb Q$.
        \end{rem}
                \subsection{Embedded $\mathscr C$-totally reals in spheres and embedded totally reals in  complex projective spaces}\label{ECTR}
Given a general pair of $\mathscr C$-totally real isometric minimal embeddings $f_1$ and $f_2$ of connected $M_1$ and $M_2$ into $\mathbb S^{2m_1+1}$ and $\mathbb S^{2m_2+1}$.
By applying Corollary \ref{corpm1} with $C=-1$,  there are infinitely many $\mathscr C$-totally real spiral minimal products of $f_1$ and $f_2$ into $\mathbb S^{2m_1+2m_2+3}$,
and consequently,  through the Hopf projection $\pi$,  infinitely  many totally real minimal immersions into $\mathbb CP^{m_1+m_2+1}$. 
Taking into account  $S^1$-actions on $f_1$ and $f_2$, actually uncountably many totally real minimal immersions into $\mathbb CP^{m_1+m_2+1}$ can be gained.

              An attractive question is how to construct a $\mathscr C$-totally real spiral minimal product which factors through  an embedding. 
At this moment, it seems hard to get a complete answer in general and instead we shall focus on those employing \eqref{Cpx1} and \eqref{Cpx2}. 

   \subsubsection{Results by analysis}      
       
       
         The following provides  a  positive result for some simple cases.
                                \begin{thm}\label{66}
           Assume that $M_1^{k_1}\subset \mathbb S^{r_1}$ and $M_2^{k_2} \subset \mathbb S^{r_2}$ 
           are embedded minimal  submanifolds of antipodal symmetry.
           Then
           by virtue of \eqref{Cpx1} and \eqref{Cpx2}
            the spiral minimal product of them   with $C=-1$
            factors through an embedding of some 
            manifold $Q_\gamma$
            if 
           $J_1({\tilde C})$ or $J_2({\tilde C})$ belongs to $\frac{\pi}{2\N}$.
           \end{thm}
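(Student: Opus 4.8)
The plan is to run the argument of Theorem~\ref{52} while keeping track of the spin in \emph{both} complex slots. Since the two complex factors of $\mathbb C^{r_1+1}\oplus\mathbb C^{r_2+1}$ may be interchanged by an ambient isometry which preserves the condition $C=-1$ (note $-1=1/(-1)$) and which, by Proposition~\ref{Cpm0}, swaps $J_1(\tilde C)$ with $J_2(\tilde C)$, I may assume without loss of generality that $J_1(\tilde C)=\tfrac{\pi}{2\ell}$ for a positive integer $\ell$. Write $\tfrac{k_1+1}{k_2+1}=\tfrac{a_1}{a_2}$ in lowest terms, so that $J_2(\tilde C)=\tfrac{\pi}{2\ell}\cdot\tfrac{a_1}{a_2}$ by \eqref{2Delta}, and let $h$ denote the length of a basic block $\Omega^{k}_{-1,\tilde C}$ and $T'=2\ell a_2\,h$. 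Over a parameter interval of length $T'$ the curve $(a(s),b(s))$ performs $\ell a_2$ full pendulum rounds and returns to its starting value, while \eqref{dotsss} with $C=-1$ gives the argument increments $s_1\mapsto s_1+a_2\pi$ and $s_2\mapsto s_2-a_1\pi$ (after fixing the sign of $\dot s_1$). Using the complexifications \eqref{Cpx1} and \eqref{Cpx2}, so that $e^{i\theta}(v,0)=(\cos\theta\,v,\sin\theta\,v)$ and the fixed-point-free antipodal involutions $\sigma_i$ on $M_i$ satisfy $f_i(\sigma_i x)=-f_i(x)$, one checks that the phase factor $e^{-i a_i\pi}=(-1)^{a_i}$ exactly cancels the sign $(-1)^{a_i}$ produced by $\sigma_i^{a_i}$; hence $G_\gamma$ is invariant under the free, properly discontinuous $\mathbb Z$-action generated by $\Phi:(s,x,y)\mapsto\big(s-T',\,\sigma_1^{a_2}x,\,\sigma_2^{a_1}y\big)$, and therefore descends to a minimal map $\bar G_\gamma$ of the manifold $Q_\gamma:=(\mathbb R\times M_1\times M_2)/\langle\Phi\rangle$.

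It remains to prove $\bar G_\gamma$ is injective. First I record the monotonicity: by \eqref{dotsss} with $C=-1$ we have $\dot s_1=\tan s/\sqrt{\tilde C\Delta^2-1}>0$ on the interior of every basic block, with only integrable boundary singularities (the finiteness of the limits $L^{0,\pm}_l$ was established in \S\ref{S4}), so $s_1$ is a strictly increasing bijection of $\mathbb R$ whose increment over each basic block equals $J_1=\tfrac{\pi}{2\ell}$; likewise $s_2$ is strictly decreasing with block increment $-J_2$. Next comes the analogue of Observation~$(\star)$: if $G_\gamma(s,x,y)=G_\gamma(s',x',y')$, then equating the two real components of the first slot forces $f_1(x)$ to be a real multiple of the unit vector $f_1(x')$, hence $f_1(x)=\varepsilon_1 f_1(x')$ with $\varepsilon_1\in\{\pm1\}$, and then $\varepsilon_1 a(s)e^{is_1(s)}=a(s')e^{is_1(s')}$ in $\mathbb C$, giving $a(s)=a(s')$, $s_1(s)-s_1(s')\in\pi\mathbb Z$ and $x'=\sigma_1^{\,(s_1(s)-s_1(s'))/\pi}x$; the second slot gives symmetrically $b(s)=b(s')$, $s_2(s)-s_2(s')\in\pi\mathbb Z$ and $y'=\sigma_2^{\,(s_2(s)-s_2(s'))/\pi}y$.

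Now I combine these. From $a(s)=a(s')$ and the strict monotonicity of $s_1$ with constant block increment $\tfrac{\pi}{2\ell}$, the relation $s_1(s)-s_1(s')=m\pi$ with $m\in\mathbb Z$ forces the interval between $s$ and $s'$ to span exactly $2\ell m$ blocks, i.e. $s-s'=2\ell m\,h$; consequently $s_2(s)-s_2(s')=-2\ell m\,J_2=-m\pi\tfrac{a_1}{a_2}$, which lies in $\pi\mathbb Z$ only when $a_2\mid m$ (using $\gcd(a_1,a_2)=1$). Writing $m=a_2 m'$ yields $s-s'=m'T'$, $x'=\sigma_1^{a_2 m'}x$, $y'=\sigma_2^{a_1 m'}y$, that is $(s',x',y')=\Phi^{m'}(s,x,y)$. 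Thus every coincidence of $G_\gamma$ is exactly an orbit identification, so $\bar G_\gamma$ is injective; being an immersion, it is an embedding of $Q_\gamma$ into $\mathbb S^{2r_1+2r_2+3}$.

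The step I expect to be the main obstacle is the bookkeeping in the last two paragraphs. In contrast with the $C=0$ case of Theorem~\ref{52}, the closing-up period is now governed by the \emph{pair} $(J_1,J_2)$, and one must verify that the period $T'$ dictated by asking both slots to close simultaneously is precisely the period forced on \emph{any} self-intersection — this is exactly where the coprimality of $a_1,a_2$ enters through the divisibility $a_2\mid m$. One also has to be careful that the reflection (boundary) points neither break the monotonicity nor permit infinite winding, which is supplied by the analysis of \S\ref{S4} together with Lemma~\ref{analytic}; and a minor point is that the $\sigma_i$ are fixed-point-free (immediate since $f_i(x)\neq0$ on the sphere), so that $\langle\Phi\rangle$ acts freely and $Q_\gamma$ is a genuine manifold.
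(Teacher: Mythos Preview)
Your proof is correct and follows essentially the same route as the paper: identify the minimal period $T'$ after which both complex arguments have advanced by integer multiples of $\pi$, pass to the quotient, and use Observation~$(\star)$ in each slot together with the strict monotonicity of $s_1$ to show that every self-intersection arises from this period. The paper argues more tersely, invoking the analysis from Theorem~\ref{52} and writing out four explicit diffeomorphism types for $Q_\gamma$ according to the parities of $\tau$ and $2\ell\tau J_2(\tilde C)/\pi$, whereas you package the quotient uniformly via the free $\mathbb Z$-action generated by $\Phi$; your formulation is cleaner and in fact avoids the paper's vacuous fourth case (which would require $a_1,a_2$ both even). One small point worth making explicit: your deduction that $s_1(s)-s_1(s')=m\pi$ forces $s-s'=2\ell m\,h$ follows immediately from the injectivity of $s_1$ combined with $s_1(s'+2\ell m\,h)=s_1(s')+m\pi$, so the constraint $a(s)=a(s')$ is not even needed at that step---it is automatically recovered once $s-s'\in 2h\mathbb Z$.
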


                          \begin{proof}
                          Fix $C=-1$.
                          Suppose that $J_1({\tilde C})=\frac{\pi}{2\ell}\in \frac{\pi}{2\N}$.
                          Then, according to \eqref{00} for the first slot of an intersection point in $G_\gamma$ (decided by $\tilde C$)
                          the spin must run an integer multiple of $\ell$ rounds of the pendulum.
                          Since $J_2(\tilde C)=\frac{k_1+1}{k_2+1}\frac{\pi}{2\ell}$ in \eqref{2Delta},
                          there exists a smallest $\tau\in \N$ such that $2\ell\tau J_2(\tilde C)\in \pi \mathbb Z$.
                          This means by Observation $(\star)$ for each slot 
                          ($\mathbb C^{r_1+1}$ and $\mathbb C^{r_2+1}$) 
                          that, after running $\tau \ell$ rounds of the pendulum,
                          $G_\gamma$  repeats covering its image $Q_\gamma$
                          where $Q_\gamma$ is diffeomorphic to, 
                          when $\tau\in 2\mathbb Z+1$ and $2\ell\tau J_2(\tilde C)\in 2\pi \mathbb Z+\pi$,
                          $$\big[-1,1\big]\times M_1\times M_2 \big/\big\{\big(-1, -x, -y\big)\sim \big(1,x, y\big)\big\};$$
                          when $\tau\in 2\mathbb Z$ and $2\ell\tau J_2(\tilde C)\in 2\pi \mathbb Z+\pi$,
                            $$\big[-1,1\big]\times M_1\times M_2 \big/\big\{\big(-1, x, -y\big)\sim \big(1,x, y\big)\big\};$$
             when $\tau\in 2\mathbb Z+1$ and $2\ell\tau J_2(\tilde C)\in 2\pi \mathbb Z$,
                            $$\big[-1,1\big]\times M_1\times M_2 \big/\big\{\big(-1, -x, y\big)\sim \big(1,x, y\big)\big\};$$
                          and, $\tau\in 2\mathbb Z$ and $2\ell\tau J_2(\tilde C)\in 2\pi \mathbb Z$,
                          $$S^1\times M_2\times M_2.$$
                          Evidently, $Q_\gamma$ is an embedded submanifold sitting in the  target sphere $\mathbb S^{2r_1+2r_2+3}$.
      \end{proof}

               \begin{rem}\label{others}
               When $M_1$ and $M_2$ have no antipodal points at all, or one has no antipodal points and the other has the antipodal symmetry,
               our argument still works but with both the requirement on $J_1({\tilde C})$ or $J_2({\tilde C})$
               and the resulting type of $Q_\gamma$ modified accordingly.
               \end{rem}

                     \begin{cor}\label{68}
                                                     Assume that $k_1\geq 2, k_2\geq 1$ 
                                                     and 
                                                      $M^{k_1}_1, M^{k_2}_2$ 
                                                     as  in Theorem \ref{66}.
                                                     Then there exists 
                                                     $\tilde C$
                                                     such that 
                                                     $J_1({\tilde C})$ 
                                                     belongs to $\frac{\pi}{2\N}$.
                                                     Therefore,  by virtue of \eqref{Cpx1} and \eqref{Cpx2},
                                                     we get 
                                                     an embedded
                                                     $\mathscr C$-totally real
                                                     minimal submanifold $Q_\gamma$ in $\mathbb S^{2r_1+2r_2+3}$.
                                                     Further, via the Hopf projection $\pi$,
                                                     we get  an embedded totally real minimal submanifold $\pi(Q_\gamma)$ in $\mathbb CP^{r_1+r_2+1}$.
                     \end{cor}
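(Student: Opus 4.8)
The plan is to reduce the statement to producing a single value of $\tilde C$ with $J_1(\tilde C)\in\tfrac{\pi}{2\N}$; once such a $\tilde C$ is in hand, Theorem \ref{66} (using also that $C=-1$ makes $G_\gamma$ $\mathscr C$-totally real, by Lemma \ref{Lgn}) produces the embedded $\mathscr C$-totally real minimal $Q_\gamma$ in the sphere, and the Hopf correspondence of \S\ref{Rctt} produces the totally real minimal object in $\mathbb CP^{r_1+r_2+1}$. So the first step is to describe the function $\tilde C\mapsto J_1(\tilde C)$ of \eqref{J}, whose natural domain is $(\check C,\infty)$ with $\check C=\min_s P_{-1}(s)$, $P_{-1}(s)=1/\Delta(s)^2$ the $C=-1$ specialization of \eqref{0Cfun} and $\Delta=(\cos s)^{k_1+1}(\sin s)^{k_2+1}$; the minimum of $P_{-1}$ is attained at $s_0$ with $\tan^2 s_0=\tfrac{k_2+1}{k_1+1}$. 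Examining the integrand of $J_1$ at the two ends of the window $\Omega_{-1,\tilde C}$ — exactly as behind Lemma \ref{C1}, Remark \ref{RkC}, and the $C=0$ analysis of Lemma \ref{B1eq} — one finds, as $\tilde C\uparrow\infty$, that the mass concentrates near $s=\tfrac\pi2$ and a rescaling there turns $J_1$ into $\int_1^{\infty}\tfrac{dv}{v\sqrt{v^{2k_1+2}-1}}=\tfrac{\pi}{2(k_1+1)}$, while as $\tilde C\downarrow\check C$ the window collapses to $\{s_0\}$ and a Laplace expansion of $\Delta^2$ at its maximum (using $(\log\Delta^2)''(s_0)=-4(k_1+k_2+2)$) gives
\[
\lim_{\tilde C\downarrow\check C}J_1(\tilde C)=\tan s_0\cdot\frac{\pi}{\sqrt{2(k_1+k_2+2)}}=\pi\sqrt{\frac{k_2+1}{2(k_1+1)(k_1+k_2+2)}}.
\]
Continuity of $J_1$ then shows that its range contains the open interval $\big(\tfrac{\pi}{2(k_1+1)},\ \pi\sqrt{\tfrac{k_2+1}{2(k_1+1)(k_1+k_2+2)}}\big)$.

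The second step is to choose $\ell=k_1$ and verify that $\tfrac{\pi}{2\ell}=\tfrac{\pi}{2k_1}$ lies in that interval whenever $k_1\ge 2$ and $k_2\ge 1$. The lower bound $\tfrac{\pi}{2k_1}>\tfrac{\pi}{2(k_1+1)}$ is immediate, and the upper bound $\tfrac{\pi}{2k_1}<\pi\sqrt{\tfrac{k_2+1}{2(k_1+1)(k_1+k_2+2)}}$, after squaring and clearing denominators, is equivalent to $2(k_1+1)(k_1+k_2+2)<4k_1^2(k_2+1)$, i.e. to
\[
k_1^2(2k_2+1)-k_1(k_2+3)-(k_2+2)>0 .
\]
At $k_1=2$ the left side is $5k_2-4\ge 1$ for $k_2\ge 1$, and its derivative in $k_1$, namely $2k_1(2k_2+1)-(k_2+3)$, is positive for $k_1\ge 2$; hence the inequality holds on the whole range. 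Therefore there is an admissible $\tilde C$ with $J_1(\tilde C)=\tfrac{\pi}{2k_1}\in\tfrac{\pi}{2\N}$.

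For the last step I would feed this $\tilde C$ and $C=-1$ into Theorem \ref{66}: using the antipodal symmetry of $M_1$ and $M_2$ and the complexifications \eqref{Cpx1}, \eqref{Cpx2}, the resulting spiral minimal product descends to a $\mathscr C$-totally real minimal embedding of a manifold $Q_\gamma$ into $\mathbb S^{2r_1+2r_2+3}$ (here $J_2(\tilde C)=\tfrac{k_1+1}{k_2+1}J_1(\tilde C)$ by \eqref{2Delta} need not belong to $\tfrac{\pi}{2\N}$, but Theorem \ref{66} only asks that one of $J_1,J_2$ do). Since $Q_\gamma$ is $\mathscr C$-totally real it is horizontal for the Hopf fibration $\pi\colon\mathbb S^{2r_1+2r_2+3}\to\mathbb CP^{r_1+r_2+1}$, so by \S\ref{Rctt} the map $\pi\circ G_\gamma$ is a totally real minimal immersion of $Q_\gamma$ into $\mathbb CP^{r_1+r_2+1}$; what remains is to see that its image $\pi(Q_\gamma)$ is an embedded submanifold, equivalently that for each $\lambda\in(0,2\pi)$ the rotate $e^{i\lambda}Q_\gamma$ either coincides with $Q_\gamma$ or is disjoint from it (the $\lambda$'s for which it coincides form a finite group, containing $e^{i\pi}$ by the antipodal symmetry of $M_1$ and $M_2$, and $\pi(Q_\gamma)$ is the corresponding quotient of $Q_\gamma$). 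I expect this to be the main obstacle: I would handle it by the same modular bookkeeping as in the proof of Theorem \ref{66}, now performed modulo the diagonal $S^1$. Writing a would-be overlap point through \eqref{dotsss} and \eqref{circledno} in both complex slots and invoking Observation $(\star)$ slot by slot, the resulting congruences on $s_1,s_2$ modulo $\pi$ relative to a common phase $\lambda$, together with the fact that $J_1(\tilde C)\in\tfrac{\pi}{2\N}$ forces closure after an integer number of half-periods of the pendulum, should force any coincidence to be non-transverse, hence an honest identification of branches; this yields the asserted embedded totally real minimal $\pi(Q_\gamma)\subset\mathbb CP^{r_1+r_2+1}$ and completes the argument.
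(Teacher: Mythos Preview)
Your first two steps coincide with the paper's. The paper cites the limits \eqref{C101}–\eqref{C102} of Lemma~\ref{C1} rather than re-deriving them, chooses $\ell=k_1$, and verifies the same strict inequality $\tfrac{\pi}{2k_1}<\tfrac{\sqrt{k_2+1}\,\pi}{\sqrt{2(k_1+1)(k_1+k_2+2)}}$, written there in the equivalent form $1+\chi_1/\chi_2<2(\chi_1-1)^2/\chi_1$ with $\chi_i=k_i+1$ and checked via the case split $\chi_1\ge4$ versus $\chi_1=3$. Your polynomial rearrangement $k_1^2(2k_2+1)-k_1(k_2+3)-(k_2+2)>0$ is the same inequality. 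Feeding this $\tilde C$ into Theorem~\ref{66} then yields the embedded $\mathscr C$-totally real $Q_\gamma$ in $\mathbb S^{2r_1+2r_2+3}$ exactly as you say.

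The divergence is in the Hopf step, which you flag as ``the main obstacle'' and leave as a speculative sketch about congruences of $s_1,s_2$ modulo a common diagonal phase. The paper does none of that. Its argument is two lines: the antipodal symmetry of $M_1$ and $M_2$ forces $-Q_\gamma=Q_\gamma$; and Observation~$(\star)$ applied to each complexified slot \eqref{Cpx1}, \eqref{Cpx2} is invoked to conclude that $\pi|_{Q_\gamma}\colon Q_\gamma\to\pi(Q_\gamma)$ is a \emph{double} cover (i.e.\ the only points of $Q_\gamma$ in the Hopf fiber through $p$ are $p$ and $-p$), whence $\pi(Q_\gamma)$ is an embedded totally real minimal submanifold of $\mathbb CP^{r_1+r_2+1}$. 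So the paper treats the descent as a direct consequence of the real-slot structure and antipodal symmetry, not as a delicate arithmetic problem; your third step is unfinished by comparison, and the mechanism you reach for is considerably heavier than the one the paper actually uses.
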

                     \begin{proof}
                     Fix $C=-1$.
                     To have some $\tilde C$ for $J_1({\tilde C})=\frac{\pi}{2\ell}\in \frac{\pi}{2\N}$,
                     we wish to use $2\ell=2k_1$.
                     According to \eqref{C101} and \eqref{C102}, consider whether
                                    $\frac{\pi}{2(k_1+1)}<\frac{\pi}{2k_1}<\frac{\sqrt{k_2+1}\,\pi}{\sqrt{2(k_1+1)(k_1+k_2+2)}}$.
                                    The first inequality is trivial.
                                    The second demands for
\begin{equation}\label{ineqchi}
                                    1+\frac{\chi_1}{\chi_2}<\frac{2(\chi_1-1)^2}{\chi_1}
\end{equation}
                                    where $\chi_1=k_1+1$ and $\chi_2=k_2+1$.
                                    The right hand side of \eqref{ineqchi} is
                                    $2\chi_1-4+\frac{2}{\chi_1}> 2\chi_1-4$.
                                  When $\chi_1\geq 4$ and $\chi_2\geq 2$,
                                    we have
                                     $(\chi_1-4)(\chi_2-2)\geq 0$
                                     and consequently
                                    $\chi_1+\chi_2\leq 4\chi_1+4\chi_2-16\leq 2\chi_1\chi_2-4\chi_2$
                                    which guarantees \eqref{ineqchi}.
                                    When $\chi_1=3$,
                                    \eqref{ineqchi}
                                    can be solved by
                                    $
                                    5\chi_2>9
                                    $.
                                    Hence the first half of the statement 
                                    holds
                                    by applying Theorem \ref{66}.
                                    
                                    When both $M_1$ and $M_2$ have antipodal symmetry,
                                     $Q_\gamma$ has antipodal symmetry. 
                                    By {Observation $(\star)$}  for each slot 
                          ($\mathbb C^{r_1+1}$ and $\mathbb C^{r_2+1}$),
                                    $\pi$ is a double covering from $Q_\gamma$  to $\pi(Q_\gamma)$.
                                    So, $\pi(Q_\gamma)$ is an embedded totally real minimal submanifold in $\mathbb CP^{r_1+r_2+1}$.
                                    Now the  statement gets proved.
                     \end{proof}

          \begin{rem}
 When $k_1$ is large, one can require $J_1({\tilde C})=\frac{\pi}{2k_1-2}$, etc., in the construction.
\end{rem}

\begin{rem}
 If $k_2=0$, then we shall need $k_1\geq 4$ for \eqref{ineqchi}.
 Let $M_2=\{point_2\}\in\mathbb S^1\subset \mathbb C$ and $M'_1$ a connected embedded 
minimal Lagrangian in $\mathbb CP^{r_1}$.
  By Proposition \ref{emdL}, $M'_1$ has a connected embedded horizontal lifting in $\mathbb S^{2r_1+1}$ which is special Legendrian.
 Hence, for $r_1\geq 4$, by $(\blacklozenge)$
 an embedded minimal Lagrangian in $\mathbb CP^{r_1+1}$ can be produced.
\end{rem}

                 \begin{rem}
              Regarding Remark \ref{others},
 when $M=M_1\times M_2\subset \R^{r_1+r_2+2}$ contains no antipodal points at all,
               it is easy to see that 
               $Q_\gamma$,
              gained by the same kind of construction via  \eqref{Cpx1} and \eqref{Cpx2},
             is diffeomorphic to   $\pi(Q_\gamma)$ 
               according to Observation $(\star)$.
               \end{rem}

    {\ }    
        
        Roughly speaking,
            the above is almost all that one can do through basic analysis without bothering entire distributions of $s_1$ and $s_2$.
            In the following we make use of geometry instead to establish more.

\subsubsection{Results by geometry} \label{calG}
          To be precise, we shall first borrow some machinery  from calibrated geometry
          to prove the following.
          
          \begin{thm}\label{spl}
          Let $M_1=\mathbb S^{r_1}\subset \R^{r_1+1}$ and $M_2=\mathbb S^{r_2}\subset \R^{r_2+1}$.
          Then, by virtue of \eqref{Cpx1} and \eqref{Cpx2},
         every spiral minimal product of them with $C=-1$ and $J_1(\tilde C)\in \pi\mathbb Q$
          factors through some embedding. 
          \end{thm}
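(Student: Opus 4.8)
The plan is to show that, under the hypotheses, the only coincidences of the immersion $G_\gamma$ are those forced by an explicit group of symmetries of $\mathbb R\times M_1\times M_2$, so that $G_\gamma$ descends to an injective immersion of a compact quotient, which is automatically an embedding.

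First I would assemble the structural input. With $C=-1$, Lemma~\ref{Lgn} gives that $G_\gamma$ is $\mathscr C$-totally real; having dimension $r_1+r_2+1$ in $\mathbb S^{2(r_1+r_2+1)+1}$ it is Legendrian, and being minimal it is special Legendrian (so its cone in $\mathbb C^{r_1+r_2+2}$ is special Lagrangian). By Proposition~\ref{Cpm1}, the hypothesis $J_1(\tilde C)\in\pi\mathbb Q$ (equivalently $J_2(\tilde C)\in\pi\mathbb Q$ by Proposition~\ref{Cpm0}) makes $\gamma$ factor through a closed curve. The calibrated geometry enters through the constancy of the Legendrian angle $\beta_{G_\gamma}$ computed in \S\ref{cCLU}: specialising the formulas there to $C=-1$, and writing $(a,b)=(\cos s,\sin s)$ in the arc parameter $s$, this reads
\begin{equation*}
(k_1+1)\,s_1(s)+(k_2+1)\,s_2(s)+\arctan\sqrt{\tilde C\,\Delta(s)^2-1}\ \equiv\ \text{const},\qquad \Delta=(\cos s)^{k_1+1}(\sin s)^{k_2+1},
\end{equation*}
which is the integrated form of \eqref{dotG} with $C=-1$ and also reproves relation \eqref{2Delta}. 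I would normalise so that this constant is $0$ with $s_1=s_2=0$ at a turning angle.

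Next I would analyse coincidences. Because $M_i=\mathbb S^{r_i}$ is used through \eqref{Cpx1} and \eqref{Cpx2}, we have $e^{i\lambda}f_i(x)=(\cos\lambda\,x,\sin\lambda\,x)$, which lies in $\mathbb S^{r_i}\times\{0\}$ only if $\lambda\in\pi\mathbb Z$, and then equals $f_i(\pm x)$; thus (Observation~$(\star)$ applied in both slots) a coincidence $G_\gamma(s,x_1,y_1)=G_\gamma(s',x_2,y_2)$ forces $(a(s),b(s))=(a(s'),b(s'))$ — so $s$ and $s'$ sit at the same pendulum angle $\theta_0$, hence $s'$ is obtained from $s$ by a shift through an \emph{even} number $2\delta$ of basic intervals of \S\ref{S4} (a ``same-direction'' coincidence) or an \emph{odd} one (``opposite-direction'') — together with $s_1(s)-s_1(s')=m_1\pi$, $s_2(s)-s_2(s')=m_2\pi$ and $x_2=(-1)^{m_1}x_1$, $y_2=(-1)^{m_2}y_1$. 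The crux is to rule out opposite-direction coincidences. Using the rotational-reflection description of the extended $\gamma$ (and Lemma~\ref{analytic}) one computes, at the common angle $\theta_0\in(z_L,z_R)$, that $m_1\pi=2\delta J_1-2A(\theta_0)$ and $m_2\pi=-2\delta J_2-2C_2(\theta_0)$ with $A(\theta_0)=\int_{\theta_0}^{z_R}\dot s_1$, $C_2(\theta_0)=\int_{\theta_0}^{z_R}\dot s_2$; substituting into the first integral above — which yields $(k_1+1)A(\theta_0)+(k_2+1)C_2(\theta_0)=\arctan\sqrt{\tilde C\,\Delta(\theta_0)^2-1}$ and $(k_1+1)J_1=(k_2+1)J_2$ — collapses everything to
\begin{equation*}
(k_1+1)m_1+(k_2+1)m_2=-\tfrac{2}{\pi}\arctan\sqrt{\tilde C\,\Delta(\theta_0)^2-1}\ \in\ (-1,0),
\end{equation*}
which contradicts the integrality of the left-hand side. (At a turning angle the two passes coincide, so no genuine opposite-direction coincidence arises there.)

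Finally, a same-direction coincidence necessarily has $2\delta J_1,\,2\delta J_2\in\pi\mathbb Z$ and is precisely the image of $(s,x_1,y_1)$ under the diffeomorphism $\Sigma_\delta\colon(s,x,y)\mapsto\big(s+2\delta(z_R-z_L),\,(-1)^{2\delta J_1/\pi}x,\,(-1)^{2\delta J_2/\pi}y\big)$ of $\mathbb R\times M_1\times M_2$, for which one checks directly that $G_\gamma\circ\Sigma_\delta=G_\gamma$. The group $\mathcal G$ generated by the admissible $\Sigma_\delta$ is infinite cyclic, and since $\gamma$ closes up its translation part is nontrivial; hence $\mathcal G$ acts freely and properly discontinuously and $Q_\gamma:=(\mathbb R\times M_1\times M_2)/\mathcal G$ is a closed manifold. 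Then $G_\gamma$ descends to $\overline{G_\gamma}\colon Q_\gamma\to\mathbb S^{2r_1+2r_2+3}$, an immersion because $\gamma,f_1,f_2$ are immersions, which by the previous two paragraphs has fibres exactly the $\mathcal G$-orbits, hence is injective. An injective immersion of a compact manifold is an embedding, so $G_\gamma$ factors through the embedding $\overline{G_\gamma}$, as claimed. The main obstacle I foresee is the careful bookkeeping of $s_1,s_2$ across the reflections of \S\ref{S4} and the extraction of the non-integrality contradiction; once the special-Legendrian first integral is in hand the remaining steps are essentially formal.
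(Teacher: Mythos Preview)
Your argument is correct and takes a genuinely different route from the paper. The paper observes that $G_\gamma$ is special Legendrian, hence its cone is special Lagrangian and calibrated by a constant form $\phi$; it then shows that a non-tangential self-intersection along the set $\mathscr S$ would produce, at a single point of the cone, two distinct calibrated tangent planes sharing a codimension-one face, and uses Harvey--Lawson's decomposition Lemma~\ref{hl1} for calibration forms to derive a contradiction from $\phi(p\wedge\xi_0\wedge w_0)=1$ with $w_0$ in the complementary subspace $W$. You instead exploit the special-Legendrian structure through the explicit first integral \eqref{gammaangle}--\eqref{betaangle}: after Observation~$(\star)$ reduces a coincidence to matching pendulum angles with $s_1,s_2$ jumping by integer multiples of $\pi$, you separate same-direction from opposite-direction passes and, for the latter, combine $(k_1+1)A(\theta_0)+(k_2+1)C_2(\theta_0)=\arctan\sqrt{\tilde C\Delta(\theta_0)^2-1}$ with $(k_1+1)J_1=(k_2+1)J_2$ to force the integer $(k_1+1)m_1+(k_2+1)m_2$ into the open interval $(-1,0)$.

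What each approach buys: your method is more elementary and self-contained (no appeal to the exterior-algebra lemma from \cite{HL1}), and it gives a clean direct proof of Theorem~\ref{spl}. The paper's calibrated-geometry argument is more conceptual and transfers without extra work to the subsequent results: Corollary~\ref{corgamma} on the antipodal dichotomy for $\gamma$, Corollary~\ref{spl->G} for arbitrary $M_1,M_2$ satisfying {\tt(1)} or {\tt(2)}, and the proof of Theorem~\ref{mL}, all of which reuse the same ``two calibrated planes through a common codimension-one face'' obstruction. Your first-integral trick is specific to the explicit form of $s_1,s_2$ and does not obviously yield these corollaries, though it could likely be adapted.
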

 
 \begin{proof}         
           Since $\mathbb S^{r_1}$ and $\mathbb S^{r_2}$ are special Legendrian in the sense of \eqref{Cpx1} and \eqref{Cpx2} respectively,
                          by \S \ref{cCLU} we know that, by virtue of \eqref{Cpx1} and \eqref{Cpx2},  every spiral minimal product $G_\gamma$ of them with $C=-1$  is special Legendrian. 
                          Hence, the cone $C(G_\gamma)=\left\{t\cdot p\,\vert\, t\in (0,\infty), \, p\in G_\gamma \right\}$
                          is a special Lagrangian cone which is calibrated by a constant (i.e., parallel in $\R^{2r_1+2r_2+4}$)   special Lagrangian calibration form $\phi$.
                          We refer to the seminal reference \cite{HL2} for knowledge about calibrations. 

                             According to  Observation $(\star)$ and the antipodal symmetry of $\mathbb S^{r_1}$ and $\mathbb S^{r_2}$,
                            if $G_\gamma$ has an intersection point at  $ G_\gamma(s', p_1, p_2)$ where $p_1\in \mathbb S^{r_1}$ and  $p_2\in \mathbb S^{r_2}$,
then $ G_\gamma$ self-intersects along the set
\begin{equation}\label{mathscrS}
\mathscr S= G_\gamma\big(s', \mathbb S^{r_1}, \mathbb S^{r_2}\big)=\Big(a(s')\, e^{is_1(s')}\cdot \big(\mathbb S^{r_1},\, 0\big),\,\, b(s')\, e^{is_2(s')}\cdot \big(\mathbb S^{r_2},\, 0\big)\Big).
\end{equation}

Note that tangential intersections (see Footnote \ref{footN}) of $ G_\gamma$ cause no problems.
                       Due to the structure of $G_\gamma$,
                       if there is a tangential intersection point of $G_\gamma$ for $(s', p_1, p_2)$ and $(s'', p_1, p_2)$
                       or
                      for $(s', p_1, p_2)$ and $(s'',-p_1, -p_2)$,
                       then around $s'$ and $s''$
                    two local pieces $G^1$ and $G^2$ of $G_\gamma$ tangentially intersects along $G_\gamma (s', \mathbb S^{r_1}, \mathbb S^{r_2})$ vs.  Figure \ref{nTi}.
      With small $\epsilon>0$ and a suitable choice of interval pairs 
                $$\big\{(s', s'+\epsilon),\, (s'',s''+\epsilon)\big\} \text{\ \ \ \  or\ \ \ \  } \big\{(s', s'+\epsilon),\, (s''-\epsilon, s'')\big\},$$
 one can assemble corresponding parts of $G^1$ and $G^2$ for an oriented $C^1$ submanifold which satisfies Harvey-Lawson's extension result $-$ Theorem 1.4 in \cite{HL0}.
As a result, around every tangential intersection point of $ G_\gamma$,   
the corresponding pieces $G^1$ and $G^2$ of $G_\gamma$ must geometrically coincide  in local, up to an orientation.

                         It is obvious that, when $J_1(\tilde C)\in \pi\mathbb Q$, 
                         $G_\gamma$ with $C=-1$ factors through an immersion of some quotient of $S^1\times M_1\times M_2$.
                          So, if $G_\gamma$ does not factor through an embedding,
                          then, by the discussions in the preceding paragraphs (or alternatively by Cauchy–Kovalevskaya Theorem),
                          there must be a non-tangential intersection set $\mathscr S$ (as in \eqref{mathscrS}) of dimension $r_1+r_2$  in Figure \ref{nTi}.
                          As a consequence, 
                          in local
                          $C(G_\gamma)$ inherits such type of non-tangential intersections along  the cone $C(\mathscr S)$.
                          Nevertheless, this is impossible due to the following fundamental decomposition result of forms.
                                     
                                      \begin{lem}[Lemma 2.12 in \cite{HL1}] \label{hl1}
             Let $\xi\in \Lambda^k \mathbb{R}^n$ be a simple $k$-vector \footnote{Namely, an exterior product of $k$ vectors.} with
             $V = span\ \xi$. Suppose that $\phi\in 
             {(\Lambda^k\mathbb{R}^n)^*}$ satisfies  $\phi(\xi)= 1$. 
              Then there exists a unique oriented complementary subspace $W$ to $ V$  with the following property.
                      For any basis $\{v_1, \cdots, v_n\}$ of $\mathbb{R}^n$ such that $\xi=v_1\wedge... \wedge v_k$
                      and 
                      $
                      \{v_{k+1}, \cdots, v_n\}$ is basis for $ W$, 
                      one has that
                                 \begin{equation}\label{decomp}
                                 \phi=v_1^*\wedge \cdots \wedge v_k^*+ \sum a_Jv_J^*,
                                  \end{equation}
                                   where $a_J=0$ whenever $j_{k-1}\leq k$. 
                                   Here $J=\{j_1,\cdots, j_k\}$ with $j_1<\cdots<j_k$.
              \end{lem}
                               Take a non-tangential intersection point $p\in \mathscr S$
                               and $\xi_0$ to be the unit size oriented  simple $(r_1+r_2)$-vector of $\mathscr S$.
                               Assume that unit vectors $w_1$ and $w_2$ at $p$ in Figure \ref{nTi}
                                are chosen for $\xi_0\wedge w_1$ and $\xi_0\wedge w_2$ to be the unit oriented $(r_1+r_2+1)$-vectors
                                 of oriented $T_pG^1$ and $T_pG^2$.
                                Correspondingly, 
                             unit oriented $(r_1+r_2+2)$-vectors
                              $\xi_1=p\wedge \xi_0\wedge w_1$ and $\xi_2=p\wedge \xi_0\wedge w_2$
                               span the tangent spaces of $C(G^1)$ and $C(G^2)$
                               at $p$ respectively.
                                                  
                                                  As the oriented $C(G_\gamma)$ is calibrated by a  special Lagrangian calibration form $\phi$,
                                                  it means that $\phi(\xi_1)=\phi(\xi_2)=1$
                                                  and
                                                  by linearity
                                                  we have that, for any $\theta\in \R$,
                                                  $$\phi\Big(p\wedge \xi_0\wedge \big(\theta \cdot w_1+(1-\theta) \cdot w_2 \big)\Big)=1.$$
                                                 By applying Lemma \ref{hl1} to the special Lagrangian form $\phi$ and the simple $(r_1+r_2+2)$-vector $\xi_1$,
                                                  we can get a unique oriented complementary subspace $W$ to $V=\text{span\ }\xi_1$.
                                                  For the dimension reason, 
                                                 $W\bigcap \text{ span}\{w_1, \, w_2\}\neq \emptyset$.
                                                 So, there exists some $w_0=\theta_0 \cdot w_1+(1-\theta_0) \cdot w_2\in W$ with $\theta_0\neq 1$
                                                 and $\phi(p\wedge\xi_0\wedge w_0)=1$.
                                                 However, 
                                                 as $w_0\in W$ and  $a_J=0$ when $j_{k-1}\leq k$,
                                                 the pairing 
                                                 $\phi(p\wedge\xi_0\wedge w_0)$ should vanish
                                                 by \eqref{decomp}.
                                                 This results in a contradiction and Theorem \ref{spl} gets proven  to be true.
\end{proof}

          By antipodal symmetry of $\mathbb S^{r_1}$ and $\mathbb S^{r_2}$,
          we get the following.
          
          \begin{cor}\label{corgamma}
          Let $\gamma$ be a solution curve with $C=-1$.
          The, as sets,  either $\gamma=-\gamma$ or $\gamma\bigcap -\gamma=\emptyset$,
          i.e., non-tangential intersection of $\gamma$ and $-\gamma$ in Figure \ref{nti} cannot occur.
          \end{cor}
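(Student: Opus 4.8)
The plan is to reduce everything to the non-tangential self-intersection analysis already carried out in the proof of Theorem~\ref{spl}. Set $r_1=k_1$, $r_2=k_2$ and realize the given solution curve $\gamma$ (with $C=-1$) as the generating curve of the spiral minimal product $G_\gamma$ built from $M_1=\mathbb S^{r_1}$ and $M_2=\mathbb S^{r_2}$ via \eqref{Cpx1} and \eqref{Cpx2}. By \S\ref{cCLU} this $G_\gamma$ is special Legendrian, hence the cone $C(G_\gamma)$ is special Lagrangian and is calibrated by a parallel special Lagrangian form $\phi$. Since $\mathbb S^{r_1}$ and $\mathbb S^{r_2}$ are antipodally symmetric, $G_{-\gamma}$ has the same image as $G_\gamma$ because $G_{-\gamma}(s,x,y)=G_\gamma(s,-x,-y)$. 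Thus a point $q\in\gamma\cap(-\gamma)$, say $q=\gamma(s')=-\gamma(s'')$, lifts to the $(r_1+r_2)$-dimensional set $\mathscr S=G_\gamma(s',\mathbb S^{r_1},\mathbb S^{r_2})$ along which two sheets of $G_\gamma$ meet: the one coming from $\gamma$ near $s'$, and the one coming from $\gamma$ near $s''$ composed with the sphere antipodal maps (i.e. from $-\gamma$ near $s'$).

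Next I would verify the key translation: these two sheets meet tangentially along all of $\mathscr S$ if and only if $\gamma$ and $-\gamma$ meet tangentially at $q$. The relation $q=\gamma(s')=-\gamma(s'')$ forces $a(s')=a(s'')$, $b(s')=b(s'')$, $e^{is_1(s'')}=-e^{is_1(s')}$ and $e^{is_2(s'')}=-e^{is_2(s')}$. Writing $\partial_s G_\gamma(s',x,y)$ and $\partial_s G_\gamma(s'',-x,-y)$ out and using these identities, both extra tangent directions take the form $(z\,v_1,\,w\,v_2)$ with the same fixed nonzero vectors $v_1=e^{is_1(s')}(x,0)$, $v_2=e^{is_2(s')}(y,0)$; hence the two directions are proportional exactly when $\dfrac{a'(s')+ia(s')s_1'(s')}{a'(s'')+ia(s'')s_1'(s'')}=\dfrac{b'(s')+ib(s')s_2'(s')}{b'(s'')+ib(s'')s_2'(s'')}\in\mathbb R$, a condition independent of $x,y$ and equivalent to $\gamma'(s')\parallel\gamma'(s'')$ (since $\partial_s G_\gamma$ is orthogonal to $T\mathscr S$). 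Consequently a non-tangential intersection of $\gamma$ and $-\gamma$ at $q$ would yield a non-tangential self-intersection of $G_\gamma$ along the codimension-one set $\mathscr S$; passing to the cone $C(G_\gamma)$ and applying Lemma~\ref{hl1} to $\phi$ and the tangent $(r_1+r_2+2)$-vectors of the two branches along $C(\mathscr S)$ produces exactly the contradiction obtained in the proof of Theorem~\ref{spl}. So $\gamma$ and $-\gamma$ can never meet non-tangentially, which is the stated impossibility in Figure~\ref{nti}.

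It remains to upgrade this to the dichotomy. Suppose $\gamma\cap(-\gamma)\neq\emptyset$ and pick $q$ in the intersection; by the above $q$ is a tangential intersection. Because the proportionality condition is $x,y$-independent, the two sheets of $G_\gamma$ then have the same tangent plane at \emph{every} point of $\mathscr S$, so they carry the same first-order Cauchy data along $\mathscr S$. Unique continuation for analytic minimal submanifolds (Cauchy–Kovalevskaya, as already invoked in the proof of Theorem~\ref{spl}), applied to the minimal submanifolds $G_\gamma$ and $G_{-\gamma}$, forces the two sheets to coincide in a neighborhood of $\mathscr S$; reading off the generating curves (equivalently, noting that $-\gamma$ is again a solution of the analytic system \eqref{dotsss} with the same $C=-1$ and $\tilde C$, and using uniqueness away from the discrete turning points together with the global analyticity of $\gamma$ from Lemma~\ref{analytic} at the turning points), we get that $\gamma$ and $-\gamma$ agree on an open arc, hence $\gamma=-\gamma$ as sets by analyticity and connectedness. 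Therefore either $\gamma=-\gamma$ or $\gamma\cap(-\gamma)=\emptyset$, proving the corollary.

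The main obstacle is the translation step in the second paragraph: one must check carefully that non-tangentiality of the curve pair $\{\gamma,-\gamma\}$ at $q$ corresponds precisely to non-tangentiality of the two sheets of $G_\gamma$ along $\mathscr S$ (and that tangentiality at one point of $\mathscr S$ propagates to all of $\mathscr S$), so that the calibrated-geometry argument of Theorem~\ref{spl} applies verbatim. This is a direct but slightly delicate computation with $\partial_s G_\gamma$ and the antipodal identities above, including the degenerate cases where a component of $\gamma'$ vanishes; once it is in place, the remainder is bookkeeping plus citations to Theorem~\ref{spl}, Lemma~\ref{hl1}, Lemma~\ref{analytic} and standard unique continuation.
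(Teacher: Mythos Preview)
Your proposal is correct and follows essentially the same route as the paper: lift an intersection point of $\gamma$ and $-\gamma$ to a codimension-one self-intersection set $\mathscr S$ of $G_\gamma$ built from $\mathbb S^{k_1}$ and $\mathbb S^{k_2}$, rule out the non-tangential case via the special Lagrangian calibration argument (Lemma~\ref{hl1}) exactly as in the proof of Theorem~\ref{spl}, and then promote a tangential intersection to local (hence global, by analyticity) coincidence of $\gamma$ and $-\gamma$. The paper's proof simply cites Theorem~\ref{spl} and the analyticity from \S\ref{S4} without spelling out the tangentiality translation you carry out in your second paragraph, but the underlying argument is the same.
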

          \begin{proof}
          If $\gamma\bigcap -\gamma\neq \emptyset$, then take a common element $p$.
          By antipodal symmetry of $\mathbb S^{r_1}$ and $\mathbb S^{r_2}$,
          there is a corresponding self-intersection of $G_\gamma$ along set $\mathscr S$ of codimension one as in \eqref{mathscrS}.
          If, as sets, $\gamma$ around $p$ and around $-p$ are not antipodal symmetry,
          then the intersection along $\mathscr S$ has to be non-tangential
              (as reasoning in the proof of Theorem \ref{spl}).
          This is a contradiction with Theorem \ref{spl}.
          So,  $\gamma$ around $p$ and around $-p$ are antipodal symmetric.
          Hence, by the analyticity of $\gamma$ in \S \ref{S4},
         as sets $\gamma=-\gamma$.
          \end{proof}
          
          Another useful corollary of Theorem \ref{spl} is the following.
          By the structure of the construction,
           one particular choice in Theorem \ref{spl}
          determines thousand others.
          
\begin{cor}\label{spl->G}
                    Let $M^{k_1}_1\subset \mathbb S^{r_1}$ and  $M^{k_2}_2\subset \mathbb S^{r_2}$ be embedded minimal submanifolds
          such that as a set $M=M_1\times M_2 \subset \R^{r_1+r_2+2}$ 
          satisfies either \\
          \text{\tt (1)} 
          $M=-M$ 
          or \\
          \text{\tt (2)}
          $M\bigcap -M=\emptyset$.
          \\
          Then, by virtue of \eqref{Cpx1} and \eqref{Cpx2},
         every spiral minimal product of them with $C=-1$ and $J_1(\tilde C)\in \pi\mathbb Q$
          factors through an embedding of some manifold into $\mathbb S^{2r_1+2r_2+3}$.
              Moreover, 
                   joint the with Hopf projection,
                  %
               infinitely many  embedded totally real minimal submanifolds
                    in $\mathbb C P^{\,r_1+r_2+1}$ can be gained.
\end{cor}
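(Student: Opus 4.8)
The plan is to bootstrap Theorem \ref{spl}, which is stated for round spheres, up to the general pair $M_1,M_2$, exploiting that the generating curve $\gamma$ is manufactured purely from the data $C=-1,\tilde C,k_1,k_2$ and is completely insensitive to which submanifolds one spins. First I would fix $C=-1$ and take any admissible $\tilde C$ with $J_1(\tilde C)\in\pi\mathbb Q$; by Proposition \ref{Cpm1} the corresponding $\gamma$ factors through a closed curve, so the spiral minimal product $G_\gamma$ of $M_1,M_2$ (through \eqref{Cpx1} and \eqref{Cpx2}) automatically factors through an immersion of a suitable finite quotient of $S^1\times M_1\times M_2$, namely the one forced by the tautological period identification of $\gamma$ together with the slotwise antipodal monodromies $(-1)^{\pm}\!\cdot\mathrm{id}$ produced by the $e^{is_i}$-rotations; it remains to see that this factored map is injective. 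One cannot simply re-run the calibration argument of Theorem \ref{spl} here, since for general $M_1,M_2$ the cone over $G_\gamma$ is $\mathscr C$-totally real but not Lagrangian, hence not special Lagrangian; so I would instead pass through the sphere model. Because $M_1\subset\mathbb S^{r_1}$ and $M_2\subset\mathbb S^{r_2}$, every coincidence $G_\gamma(s,p_1,p_2)=G_\gamma(s',q_1,q_2)$ with $p_i,q_i\in M_i$ is also a coincidence for the spiral minimal product $G_\gamma^{\mathbb S}$ of $\mathbb S^{r_1}$ and $\mathbb S^{r_2}$ built from the \emph{same} $\gamma$; since $\mathbb S^{r_i}$ is minimal Legendrian in the sense of \eqref{Cpx1}--\eqref{Cpx2}, Theorem \ref{spl} applies and tells us $G_\gamma^{\mathbb S}$ factors through an embedding, so by Observation $(\star)$ applied to each slot any such coincidence forces $|a(s)|=|a(s')|$, $|b(s)|=|b(s')|$, $p_i=\pm q_i$ and $s_i(s)-s_i(s')\in\pi\mathbb Z$ with signs matched slot by slot, and is one of the period identifications above.

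The hypothesis on $M=M_1\times M_2$ is exactly what converts this into an embedding statement for $M_1,M_2$. If $M=-M$, that is $M_1=-M_1$ and $M_2=-M_2$, then each choice $p_i\mapsto\pm q_i$ stays inside $M_i$, so every coincidence present for the spheres that involves points of $S^1\times M_1\times M_2$ descends to an identification of the quotient constructed in the first paragraph; hence $G_\gamma$ factors through an embedding of the corresponding closed quotient $Q_\gamma$ of $S^1\times M_1\times M_2$ into $\mathbb S^{2r_1+2r_2+3}$. If $M\cap -M=\emptyset$, then one factor, say $M_1$, contains no antipodal pair, so a coincidence with $q_1=-p_1$ is impossible; every surviving coincidence therefore has $q_1=p_1$ (and, if $M_2$ also has no antipodal pair, $q_2=p_2$), and the only identifications in play are those of the sub-quotient cut out by these no-antipodal conditions, again yielding an embedding of an explicit closed finite quotient $Q_\gamma$ of $S^1\times M_1\times M_2$. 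In both cases the resulting embedded submanifold is $\mathscr C$-totally real by Lemma \ref{Lgn}, since $C=-1$.

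For the projective statement I would argue as in the proof of Corollary \ref{68}: by the correspondence recalled in \S \ref{Rctt} the Hopf image of $G_\gamma$ is a totally real minimal immersion into $\mathbb CP^{r_1+r_2+1}$, and Observation $(\star)$ applied slotwise shows that $\pi$ restricted to $Q_\gamma$ is a finite covering onto an \emph{embedded} totally real minimal submanifold $\pi(Q_\gamma)$ (the only $\theta$ with $e^{i\theta}q\in Q_\gamma$ form a finite subgroup of $S^1$, independent of $q\in Q_\gamma$). Finally, letting $\tilde C$ range over the infinitely many admissible values with $J_1(\tilde C)\in\pi\mathbb Q$ furnished by Corollary \ref{corpm1} (using that $J_1$ is not constant once $k_1+k_2\geq 1$, cf. Lemma \ref{C1}) produces infinitely many such $Q_\gamma$, pairwise geometrically distinct by the results of \S \ref{MCR} in the present range $k_1+k_2\geq1$; hence one obtains infinitely many embedded totally real minimal submanifolds in $\mathbb CP^{r_1+r_2+1}$, as asserted.

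The step I expect to be the real obstacle is the middle one: one must check carefully that the identifications among points of $S^1\times M_1\times M_2$ obtained by restricting the sphere model close up into a genuine equivalence relation whose quotient is a \emph{closed manifold} on which $G_\gamma$ descends to an embedding, and that no partial antipodal identification (coming from a factor with some but not all antipodal pairs) can occur. The dichotomy $M=-M$ versus $M\cap -M=\emptyset$ is precisely the hypothesis ruling out that intermediate situation, and pinning down the exact diffeomorphism type of $Q_\gamma$ in the two cases — while routine in the spirit of Theorems \ref{52} and \ref{66} — is where the bookkeeping has to be done with care.
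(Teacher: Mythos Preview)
Your overall strategy is the paper's: transfer the problem to a sphere model using the same $\gamma$, then invoke Theorem \ref{spl}. But there is a genuine gap in the dimensions you choose. You compare with $G_\gamma^{\mathbb S}$ built on $\mathbb S^{r_1}$ and $\mathbb S^{r_2}$ using the \emph{same} $\gamma$; however, $\gamma$ is determined through \eqref{dotsss} by the dimensions $k_1,k_2$ of $M_1,M_2$, not by $r_1,r_2$. Spinning the full ambient spheres $\mathbb S^{r_1},\mathbb S^{r_2}$ by this $\gamma$ does \emph{not} produce a minimal immersion when $k_i<r_i$, so the phrase ``spiral minimal product $G_\gamma^{\mathbb S}$ of $\mathbb S^{r_1}$ and $\mathbb S^{r_2}$'' is already incorrect, and Theorem \ref{spl}---whose proof rests on the special Lagrangian calibration and hence on minimality---does not apply to your $G_\gamma^{\mathbb S}$.

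The fix is exactly what the paper does: replace $\mathbb S^{r_i}$ by $\mathbb S^{k_i}\subset\R^{k_i+1}$. With these smaller spheres the same $\gamma$ \emph{is} a spiral minimal product (now landing in $\mathbb S^{2k_1+2k_2+3}$), and Theorem \ref{spl} applies verbatim with $r_i$ taken to be $k_i$. The paper then argues by contraposition rather than by classifying all coincidences: if $\tilde G_\gamma$ for $M_1,M_2$ had a non-tangential self-intersection, then---since $\mathbb S^{k_i}$ carries at least as much antipodal symmetry as $M_i$ under hypothesis \text{\tt (1)} or \text{\tt (2)}---the sphere model $G_\gamma$ for $\mathbb S^{k_1},\mathbb S^{k_2}$ would inherit a non-tangential self-intersection, contradicting Theorem \ref{spl}; tangential intersections are handled by the Harvey--Lawson extension result exactly as you anticipate. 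This contrapositive route also spares you the case-by-case quotient bookkeeping of your middle paragraph, which (as you yourself flag at the end) is delicate precisely when one factor has partial antipodal symmetry.
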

                       
 

        %
 \begin{rem}
   If both $M_1$ and $M_2$ belong to those in Remark \ref{exasym}, then $M$ satisfies  \text{\tt (1)}.
   \end{rem}
   \begin{rem}\label{plentiful2}
            If either of $M_1$ and 
                  {$M_2$} belongs to those in Remark \ref{plentiful} and the other is an arbitrary embedded spherical minimal submanifold,
            then $M$ satisfies  \text{\tt (2)}.
            \end{rem}

\begin{proof}
Fix $C=-1$.
It is the same that,
                                   according to  Observation $(\star)$ and assumption \text{\tt (1)} or \text{\tt (2)},
                            if spiral minimal product $\tilde G_\gamma$ of $M_1$ and $M_2$
                            has an intersection point at  $\tilde G_\gamma(s', p_1, p_2)$,
then $\tilde G_\gamma$ self-intersects along the set
                $$\tilde G_\gamma(s', M_1, M_2)=\Big(a(s')\, e^{is_1(s')}\cdot \big(M_1,\, 0\big),\,\, b(s')\, e^{is_2(s')}\cdot \big(M_2,\, 0\big)\Big).$$

Similarly, due to assumption \text{\tt (1)} or \text{\tt (2)} and by Harvey-Lawson's extension result,
 tangential intersections of $\tilde G_\gamma$ cause no problems for $\tilde G_\gamma$ to factor through an embedding.
              So,     
                         suppose that $\tilde G_\gamma$ has
          non-tangential intersections  as in Figure \ref{nTi}.
       Since $\mathbb S^{k_1}, \mathbb S^{k_2}$ have  antipodal symmetry (no less than those of $M_1$ and $M_2$),
       if we use the same $\gamma$ to build $G_\gamma$, 
       with $C=-1$,
        by virtue of \eqref{Cpx1} and \eqref{Cpx2}, 
       for $M'_1=\mathbb S^{k_1}\subset \R^{k_1+1}, M'_2=\mathbb S^{k_2}\subset \R^{k_2+1}$,
       then $G_\gamma$ of $M'_1$ and $M'_2$ into  $\mathbb S^{{2k_1+2k_2+3}}$ must have non-tangential intersections as well.
        This would give a contradiction with Theorem \ref{spl}.
        Therefore, we finish the proof.
 \end{proof}

 Apparently, Corollary \ref{spl->G} confirms part of Theorem \ref{mainebd}.
 In order to prove entire Theorem \ref{mainebd}, we shall focus on Corollary \ref{corgamma}.
 We wish to use $\gamma$ with the property $\gamma\bigcap -\gamma=\emptyset$.
 Because, once such $\gamma$ with $C=-1$ exists, 
 by virtue of \eqref{Cpx1} and \eqref{Cpx2} and according to Observation $(\star)$,
  spiral minimal product $G_\gamma$ of arbitrary embedded minimal submanifolds
 $M^{k_1}_1\subset \mathbb S^{r_1}$ and  $M^{k_2}_2\subset \mathbb S^{r_2}$
 will have no non-tangential self-intersection point.
     As a result, we achieve the following.
     
     \begin{thm}
     When $k_1+k_2\geq 1$, there are infinitely many closed solution curves $\gamma$ with $C=-1$ 
     such that as sets $\gamma\bigcap -\gamma=\emptyset$.
     In particular, Theorem \ref{mainebd} is true.
     \end{thm}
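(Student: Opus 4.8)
The plan is to get both assertions from the dichotomy already proved in Corollary~\ref{corgamma}, together with the abundance of closed solution curves furnished by Corollary~\ref{corpm1}. First I recall that, by Corollary~\ref{corpm1}, whenever $J_1(\tilde C)\in\pi\mathbb Q$ the solution curve $\gamma$ with $C=-1$ attached to $\tilde C$ factors through a closed curve, and there are infinitely many such $\tilde C$; and that, by Corollary~\ref{corgamma}, each such $\gamma$ satisfies either $\gamma=-\gamma$ or $\gamma\cap-\gamma=\emptyset$ as sets. Hence it is enough to rule out $\gamma=-\gamma$ for all but countably many $\tilde C$.

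To that end I analyze the equality $\gamma=-\gamma$ directly. Write $\gamma(s)=\big(a(s)e^{is_1(s)},\,b(s)e^{is_2(s)}\big)$ in the arc parameter $s$ of $(a,b)$. The antipodal involution of $\mathbb S^3\subset\mathbb C^2$ fixes $(a(s),b(s))$ and adds $\pi$ to both $s_1$ and $s_2$. Along $\gamma$ the pair $(a,b)$ runs periodically back and forth over the basic arc $\Omega_{-1,\tilde C}$ under the rotational reflections of \S\ref{S4} (one full round spanning two successive basic domains), while $s_1$ is monotone; consequently a set-theoretic identity $\gamma=-\gamma$ cannot be realized by an orientation-reversing reparametrization and must come from a translation by an integer number $k$ of full pendulum rounds. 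This forces $2kJ_1(\tilde C)\equiv\pi$ and $2kJ_2(\tilde C)\equiv\pi\pmod{2\pi}$, which, combined with $(k_1+1)J_1(\tilde C)=(k_2+1)J_2(\tilde C)$ from Proposition~\ref{Cpm0}, is possible only when $k_1+1$ and $k_2+1$ are divisible by the same power of $2$, and in that case confines $J_1(\tilde C)/\pi$ to a countable set of rationals. In particular, when $k_1+1$ and $k_2+1$ are divisible by different powers of $2$ the equality $\gamma=-\gamma$ never occurs.

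I then invoke the non-constancy of $J_1$: by Lemma~\ref{C1} and Remark~\ref{RkC}, under $k_1+k_2\geq1$ the continuous function $J_1(\cdot)$ is not constant, so its image contains an open interval, whence there are infinitely many $\tilde C$ with $J_1(\tilde C)\in\pi\mathbb Q$ outside the countable exceptional set above. For each of these, Proposition~\ref{Cpm1} yields a closed solution curve $\gamma$ with $C=-1$ and $\gamma\neq-\gamma$, hence $\gamma\cap-\gamma=\emptyset$ by Corollary~\ref{corgamma}; this proves the first assertion. The second assertion (Theorem~\ref{mainebd}) then follows from the discussion preceding the statement: for such a $\gamma$ and an arbitrary pair of embedded minimal $M_1\subset\mathbb S^{r_1}$, $M_2\subset\mathbb S^{r_2}$, Lemma~\ref{Lgn} makes $G_\gamma$ --- built via \eqref{Cpx1} and \eqref{Cpx2} --- $\mathscr C$-totally real and minimal; by Observation~$(\star)$ a non-tangential self-intersection of $G_\gamma$ would yield a point of $\gamma\cap-\gamma$ (the tangential ones being absorbed, exactly as in the proof of Corollary~\ref{spl->G}, by the Harvey--Lawson extension result, Theorem~1.4 of \cite{HL0}), so $G_\gamma$ factors through a $\mathscr C$-totally real minimal embedding of a suitable quotient of $S^1\times M_1\times M_2$ into $\mathbb S^{2r_1+2r_2+3}$, and, composed with the Hopf projection, an embedded totally real minimal submanifold of $\mathbb C P^{r_1+r_2+1}$.

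The main obstacle is the middle paragraph: turning the ``$s_1$ monotone versus $(a,b)$ periodic'' picture into a rigorous argument that $\gamma=-\gamma$ forces the resonance $2kJ_1(\tilde C)\equiv\pi\pmod{2\pi}$ --- above all, cleanly excluding orientation-reversing identifications --- and then checking that the resulting exceptional set of $\tilde C$ is genuinely countable. A secondary subtlety, needed only to make the Observation~$(\star)$ reasoning close for a fully arbitrary pair $M_1,M_2$ (the case of round spheres being already settled by Theorem~\ref{spl}), is that ``partial-antipodal'' coincidences of $\gamma$, of the form $\gamma_1(s)=-\gamma_1(s')$ with $\gamma_2(s)=\gamma_2(s')$, must be excluded as well; this only enlarges the exceptional set of $\tilde C$ by countably many further resonant values, which is harmless since the image of $J_1$ contains an interval.
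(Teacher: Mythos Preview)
Your overall strategy matches the paper's: use the dichotomy of Corollary~\ref{corgamma} to reduce the question to an arithmetic resonance condition on $J_1(\tilde C)$ and $J_2(\tilde C)$, and then exhibit infinitely many $\tilde C$ avoiding it. Your identification of the resonance $2kJ_1\equiv\pi$, $2kJ_2\equiv\pi\pmod{2\pi}$, together with the observation that it forces $k_1+1$ and $k_2+1$ to have the same $2$-adic valuation, is correct and is precisely what the paper exploits.

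The genuine gap is the sentence ``confines $J_1(\tilde C)/\pi$ to a countable set of rationals.'' Since $\mathbb Q$ is itself countable, this is vacuous, and the subsequent inference --- ``the image of $J_1$ contains an interval, whence there are infinitely many $\tilde C$ with $J_1(\tilde C)\in\pi\mathbb Q$ outside the countable exceptional set'' --- does not follow. What the resonance actually forces (via your own computation) is that the reduced denominator of $J_1(\tilde C)/\pi$ be \emph{even}; rationals with \emph{odd} reduced denominator are dense in any interval, and that is the statement you need. The same repair applies to the partial-antipodal resonances you flag at the end: those too are governed by parity of the reduced denominators of $J_1/\pi$ and $J_2/\pi$, not by a mere countability claim.

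The paper avoids this slip by an explicit construction rather than a counting argument: starting from any $\tilde C$ with $J_1(\tilde C)=\frac{d_1}{d_0}\pi$ in the interval $\mathscr I$ of Lemma~\ref{C1} (writing $d_0=2^{\ell_0}q_0$, $k_1+1=2^{\ell_1}q_1$, $k_2+1=2^{\ell_2}q_2$ with $q_0,q_1,q_2$ odd), it rescales by $h_j=\dfrac{2^{\ell_0-\ell_1+\ell_2+j}}{2^{\ell_0-\ell_1+\ell_2+j}+1}\to 1$ so that both $J_1(\tilde C_j)/\pi$ and $J_2(\tilde C_j)/\pi$ acquire odd reduced denominator. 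Then $2m\,J_i(\tilde C_j)$ is always an \emph{even} multiple of $\pi$, which simultaneously excludes $\gamma=-\gamma$ and the two partial-antipodal identifications. Once you replace ``countable'' by this concrete parity criterion, your argument becomes the paper's.
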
     

              \begin{proof}
              With $C=-1$ and allowed $\tilde C$,
             we have $(k_2+1)J_2(\tilde C)=(k_1+1)J_1(\tilde C)$.
             Assume that $k_1+1=2^{\ell_1} q_1$
             and $k_2+1=2^{\ell_2} q_2$
             where $\ell_1,\, \ell_2$ are nonnegative integers
              and $q_1,\, q_2$ positive odd integers.
              
             When $k_1+k_2\geq 1$, 
             by 
             {Lemma \ref{C1}}
the range of $J_1$ contains nonempty open interval              $$\mathscr I= \left(\frac{\pi}{2(k_1+1)},
                                                    \frac{\sqrt{k_2+1}\,\pi}{\sqrt{2(k_1+1)(k_1+k_2+2)}}\right).$$

           Take $\tilde C$ with $J(\tilde C)=\frac{d_1}{d_0}\pi   \in \mathscr I\bigcap \pi\mathbb Q$, where $d_1$ and $d_0$ 
           are positive integers relatively prime to each other.
Assume that $d_0=2^{\ell_0}q_0$ where $\ell_0$ is a nonnegative integer and  $q_0$ an odd integer.
Then consider
                           $h_j=\frac{2^{\ell_0-\ell_1+\ell_2+j}}{2^{\ell_0-\ell_1+\ell_2+j}+1}$
                           with $j>\min\{\ell_1-\ell_2,0\}$.
                           Since $h_j\uparrow 1$ as $j\uparrow \infty$,
                           it is clear that $\frac{h_jd_1}{d_0}\pi\in \mathscr I $ for sufficiently large $j$.
                        Take $\tilde C_j$ for
                          $J_1(\tilde C_j)=\frac{h_jd_1}{d_0}\pi$.
                          Now,  with $C=-1$, we have 
\begin{equation}\label{J12j}
                        J_1(\tilde C_j)=\frac{2^{-\ell_1+\ell_2+j}d_1\pi}{q_0(2^{\ell_0+\ell_1-\ell_2+j}+1)}
                        \ \text{\ \ \  and\ \ \  }\
                          J_2(\tilde C_j)=\frac{2^jq_1 
                          {d_1}\pi}
                          {
                           q_2
                          {q_0}
                          (2^{\ell_0+\ell_1-\ell_2+j}+1)}.
\end{equation}
Denote the corresponding  solution curve by $\gamma_{\tilde C_j}=\gamma_{-1,\tilde C_j}$.
                          
                          By Corollary \ref{corgamma},
                          if we track points
                          realizing minima of $a(s)$
                          (corresponding to $\big\{z^{-1, \tilde C}_{L, \ell}\big\}_{\ell\in \mathbb Z}$)
                          and can show that the set $\mathscr E$ of  those points is not antipodal symmetric,
                          then neither is the solution curve $\gamma_{\tilde C_j}$.
                          Further, that means $\gamma_{\tilde C_j}\bigcap -\gamma_{\tilde C_j}=\emptyset$
                         and this property enables us to take arbitrary embedded spherical minimal submanifolds as inputs
                         for the spiral minimal product program through \eqref{Cpx1} and \eqref{Cpx1} without non-tangential intersections.
                         Hence, each $G_{\gamma_{\tilde C_j}}$ with the property factors through an embedding.
                         
                         To finish the proof,
                        we use \eqref{J12j}.
                        Note that 
                        one can only run integer rounds of the pendulum to return the minima of $a(s)$,
                        i.e., 
                        $\big|z^{-1, \tilde C}_{L, \ell}-z^{-1, \tilde C}_{L, \ell'}\big|\in 2\big(z^{-1, \tilde C}_{R}-z^{-1, \tilde C}_{L}\big)\mathbb Z$ for $\ell\neq \ell'$.
 By  \eqref{J12j}, we know that
                         $J_1(\tilde C_j)$ times an even integer never produces an odd multiple of $\pi$.
                         Neither does $J_2(\tilde C_j)$.
                         This implies $\mathscr E\bigcap -\mathscr E=\emptyset$.
                         Consequently, we show that $\gamma_{\tilde C_j}\bigcap -\gamma_{\tilde C_j}=\emptyset$ holds.
                         And we complete the proof by these $\big\{\gamma_{\tilde C_j}\big\}$ and $\big\{G_{\gamma_{\tilde C_j}}\big\}$.
              \end{proof}

              Now let us provide a proof of Theorem \ref{mL} for the construction of embedded minimal Lagrangians in complex projective spaces.
       
       \begin{pf0}
       Based on \S \ref{Rctt}, 
       it can be observed that $(\blacklozenge)$
       can be strengthened to be the following:
       whenever the connected embedded horizontal lifting $M_1$ for $u_1'$
       has $e^{i\lambda}M_1\bigcap M_1\neq \emptyset$ for some $\lambda\in \R$,
       then as sets $e^{i\lambda}M_1=M_1$.
       The same holds for the connected embedded horizontal lifting $M_2$ of $u_2'$.
       
       Now take spiral minimal products with $C=-1$ and $\tilde C$ for $J_1(\tilde C)\in \pi\mathbb Q$.
       From the symmetric property of horizontal liftings that we just emphasize on and the argument of applying Lemma \ref{hl1}, 
       resulting specal Legendrian immersions cannot have non-tangential self-intersections.
       Hence, the spiral minimal products factor through certain embeddings.
       Again, by the symmetric property,
       through the Hopf projection each such embedded submanifold covers its image finitely many times. 
       
       With fixing $C=-1$, the cardinality $\mathbb Q$ of value $\tilde C$ for $J_1(\tilde C)\in \pi\mathbb Q$ is due to 
       Lemma \ref{C1} and Remark \ref{RkC}.
       Since one can choose horizontal liftings $e^{i\lambda_1}M_1$ and $e^{i\lambda_2}M_2$ 
       which have a $S^1$-rotational phase-difference $e^{i(\lambda_1-\lambda_2)}$,
       taking this into account leads us to the cardinality in Theorem \ref{mL} is $\mathbb Q\times S^1$.
              \end{pf0}
           
           \begin{rem}
           Based on $\mathbb RP^{m}\subset \mathbb CP^m$ and a huge diversity of minimal Lagrangian tori in $\mathbb CP^2$ 
           by Carberry–McIntosh \cite{CM},
           one can repeatedly apply Theorem \ref{mL} to create examples of minimal Lagrangians 
           in corresponding complex projective spaces.
           \end{rem}

{\ }

\section{Situation with General $C$}\label{GC}

Define $J_1^C(\tilde C) 
     ={
                              \displaystyle         \mathlarger{\int}_{\Omega_{C, \tilde C}} 
                                     \dfrac{\tan s}{  \sqrt{  {\tilde C  
                 \Big(\cos s\Big)^{2k_1+2}
                  \Big( \sin s\Big)^{2k_2+2}
                  \,
                   -
                   {
                      1-(C^2-1)\cos^2 s
                   }
            }
            }
 }\, ds
}
$
$$\ J_2^C(\tilde C) 
     ={
                                          \displaystyle
                                          \mathlarger{\int}_{\Omega_{C, \tilde C}} 
                                    \dfrac{\cot s}{    \sqrt{ {\tilde C  
                 \Big(\cos s\Big)^{2k_1+2}
                  \Big( \sin s\Big)^{2k_2+2}
                  \,
                   -
                   {
                      1-(C^2-1)\cos^2 s
                   }
            }
            }
 }\, ds
} 
$$
for general $C$.
So $J_1, J_2$ in the previous section correspond to $J_1^C(\tilde C) $ and $J_2^C(\tilde C)$ with $C=-1$.
However, generally for $C\neq \pm1$ or $0$, 
            the relation between $\int_{\Omega_{C,\tilde C}} \dot s_1 ds=J_1^C(\tilde C)$ and $\int_{\Omega_{C,\tilde C}} \dot s_2 ds=CJ_2^C(\tilde C)$ is not that clear.
                      In particular, a priori one does not know 
                              whether there exists $\tilde C$ with  $C\neq \pm 1$ or $0$ 
                              such that both $J_1^C(\tilde C)$ and $CJ_2^C(\tilde C)$ lie in $\pi\mathbb Q $.
           The importance is that, for closed minimal submanifolds $f_1$ and $f_2$, 
           only when both quantities $J_1^C(\tilde C)$ and $CJ_2^C(\tilde C)$ belong to $\pi\mathbb Q $,
           the corresponding spiral minimal product can descend to an immersion from a closed manifold into the target sphere (otherwise the source manifold can never be compact).

By studying  $R(C,\tilde C)=CJ_2^C(\tilde C)\big/ J_1^C(\tilde C)$,
                                the following can be confirmed. 
                              
                              \begin{thm}\label{CC}
                              When $k_1+k_2>0$
                                    \footnote{The case $k_1=k_2=0$ will be discussed in \S \ref{k120}.},
                              the cardinality of $(C, \tilde C)$ for both $J_1^C(\tilde C)$ and $CJ_2^C(\tilde C)$ in  $\pi\mathbb Q $
                              is at least that of $\mathbb Q\times \mathbb Q$.
                              Hence, Theorem \ref{main0} holds.
                              \end{thm}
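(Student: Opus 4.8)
Let $\mathcal U$ be the open set of admissible parameters $(C,\tilde C)$ with $C\neq 0$ and $\Omega_{C,\tilde C}\neq\emptyset$, and put
\[
\Psi(C,\tilde C)=\bigl(J_1^C(\tilde C),\; C\,J_2^C(\tilde C)\bigr)\in\R^2,\qquad R(C,\tilde C)=\frac{C\,J_2^C(\tilde C)}{J_1^C(\tilde C)}.
\]
By the global construction of \S\ref{S4} (Theorem \ref{ltog}), every admissible $(C,\tilde C)$ already produces a globally defined doubly spiral minimal product $G_\gamma$ of varying magnitudes; over one full swing of the pendulum — the closure of two consecutive basic domains — the arguments $s_1$ and $s_2$ increase by $2J_1^C(\tilde C)$ and $2C J_2^C(\tilde C)$ respectively, so when \emph{both} entries of $\Psi(C,\tilde C)$ lie in $\pi\mathbb Q$, finitely many swings return $\gamma$ to its initial position in $\mathbb S^3$ and $G_\gamma$ descends to a minimal immersion of a closed quotient of $S^1\times M_1\times M_2$. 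Hence it suffices to show that $\Psi^{-1}(\pi\mathbb Q\times\pi\mathbb Q)$ has cardinality at least that of $\mathbb Q\times\mathbb Q$; since the resulting products are pairwise geometrically distinct by \S\ref{MCR} (recall $k_1+k_2\ge 1$), Theorem \ref{main0} then follows at once. The plan is to prove that $\Psi$ is a real-analytic local diffeomorphism near some point of $\mathcal U$: then $\Psi$ maps a neighbourhood onto an open $W\subset\R^2$, and the preimage of the dense set $\pi\mathbb Q\times\pi\mathbb Q$ is naturally indexed by $(\pi\mathbb Q\times\pi\mathbb Q)\cap W$.

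\textbf{Regularity and the Jacobian.} First I would verify that $J_1^C(\tilde C)$ and $J_2^C(\tilde C)$ depend real-analytically on $(C,\tilde C)$ throughout $\mathcal U$. The only subtlety is the inverse–square–root behaviour of the integrands in \eqref{J} at the moving endpoints of $\Omega_{C,\tilde C}$; exactly as in the endpoint analysis of \S\ref{S4} (the slope of \eqref{4.1} being nonzero there), a change of variable $u=\sqrt{\,\cdot\,}$ absorbs each singularity, turning $J_i^C$ into an integral with analytic integrand over a parameter-dependent interval with analytic endpoints, whence analyticity of $J_1^C,J_2^C$ and of $R$ (where $J_1^C\neq 0$). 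Since the integrand $\tan s/\sqrt{\,\cdot\,}$ in \eqref{J} is positive on $\Omega_{C,\tilde C}\subset(0,\tfrac\pi2)$, we have $J_1^C>0$, and the Jacobian of $\Psi$ factors as $J_1^C\cdot\dfrac{\partial(J_1^C,R)}{\partial(C,\tilde C)}$; thus it is enough to prove that $J_1^C$ and $R$ are functionally independent on $\mathcal U$, i.e.\ $dJ_1^C\wedge dR\not\equiv 0$.

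\textbf{Non-degeneracy.} Here the line $\{C=-1\}$ serves as the natural probe. Along it, Proposition \ref{Cpm0} gives $R\equiv -\tfrac{k_1+1}{k_2+1}=:c$ identically, whereas $J_1^C$ is non-constant in $\tilde C$ there (by the argument establishing Lemma \ref{C1}). On the other hand $R$ is \emph{not} globally constant on $\mathcal U$: in the steady-magnitude limit, where $\tilde C$ decreases so that $\Omega_{C,\tilde C}$ collapses to a single point $s_0(C)$, one has $J_1^C\to \tan s_0(C)\cdot \pi/\sqrt{|D''|/2}$ and $J_2^C\to \cot s_0(C)\cdot \pi/\sqrt{|D''|/2}$ (the quantity $D$ denoting the positive bracket of \eqref{4.1}, which there develops a single small hump), so $R(C,\tilde C)\to C\cot^2 s_0(C)$; this boundary value equals $c$ at $C=-1$ (consistent with $\tan^2 s_0(-1)=\tfrac{k_2+1}{k_1+1}$) but tends to $0$ as $C\to 0^-$, so it is non-constant. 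Now if $dJ_1^C\wedge dR\equiv 0$, then by analyticity it vanishes on the whole component; wherever $dJ_1^C\neq 0$ this forces $R$ to be locally a function of $J_1^C$ alone, and along $\{C=-1\}$ — where $J_1^C$ sweeps an interval while $R\equiv c$ — that function is identically $c$ near those values, so $R\equiv c$ on an open set, hence everywhere, contradicting the previous sentence. Therefore $\det D\Psi\neq 0$ on a dense open subset of $\mathcal U$.

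\textbf{Conclusion and the main obstacle.} Picking $p_\ast\in\mathcal U$ with $\det D\Psi(p_\ast)\neq 0$, the map $\Psi$ restricts to a diffeomorphism of a neighbourhood of $p_\ast$ onto an open $W\subset\R^2$; then $\Psi^{-1}\bigl((\pi\mathbb Q\times\pi\mathbb Q)\cap W\bigr)$ is in bijection with the dense subset $(\pi\mathbb Q\times\pi\mathbb Q)\cap W$, so it has cardinality at least that of $\mathbb Q\times\mathbb Q$, and for each such $(C,\tilde C)$ the first paragraph produces a doubly spiral minimal product of varying magnitudes factoring through a minimal immersion of a closed manifold. This proves Theorem \ref{CC}, and with it Theorem \ref{main0}. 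I expect the genuine difficulty to be the non-degeneracy step: ruling out an accidental identical vanishing of $dJ_1^C\wedge dR$. The argument leans on Proposition \ref{Cpm0} to pin $R$ along $C=-1$, on the non-constancy of $J_1$ there, and on the explicit steady-magnitude limit $R\to C\cot^2 s_0(C)$; making the last point rigorous requires controlling the integrals $J_i^C$ uniformly as $\Omega_{C,\tilde C}$ degenerates (the standard $\int ds/\sqrt{a-bs^2}=\pi/\sqrt b$ estimate near the collapsing hump), which is the technical heart of the proof.
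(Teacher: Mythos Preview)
Your argument is correct and takes a somewhat different route than the paper for the crucial non-degeneracy step. Both proofs work with the ratio $R=CJ_2^C/J_1^C$, establish its analyticity on $\{C\ne 0\}$, and rely on the steady-magnitude boundary limit $R\to C\cot^2 s_C$ (this is Lemma~\ref{AL3} in the paper). The paper then \emph{computes} $\partial R/\partial C$ along $\partial\Omega$ at $C=1$ directly via \eqref{sCd} and finds it nonzero exactly when $k_1+k_2>0$; this foliates a neighbourhood by level curves $R^{-1}(q)$, along which the paper argues $J_1^C$ must vary. You instead invoke the linear identity of Proposition~\ref{Cpm0}, which pins $R\equiv -(k_1+1)/(k_2+1)$ along the entire line $C=-1$ while $J_1$ varies there (Lemma~\ref{C1}); a short functional-dependence argument then forces $dJ_1^C\wedge dR\not\equiv 0$, hence $\det D\Psi\not\equiv 0$, and the inverse function theorem finishes. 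Your route makes the special role of $C=\pm 1$ more transparent and packages the conclusion as a local diffeomorphism of $\Psi$, while the paper's explicit derivative at $C=1$ supplies quantitative control and shows precisely where the hypothesis $k_1+k_2>0$ enters.

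One minor correction: your claim that $C\cot^2 s_C\to 0$ as $C\to 0^-$ is valid only when $k_2\ge 1$. When $k_2=0$, expanding \eqref{sCd} for small $\tan s_C$ gives $C^2\approx (k_1+1)\tan^4 s_C$, so $C\cot^2 s_C\to -\sqrt{k_1+1}$ rather than $0$; since $-\sqrt{k_1+1}\neq -(k_1+1)$ for $k_1\ge 1$ (forced by $k_1+k_2\ge 1$ with $k_2=0$), the boundary value is still non-constant and your contradiction goes through unchanged.
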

                              \begin{proof}
                              With                      
                                $$
           \Omega=
          \big\{
          (C, \tilde C)
        \,\big\vert\,
           \tilde C>P_C(s_C)
               \big\}
          $$
                              we only need to consider function $R$ on region
                              $$
           \Omega_+=
          \big\{
          (C, \tilde C)\in \Omega
        \,\big\vert\,
        C> 0
               \big\}
          $$
                  the boundary of which is
                                          $$\p \Omega_+=\Big\{(0,\tilde C)\,\big\vert\, \tilde C> P_0(s_0)\Big\}\bigsqcup \big(\p \Omega\big)_+\, .$$
                                            \begin{figure}[h]
		\includegraphics[scale=0.77]{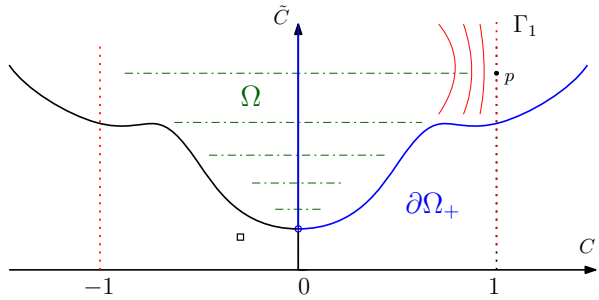}
                             \caption{Illustration Figure (not caring about precise behaviors)} 
                          \label{fig:5}
                             \end{figure}
                             {\ }\\
        {\bf Claim A. } $R$ is an analytic function in $\Omega_+$. 
                              \\
                               {\it Proof of the Claim A. }
                               Let us use the arc parameter $t_{arc}$ of $\overline{\,\gamma^0\,}$. 
                               Then, by $\frac{d t_{arc}}{ds}=\sqrt{1+\Theta}$,
                               we have 
        \begin{equation}\label{tarc}
                               \frac{d}{d t_{arc}} s_1=\frac{\dot s_1}{\sqrt{1+\Theta}}
                               =\frac{\tan s}{\sqrt{\tilde C  
                 \big(\cos s\big)^{2k_1+2}
                  \big( \sin s\big)^{2k_2+2}}}
                  =\frac{1}{\sqrt{\tilde C}\,\, a^{k_1+2}b^{k_2}}\, ,
       \end{equation}
         \begin{equation}\label{tarc1}
                               \frac{d}{d t_{arc}} s_2
                               =\frac{\dot s_2}{\sqrt{1+\Theta}}
                               =\frac{C\cot s}{{\sqrt{\tilde C  
                 \big(\cos s\big)^{2k_1+2}
                  \big( \sin s\big)^{2k_2+2}}}}
                  =\frac{C}{\sqrt{\tilde C}\, \, a^{k_1} b^{k_2+2}} ,
       \end{equation}
        \begin{equation}\label{tarc2}
                               \frac{d}{d t_{arc}} a
                               =-\frac{b}{\sqrt{1+\Theta}}
                               =
                               -\frac
                               {
                                \sqrt{\tilde C  
                 a^{2k_1+2}
               b^{2k_2+2}
                  \,
                   -
                   {
                      1-(C^2-1) a^2
                   }
            }
                               }
         {\sqrt{\tilde C}\,
                 a^{k_1+1}
                  b^{k_2}} ,
       \end{equation}
          \begin{equation}\label{tarc3}
                         \,      \frac{d}{d t_{arc}} b=\frac{a}{\sqrt{1+\Theta}}
                                                        =
                               \frac
                               {
                                \sqrt{\tilde C  
                 a^{2k_1+2}
               b^{2k_2+2}
                  \,
                   -
                   {
                      1-(C^2-1) a^2
                   }
            }
                               }
         {\sqrt{\tilde C}\,
                 a^{k_1}
                  b^{k_2+1}}
                         .
       \end{equation}
       
           By geometric meanings of $\overline{\,\gamma^0\,}$ in \S \ref{S4}, up to a rotational isometry (i.e., a  composition of suitable separate rotations in each $\C$-copy of $\C\oplus\C$),
                              one can always assume that $s_1=s_2=0$ at the starting point  of the system with vanishing \eqref{tarc2} and \eqref{tarc3}.
                               Since $\overline{\,\gamma^0\,}$ is 
                               solved by the minimal surface system for spiral minimal products,
                               it is clear that $\overline{\,\gamma^0\,}$ and its ending point (with extremal $a$-value)
                               are analytic 
                               in its initial position and unit tangent, 
                               i.e., 
                               $$\left(a_0, b_0\right)
                               \text{\ \ \ \  and\ \ \ \  } 
                               \left(
                               \frac{i}{\sqrt{\tilde C} a_0^{k_1+1}b_0^{k_2}},
                               \frac{iC}{\sqrt{\tilde C} a_0^{k_1}b_0^{k_2+1}}
                               \right).
                               $$
                               As $(C,\tilde C)$ decides $a_0$ and $b_0$ analytically
                              whenever $a_0b_0\neq 0$, i.e., $C\neq 0$,
                               the complex angles 
                               $J_1^C(\tilde C)$ and $J_2^C(\tilde C)$ 
                               of two complex components of the ending point of $\overline{\,\gamma^0\,}$
                                are analytic in $C$ and $\tilde C$.
                               Hence, $R(C,\tilde C)$ is analytic in $\Omega_+$.
                        %
                              Thus Claim A gets verified.

                               {\ }

                  To finish the proof of Theorem \ref{CC},
                  let $p\in \{C=1\}\bigcap\Omega_+$.
                  
                  If $\frac{\p R}{\p C}|_p\neq 0$, 
 then 
          there exists some small $\epsilon_0>0$
    such that
                $R^{-1}([q_0-\epsilon_0,q_0+\epsilon_0])\bigcap \Xi$  foliate
                 an open square $\Xi$ of $p$
                by connected regular curves $\{R^{-1}(q)\}_{q\in(q_0-\epsilon_0,q_0+\epsilon_0)}$
                and in fact it turns out that
                each curve is analytic by Theorem 2.35 (Real Analytic Implicit Function Theorem) in \cite{KP}. 
               %
               %
               There are two possibilities.
            The first is that
            there are infinitely many $q\in (q_0-\epsilon_0, q_0+\epsilon_0)\bigcap \mathbb Q$
            with $J_1^C(\tilde C)$ non-constant along   ${R}^{-1}(q)\bigcap \Xi$.
           Since $R(C, \tilde C)=CJ_2^C(\tilde C)/J_1^C(\tilde C)\equiv q$ in ${\tt R}^{-1}(q)$,
                            one can see that
                             $J_1^C(\tilde C)\in \pi\mathbb Q$ automatically implies $CJ_2^C(\tilde C)\in \pi\mathbb Q$ as well.
                             As a result,
                             by taking the role of  $q$ into account,
                             there are at least $\mathbb Q\times \mathbb Q$ many pairs of $(C, \tilde C)$
                              such that each induces  a doubly spiral minimal product
                             which factors through an immersion of certain quotient manifold of $S^1\times M_1\times M_2$.
                             The other is the opposite, 
                             namely only finitely many $q\in (q_0-\epsilon_0, q_0+\epsilon_0)\bigcap \mathbb Q$
                             for which $J_1^C(\tilde C)$ is non-constant along   ${R}^{-1}(q)\bigcap \Xi$.
                             However, this cannot happen.
                             Otherwise, by the density of rational numbers in $\R$,
                             one can see from the foliation by the curve segments 
                             that the function $J_1^C(\tilde C)$ must be constant along $\Gamma_1$.
This contradicts with \eqref{B101} and \eqref{B102} in Lemma \ref{B1}.
              
              Since $R$ is analytic to the boundary $\p\Omega\sim\{C=0\}$,
              let us focus on the point $p_0=\p\Omega\bigcap\{C=1\}$.
              Note that the $R$-value along the boundary is given by $C\cot^2 s_C$ and $\cot^2 s_1=\frac{k_1+1}{k_2+1}$.
              By differentiating \eqref{sCd} 
              we have 
              $$s'_1=\frac{\p s_C}{\p C}\Big|_{C=1}=\frac{\sqrt{(k_1+1)(k_2+1)}}{(k_1+k_2+2)^2}.$$
              The differentiation of $C\cot^2 s_C$ at $C=1$ is the following
              $$
              \cot^2 s_1-\frac{2\cot s_1}{\sin^2 s_1} s'_1
              =
              \frac{\cot s_1}{\sin^2 s_1} 
              \left(\cos s_1\sin s_1-2 s'_1\right)
              =
               \frac{\cot s_1}{\sin^2 s_1} 
               \frac{\sqrt{(k_1+1)(k_2+1)}}{(k_1+k_2+2)^2}
               (k_1+k_2).
              $$
              Hence, when $k_1+k_2>0$,
              the partial derivative  (horizontal portion) $\frac{\p R}{\p C}\neq 0$ in some neighborhood of $p_0$ in $\Omega_+$.
                    \footnote{In fact, away from $\{C=0\}$,  the nonvanishing property $\frac{\p R}{\p C}\neq 0$ holds for an open dense set along $\p \Omega_+$
                    when $k_1+k_2>0$.}
             Now by the argument in the preceding paragraph
               Theorem \ref{main0} stands true.
            \end{proof}

                              {\ }
                              
\section{More Comments and Remarks}\label{MCR}
 
          In this concluding section, we would like to mention something more about spiral minimal products.
          The first is the geometric distinctions among $\{G_\gamma\}$.
  \subsection{Geometric distinctions}
          For spherical $\mathscr C$-totally real (minimal) immersion $f_1$  of $M_1^{k_1}$ and  $f_2$ of $M_2^{k_2}$,
          there are induced metrics $g_1$ on $M_1$ and $g_2$ on $M_2$ respectively.
          Then a spiral minimal product $G_\gamma$ of them induces a metric $g_\gamma$ on $\R\times M_1\times M_2$ by
\begin{equation}\label{indmetric}
               g_\gamma
               =
             dt_{arc}^2 
               \oplus 
               a^2(t_{arc})\, g_1\oplus b^2(t_{arc})\, g_2 
\end{equation}
          where $t_{arc}$ is an arc parameter of $\gamma\subset \mathbb S^3$. 
         Note that  $dt_{arc}^2=(1+\Theta)\, {ds^2}$ of $\gamma^0$ over the basic domain $\Omega_{C, \tilde C}$
         and  $t_{arc}$ is convenient for the global description \eqref{indmetric}.
         
         \begin{defn}
         Two spherical immersions of manifold $M$ into $\mathbb S^n$ are called geometrically the same,
         if one can be written as  a composition of the other with an isometry of $\mathbb S^n$; 
         and are called geometrically distinct if otherwise.
         \end{defn}
         
         Note that, if immersions $G_{\gamma}$ and $G_{\gamma'}$ of $\R\times M_1\times M_2$ into the target sphere are geometrically the same, 
                      then their induced metrics are the same, i.e., $g_{\gamma}=g_{\gamma'}$
                             (independent of choice of parametrization). 
       When $k_1+k_2\geq 1$, 
         it is clear that spiral minimal products of steady magnitudes are geometrically distinct  from those of varying magnitudes
         and 
         that spiral minimal products of steady magnitudes constructed in \S \ref{SMP} are mutually  distinct.
         
         Now we emphasize the geometric distinctions among spiral minimal products $\{G_\gamma\}_{C,\tilde C}$ of varying magnitudes in our construction.
          
          \begin{thm}\label{Gdistinct}
          When $k_1+k_2\geq 1$, every $G_\gamma$ produced by $(C,\tilde C)\in \Omega_+$ is geometrically distinct from 
              $G_{\gamma'}$ by $(C', \tilde C') \in \Omega_+$ if $(C', \tilde C')\neq (C,\tilde C)$.
          \end{thm}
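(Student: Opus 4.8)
The plan is to show that the \emph{intrinsic} datum recorded by the induced metric $g_\gamma$ in \eqref{indmetric} already determines the pair $(C,\tilde C)$. If $G_\gamma$ (built from $(C,\tilde C)\in\Omega_+$) and $G_{\gamma'}$ (built from $(C',\tilde C')\in\Omega_+$) are geometrically the same, then $g_\gamma$ and $g_{\gamma'}$ are isometric Riemannian metrics on $\mathbb R\times M_1\times M_2$ (this is the Remark preceding the theorem, read up to a domain diffeomorphism); so it suffices to recover $(C,\tilde C)$ from $g_\gamma$ alone and deduce $(C,\tilde C)=(C',\tilde C')$.

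First I would put $g_\gamma$ in a usable form. In the arc parameter $s$ of the curve $(a,b)\subset\mathbb S^1\subset\mathbb S^3$ one has $a=\cos s$, $b=\sin s$ and
\[
g_\gamma=\big(1+\Theta(s)\big)\,ds^2\ \oplus\ \cos^2\! s\;g_1\ \oplus\ \sin^2\! s\;g_2,
\]
where $\Theta=\Theta_{C,\tilde C}$ is the explicit function of $s$ given by \eqref{Gexp}; passing to $t_{arc}$ with $dt_{arc}=\sqrt{1+\Theta}\,ds$ this is the (multiply) warped product $dt_{arc}^2\oplus a(t_{arc})^2g_1\oplus b(t_{arc})^2g_2$ over an interval, and the lines $\{x,y\ \text{fixed}\}$ are unit-speed $g_\gamma$-geodesics. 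Since the comparison is made for one and the same pair $f_1,f_2$, the base metrics $g_1,g_2$ are identical for $g_\gamma$ and $g_{\gamma'}$.

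The crux is to show that any isometry between $g_\gamma$ and $g_{\gamma'}$ respects this warped structure, so that the warping profiles agree up to the natural replacement $t_{arc}\mapsto\pm t_{arc}+\mathrm{const}$. The line field $\mathbb R\,\partial_{t_{arc}}$ is intrinsic: the leaves $\{t_{arc}=c\}$ are exactly the maximal submanifolds along which $g_\gamma$ is homothetic to the \emph{fixed} metric $g_1\oplus g_2$ on $M_1\times M_2$, and $\partial_{t_{arc}}$ can be pinned down pointwise via curvature invariants (e.g. the mixed sectional curvatures along $\partial_{t_{arc}}$ are $-a''/a$ and $-b''/b$, whereas $g_\gamma$ restricted to a slice only sees $g_1,g_2$ rescaled). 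The hypothesis $k_1+k_2\geq 1$ enters here: at least one warping factor is present and, for varying magnitudes, genuinely non-constant, so $g_\gamma$ is neither a Riemannian product nor locally homogeneous. This is the step I expect to be the main obstacle: for special underlying pairs (e.g. $M_1,M_2$ round spheres, where a slice carries extra isometries) one must still argue that the warped decomposition of $g_\gamma$ is unique, which is cleanest via a Hiepko/de Rham–type uniqueness statement for warped products of a metric that is not itself a Riemannian product. (If one only demands the rigid notion ``$G_{\gamma'}=\Phi\circ G_\gamma$ for an ambient isometry $\Phi$'', then $g_\gamma=g_{\gamma'}$ holds literally and this step is vacuous, giving $a\equiv\tilde a$, or $b\equiv\tilde b$ when $k_1=0$, at once.)

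Finally, from the warping profile I recover $(C,\tilde C)$. Differentiating $a=\cos s$ in $t_{arc}$ gives
\[
\Big(\frac{da}{dt_{arc}}\Big)^{2}=\frac{1-a^{2}}{\,1+\Theta_{C,\tilde C}(a)\,},
\]
whose right-hand side, by \eqref{Gexp}, is an explicit rational function of $u=a^{2}$ with coefficients depending polynomially on $\tilde C$ and on $C^{2}$; thus the profile determines this rational function on the range of $a$. Clearing denominators one obtains, for all $u$ in that range, a relation $\tilde C\,A(u)+(C^{2}-1)\,B(u)=-(1-u)$ for two explicit analytic functions $A,B$, and a short computation shows $A$ and $B$ are linearly independent when $k_1+k_2\geq1$ (comparing their behavior as $u\to0$ or $u\to1$); hence $\tilde C$ and $C^{2}$ are uniquely determined, and since $C>0$ throughout $\Omega_+$ so is $C$. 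Therefore $(C,\tilde C)=(C',\tilde C')$, contradicting $(C,\tilde C)\neq(C',\tilde C')$, and the claimed geometric distinctness follows.
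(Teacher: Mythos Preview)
Your argument is correct, but the paper's proof is considerably shorter and avoids both your step~2 and most of your step~3.

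The paper uses exactly the rigid reading you flag in your parenthetical: by Definition~8.1, ``geometrically the same'' means $G_{\gamma'}=\Phi\circ G_\gamma$ for an ambient isometry $\Phi$, so $g_\gamma=g_{\gamma'}$ \emph{literally} as tensors on $\mathbb R\times M_1\times M_2$. When $k_1\geq 1$ (respectively $k_2\geq 1$) the warping factor $a^2$ (resp.\ $b^2$) is then read off directly from the metric, and in particular its pair of extremal values over the pendulum is determined. At this point the paper invokes a separate technical lemma, Lemma~\ref{Endpts} in the Appendix, which shows that the basic domain $\Omega_{C,\tilde C}=(z_L^{C,\tilde C},z_R^{C,\tilde C})$ is injective in $(C,\tilde C)\in\overline{\Omega_+}\setminus\partial\Omega$; thus distinct pairs $(C,\tilde C)$ give distinct extremal values of $a^2$ (and of $b^2$), hence $g_\gamma\neq g_{\gamma'}$. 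That is the entire proof.

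So your step~2 (uniqueness of the warped decomposition up to isometry) is not needed at all under the paper's definition, and your step~3 is replaced by the two-endpoints lemma rather than a recovery of $(C,\tilde C)$ from the full functional form of $\Theta$. What your route buys is robustness under a looser notion of equivalence (allowing a domain diffeomorphism), at the price of the Hiepko/de~Rham-type argument; the paper's route is shorter but outsources the injectivity to Lemma~\ref{Endpts}, whose proof is an elementary monotonicity computation with the explicit function \eqref{Cfun1}.
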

          
          \begin{proof}
          By Lemma \ref{Endpts} in Appendix,
          we know that $\Omega_{C,\tilde C}\neq \Omega_{C',\tilde C'}$ unless $(C, \tilde C)=(C', \tilde C')$.
          Hence, when $(C, \tilde C)\neq (C', \tilde C')$,
          the pairs of minimal and maximal values of $a^2$ for $G_\gamma$ and $G_{\gamma'}$ are different.
          The same holds for pairs of minimal and maximal values of  $b^2(t_{arc})$ for them as well.
          Due to the assumption $k_1+k_2\geq 1$, we know either $k_1\geq 1$ or $k_2\geq 1$.
          As a result, it can be seen by \eqref{indmetric} that $g_\gamma\neq g_{\gamma'}$.
          Therefore, the proof gets accomplished.
          \end{proof}
          
 \subsection{The case with \boldmath$k_1=k_2=0$.}     \label{k120}    
          One may wonder what happens when $k_1=k_2=0$.
          In this situation, the method of the proof of Theorem \ref{Gdistinct} fails.
          In fact, 
          all spiral minimal products $G_\gamma$ with $k_1=k_2=0$ and $M_1=\{point_1\}, M_2=\{point_2\}$  must run great circles in the target sphere
         and all $\{G_\gamma\}$ are geometrically the same.
         In particular, every solution curve $\gamma$  runs a great circle in $\mathbb S^3$.
          This can be proved for our constructions as follows.

          According to \eqref{tarc}$-$\eqref{tarc3} with respect to arc parameter $t_{arc}$ of $\gamma$, 
          one can check directly  that $\|G'_\gamma\|=1$
          and moreover, by some elementary calculations and relation \eqref{dotG}, that
          $$
                 G''_\gamma
                 =
                    \Big(
                         %
                           \Big[
                           i
                           (2a's'_1
                           +as''_1)
                           +a''
                           -a\cdot(s'_1)^2
                          \Big]
  {e^{is_1} f_1}
                           ,\,
                           \Big[
                           i
                           (2b's'_2
                           +bs''_2)
                           +b''-b\cdot(s'_2)^2
                           \Big]
 {e^{is_2} f_2}
                       \Big)
                 =
                 -G_\gamma\, .$$
          This means that $G_\gamma$ is a geodesic and hence runs part of a great circle of $\mathbb S^3$.

          When $C\neq 0$ and $k_1=k_2=0$, 
          preferred antiderivatives of  \eqref{dotsss} can be given  by
\begin{equation}\label{s1anti}
    s_1= -\frac{1}{2}\arctan\left[\frac{-2+(1+\tilde C- C^2)\cos^2 s}
    {2\sqrt{\tilde C\, \big(\cos s\big)^{2}
         \big( \sin s\big)^{2}-1-(C^2-1)\cos^2 s}}\right]
\end{equation}
     and
\begin{equation}\label{s2anti}
         \,
    s_2= \frac{1}{2}\arctan
    \left[
    \frac{-1+\tilde C- C^2+(1-\tilde C- C^2)\cos^2 s}
    {2C\sqrt{\tilde C\, \big(\cos s\big)^{2}
         \big( \sin s\big)^{2}-1-(C^2-1)\cos^2 s}}
    \right].
\end{equation}
By Lemma \ref{s12sign} 
we know that each numerator of the fractions inside arctan of \eqref{s1anti} and \eqref{s2anti} has opposite signs in $\p\Omega_{C,\tilde C}$.
      Note that $1+\tilde C- C^2>0$ and $1-\tilde C- C^2<0$ in the proof of Lemma \ref{s12sign}.
      The numerator of the fraction in \eqref{s1anti} is decreasing
      while that in \eqref{s2anti} is increasing.
         Hence, 
         it follows that
\begin{equation}\label{00s12}
\int_{\Omega_{C,\tilde C}} \dot s_1 ds=\frac{\pi}{2} \text{\ \ \ \ \ and\ \ \ \ } \int_{\Omega_{C,\tilde C}} \dot s_2 ds=\text{sign}(C)\cdot\frac{\pi}{2}\, .
\end{equation}
         Thus
         the total changes of $s_1$ {and} $s_2$ over 
         $\Omega^0_{C,\tilde C}\bigcup \Omega^1_{C,\tilde C}\bigcup \Omega^2_{C,\tilde C}\bigcup \Omega^3_{C,\tilde C}$ 
         are exactly $\pm 2\pi$ for every nonzero $C$.
         Namely, after running two rounds of the pendulum, $\gamma$ closes up as an entire great circle.
         Consequently, it is clear that projection of $\gamma$ on each $\C$-component of $\C\oplus\C$ draws an ellipse
           and
           the projection image of $\gamma$ over each $\Omega^\ell_{C,\tilde C}$
          where $\ell \in \{0,1,2,3\}$ performs a quarter of the ellipse.
           
           When $C=k_1=k_2=0$, 
           an antiderivative of $\dot{s}_1$
           can be chosen to be 
           $$
    s_1= -\frac{1}{2}\arctan\left[\frac{-2+(1+\tilde C)\cos^2 s}
    {2\sqrt{(\tilde C \cos^2 s-1)\sin^2s}}\right]
    \, \text{ with allowed } \tilde C>1.
     $$
                   Apparently, $\int_{\Omega_{0,\tilde C}} \dot s_1 ds=\frac{\pi}{2}$.
                   So, similarly after running two rounds of the pendulum, $\gamma$ closes up as an entire great circle in $\mathbb S^3$.
           However, only the projection of $\gamma$ on the first $\C$-copy gives an ellipse while projection to the other runs a closed interval  
           back and forth two rounds.

             {\ }
             
           We would like to remark that 
           $\int_{\Omega_{0,\tilde C}} \dot s_1 ds=\frac{\pi}{2}$
            coincides with 
            the asymptotic behaviors \eqref{B103} and \eqref{B104} for $C=0$, 
            and that
             \eqref{00s12}
             coincides with
             the asymptotic behaviors
            in
           Remark \eqref{FRkC}.

{\ }

 \subsection{Some comments about the construction}
 In this subsection, we mention some feature and some connection related to spiral minimal products, as well as a conjecture.
  
 {\ }
 
Both constructions in Theorem \ref{mainebd} and Corollary \ref{singlyemb} can be performed inductively and jointly,
a huge number of examples of embedded spherical minimal submanifolds of highly nontrivial topology and complicated magnitude distributions can be gained. 
 Due to  Corollary \ref{singlyemb}, there are numerous such examples of dimensions larger than half that of the ambient sphere.
 
 {\ }
 
Let {\boldmath$\Delta_{g}$} be the standard Laplace-Beltrami operator of Riemannian $(M, g)$.
       Then the following can be derived by the Takahashi Theorem \cite{T}.
       
       \begin{prop}
       An isometric immersion $f$ of $(M^k, g)$ into unit sphere $\mathbb S^n\subset \R^{n+1}$ is minimal 
       if and only if
        {\boldmath$\Delta_{g}$}$f=-kf$.
       \end{prop}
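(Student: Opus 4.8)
The statement is the classical Takahashi characterization, and the plan is to derive the Beltrami formula $\Delta_g f=\vec H$ (the trace mean curvature vector of $M$ in $\R^{n+1}$, viewing $f$ as an $\R^{n+1}$-valued map) and then to peel off the contribution of the sphere $\mathbb S^n$ sitting inside $\R^{n+1}$.

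First I would establish the Beltrami formula. Fix $x\in M$ and a local orthonormal frame $\{e_1,\dots,e_k\}$ that is geodesic at $x$, so $\n_{e_i}e_j(x)=0$. Writing $f=(f^1,\dots,f^{n+1})$ and applying $\Delta_g$ coordinate-wise, at $x$ one has $\Delta_g f=\sum_i e_i(e_i f)=\sum_i\bn_{e_i}(f_*e_i)$, and the Gauss formula $\bn_{e_i}(f_*e_i)=f_*(\n_{e_i}e_i)+h(e_i,e_i)$ kills the tangential term at $x$; hence $\Delta_g f=\sum_i h(e_i,e_i)=\vec H_{M\subset\R^{n+1}}$ pointwise, where $\vec H_{M\subset\R^{n+1}}=\tr h$ and $h$ is the second fundamental form of $M$ in $\R^{n+1}$.

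Next I would use the orthogonal splitting $\R^{n+1}=T_xM\oplus N_x(M\subset\mathbb S^n)\oplus\R\,f(x)$ along $M$, together with the Gauss formula for the tower $M\subset\mathbb S^n\subset\R^{n+1}$, which gives $h(X,Y)=h_{M\subset\mathbb S^n}(X,Y)+h_{\mathbb S^n}(X,Y)$ for $X,Y$ tangent to $M$. Since $\bn_X f=X$ for $X$ tangent to $\mathbb S^n$ (the ambient covariant derivative of the position vector field being the identity), differentiating $\lan Y,f\ran\equiv 0$ yields $\lan\bn_XY,f\ran=-\lan X,Y\ran$, so $h_{\mathbb S^n}(X,Y)=-\lan X,Y\ran\,f$. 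Tracing over $\{e_i\}$ then gives
$$\Delta_g f=\vec H_{M\subset\R^{n+1}}=\vec H_{M\subset\mathbb S^n}-k\,f,$$
where $\vec H_{M\subset\mathbb S^n}$ is tangent to $\mathbb S^n$ while $f$ is normal to $\mathbb S^n$ at each point.

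Finally, since these two pieces are orthogonal at every point, $\Delta_g f=-k\,f$ holds if and only if the $\mathbb S^n$-tangential part $\vec H_{M\subset\mathbb S^n}$ vanishes identically, i.e. if and only if $f$ is a spherical minimal immersion; this is the desired equivalence. I do not anticipate a genuine obstacle, since the result is standard; the only points that require care are the sign of $h_{\mathbb S^n}$ (pinned down by $\bn_X f=X$ on $\mathbb S^n$) and the normalization of the mean curvature vector (using the trace convention $\vec H=\tr h$, consistent with the paper's definition of minimality), together with the remark that the $\mathbb S^n$-tangential and $\mathbb S^n$-normal components of $\Delta_g f$ must vanish separately.
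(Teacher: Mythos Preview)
Your argument is correct and is the standard proof of this classical result. Note, however, that the paper does not actually give its own proof of this proposition: it simply states the result with the remark that it ``can be derived by the Takahashi Theorem \cite{T}'' and moves on. So there is no paper-proof to compare against; you have supplied a complete self-contained argument where the paper merely invokes the literature.
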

 
      In view of the proposition,
      isometric spherical minimal immersions $f_1$ and $f_2$ of $(M^{k_1}_1, g_1)$ and $(M^{k_2}_2, g_2)$
      mean that 
                {\boldmath$\Delta_{g_1}$}$f_1=-k_1f_1$
                and
                   {\boldmath$\Delta_{g_2}$}$f_2=-k_2f_2$.
      Then
      the construction of spiral minimal products
      is equivalent to building
      $G_\gamma$ 
      for
       {\boldmath$\Delta_{g_\gamma}$}$G_\gamma=-(k_1+k_2+1)G_\gamma$
       where $g_\gamma$ is of format  \eqref{indmetric}.
       However, this formulation bothers solving the following system of four ordinary differential equations 
       
        {\boldmath$\Delta_{g_\gamma}$}
        $G_\gamma=
       \big(  \frac{\gamma_1}
         {a^2}
         $
         {\boldmath$\Delta_{g_1}$}
         $f_1
        \, ,\,
         \frac{\gamma_2}
         {b^2}
         $
         {\boldmath$\Delta_{g_2}$}
         $f_2
         \big)
         $
         +
         $\big(\gamma''_1 f_1\, ,\, \gamma''_2 f_2\big)$
         +
         $
         \big(k_1\frac{a'}{a}+k_2\frac{b'}{b}\big)
         \big( \gamma'_1 f_1\, ,\, \gamma'_2 f_2\big)
         $
         \\
 \text{\ \ \ \ \ \ \, \,   \ \ \ \ \ \ }       =
       $\big( 
       \big[- \frac{k_1}
         {a^2}
         +
         \frac{\gamma''_1}{\gamma_1}
         +
         k_1\frac{a'\gamma_1'}{a\gamma_1}
         +
      {k_2\frac{b'\gamma_1'}{b\gamma_1}}
        \big]
        \gamma_1f_1
        \, ,\,
        \big[- \frac{k_2}
         {b^2}
         +
         \frac{\gamma''_2}{\gamma_2}
         +
        {k_1\frac{a'\gamma'_2}{a\gamma_2}}
         +
         k_2\frac{b'\gamma_2'}{b\gamma_2}
        \big]
        \gamma_2f_2
         \big)
         $
\begin{equation}\label{LapG}
         =-(k_1+k_2+1)G_\gamma \ \ \ \ \ \ \ \ \ \ \ \ \ \ \, \, \ \ \ \ \ \ \ \ \ \ \, \, \ \ \ \ \ \ \ \ \ \ \, \, \ \ \ \ \ \ \ \ \ \ \, \, \ \ \ \ \ \ \ \ \ \ \, \, \ \ \ \ \ \ \ \ \ \ \, \, \ \ \ \ \ \ \ \ \ \ \, \,
\end{equation}
where  derivative $'$ is taken with respect to an arc parameter $t_{arc}$ of $\gamma$.
{With \eqref{tarc}$-$\eqref{tarc3}, \eqref{dotG} and $\frac{ds}{d t_{arc}}=\frac{1}{\sqrt{1+\Theta}}$, 
one can check through some basic but delicate computations that all  spiral minimal products of varying magnitudes satisfy  \eqref{LapG}.
As for spiral minimal products of steady magnitudes which are limits of those with varying magnitudes, 
one can use either  \eqref{b/a} or  more easily Remark \ref{valueofc}  and Relation \eqref{sCd} for a direct verification that they also obey \eqref{LapG}.
Clearly,  the constructions and results in  our paper confirm that all solutions $G_\gamma(t, x, y)=\big(\gamma_1f_1(x), \gamma_2f_2(y)\big)$
by  immersed curves $\gamma=(\gamma_1,\gamma_2)\subset\mathbb S^3$ with $\gamma_1\gamma_2\neq 0$
to equation \eqref{LapG} are exactly the spiral minimal products of $f_1$ and $f_2$ with varying or steady magnitudes.}

                                    
{\ }                                    
                                          
                     We would like to end up the paper with the following conjecture,
                     which
                     we have verified for $C=-1$ with the help from calibrated geometry and 
                     disproved for $C=0$.
                     
                {\ }\\   
         {\bf Conjecture.}
      Assume that $M_1^{k_1}, M_2^{k_2}$ with $k_1,k_2\geq 1$ are
embedded  spherical minimal 
submanifolds.
Let $\mathcal Q_C=\left\{(C, \tilde C)\, \vert\, J_1^C(\tilde C)\in \pi\mathbb Q, CJ^C_2(\tilde C)\in \pi\mathbb Q, CJ^C_2(\tilde C)/J^C_1(\tilde C)\in \mathbb Q\right\}$.
Then, for a generic $C_0$ with $\#\mathcal Q_{C_0}=\infty$,
there are infinitely many pairs $\{(C_0, \tilde C_j)\}_{j\in \N}\subset \mathcal Q_{C_0}$
such that,
by virtue of \eqref{Cpx1} and \eqref{Cpx2},
each $(C_0, \tilde C_j)$ generates a spiral minimal product of $M_1$ and $M_2$
which factors through an embedding.
    
    {\ }  


{\ }

\section{Appendix for General $C$}
     In this section, we collect some useful properties with general $C$.
 \subsection{Unique critical point \boldmath$ s_C$.}
     \begin{lem}\label{AL1}
     For $C\neq 0$ or $k_2>C=0$,
     the function 
\begin{equation}\label{Cfun}
      P_C(s)=\dfrac{ 1+(C^2-1)\cos^2 s}{
                  \Big(\cos s\Big)^{2k_1+2}
                  \Big( \sin s\Big)^{2k_2+2}
                  }
\end{equation} 
         has exactly one critical point  $s_C\in (0, \frac{\pi}{2})$, where the function attains its minimum.
     \end{lem}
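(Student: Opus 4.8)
The plan is to substitute $x=\cos^2 s$, which is a strictly decreasing diffeomorphism of $\left(0,\tfrac{\pi}{2}\right)$ onto $(0,1)$, and to work with
\[
\tilde P(x)=\dfrac{1+(C^2-1)x}{x^{k_1+1}(1-x)^{k_2+1}},
\qquad P_C(s)=\tilde P(\cos^2 s),
\]
so that the function of \eqref{Cfun} is repackaged as $\tilde P$. First I would check that $\tilde P>0$ on $(0,1)$: the denominator never vanishes there, and the affine numerator $1+(C^2-1)x$ runs between $1$ and $C^2$ on $[0,1]$, hence is at least $\min\{1,C^2\}>0$ when $C\neq 0$, while for $C=0$ it equals $1-x>0$. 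Consequently a point of $\left(0,\tfrac{\pi}{2}\right)$ is critical for $P_C$ exactly when the corresponding $x$ is critical for $\tilde P$, i.e.\ when it annihilates $(\log\tilde P)'$.

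Next I would compute
\[
(\log\tilde P)'(x)=\dfrac{C^2-1}{1+(C^2-1)x}-\dfrac{k_1+1}{x}+\dfrac{k_2+1}{1-x}
\]
and clear the positive common denominator $x(1-x)\bigl(1+(C^2-1)x\bigr)$. A routine expansion then turns the critical point equation into $Q(x)=0$, where
\[
Q(x)=(C^2-1)(k_1+k_2+1)\,x^2+\bigl(2k_1+k_2+2-k_1C^2\bigr)\,x-(k_1+1)
\]
is a polynomial of degree at most two, and one reads off directly $Q(0)=-(k_1+1)<0$ and $Q(1)=C^2(k_2+1)$.

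The core of the argument is a short case analysis showing that $Q$ has exactly one root in $(0,1)$, using only the signs of $Q(0)$, $Q(1)$, the leading coefficient, and the product of the roots $Q(0)/(\text{leading coefficient})$. If $C^2>1$ the parabola opens upward with $Q(0)<0<Q(1)$ and negative product of roots, so it has exactly one positive root, which necessarily lies in $(0,1)$. If $0<C^2<1$ it opens downward, and since such a parabola is positive only strictly between its two roots, the pattern $Q(0)<0$, $Q(1)>0$ forces $(0,1)$ to contain exactly the smaller of the two roots. If $C^2=1$ the polynomial is affine with the single root $(k_1+1)/(k_1+k_2+2)\in(0,1)$. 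If $C=0$, so $k_2>0$, then $Q(1)=0$, meaning that $x=1$ (the excluded endpoint $s=0$) is a root, and Vieta's relation forces the other root to equal $(k_1+1)/(k_1+k_2+1)$, which lies in $(0,1)$ precisely because $k_2>0$ — this is exactly where the hypothesis enters, since for $C=0=k_2$ one has $Q(x)=-(k_1+1)(x-1)^2$, with no root in the open interval. In every case there is a unique $s_C\in\left(0,\tfrac{\pi}{2}\right)$ with $P_C'(s_C)=0$. Finally, $P_C(s)$ blows up at both endpoints of $\left(0,\tfrac{\pi}{2}\right)$ (the denominator vanishes while the numerator has a nonzero limit, or in the boundary cases the factor $(\sin s)^{2k_2}$ or $(\cos s)^{2k_1+2}$ forces it), so the positive continuous function $P_C$ attains a global minimum at an interior critical point, which by uniqueness must be $s_C$.

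The only genuinely delicate point I anticipate is organizing this case analysis cleanly — in particular tracking the sign of the leading coefficient of $Q$ as $C^2$ passes through $1$ and handling the degenerate case $C=0$ — because once the problem has been reduced to the single quadratic $Q$, the rest is entirely elementary.
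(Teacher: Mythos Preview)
Your argument is correct and follows the same opening move as the paper: you both substitute $t=\cos^2 s$ and study $\tilde P(t)=\bigl(1+(C^2-1)t\bigr)\big/\bigl(t^{k_1+1}(1-t)^{k_2+1}\bigr)$. The difference lies only in how you establish that $\tilde P'$ has a unique zero in $(0,1)$. The paper writes out $\tilde P'$ directly, observes that its numerator $N(t)$ has derivative $N'(t)=\tfrac{k_1+1}{t^2}+C^2\tfrac{k_2+1}{(1-t)^2}>0$, and then reads off uniqueness from $N(0^+)=-\infty$ together with $N(1^-)=+\infty$ (or $N(1^-)=k_2>0$ when $C=0$); no case split on the size of $C^2$ is needed. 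You instead clear denominators in $(\log\tilde P)'$ to get the explicit quadratic $Q$, and then run a clean case analysis on the sign of its leading coefficient. Both routes are entirely elementary; the paper's monotonicity trick is a touch shorter and uniform in $C$, while your approach has the virtue of giving the critical point explicitly via Vieta (which you actually use in the $C=0$ and $C^2=1$ cases), and makes transparent exactly where the hypothesis $k_2>0$ enters when $C=0$.
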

     \begin{proof}
     Set $t=\cos^2 s\in (0,1)$. 
     Then \eqref{Cfun} becomes
\begin{equation}\label{Cfun1}
      \dfrac{ 1+(C^2-1)t}{
                 \, t^{k_1+1}
                  ( 1-t)^{k_2+1}
                  }
\end{equation} 
     the derivative of which in $t$ is
\begin{equation}\label{Cfun2}
      \dfrac{ \big(C^2-1\big)
                  -
                  \left[1+(C^2-1)t\right]
                  \big(\frac{k_1+1}{t}-\frac{k_2+1}{1-t}\big)
                  }{
                 \, 
            {\tt T}
                  }
\end{equation} 
     where
     ${\tt T}=t^{k_1+1}
                  ( 1-t)^{k_2+1}>0$.
                  Note that the derivative of the numerator of \eqref{Cfun2}
                  is
\begin{eqnarray}       
        &&     -     \big(C^2-1\big) \left(\frac{k_1+1}{t}-\frac{k_2+1}{1-t}\right)
 -
                 \big[1+(C^2-1)t\big]
                 \left(-\frac{k_1+1}{t^2}
 -
                 \frac{k_2+1}{(1-t)^2}\right)
\nonumber\\ &= & \frac{k_1+1}{t^2}+C^2 \frac{k_2+1}{(1-t)^2}>0
\nonumber
\end{eqnarray} 
and that the numerator of \eqref{Cfun2} goes to negative infinity when $t\downarrow 0$
                 whereas it tends to positive infinity when $C\neq 0$ and $t\uparrow 1$ or tends to $k_2>0$ when $C=0$ and $t\uparrow 1$.
                 Hence, 
                 \eqref{Cfun2} 
                 has exactly one zero point $t_C\in (0,1)$
                 and
                 \eqref{Cfun1}
                  attains its minimum at $t_C$.
                 As $\frac{dt}{ds}=-\sin 2s\neq 0$ 
                 for $s\in(0,\frac{\pi}{2})$,
                 it follows that
                 $s_C=\arccos \sqrt {t_C}$ is the unique critical point of $P_C(s)$ in $(0,\frac{\pi}{2})$.
     \end{proof}
     \begin{rem}
     When $k_2=C=0$, in $\overline \Omega_{0, \tilde C}=\big[0,  \arccos{(\tilde C^{-\frac{1}{2k_1+2}})}\big]$,
     zero is the only minimal point of $P_0(s)$ with $P_0(0)=1$.
     \end{rem}
     
          \begin{rem}\label{valueofc}
     For the case of steady magnitudes with $a\equiv \cos s_C$ and $b\equiv \sin s_C$,
     the ratio $c=\frac{\dot s_2}{\dot s_1}$ in \S \ref{SMP} is exactly $C\cot^2 s_C$.
     \end{rem}

\subsection{Range of \boldmath$s_C$}     
     Based on the above lemma, the critical point is unique. 
    So
     we consider the derivative of $P_C(s)$ in $s$:
            $$
            \dfrac{\, -2(C^2-1)\cos s \sin s
            -
            \left[1+(C^2-1)\cos^2 s\right]
            \big(-(2k_1+2)\tan s+(2k_2+2)\cot s\big)}{T}
            $$
     where $T$ is the denominator of \eqref{Cfun}.
     As a result, 
           for a vanishing derivative,
        one needs
\begin{equation}   \label{sCd}
                 C^2-1
                 =\dfrac
                     {-(2k_1+2)\tan s_C+(2k_2+2)\cot s_C}
                     {\, 2k_1\tan s_C-(2k_2+2)\cot s_C  \, }
                     \dfrac{1}{\cos^2 s_C}
                     \, .
\end{equation}   
                 Therefore, we get the following property.
        \begin{lem}\label{AL2}
                 Assume that $k_1, k_2\geq 1$.
                 For the critical point $s_C$ of $P_C(s)$ in \eqref{Cfun},
                 we have 
                 $s_{C_1}\neq s_{C_2}$ unless $C_1=\pm C_2$,
                 and moreover, 
\begin{equation}\label{sC1}   
               \tan^2  s_C= \frac{k_2+1}{k_1+1}
               \text{\ \ when \ } C= \pm 1,
\end{equation}   
\begin{equation}   \label{sC2}   
            \   \tan^2  s_C\, \, \big\uparrow\, \, \frac{k_2+1}{k_1}
               \text{\ \ as \ } C\rightarrow \pm \infty,
\end{equation}   
                 and 
\begin{equation}   \label{sC3}   
               \tan^2  s_C\, \, \big \downarrow\, \, \frac{k_2}{k_1+1}
               \text{\ \ as \ } C\rightarrow 0. \
\end{equation}   
     \end{lem}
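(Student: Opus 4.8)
The plan is to convert all three assertions into the study of a single rational function of $u:=\tan^{2}s_{C}$. Starting from the critical-point identity \eqref{sCd}, I would multiply the numerator and denominator of the fraction on its right-hand side by $\tan s_{C}$ and use $\sec^{2}s_{C}=1+u$ to rewrite it as
\[
C^{2}-1=\frac{(k_{2}+1)-(k_{1}+1)u}{k_{1}u-(k_{2}+1)}\,(1+u).
\]
Adding $1$ and expanding the numerator, the terms linear in $u$ cancel and one is left with the compact relation $C^{2}=G(u)$, where
\[
G(u)=\frac{u\bigl(k_{2}-(k_{1}+1)u\bigr)}{k_{1}u-(k_{2}+1)}.
\]
So everything is governed by the behaviour of $G$.

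Next I would localise the variable and establish monotonicity. Noting $\frac{k_{2}}{k_{1}+1}<\frac{k_{2}+1}{k_{1}}$ and inspecting the signs of numerator and denominator, one sees $G(u)>0$ exactly on $I:=\bigl(\frac{k_{2}}{k_{1}+1},\,\frac{k_{2}+1}{k_{1}}\bigr)$, while $G(u)<0$ on $(0,\frac{k_{2}}{k_{1}+1})$ and on $(\frac{k_{2}+1}{k_{1}},\infty)$. Since Lemma \ref{AL1} gives a unique critical point $s_{C}$, and since \eqref{sCd} is meaningful only where $2k_{1}\tan s_{C}-(2k_{2}+2)\cot s_{C}\neq0$, i.e. $u\neq\frac{k_{2}+1}{k_{1}}$, it follows that $\tan^{2}s_{C}\in I$ for every $C\neq 0$ (and the case $C=0$, permitted here since $k_{2}\geq1$, occurs as the boundary value $\tan^{2}s_{0}=\frac{k_{2}}{k_{1}+1}$). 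I would then show $G\colon I\to(0,\infty)$ is a strictly increasing bijection. Differentiating gives $G'(u)=Q(u)\big/\bigl(k_{1}u-(k_{2}+1)\bigr)^{2}$ with $Q(u)=-k_{1}(k_{1}+1)u^{2}+2(k_{1}+1)(k_{2}+1)u-k_{2}(k_{2}+1)$, a downward parabola of discriminant $4(k_{1}+1)(k_{2}+1)(k_{1}+k_{2}+1)>0$; evaluating $Q$ at the endpoints of $\overline{I}$ yields $Q\bigl(\frac{k_{2}}{k_{1}+1}\bigr)=\frac{k_{2}(k_{1}+k_{2}+1)}{k_{1}+1}>0$ and $Q\bigl(\frac{k_{2}+1}{k_{1}}\bigr)=\frac{(k_{2}+1)(k_{1}+k_{2}+1)}{k_{1}}>0$, so $\overline{I}$ lies strictly between the two roots of $Q$, whence $Q>0$ and $G'>0$ on $I$. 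Combined with the boundary values $G(u)\to0^{+}$ as $u\downarrow\frac{k_{2}}{k_{1}+1}$ and $G(u)\to+\infty$ as $u\uparrow\frac{k_{2}+1}{k_{1}}$ (the numerator there tending to the negative constant $-\frac{(k_{2}+1)(k_{1}+k_{2}+1)}{k_{1}^{2}}$ while the denominator tends to $0^{-}$), this makes $G$ an increasing homeomorphism onto $(0,\infty)$, and a one-line evaluation gives $G\bigl(\frac{k_{2}+1}{k_{1}+1}\bigr)=1$.

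The conclusions are then read off from $G^{-1}$. Since $P_{C}$ depends on $C$ only through $C^{2}$, we have $\tan^{2}s_{C}=G^{-1}(C^{2})$; hence $s_{C_{1}}=s_{C_{2}}$ forces $C_{1}^{2}=C_{2}^{2}$, i.e. $C_{1}=\pm C_{2}$, which is the first assertion. Taking $C=\pm1$ gives $\tan^{2}s_{C}=G^{-1}(1)=\frac{k_{2}+1}{k_{1}+1}$, which is \eqref{sC1}. Monotonicity of $G^{-1}$ with $\sup I=\frac{k_{2}+1}{k_{1}}$ yields \eqref{sC2} — as $C^{2}\to\infty$, $\tan^{2}s_{C}$ increases to $\frac{k_{2}+1}{k_{1}}$ — and with $\inf I=\frac{k_{2}}{k_{1}+1}$ it yields \eqref{sC3} — as $C^{2}\to0$, $\tan^{2}s_{C}$ decreases to $\frac{k_{2}}{k_{1}+1}$. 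The only step that requires genuine care is the sign of the quadratic $Q$ on $\overline{I}$, equivalently the strict monotonicity of $G$; everything else is routine manipulation of the explicit rational function $G$.
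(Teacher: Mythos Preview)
Your proof is correct. Both your argument and the paper's rest on the critical-point relation \eqref{sCd}, but you take a more explicit route: you rewrite \eqref{sCd} as $C^{2}=G(u)$ with $u=\tan^{2}s_{C}$ and analyse the rational function $G$ directly, proving by an explicit derivative computation that $G$ is a strictly increasing bijection from $I=\bigl(\tfrac{k_{2}}{k_{1}+1},\tfrac{k_{2}+1}{k_{1}}\bigr)$ onto $(0,\infty)$. The paper instead argues injectivity by asserting $\tfrac{ds_{C}}{dC}\neq0$ for $C>0$, obtains \eqref{sC2} by noting that $C^{2}\to\infty$ forces the denominator of \eqref{sCd} to vanish, and derives \eqref{sC3} via uniform convergence of $P_{C}$ to $P_{0}$ on compact subsets. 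Your approach is more computational but fully self-contained, in particular giving a clean justification of the monotonicity of $C\mapsto s_{C}$ that the paper leaves implicit; the paper's approach is shorter and more qualitative. One small point worth making explicit in your write-up: the possibility $u=\tfrac{k_{2}+1}{k_{1}}$ is excluded not merely because \eqref{sCd} fails to be ``meaningful'' there, but because substituting this value into the polynomial form of the critical-point equation (before dividing) yields a contradiction for any finite $C$.
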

                              \begin{proof}
                              Since $s_C=s_{-C}$ for \eqref{Cfun},
                              we assume $C\geq 0$
                              and
                              by \eqref{sCd} we know that $C \mapsto s_C$ is an injective map by $\frac{d s_C}{d C}\neq 0$ when $C>0$.
                              It is clear that \eqref{sC1} follows from \eqref{sCd}
                              and that $C\rightarrow \pm \infty$ corresponds to vanishing denominator of the first factor on the right hand side of \eqref{sCd}.
                              The uparrow in \eqref{sC2} is due to the sign on the left hand side of \eqref{sCd}.
                              As for \eqref{sC3} with $k_2\geq 1$, one can see this from the fact that,
                              for any compact $K\Subset (0,\frac{\pi}{2})$,
                              the function \eqref{Cfun}
                              uniformly limits to
                              $$
                                   \dfrac{ 1}{
                  \Big(\cos s\Big)^{2k_1+2}
                  \Big( \sin s\Big)^{2k_2}
                  }
                              $$
                              on $K$
                              when $C\rightarrow 0$.
                              The downward arrow can be seen by the monotonicity of $C \mapsto s_C$ for $C\geq 0$ induced from its injectivity.
                              \end{proof}

 \subsection{Range of function \boldmath$R(C, \tilde C)$.}    
 
 {\ }
 
     Recall \ \ $J_1^C(\tilde C) 
     ={
                              \displaystyle         \mathlarger{\int}_{\Omega_{C, \tilde C}} 
                                     \dfrac{\tan s}{  \sqrt{  {\tilde C  
                 \Big(\cos s\Big)^{2k_1+2}
                  \Big( \sin s\Big)^{2k_2+2}
                  \,
                   -
                   {
                      1-(C^2-1)\cos^2 s
                   }
            }
            }
 }\, ds
}
$
$$\ \ \ J_2^C(\tilde C) 
     ={
                                          \displaystyle
                                          \mathlarger{\int}_{\Omega_{C, \tilde C}} 
                                    \dfrac{\cot s}{    \sqrt{ {\tilde C  
                 \Big(\cos s\Big)^{2k_1+2}
                  \Big( \sin s\Big)^{2k_2+2}
                  \,
                   -
                   {
                      1-(C^2-1)\cos^2 s
                   }
            }
            }
 }\, ds
} 
$$
and
set $R=R(C, \tilde C)={CJ_2^C(\tilde C)}\big/{J_1^C(\tilde C)}$
and
 $
           \Omega_+=
          \big\{
          (C, \tilde C)
        \,\vert\,
        C> 0,
           \tilde C>P_C(s_C)
               \big\}
          $.
     We obtain the following.
\begin{lem}\label{AL3}
Assume that $k_1,k_2\geq 1$ and $0<C<\infty$
     (i.e., to generate a doubly spiral minimal product).
     Then the value range of $R$ on $\Omega_+$ is $(0, \infty)$.
  \end{lem}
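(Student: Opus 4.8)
The plan is to show that $R$ takes only positive values, that its range is connected, and that this range has infimum $0$ and supremum $+\infty$; these three facts together force the range to be exactly $(0,\infty)$. First I would note $R>0$ on all of $\Omega_+$: there $C>0$ and $\Omega_{C,\tilde C}$ is a nonempty open interval, so $J_1^C(\tilde C)$ and $J_2^C(\tilde C)$ are integrals of strictly positive integrands and hence positive, whence $R=CJ_2^C/J_1^C>0$. Next, by Claim A in the proof of Theorem \ref{CC} the function $R$ is analytic on $\Omega_+$, and $\Omega_+=\{(C,\tilde C):C>0,\ \tilde C>P_C(s_C)\}$ is connected (its vertical slices are intervals and $C\mapsto P_C(s_C)$ is continuous). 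So the range of $R$ is a connected subset of $(0,\infty)$, and it remains to prove $\inf_{\Omega_+}R=0$ and $\sup_{\Omega_+}R=+\infty$.

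The crucial step is to analyse $R(C,\tilde C)$ as $\tilde C\downarrow P_C(s_C)$ with $C$ fixed. By Lemma \ref{AL1}, $P_C$ has a unique interior minimum at $s_C$ and blows up at both ends of $(0,\frac{\pi}{2})$, so $\Omega_{C,\tilde C}=\{s\in(0,\frac{\pi}{2}):\tilde C>P_C(s)\}$ is an interval that collapses to the single point $s_C$ as $\tilde C\downarrow P_C(s_C)$. From \eqref{circledno} and \eqref{dotsss} one has the pointwise relation $\dot s_2(s)=C\cot^2 s\,\dot s_1(s)$, hence
\begin{equation*}
R(C,\tilde C)=\frac{\displaystyle\int_{\Omega_{C,\tilde C}}\dot s_2\,ds}{\displaystyle\int_{\Omega_{C,\tilde C}}\dot s_1\,ds}=\frac{\displaystyle\int_{\Omega_{C,\tilde C}}C\cot^2 s\,\dot s_1(s)\,ds}{\displaystyle\int_{\Omega_{C,\tilde C}}\dot s_1(s)\,ds},
\end{equation*}
i.e.\ $R(C,\tilde C)$ is the average of the continuous function $s\mapsto C\cot^2 s$ against the positive weight $\dot s_1(s)\,ds$ over $\Omega_{C,\tilde C}$. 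Since the domain shrinks to $s_C$, I would conclude $\lim_{\tilde C\downarrow P_C(s_C)}R(C,\tilde C)=C\cot^2 s_C$. (For completeness one can also show $\lim_{\tilde C\to\infty}R(C,\tilde C)=\frac{k_1+1}{k_2+1}$ by concentrating both integrals near $s=0$ and $s=\frac{\pi}{2}$ and using $\int_1^\infty\frac{dv}{v\sqrt{v^{2k+2}-1}}=\frac{\pi}{2(k+1)}$, but this is not needed below.)

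Finally I would invoke Lemma \ref{AL2}: on $C>0$ the map $C\mapsto s_C$ is injective, and since $\tan^2 s_C\downarrow\frac{k_2}{k_1+1}$ as $C\to0$ and $\tan^2 s_C\uparrow\frac{k_2+1}{k_1}$ as $C\to\infty$, the quantity $\tan^2 s_C$ is strictly monotone, so $\cot^2 s_C\in\bigl(\frac{k_1}{k_2+1},\frac{k_1+1}{k_2}\bigr)$ for every $C>0$; consequently $\frac{k_1C}{k_2+1}<C\cot^2 s_C<\frac{(k_1+1)C}{k_2}$. Combined with the limit of the previous paragraph: given any $M>0$, choose $C$ with $\frac{k_1C}{k_2+1}>M$ and then $\tilde C$ close enough to $P_C(s_C)$ so that $R(C,\tilde C)>M$; thus $\sup_{\Omega_+}R=+\infty$. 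Symmetrically, letting $C\to0$ and taking $\tilde C$ near $P_C(s_C)$ produces values of $R$ tending to $0$, so $\inf_{\Omega_+}R=0$. Hence the range of $R$ on $\Omega_+$ equals $(0,\infty)$. The only step that needs genuine care is the claimed limit $R(C,\tilde C)\to C\cot^2 s_C$, i.e.\ that the blow-up of $\dot s_1$ at $\partial\Omega_{C,\tilde C}$ does not spoil the averaging; but this follows at once from the elementary estimate $|R(C,\tilde C)-C\cot^2 s_C|\le\sup_{s\in\Omega_{C,\tilde C}}|C\cot^2 s-C\cot^2 s_C|$, which tends to $0$ as the domain collapses, so no delicate analysis is actually required.
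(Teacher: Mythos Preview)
Your proof is correct and follows essentially the same approach as the paper: both hinge on the boundary limit $R(C,\tilde C)\to C\cot^2 s_C$ as $\tilde C\downarrow P_C(s_C)$ together with Lemma~\ref{AL2}'s bounds $\cot^2 s_C\in\bigl(\tfrac{k_1}{k_2+1},\tfrac{k_1+1}{k_2}\bigr)$, so that $C\cot^2 s_C$ sweeps out $(0,\infty)$ as $C$ varies. You make explicit two ingredients the paper leaves tacit---the connectedness/continuity argument ensuring the range is an interval, and the clean averaging estimate $|R-C\cot^2 s_C|\le\sup_{\Omega_{C,\tilde C}}|C\cot^2 s-C\cot^2 s_C|$ justifying the limit---which is a helpful clarification rather than a different route.
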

                                  \begin{proof}
                                         When $\tilde C\downarrow P_C(s_C)$,
                                          value of $R$
                                           accumulates 
                                           to
                                           $C\cot^2 s_C$.
                                     According to the analysis in 
                                           Lemma \ref{AL2},
                                         we know that 
                                           $\tan^2s\in \Big(\frac{k_2}{k_1+1}, \frac{k_2+1}{k_1}\Big)$
                                           with
                                                  $\left[\frac{k_2}{k_1+1}, \frac{k_2+1}{k_1}\right]\Subset \left(0,\infty\right)$.
                                        Hence, 
                                        the value range of  $R$ on $\Omega_+$%
                                                is $(0,+\infty)$.    
\end{proof}

    
\subsection{Basic domains without repetition in \boldmath$\Omega_+$.} 
   Our next lemma states that the basic domain $\Omega_{C, \tilde C}$ never appears repeatedly in $\overline{\,\Omega_+}$.
   \begin{lem}\label{Endpts}
   For any fixed $k_1,k_2\geq 0$,
   the basic domain $\Omega_{C, \tilde C}=\big(z^{C, \tilde C}_L, \,    z^{C, \tilde C}_R \big)\subset \big(0,\frac{\pi}{2}\big)$ 
   is different from those corresponding to others in $\overline{\,\Omega_+}\sim \p\Omega$.
   \end{lem}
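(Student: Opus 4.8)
The plan is to prove that the interval-valued map $(C,\tilde C)\mapsto \Omega_{C,\tilde C}=\bigl(z^{C,\tilde C}_L,\,z^{C,\tilde C}_R\bigr)$ is injective on $\overline{\,\Omega_+}\setminus\p\Omega$. First I would recall from Lemma~\ref{AL1} (and the remark following it for the degenerate case $k_2=C=0$) that $\Omega_{C,\tilde C}$ is precisely the sublevel set $\{s\in(0,\tfrac\pi2):\tilde C>P_C(s)\}$, that it is a single open interval, and that on $\overline{\,\Omega_+}\setminus\p\Omega$ the parameter $\tilde C$ strictly exceeds $\min P_C$, so that $z_L<z_R$. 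Since $P_C(s)\to\infty$ as $s\to\tfrac\pi2$ one always has $z_R<\tfrac\pi2$, and since $P_C(s)\to\infty$ also as $s\to 0$ whenever $C\ne 0$ or $k_2>0$, the left endpoint $z_L$ can vanish only in the degenerate case $C=0,\ k_2=0$. Finally, at any endpoint lying in the open interval $(0,\tfrac\pi2)$, continuity of $P_C$ forces $P_C=\tilde C$ there.

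Suppose then that $(C_1,\tilde C_1)$ and $(C_2,\tilde C_2)$ in $\overline{\,\Omega_+}\setminus\p\Omega$ produce the \emph{same} interval $(z_L,z_R)$, and consider first the generic case $z_L>0$. Writing $D(s)=(\cos s)^{2k_1+2}(\sin s)^{2k_2+2}>0$, the endpoint conditions $P_{C_i}(z_L)=P_{C_i}(z_R)=\tilde C_i$ give, after clearing denominators in $P_{C_i}(z_L)=P_{C_i}(z_R)$,
\[
 D(z_R)-D(z_L)=(C_i^2-1)\,Q,\qquad Q:=\cos^2 z_R\,D(z_L)-\cos^2 z_L\,D(z_R),
\]
for $i=1,2$, where $Q$ depends only on $z_L,z_R$. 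If $Q\ne 0$, then $C_1^2-1=C_2^2-1$, hence $C_1=C_2$ (both are nonnegative on $\overline{\,\Omega_+}$), and then $\tilde C_1=P_{C_1}(z_L)=P_{C_2}(z_L)=\tilde C_2$. If instead $Q=0$, the displayed identity forces $D(z_L)=D(z_R)$, and then $Q=0$ reads $\cos^2 z_R\,D(z_L)=\cos^2 z_L\,D(z_R)$, whence $\cos^2 z_L=\cos^2 z_R$ and so $z_L=z_R$, contradicting nondegeneracy. Either way $(C_1,\tilde C_1)=(C_2,\tilde C_2)$.

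It remains to handle the case in which the common interval has left endpoint $z_L=0$; by the first paragraph this forces $k_2=0$ and $C_1=C_2=0$, and then the single remaining endpoint relation $\tilde C_i=(\cos z_R)^{-(2k_1+2)}$ (i.e.\ $P_0(z_R)=\tilde C_i$) determines $\tilde C_i$ from $z_R$, so $\tilde C_1=\tilde C_2$ again. Combining the two cases yields injectivity, which is the assertion of the lemma. I expect the only delicate point to be the subcase $Q=0$: there one must exploit \emph{both} consequences of $Q=0$ — the identity $D(z_L)=D(z_R)$ and the relation $\cos^2 z_R\,D(z_L)=\cos^2 z_L\,D(z_R)$ — to force the two endpoints to collide; the rest is bookkeeping about which endpoints lie in $(0,\tfrac\pi2)$ versus on its boundary.
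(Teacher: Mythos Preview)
Your proof is correct and in fact more direct than the paper's. Both arguments reduce to showing that the condition $P_C(z_L)=P_C(z_R)$, with $z_L<z_R$ fixed in $(0,\tfrac\pi2)$, determines $C^2$ uniquely. The paper does this by introducing the function $\Phi(C)=\bigl[P_C(z_L)-P_C(z_R)\bigr]D(z_R)$, computing $\frac{d\Phi}{dC}$, and arguing via an auxiliary contradiction that this derivative has a fixed sign for $C>0$, so that $\Phi$ is strictly monotone. You instead observe directly that $P_C(z_L)=P_C(z_R)$ is an affine equation in $C^2$, namely $D(z_R)-D(z_L)=(C^2-1)Q$, so two solutions with the same $(z_L,z_R)$ force either $C_1^2=C_2^2$ (when $Q\ne0$) or the degenerate collapse $z_L=z_R$ (when $Q=0$). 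This exploits the linearity in $C^2$ at the outset and avoids the derivative computation entirely; the paper's monotonicity argument is really just a disguised way of saying the same affine equation has at most one root.
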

                     
     \begin{proof}
     For simpler notations,
     let $(\alpha_1, \beta_1)=\Omega_{C_1, \tilde C_1}$ and $(\alpha_2, \beta_2)=\Omega_{C_2, \tilde C_2}$ 
     where $(C_1,\tilde C_1)$, $(C_2, \tilde C_2)\in \overline{\,\Omega_+}\sim \p\Omega$.
     If $a_2=a_1=0$, 
               then by Figure \ref{fig:2c} we have $k_2=0$, $C_2=C_1=0$, 
               $\cos^{2k_1+2}\beta_1=\tilde C_1^{-1}$
                                  and $\cos^{2k_1+2}\beta_2=\tilde C_2^{-1}$.
                                  Thus the statement simply follows.
                                  
                                  {\ }
                                  
     From now on, we suppose that $(\alpha_1, \beta_1)=(\alpha_2, \beta_2)$ and $\alpha_2=\alpha_1>0$.
     Set $a_1=\cos \alpha_1$, $b_1=\cos \beta_1$, $a_2=\cos \alpha_2$ and $b_2=\cos \beta_2$.
     On the one hand, by \eqref{4.1}, 
     for $\alpha_2=\alpha_1$,
     we must have 
     $$ \tilde C_2
       = \frac{1+(C_2^2-1)a^2_1}{a_1^{2k_1+2} (1-a_1^2)^{k_2+1}}\, ,
      $$
     and further, by $\tilde C_2 b_2^{2k_1+2} (1-b_2^2)^{k_2+1}=1+(C_2^2-1)b_2^2$ and $b_2=b_1$, 
           have
\begin{equation}\label{C1tilde} 
                   \tilde C_2
                         =\frac{1+(C_2^2-1)a^2_1}{a_1^{2k_1+2} (1-a_1^2)^{k_2+1}}
                           = \frac{1+(C_2^2-1)b^2_2}{b_2^{2k_1+2} (1-b_2^2)^{k_2+1}}
                            = \frac{1+(C_2^2-1)b^2_1}{b_1^{2k_1+2} (1-b_1^2)^{k_2+1}}
                         . 
\end{equation}
           
           On the other hand,  we consider 
\begin{equation}\label{Phi}
        \Phi(C)
           =
          \frac{1+(C^2-1)a^2_1}{a_1^{2k_1+2} (1-a_1^2)^{k_2+1}}
          \cdot  b_1^{2k_1+2} (1-b_1^2)^{k_2+1}-1-(C^2-1)b_1^2 ,
\end{equation}
          which vanishes at $C=C_2$ and leads to
\begin{equation}\label{Phider}
          \frac{1}{ b_1^{2k_1+2} (1-b_1^2)^{k_2+1}}
          \cdot
          \frac{d\Phi}{dC}
          =
                     \frac{2C a^2_1}{a_1^{2k_1+2} (1-a_1^2)^{k_2+1}}
                     -
                       \frac{2C b^2_1}{ b_1^{2k_1+2} (1-b_1^2)^{k_2+1}}.
\end{equation}
                     Note that \eqref{Phider} indicates that $  \frac{d\Phi}{dC}\neq 0$ unless $C=0$.
                     The reason is that, otherwise,
                     $$
               \frac{a^2_1}{a_1^{2k_1+2} (1-a_1^2)^{k_2+1}}
               =
               \frac{b^2_1}{ b_1^{2k_1+2} (1-b_1^2)^{k_2+1}} ,
                     $$
                     which  by \eqref{C1tilde} 
                     arrives at a contradiction that
                     $$
                     \frac{1+(C_2^2-1)a^2_1}{a_1^{2} }
                     =
                      \frac{1+(C_2^2-1)b^2_1}{b_1^{2} } 
                      \ \ \ \ \ \text{(impossible as } a_1<b_1).
                     $$
                     Therefore, we have shown that
                     $  \frac{d\Phi}{dC}\neq 0$ has a definite sign whenever $C>0$.
                   Namely, 
                     $\Phi$ is strictly increasing or strictly decreasing when $C>0$.
                     Since
                         $$ \tilde C_1
       = \frac{1+(C_1^2-1)a^2_1}{a_1^{2k_1+2} (1-a_1^2)^{k_2+1}}$$
       and 
       $$\tilde C_1 b_1^{2k_1+2} (1-b_1^2)^{k_2+1}-1-(C_1^2-1)b_1^2=0 ,$$
       we have $\Phi(C_1)=0$.
                     So, due to the relation $\Phi(C_2)=0=\Phi(C_1)$, 
                      either strict monotonicity of $\Phi$ (when $C>0$) would assert that
                     $b_2=b_1$ if and only if $C_2=C_1(\geq 0)$.
                     
                     With $C_2=C_1$, it follows that 
                     $(a_2, b_2)=(a_1, b_1)$ if and only if
                     $\tilde C_2=\tilde C_1$.
                     Hence, we finish the proof.
     \end{proof}

\subsection{A sign lemma for the case of \boldmath$k_1=k_2=0$.}
When $k_1=k_2=0$,
 signs of numerators of the fractions inside arctan in \eqref{s1anti} and \eqref{s2anti} at the ending points of $\Omega_{C,\tilde C}$ are 
             important to figure out changes of $s_1$ and $s_2$ over the basic domain $\Omega_{C,\tilde C}$.
The following sign lemma is simple and useful. 

     \begin{lem}\label{s12sign}
     When $C>0$,  each numerator of the fractions inside arctan of \eqref{s1anti} and \eqref{s2anti}
       has opposite signs  at the ending points of $\Omega_{C,\tilde C}$.
     \end{lem}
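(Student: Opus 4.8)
The plan is to push everything through a single quadratic substitution. For $k_1=k_2=0$ the basic domain $\Omega_{C,\tilde C}=(z_L,z_R)$ is, consistently with \eqref{0Cfun}, the interval on which
\[
\tilde C\,\cos^2 s\,\sin^2 s-1-(C^2-1)\cos^2 s>0 ,
\]
so this expression (the radicand appearing in \eqref{s1anti} and \eqref{s2anti}) vanishes at $s=z_L$ and $s=z_R$. Setting $u=\cos^2 s$ and $A:=1+\tilde C-C^2$, the vanishing condition becomes $\tilde C u(1-u)-1-(C^2-1)u=0$, i.e. $Q(u):=\tilde C u^2-Au+1=0$. Hence $u_L:=\cos^2 z_L$ and $u_R:=\cos^2 z_R$ are exactly the two roots of $Q$; since $z_L<z_R$ they are distinct, so Vieta gives $u_L+u_R=A/\tilde C$, $u_Lu_R=1/\tilde C$, and the discriminant $A^2-4\tilde C$ is strictly positive (recall $\tilde C>0$ is required for an admissible $\Omega_{C,\tilde C}\neq\emptyset$, cf. Figure \ref{fig:2a}).

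Next I would rewrite the two numerators in terms of $A$: using $1+\tilde C-C^2=A$, $-1+\tilde C-C^2=A-2$, $1-\tilde C-C^2=A-2\tilde C$, the numerator in \eqref{s1anti} is $N_1(u)=Au-2$ and the numerator in \eqref{s2anti} is $N_2(u)=(A-2)+(A-2\tilde C)u$. To prove each changes sign across $\{z_L,z_R\}$ it suffices to show $N_i(u_L)N_i(u_R)<0$. By Vieta, $N_1(u_L)N_1(u_R)=A^2u_Lu_R-2A(u_L+u_R)+4=A^2/\tilde C-2A^2/\tilde C+4=4-A^2/\tilde C=-(A^2-4\tilde C)/\tilde C<0$. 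For $N_2$, writing $N_2=p+qu$ with $p=A-2$, $q=A-2\tilde C$, one has $\tilde C\,N_2(u_L)N_2(u_R)=p^2\tilde C+pqA+q^2$; expanding and collecting powers of $A$ (the $A$-linear terms cancel) gives $A^3-(\tilde C+1)A^2-4\tilde C A+4\tilde C(\tilde C+1)=(A^2-4\tilde C)(A-\tilde C-1)$, and since $A-\tilde C-1=-C^2$ this is $-C^2(A^2-4\tilde C)$. Thus $N_2(u_L)N_2(u_R)=-C^2(A^2-4\tilde C)/\tilde C<0$ because $C>0$ and $A^2-4\tilde C>0$. In both cases the product is strictly negative, so in particular neither value vanishes and the two endpoint values have opposite signs, which is the assertion.

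There is no real obstacle here: the argument is a two–line Vieta computation once the right quadratic is identified. The only points that need care are (i) taking the expression defining $\Omega_{C,\tilde C}$ in the form consistent with $P_C(s)=\frac{1+(C^2-1)\cos^2 s}{\cos^2 s\,\sin^2 s}$, so that the endpoint equation really is $\tilde C u^2-Au+1=0$ with $A=1+\tilde C-C^2$, and (ii) checking the factorisation $p^2\tilde C+pqA+q^2=(A^2-4\tilde C)(A-\tilde C-1)$ together with the identity $A-\tilde C-1=-C^2$. Everything else follows from positivity of $\tilde C$ and of the discriminant (the latter being forced by the existence of two distinct endpoints $z_L<z_R$). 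As a remark one can add that this sign alternation is precisely what makes $\arctan$ jump between $+\tfrac\pi2$ and $-\tfrac\pi2$ at the two ends of $\Omega_{C,\tilde C}$, yielding the exact total increments $\int_{\Omega_{C,\tilde C}}\dot s_1\,ds=\tfrac\pi2$ and $\int_{\Omega_{C,\tilde C}}\dot s_2\,ds=\mathrm{sign}(C)\cdot\tfrac\pi2$ used in \eqref{00s12}.
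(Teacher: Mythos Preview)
Your proof is correct. Both you and the paper reduce to the same quadratic $\tilde C u^2 - Au + 1 = 0$ in $u=\cos^2 s$ with $A=1+\tilde C-C^2$ (the paper writes $F$ for your $A$ and $D=A^2-4\tilde C$ for the discriminant), but the strategies diverge from there. The paper solves explicitly for the endpoint values: using $A\cos^2 s = 1 + \tilde C\cos^4 s$ at $\partial\Omega_{C,\tilde C}$, it rewrites the first numerator as $\tilde C\cos^4 s - 1 = \tfrac{\sqrt D}{2\tilde C}\big(\sqrt D \pm A\big)$ and then argues $\sqrt D < A$ (from $\tilde C>0$) to get opposite signs; for the second numerator it passes to $\sin^2 s$ and repeats the same trick, incidentally deducing $A>0$ and $1-\tilde C-C^2<0$ along the way. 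Your route via Vieta is more economical: by showing $N_i(u_L)N_i(u_R)<0$ directly you bypass the explicit root formula, the inequality $\sqrt D<A$, and the separate treatment of the two numerators in dual variables. The only ingredients you need are $\tilde C>0$, $C\neq 0$, and the positivity of the discriminant, all of which are immediate from $\Omega_{C,\tilde C}$ being a nondegenerate interval. The factorisation $p^2\tilde C+pqA+q^2=(A^2-4\tilde C)(A-\tilde C-1)$ and the identity $A-\tilde C-1=-C^2$ both check out.
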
   
     
     \begin{proof}
     Recall that $\Omega_{C,\tilde C}=\big\{s\in (0,\frac{\pi}{2})\, \big\vert\, \tilde C \cos^2s\, (1-\cos^2s)-1-(C^2-1)\cos^2s>0\big\}$.
     By considering
     $
     -\tilde C \cos^4s+(1+\tilde C- C^2)\cos^2 s-1=0
     $,
      at ending points of $\Omega_{C,\tilde C}$
     we have
     $2\tilde C \cos^2s=
               F\pm \sqrt{D}
                $
                where
               $F= 1+\tilde C- C^2>0$
               and
                 $D=(1+\tilde C- C^2)^2-4\tilde C>0$
                 according to the nonempty of $\Omega_{C,\tilde C}$.
             By
             $
               F\cos^2 s=1+\tilde C \cos^4s,
            $
            the numerator of \eqref{s1anti} at ending points of $\Omega_{C,\tilde C}$
            becomes
\begin{equation}\label{ends}
            \tilde C \cos^4s-1
            =\frac{1}{2\tilde C}
            \Big[
            D \pm 
            F\sqrt{D}
            \Big]
            =
            \frac{\sqrt{D}\, }{\,2\tilde C}
            \Big[
            \sqrt D \pm 
            F
            \Big].
\end{equation}
           Since $\tilde C>0$, it follows that $\sqrt D < F$.
           So, the numerator of \eqref{s1anti}, 
                which is \eqref{ends} at ending points of $\Omega_{C,\tilde C}$, 
                has opposite signs in $\p\Omega_{C,\tilde C}$.

Since the fraction inside arctan of \eqref{s2anti} 
can be rewritten as
           $$
    \frac{-2C^2+(-1+\tilde C+ C^2)\sin^2 s}{2C\sqrt{-C^2+(-1+\tilde C+ C^2)\sin^2 s- \tilde C \sin^4 s}},
     $$
the same argument results in the same conclusion on opposite signs of the numerator in $\p\Omega_{C,\tilde C}$.
In particular, $-1+\tilde C+C^2>0$.
     \end{proof}

             {\ }

\section{Appendix for $C=0$}
We shall take conventions and notations about singly spiral minimal products from \S \ref{P} 
               and 
                      discuss some quantitive properties of them.
   \begin{lem}\label{B1}
  With $s_0=s_{k_1,k_2}$ and $m_0=P_0(s_0)$ in \S \ref{P}, 
  the integral
     \begin{equation}\label{B1eq}
      I({\tilde C})= 
       \mathlarger{\int}_{\Omega_{0, \tilde C}}
       \,\,
       \dfrac{1}
       {\cos s
       \sqrt {\tilde C\, \Big(\cos s\Big)^{2k_1+2}
         \Big( \sin s\Big)^{2k_2}-1}
         }\, ds \,
\end{equation}
has the following limits, when $k_1\geq 0$ and $k_2>0$,
\begin{eqnarray}
                         \lim_{\tilde C \uparrow \infty}
                         I({\tilde C})
                         &=&
                         \ \ \frac{\pi}{2(k_1+1)},
                         \label{B101}
                         \\
                            \lim_{\tilde C \downarrow m_{0}}I({\tilde C})
                            &=&
                            \frac{\pi}{\sqrt{2(k_1+1)}}\, ;
                          \label{B102}
 \end{eqnarray}
            and, when $k_1\geq0$ and $k_2= 0$, with $\Omega_{0, \tilde C}=\big(0,  \arccos{(\tilde C^{-\frac{1}{2k_1+2}})}\big)$,
\begin{eqnarray}
                                  \lim_{\tilde C \uparrow \infty}\,\,\, I({\tilde C})
                                  &=&
                                  \ \frac{\pi}{2(k_1+1)},
                                  \label{B103}
                                  \\
                                 \lim_{\tilde C \downarrow 1}\,\,\, I({\tilde C})
                                 &=&
                                 \frac{\pi}{2\sqrt{(k_1+1)}}.
                                 \label{B104}
\end{eqnarray}
 \end{lem}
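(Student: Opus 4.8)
The plan is to obtain all four limits by \emph{localizing} $I(\tilde C)$ near the point of $\Omega_{0,\tilde C}$ that carries all the mass and then evaluating an explicit model integral after a linear rescaling. Throughout write $Q(s)=(\cos s)^{2k_1+2}(\sin s)^{2k_2}=1/P_0(s)$, so that the quantity under the square root in \eqref{B1eq} is $\tilde C\,Q(s)-1$ and $\Omega_{0,\tilde C}$ is the set where this is positive: by Lemma \ref{AL1} (with $C=0$, $k_2>0$) its endpoints $z^{0,\tilde C}_L<z^{0,\tilde C}_R$ are the two zeros of $\tilde C Q(s)-1$, while for $k_2=0$ one has $\Omega_{0,\tilde C}=\bigl(0,\arccos(\tilde C^{-1/(2k_1+2)})\bigr)$. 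Two elementary identities do the bookkeeping: $\int_1^\infty \frac{dw}{w\sqrt{w^{2n}-1}}=\frac{\pi}{2n}$, proved by the substitution $t=w^{-n}$, and $\int_{-1}^{1}\frac{dt}{\sqrt{1-t^2}}=\pi$.

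For \eqref{B101} and \eqref{B103}, where $\tilde C\uparrow\infty$, fix a small $\epsilon>0$ and split $\Omega_{0,\tilde C}$ into a bulk part $[\epsilon,\tfrac{\pi}{2}-\epsilon]$, a left part near $0$, and a right part near $\tfrac\pi2$. On the bulk $Q\ge c_\epsilon>0$, hence $\tilde C Q-1\ge c_\epsilon\tilde C-1\to\infty$ uniformly and the bulk contributes $O(\tilde C^{-1/2})$. On the left part, $Q(s)\asymp s^{2k_2}$ and $z^{0,\tilde C}_L\to 0$, so the rescaling $s=z^{0,\tilde C}_L w$ bounds that contribution by a constant times $z^{0,\tilde C}_L\int_1^\infty\frac{dw}{\sqrt{w^{2k_2}-1}}\to 0$ (the integral converges since $k_2\ge1$; when $k_2=0$ the left end is $s=0$ and the piece is $O(\tilde C^{-1/2})$ directly). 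On the right part set $u=\tfrac\pi2-s$, let $u_R=\tfrac\pi2-z^{0,\tilde C}_R\to0$, and rescale $u=u_R w$; using $\sin(u_R w)\sim u_R w$ and $\cos^{2k_2}(u_R w)\to1$ one checks the integrand converges pointwise to $\frac{1}{w\sqrt{w^{2(k_1+1)}-1}}$ with an integrable majorant, so dominated convergence gives that part $\to\int_1^\infty\frac{dw}{w\sqrt{w^{2(k_1+1)}-1}}=\frac{\pi}{2(k_1+1)}$. Letting $\epsilon\downarrow0$ yields \eqref{B101} and \eqref{B103}.

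For \eqref{B102}, where $\tilde C\downarrow m_0$ and $k_2>0$, the domain $\Omega_{0,\tilde C}$ shrinks to the unique maximum $s_0$ of $Q$; expand $Q(s)=Q(s_0)+\tfrac12Q''(s_0)(s-s_0)^2+o((s-s_0)^2)$ with $Q''(s_0)<0$. Setting $\delta=\tilde C Q(s_0)-1\downarrow0$ and rescaling $s-s_0=\sqrt{-2\delta/(\tilde C Q''(s_0))}\,t$ with $t\in(-1,1)$, the integrand converges (with $\cos s\to\cos s_0$) to $\frac{1}{\cos s_0}\cdot\frac{1}{\sqrt{-\tfrac{m_0}{2}Q''(s_0)}}\cdot\frac{1}{\sqrt{1-t^2}}$, so $I(\tilde C)\to\frac{\pi}{\cos s_0\sqrt{-\tfrac{m_0}{2}Q''(s_0)}}$. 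To identify this constant, differentiate $\log Q$: $Q'/Q=-(2k_1+2)\tan s+2k_2\cot s$, so $Q'(s_0)=0$ forces $\tan^2 s_0=\tfrac{k_2}{k_1+1}$; then $Q''(s_0)=Q(s_0)\bigl(Q'/Q\bigr)'(s_0)=-Q(s_0)\bigl[(2k_1+2)\sec^2 s_0+2k_2\csc^2 s_0\bigr]$, and since $Q(s_0)=1/m_0$ and $\cot^2 s_0=\tfrac{k_1+1}{k_2}$ one gets $-m_0Q''(s_0)\cos^2 s_0=(2k_1+2)+2k_2\cot^2 s_0=4(k_1+1)$, hence $\cos s_0\sqrt{-\tfrac{m_0}{2}Q''(s_0)}=\sqrt{2(k_1+1)}$ and the limit is $\pi/\sqrt{2(k_1+1)}$. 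For \eqref{B104} ($k_2=0$, $\tilde C\downarrow1$) the same localization applies at $s=0$: from $\cos^{2k_1+2}s=1-(k_1+1)s^2+o(s^2)$ the radicand is $(\tilde C-1)-\tilde C(k_1+1)s^2+o(\cdot)$, so rescaling $s=\sqrt{(\tilde C-1)/(\tilde C(k_1+1))}\,t$ on $\bigl(0,\arccos(\tilde C^{-1/(2k_1+2)})\bigr)$ sends $I(\tilde C)$ to $\frac{1}{\sqrt{k_1+1}}\int_0^1\frac{dt}{\sqrt{1-t^2}}=\frac{\pi}{2\sqrt{k_1+1}}$.

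The main obstacle is making the localization rigorous rather than formal: for each regime one must supply uniform bounds justifying dominated convergence after the rescaling — in particular controlling the Taylor remainder in $Q(s)\approx Q(s_0)+\tfrac12Q''(s_0)(s-s_0)^2$ uniformly over the shrinking interval, so that the rescaled radicand stays bounded below by a multiple of $1-t^2$, and likewise dominating the square-root singularity at the moving endpoints ($w=1$, respectively $t=\pm1$) by an integrable function. Once these estimates are in place, the two model integrals and the trigonometric identities for $s_0$ and $Q''(s_0)$ finish the computation routinely.
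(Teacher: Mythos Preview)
Your argument is essentially the same as the paper's: split the integral into bulk/left/right pieces for $\tilde C\uparrow\infty$, and localize by Taylor expansion of $Q$ at $s_0$ (respectively at $0$) for $\tilde C\downarrow m_0$ (respectively $\tilde C\downarrow1$). The paper's computation of $Q''(s_0)=-4Q(s_0)(k_1+k_2+1)$ and of $\cos^2 s_0=(k_1+1)/(k_1+k_2+1)$ is exactly your identity $-m_0Q''(s_0)\cos^2 s_0=4(k_1+1)$, and for the right endpoint as $\tilde C\uparrow\infty$ the paper uses the substitution $\Delta=\tilde C(\sin s)^{2k_1+2}-1$ in place of your $u=u_R w$, arriving at the same $\arctan$-type model integral.

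One small correction: in your treatment of the left piece for $\tilde C\uparrow\infty$ you claim $\int_1^\infty\frac{dw}{\sqrt{w^{2k_2}-1}}$ converges for $k_2\ge1$, but for $k_2=1$ this integral diverges logarithmically. The fix is immediate: the actual contribution is $z^{0,\tilde C}_L\int_1^{\epsilon/z^{0,\tilde C}_L}\frac{dw}{\sqrt{w^{2k_2}-1}}$, which for $k_2=1$ is $O\bigl(z^{0,\tilde C}_L\log(1/z^{0,\tilde C}_L)\bigr)\to0$; for $k_2\ge2$ your bound works as written. The paper handles this endpoint differently, observing that the slope $K$ of $\tilde C Q-1$ at $z^{0,\tilde C}_L$ blows up (since $K\sim 2k_2\cot z^{0,\tilde C}_L$), which makes the square-root singularity contribute $O(\sqrt{\delta s/K})\to0$.
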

                 \begin{proof}
                 Note that, with the integrand denoted by $\digamma$ and the characteristic function by $\chi$,  
\begin{equation}\label{B01}
                 \lim_{\tilde C \uparrow \infty}
                 I({\tilde C})\,
                 =
                 \lim_{\tilde C \uparrow \infty} \mathlarger{\int}_{\text{around 0}} \chi_{{}_{\Omega_{-1,\tilde C}}}\, \digamma\, ds
                  +
                 \lim_{\tilde C \uparrow \infty} \mathlarger{\int}_{\text{around }\frac{\pi}{2}} \chi_{{}_{\Omega_{-1,\tilde C}}}\, \digamma\, ds
                  +
                  \lim_{\tilde C \uparrow \infty} \mathlarger{\int}_{\text{ middle part}} 
                  ,
\end{equation}
                  and the last term disappears in the limit.
                  Here $\chi_{\Omega_{-1,\tilde C}} F$ means the trivial extension of $F$ 
                       to domain $(0,\frac{\pi}{2})$ by assigning value zero outside $\Omega_{-1,\tilde C}$.
                  Set 
                  $T=\left(\cos s\right)^{2k_1+2}
         \left( \sin s\right)^{2k_2}$.
                  Around left ending point $z^{0, \tilde C}_L$,
                  the slope of $\tilde C T-1$
                  is given by
                  $K=-(2k_1+2)\tan s+2k_2\cot s$ which blows up when $k_2>0$ and $\tilde C\uparrow \infty$.
                  Note that, around $z^{0, \tilde C}_L$, 
                  the integral 
                      over $\big(z^{0, \tilde C}_L, z^{0, \tilde C}_L+\delta s\big)$
                       is approximately $\frac{2}{\cos z^{0, \tilde C}_L}\sqrt{\delta s/K}$ (plus higher order terms)
                       for positive infinitesimal amount $\delta s$ of variation of $s$.
                  Hence the first term of \eqref{B01} also vanishes in the limit.
                  Whereas, the second term around $\frac{\pi}{2}$
                  limits to the following
\begin{eqnarray}
  & &    \lim_{\tilde C \uparrow \infty} \mathlarger{\int}_{\text{around }\frac{\pi}{2}}\,\,
                   \dfrac{\chi_{{}_{\tilde C\, (\cos s)^{2k_1+2}
         >1}}}
       {\cos s
       \sqrt {\tilde C\, \Big(\cos s\Big)^{2k_1+2}
         -1}
         }\, ds
     \nonumber      \\      & =&
           \lim_{\tilde C \uparrow \infty}   \mathlarger{\int}_{\text{around }0}\,\,
                   \dfrac{\chi_{{}_{\tilde C\, (\sin s)^{2k_1+2}
         >1}}}
       {\sin s
       \sqrt {\tilde C\, \Big(\sin s\Big)^{2k_1+2}
         -1}
         }\, ds
   \nonumber     
\end{eqnarray}
which shares the same limit value 
\begin{eqnarray}
                   & &
                   \lim_{\tilde C \uparrow \infty}
                    \mathlarger{\int}_{\text{around $s=0$}}\,\,
                   \dfrac{\chi_{{}_{\Delta>0}}\,d\sqrt{\Delta}}
       {(k_1+1)(\Delta+1)
         }
            \nonumber \\
         &=&
         \frac{1}{k_1+1}\arctan \sqrt{\Delta}\big|_{0}^{\infty}
            \nonumber \\
          &=& \frac{\pi}{2(k_1+1)} 
             \nonumber 
\end{eqnarray}
where $\Delta=\tilde C (\sin s)^{2k_1+2}-1$.

{\ }

      When $k_2\geq 1$, 
                       about the unique interior critical point $s_0=s_{k_1,k_2}$ of $P_0(s)$ mentioned in \S \ref{P} and proven in Lemma \ref{AL1},
                       we have
                       $\dot{T}=T\cdot \big(-(2k_1+2)\tan s+2k_2\cot s\big)=0$ at $s_{0}$,
                       namely
                       $$
                       \tan^2 s_{0}=\frac{k_2}{k_1+1}>0,
                       $$
                       and moreover,
                         $$
                       \ddot T =
                                      T\cdot \Big(-(2k_1+2)\tan s+2k_2\cot s\Big)^2 
                                         +T\cdot \left(-\frac{2k_1+2}{\cos^2 s}-\frac{2k_2}{\sin^2 s}\right)
                       $$
                       which, at $s_{0}$, has value
\begin{equation}\label{B105}
                       \ddot T\left(s_{0}\right) =
                                        -4\, T\left(s_{0}\right)\cdot \big({k_1+k_2+1}\big).
\end{equation}
                             
                       As $\tilde C\downarrow m_{0}$,
                       it follows 
                       $\tilde CT\left(s_{0}\right)\downarrow 1$
                       and
                       the part under square symbol in \eqref{B1eq}
                       is 
\begin{equation}\label{B02}
                       \text{some small positive number }
                       - 2\big({k_1+k_2+1}\big) (\delta s)^2 + o(\delta s)^2
\end{equation}                       
                  where $\delta s$ is quite small.
                  Hence, by \eqref{B02},
                  the limit of \eqref{B102} reads
                  $$
                  \frac{\pi}{\cos s_{0}\sqrt{2(k_1+k_2+1)}}
                  =
                   \frac{\pi}{\sqrt{2(k_1+1)}} .
                  $$
                  
                  When $k_2=0$,
                  the domain $\Omega_{0,\tilde C}=\big(0, \arccos{(\tilde C^{-\frac{1}{2k_1+2}})}\big)$
                  and one misses half of the integral around $s=0$
                    and $\ddot T$ loses half contribution compared with \eqref{B105}.
                    So, 
                            $$\lim_{\tilde C \downarrow 1}I({\tilde C})=\frac{\pi}{2\sqrt{(k_1+1)}} .$$
                      On the other hand, when $k_2=0$, 
                                 the part ${ \displaystyle\int}_{\text{around 0}} $ in \eqref{B01} 
                                 for $\tilde C \uparrow \infty$ also goes away
                                 as the integrand now is approximately $\frac{1}{\sqrt{\tilde C-1}}$ around $s=0$.
                                 So, \eqref{B103} follows similarly by the reasoning of  \eqref{B101}.
                 \end{proof}
       %
       %
       %
       %
       %
       %
        
     We get the following corollary as we wished for the case with $C=0$.
                          \begin{cor}\label{BC1}
                                    When $k_1+k_2\geq 1$,
                                    there are infinitely many  $\tilde C$
                                    for $I({\tilde C})\in \pi\mathbb Q$.
                          \end{cor}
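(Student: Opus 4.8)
The plan is to read off the conclusion from the boundary asymptotics of $I(\cdot)$ established in Lemma \ref{B1} together with a routine continuity/intermediate-value argument. First I would fix $k_1,k_2$ with $k_1+k_2\geq1$ and record the shape of the admissible parameter set: when $k_2>0$ this is the open half-line $\tilde C\in(m_0,\infty)$ with $m_0=P_0(s_0)$, and when $k_2=0$ (so that necessarily $k_1\geq1$) it is $\tilde C\in(1,\infty)$. On this interval $\tilde C\mapsto I(\tilde C)$ is continuous: the integrand of \eqref{B1eq} has only square-root endpoint singularities on $\Omega_{0,\tilde C}=(z^{0,\tilde C}_L,z^{0,\tilde C}_R)$ because the slope of $\tilde C(\cos s)^{2k_1+2}(\sin s)^{2k_2}-1$ is nonzero at each boundary point, so after the substitution that removes these singularities (the very change of variables used in the proof of Lemma \ref{B1}) one obtains an integral with a smooth integrand over a domain whose endpoints depend continuously — indeed real-analytically, cf.\ Claim A in the proof of Theorem \ref{CC} — on $\tilde C$.

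Next I would invoke Lemma \ref{B1} to pin down the two boundary values of $I$ and check they are distinct. When $k_2>0$ these are $\lim_{\tilde C\uparrow\infty}I(\tilde C)=\tfrac{\pi}{2(k_1+1)}$ by \eqref{B101} and $\lim_{\tilde C\downarrow m_0}I(\tilde C)=\tfrac{\pi}{\sqrt{2(k_1+1)}}$ by \eqref{B102}; when $k_2=0$ they are $\tfrac{\pi}{2(k_1+1)}$ by \eqref{B103} and $\tfrac{\pi}{2\sqrt{k_1+1}}$ by \eqref{B104}. In either case the two limits differ: $\tfrac{\pi}{2(k_1+1)}<\tfrac{\pi}{\sqrt{2(k_1+1)}}$ always holds since $2(k_1+1)>1$, and $\tfrac{\pi}{2(k_1+1)}<\tfrac{\pi}{2\sqrt{k_1+1}}$ holds precisely because $k_1\geq1$ in the $k_2=0$ case. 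Applying the intermediate value theorem to the continuous function $I$ on $(m_0,\infty)$ (resp.\ $(1,\infty)$) — picking $\tilde C$ close to the left end so that $I(\tilde C)$ is close to one limit, and $\tilde C$ large so that $I(\tilde C)$ is close to the other — shows that the image of $I$ contains the nonempty open interval $\mathscr J$ whose endpoints are those two limit values.

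Finally, since $\mathscr J$ is a nonempty open interval it contains infinitely many numbers $\pi q$ with $q\in\mathbb Q$; for each such $\pi q\in\mathscr J$ there is at least one admissible $\tilde C$ with $I(\tilde C)=\pi q\in\pi\mathbb Q$, and distinct rationals $q$ force distinct $\tilde C$, giving the asserted infinitude. Each such $\tilde C$ then produces, via the closing-up recipe \eqref{sQ} and the rotational-reflection extension of \S\ref{C=0}, a singly spiral minimal product factoring through a closed manifold. There is no genuine obstacle here beyond the continuity of $I$ on the open parameter interval together with the controlled boundary behaviour, both of which are already supplied by Lemma \ref{B1}; the only point needing a line of care is that one must stay on the open interval $(m_0,\infty)$ (resp.\ $(1,\infty)$) and use the limits merely to locate the range, rather than claiming continuity at the excluded endpoint.
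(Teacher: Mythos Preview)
Your proof is correct and follows essentially the same approach as the paper, which presents Corollary \ref{BC1} as an immediate consequence of Lemma \ref{B1}: the two endpoint limits of $I(\cdot)$ differ under the hypothesis $k_1+k_2\geq1$, so by continuity the range contains a nonempty open interval and hence infinitely many elements of $\pi\mathbb Q$. Your added justification of continuity via the square-root endpoint behaviour is a welcome bit of rigor that the paper leaves implicit.
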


{\ }

\section{Appendix for $C=-1$}
    Let us study asymptotic behaviors of $J_1(\tilde C)$ for doubly spiral minimal products with $C=-1$.
     As in \eqref{sC1}, now at the unique critical point $s_{-1}$ we have $\tan ^2 s_{-1}=\sqrt{\frac{k_2+1}{k_1+1}}$.
     Denote $\frac{1}
                   {\big(\cos s_{-1}\big)^{2k_1+2}
                            \big( \sin s_{-1}\big)^{2k_2+2}}$ by ${\tt m}_0$. 
    The counterpart of Lemma \ref{B1} is the following.

     \begin{lem}\label{C1}
  The integral
     \begin{equation}\label{C1eq}
       J_1({\tilde C})= 
       \mathlarger{\int}_{\Omega_{-1, \tilde C}}
       \,\,
       \dfrac{\tan s}
       {
       \sqrt {\tilde C\, \Big(\cos s\Big)^{2k_1+2}
         \Big( \sin s\Big)^{2k_2+2}-1}
         }\, ds \,
\end{equation}
has the following limits
\begin{eqnarray}
                         \lim_{\tilde C \uparrow \infty} J_1({\tilde C})\,
                         &=&
                      \ \ \ \ \ \ \ \ \ \ \ \   \frac{\pi}{2(k_1+1)},
                         \label{C101}
                         \\
                            \lim_{\tilde C \downarrow {\tt m}_{0}}
                             J_1({\tilde C})
                             &=&
                             \frac{\sqrt{k_2+1}\,\pi}{\sqrt{2(k_1+1)(k_1+k_2+2)}}.
                             \label{C102}
 \end{eqnarray}
Consequently, combined with \eqref{2Delta} it follows that
\begin{eqnarray}
                         \lim_{\tilde C \uparrow \infty} J_2({\tilde C})\,
                         &=&
                       \ \ \ \ \ \ \ \ \ \ \ \     \frac{\pi}{2(k_2+1)},
\label{C103}\\
                            \lim_{\tilde C \downarrow {\tt m}_{0}}
    J_2({\tilde C})
    &=&
    \frac{\sqrt{k_1+1}\,\pi}{\sqrt{2(k_2+1)(k_1+k_2+2)}}.
    \label{C104}
\end{eqnarray}
\end{lem}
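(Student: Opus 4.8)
\textbf{Proof plan for Lemma \ref{C1}.}
The plan is to compute the two limits of $J_1(\tilde C)$ by the same localization strategy used in the proof of Lemma \ref{B1}, and then to deduce the limits of $J_2(\tilde C)$ purely algebraically from the exact identity \eqref{2Delta}. First I would write $\Delta=(\cos s)^{k_1+1}(\sin s)^{k_2+1}$ and $\Lambda=\tilde C\Delta^2-1$, so that the integrand of \eqref{C1eq} is $\tan s/\sqrt{\Lambda}$ and $\Omega_{-1,\tilde C}=\{\Lambda>0\}$, which by Lemma \ref{AL1} (applied with $C=-1$) is an interval $(z_L,z_R)\Subset(0,\frac\pi2)$. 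For the limit $\tilde C\uparrow\infty$: split $\Omega_{-1,\tilde C}$ into a neighbourhood of $z_L\downarrow 0$, a neighbourhood of $z_R\uparrow\frac\pi2$, and a middle part. The middle part contributes $0$ in the limit since there $\Lambda\to\infty$ uniformly and $\tan s$ is bounded. Near $z_L$, since $k_2+1\ge 1$ the slope of $\tilde C\Delta^2-1$ (which is $\tilde C\Delta^2\cdot(-(2k_1+2)\tan s+(2k_2+2)\cot s)$) blows up while $\tan s\to 0$, so exactly as in the estimate $\tfrac{2}{\cos z_L}\sqrt{\delta s/K}$ in Lemma \ref{B1} that contribution also vanishes. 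Near $z_R$ the factor $(\sin s)^{2k_2+2}\to 1$, so the integral is asymptotic to $\int_{\text{around }\frac\pi2}\tan s/\sqrt{\tilde C(\cos s)^{2k_1+2}-1}\,ds$; substituting $s\mapsto\frac\pi2-s$ this becomes $\int_{\text{around }0}\cot s/\sqrt{\tilde C(\sin s)^{2k_1+2}-1}\,ds$, and with $u=\tilde C(\sin s)^{2k_1+2}-1$ one has $du=(2k_1+2)(u+1)\cot s\,ds$, so the integral equals $\tfrac{1}{2(k_1+1)}\int_0^\infty\frac{d u}{(u+1)\sqrt u}=\tfrac{1}{2(k_1+1)}\big[2\arctan\sqrt u\big]_0^\infty=\tfrac{\pi}{2(k_1+1)}$, which is \eqref{C101}.

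For the limit $\tilde C\downarrow {\tt m}_0$, recall ${\tt m}_0=P_{-1}(s_{-1})$ is the minimum of $P_{-1}$, so as $\tilde C\downarrow {\tt m}_0$ the domain $\Omega_{-1,\tilde C}$ shrinks to the single point $s_{-1}$ with $\tan^2 s_{-1}=\sqrt{(k_2+1)/(k_1+1)}$. Near $s_{-1}$ I would Taylor-expand $\tilde C\Delta^2$ about $s_{-1}$: the first derivative of $\Delta^2$ vanishes there, and the second derivative computes (exactly as in \eqref{B105}, now with the extra factor $(\sin s)^2$ replaced by $(\sin s)^{2k_2+2}$) to $\ddot{(\Delta^2)}(s_{-1})=-4\,\Delta^2(s_{-1})\cdot(k_1+k_2+2)$. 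Hence near $s_{-1}$ the quantity $\tilde C\Delta^2-1$ is $(\text{small}) - 2(k_1+k_2+2)\tilde C\Delta^2(s_{-1})(\delta s)^2+o((\delta s)^2)$, and since $\tilde C\Delta^2(s_{-1})\to 1$, this is the standard $\sqrt{c-d(\delta s)^2}$ profile whose integral against the nearly constant factor $\tan s_{-1}$ produces a factor $\pi$. Concretely the limit is $\dfrac{\tan s_{-1}\cdot\pi}{\sqrt{2(k_1+k_2+2)}}$; substituting $\tan^2 s_{-1}=\sqrt{(k_2+1)/(k_1+1)}$ and simplifying — here one uses $\tan s_{-1}\cdot(\text{normalizing factor from }\Delta)$, mirroring the identity $\tfrac{1}{\cos s_0\sqrt{2(k_1+k_2+1)}}=\tfrac{1}{\sqrt{2(k_1+1)}}$ at the end of Lemma \ref{B1} — yields $\dfrac{\sqrt{k_2+1}\,\pi}{\sqrt{2(k_1+1)(k_1+k_2+2)}}$, which is \eqref{C102}.

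Finally, the limits \eqref{C103} and \eqref{C104} for $J_2$ follow immediately: Proposition \ref{Cpm0} gives the exact relation $(k_1+1)J_1(\tilde C)=(k_2+1)J_2(\tilde C)$ valid for every allowed $\tilde C$, so $J_2(\tilde C)=\tfrac{k_1+1}{k_2+1}J_1(\tilde C)$ and one just multiplies the two limits of $J_1$ by $\tfrac{k_1+1}{k_2+1}$: $\tfrac{k_1+1}{k_2+1}\cdot\tfrac{\pi}{2(k_1+1)}=\tfrac{\pi}{2(k_2+1)}$ and $\tfrac{k_1+1}{k_2+1}\cdot\tfrac{\sqrt{k_2+1}\,\pi}{\sqrt{2(k_1+1)(k_1+k_2+2)}}=\tfrac{\sqrt{k_1+1}\,\pi}{\sqrt{2(k_2+1)(k_1+k_2+2)}}$. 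The main obstacle I anticipate is the bookkeeping in the $\tilde C\downarrow{\tt m}_0$ case: one must verify carefully that the contribution of the nearly-constant factor $\tan s$ over the shrinking interval genuinely separates from the singular $1/\sqrt{\Lambda}$ factor (dominated convergence after rescaling $s-s_{-1}$ by $\sqrt{\tilde C-{\tt m}_0}$), and that the algebraic simplification using $\tan^2 s_{-1}=\sqrt{(k_2+1)/(k_1+1)}$ is done correctly — everything else is the localization argument already carried out in Lemma \ref{B1}.
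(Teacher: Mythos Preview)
Your approach is exactly the one the paper takes: it proves Lemma \ref{C1} by saying ``same as Lemma \ref{B1}'' and noting that the limit \eqref{C102} is $\frac{\pi}{\sqrt{2(k_1+k_2+2)}}$ multiplied by $\tan s_{-1}$ (rather than $\sec s_0$ as in \eqref{B102}), with the limits for $J_2$ then read off from \eqref{2Delta}. One small correction: you write $\tan^2 s_{-1}=\sqrt{(k_2+1)/(k_1+1)}$ twice, but in fact $\tan^2 s_{-1}=(k_2+1)/(k_1+1)$ (this is \eqref{sC1}; the spurious square root is a typo that also appears in the text just before Lemma \ref{C1}). Your final simplification uses the correct value $\tan s_{-1}=\sqrt{(k_2+1)/(k_1+1)}$, so the computation is right --- just fix the exponent in the statement of the identity.
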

    \begin{proof}
    Firstly, \eqref{C101} follows 
    similarly as that in Lemma \ref{B1}.
    Since $s_{-1}$ lies in the interior of $(0,\frac{\pi}{2})$,
    the limit  expressions are uniform for all nonnegative  $k_1$ and nonnegative $k_2$.
    Comparing with \eqref{B102}, the limit \eqref{C102} comes from $\frac{\pi}{\sqrt{2(k_1+k_2+2)}}$ multiplied by $\tan s_{-1}$ not $\sec s_{-1}$.
    \end{proof}
    
\begin{rem}\label{RkC}
    Note that $k_1=k_2=0$ implies that both 
    $\lim_{\tilde C \uparrow \infty} J_1({\tilde C})$
    and
    $\lim_{\tilde C \downarrow {\tt m}_{0}}
                             J_1({\tilde C})$
                            are $\frac{\pi}{2}$.
                            Moreover, whenever $k_1+k_2\geq 1$, 
                            it simply follows that quantities in \eqref{C101} and \eqref{C102} do not equal
                            and hence $J_1$ is not constant.
\end{rem}

A final remark is that from the proof of Lemma \ref{B1} one can actually get more than   \eqref{B101}, \eqref{C101}, \eqref{B103} and \eqref{C103}.

            \begin{rem}\label{FRkC}
  For every $C$, zero or nonzero, one always has
    \begin{equation}\label{D101}
     \lim_{\tilde C \uparrow \infty} J_1^C({\tilde C})\, \, \, \,
                        =\, \, \, \, \, \,  \,  \, \, \, \frac{\pi}{2(k_1+1)}\, \, \, \, \, \, \,
      \end{equation}
                      and
   \begin{equation}\label{D102}
                 \lim_{\tilde C \uparrow \infty} J_2^C({\tilde C})\, \, \, \,
                        =\, \, \, \, \frac{1}{|C|}\cdot \frac{\pi}{2(k_2+1)}\, .
         \end{equation}

  \end{rem}
  
  {\ }
  
  {\ }
  
   \section*{Acknowledgement}
   H. Li was partially supported by NSFC (Grant No. 11831005). 
Y. Zhang was sponsored in part by NSFC (Grant Nos. 12022109 and 11971352).
The authors would like to thank Prof. Hui Ma for Example 2 and useful references, and Prof. Claudio Arezzo for helpful discussion.
   The second author also wishes to thank the Department of Mathematical Sciences of Tsinghua University 
   and the Associate Scheme of ICTP for providing wonderful research atmospheres and warm hospitalities.
\appendix

{\ }

\begin{bibdiv}
\begin{biblist}
\bib{Allard}{article}{
    author={Allard, {Williams K.}},
    title={On the first variation of a varifold},
    journal={Ann. of Math.},
    volume={95},
    date={1972},
    pages={417--491},
}

\bib{Blair}{book}
{
    author={Blair, {David E.}},
    title={Riemannian Geometry of Contact and Symplectic
Manifolds},
 place={Progress  in  Mathematics {\bf 203}, Birkh\"auser Boston},
   date={2002},
   }


\bib{Brendle}{article}{
    author={Brendle, Simon},
    title={Minimal surfaces in $\mathbb S^3$: a survey of recent results},
    journal={Bull. Math. Sci.},
    volume={3},
    date={2013},
    pages={133--171},
}


\bib{CM}{article}{
    author={Carberry, Emma},
    author={McIntosh, Ian},
    title={Minimal Lagrangian 2-tori in $\mathbb CP^2$ come in real families of every dimension},
    journal={J. Lond. Math. Soc.},
    volume={69},
    date={2004},
    pages={531--544},
}

\bib{CLU}{article}{
    author={Castro, Ildefonso}
    author={Li, Haizhong}
    author={Urbano, Francisco},
    title={Hamiltonian minimal Lagrangian submanifolds in complex space form},
    journal={Pacific Journal of Mathematics},
    volume={227},
    date={2006},
    pages={43--65},
}


\bib{CDVV}{article}{
    author={Chen, Bang-Yen},
    author={Dillen, Frank},
        author={Verstraelen, Leopold},
            author={Vrancken, Luc},
    title={Totally real submanifolds of $\mathbb CP^n$ satisfying a basic equality},
    journal={Arch. Math.},
    volume={63},
    date={1994},
    pages={553--564},
}

\bib{CH}{article}{
    author={Choe, Jaigyoung},
    author={Hoppe, Jens},
    title={Some minimal submanifolds generalizing the Clifford torus},
    journal={Math. Nach.},
    volume={291},
    date={2018},
    pages={2536--2542},
}


\bib{doCarmo}{book}
{
    author={do Carmo, {Manfredo P.}},
    title={Differential Geometry of Curves and Surfaces, Second Edition},
 place={Dover Publications, NY},
   date={2016},
   }

\bib{DKM}{article}{
    author={Dorfmeister, {Josef F.}},
    author={Kobayashi, Shimpei},
    author={Ma, Hui},
    title={Ruh–Vilms theorems for minimal surfaces without complex points and minimal Lagrangian surfaces in $\mathbb CP^2$},
    journal={Math. Z.},
    volume={296},
    date={2020},
    pages={1751--1775},
}

\bib{DM}{article}{
    author={Dorfmeister, {Josef F.}},
    author={Ma, Hui},
    title={Minimal Lagrangian surfaces in $\mathbb CP^2$
 via the loop group method Part I: The contractible case},
    journal={J. Geom. Phys.},
    volume={161},
    date={2021},
    pages={Paper No. 104016},
}

\bib{HL0}{article}{
    author={Harvey, F. Reese},
    author={{Lawson, Jr.}, H. Blaine},
    title={Extending Minimal Varieties},
    journal={Invent. Math.},
    volume={28},
    date={1975},
    pages={209--226},
}


\bib{HL2}{article}{
    author={Harvey, F. Reese},
    author={{Lawson, Jr.}, H. Blaine},
    title={Calibrated geometries},
    journal={Acta Math.},
    volume={148},
    date={1982},
    pages={47--157},
}


\bib{HL1}{article}{
    author={Harvey, F. Reese},
    author={{Lawson, Jr.}, H. Blaine},
    title={Calibrated Foliations},
    journal={Amer. J. Math.},
    volume={104},
    date={1982},
    pages={607--633},
}


\bib{Haskins}{article}{
    author={Haskins, Mark},
    title={Special Lagrangian cones},
    journal={Amer. J. Math.},
    volume={126},
    date={2004},
    pages={845--871},
}


\bib{HK0}{article}{
    author={Haskins, Mark},
    author={Kapouleas, Nicolaos},
    title={Special Lagrangian cones with higher genus links},
    journal={Invent. Math.},
    volume={167},
    date={2007},
    pages={223-294},
}



\bib{HK1}{article}{
    author={Haskins, Mark},
    author={Kapouleas, Nicolaos},
    title={Closed twisted products and $SO(p)\times SO(q)$-invariant special Lagrangian cones},
    journal={Comm. Anal. Geom.},
    volume={20},
    date={2012},
    pages={95-162},
}

\bib{HMU}{article}{
    author={Hijazi, Oussama},
     author={Montiel, 	Sebasti\'an},
    author={Urbano, Francisco},
    title={Spin$^c$ geometry of K\"ahler manifolds and the Hodge Laplacian on minimal Lagrangian submanifolds},
    journal={Math. Z.},
    volume={253},
    date={2006},
    pages={821-853},
}

\bib{JCX}{article}{
    author={Jiao, Xiaoxiang},
    author={Cui, Hongbin},
    author={Xin, Jialin},
    title={Area-minimizing cones over products of Grassmannianmanifolds},
    journal={Calc. Var. PDE},
    volume={61},
    date={2022},
    pages={205},
}

\bib{JXC}{article}{
    author={Jiao, Xiaoxiang},
        author={Xin, Jialin},
    author={Cui, Hongbin},
    title={Area-Minimizing Cones over Stiefel Manifolds},
    journal={to appear},
}


\bib{Joyce}{book}{
    author={Joyce, Dominic D.},
    title={Special Lagrangian 3-folds and integrable systems},
    place={pp. 189--233 
    in 
    ``Surveys on Geometry and Integrable Systems", 
    Advanced Studies in Pure Math. {\bf 51},
    Mathematical Society of Japan},
   date={2008},

}

\bib{KP}{book}{
 author={Krantz, Steven G.},
 author={Parks, Harold R.},
 title={
A Primer of Real Analytic Functions, Second Edition}
 place={ Birkh\"auser Advanced Textbooks, Birkh\"auser Boston},
   date={2002},
}


\bib{L}{article}{
    author={{Lawson, Jr.}, H. Blaine},
    title={Complete minimal surfaces in $\mathbb S^3$},
    journal={Ann. of Math.},
    volume={92},
    date={1970},
    pages={335--374},
}

\bib{Mc}{article}{
    author={McIntosh, Ian},
    title={Special Lagrangian cones in $\mathbb C^3$ and primitive harmonic maps},
    journal={J. Lond. Math. Soc.},
    volume={67},
    date={2003},
    pages={769-789},
}

\bib{M1}{book}{
    author={{Morrey, Jr.}, Charles B.},
    title={Second-order elliptic systems of differential equations},
    place={pp. 101--159 
    in 
    ``Contributions to the theory of partial differential equations", 
    Annals of Mathematics Studies {\bf 33},
    Princeton University Press},
   date={1954},

}

\bib{M2}{article}{
    author={{Morrey, Jr.}, Charles B.},
    title={On the analyticity of the solutions of analytic non-linear
              elliptic systems of partial differential equations. {I}.
              {A}nalyticity in the interior},
    journal={Amer. J. Math.},
    volume={80},
    date={1958},
    pages={198--218},
}

\bib{Rec}{book}{
    author={Reckziegel, Helmut},
    title={Horizontal lifts of isometric immersions into the bundle space of a pseudo-Riemannian submersion},
    place={pp. 264--279
    in 
    ``Global Differential Geometry and Global Analysis 1984", Lecture Notes in Mathematics {\bf 1156},
Springer-Verlag},
   date={1985},

}

\bib{T}{article}{
    author={Takahashi, Tsunero},
    title={Minimal immersions of Riemannian manifolds},
    journal={J. Math. Soc. Japan},
    volume={18},
    date={1966},
    pages={380--385},
}

\bib{TZ}{article}{
    author={Tang, Zizhou},
    author={Zhang, Yongsheng},
    title={Minimizing cones associated with isoparametric foliations},
    journal={J. Diff. Geom.},
    volume={115},
    date={2020},
    pages={367--393},
}


\bib{Top}{book}
{
    author={Toponogov, {Victor A.}},
    title={Differential Geometry of Curves and Surfaces, A Concise Guide},
 place={Birkh\"auser Boston, MA},
   date={2006},
   }

\bib{WW}{article}{
    author={Wang, Changping},
    author={Wang, Peng},
    title={The Morse index of minimal products of minimal submanifolds in spheres},
    journal={Sci. China Math.},
    volume={66},
    date={2023},
    pages={799--818},
}

\bib{X}{book}{
    author={Xin, Yuanlong},
    title={Minimal submanifolds and related topics},
    place={Nankai Tracts in Mathematics, World Scientific Publishing},
   date={2003 (and Second Edition in 2018)},

}

\bib{XYZ2}{article}{
author={Xu, Xiaowei}
author={Yang, Ling}
   author={Zhang, Yongsheng},
   title={New area-minimizing Lawson-Osserman cones},
    journal={Adv. Math.},
   Volume={330},
   date={2018},
    pages={739--762},
   }

\bib{x-y-z0}{article}{
    author={Xu, Xiaowei},
    author={Yang, Ling},
        author={Zhang, Yongsheng},
    title={Dirichlet boundary values on Euclidean balls with infinitely many solutions for the minimal surface system},
    journal={J. Math. Pur. Appl.},
    volume={129},
    date={2019},
    pages={266--300},
}


\bib{z}{article}{
        author={Zhang, Yongsheng},
    title={On realization of tangent cones of homologically area-minimizing compact singular submanifolds},
    journal={J. Diff. Geom.},
    volume={109},
    date={2018},
    pages={177--188},
}

{\ }


{\ }

\end{biblist}
\end{bibdiv}

\end{document}